\theoremstyle{plain}
\numberwithin{equation}{subsection}
\theoremstyle{definition} % I've moved this up here since I have trouble reading large blocks of italic text. 
\newtheorem{theorem}[equation]{Theorem}
\newtheorem{lemma}[equation]{Lemma}
\newtheorem{prop}[equation]{Proposition}
\newtheorem{cor}[equation]{Corollary}
\newtheorem{construction}[equation]{Construction}
\newtheorem{variation}[equation]{Variation}
\newtheorem{definition}[equation]{Definition}
\newtheorem{defn}[equation]{Definition}
\newtheorem{ex}[equation]{Example}
\newtheorem{notation}[equation]{Notation}
\newtheorem{recollection}[equation]{Recollection}
\newtheorem{remark}[equation]{Remark}
\newtheorem{rmk}[equation]{Remark}
\newtheorem{warning}[equation]{Warning}
\newtheorem*{remark*}{Remark}
\newtheorem*{terminology*}{Terminology}
\newtheorem*{interpretation*}{Interpretation}
\newtheorem*{definition*}{Definition}
\newtheorem*{conjecture*}{Conjecture}
\newtheorem*{notation*}{Notation}
\newtheorem*{convention*}{Convention}
\newcommand{\Hrm}{\mathrm{H}}
\newcommand{\euscr}[1]{\EuScript{#1}} % Euler script
\newcommand{\bcat}{\euscr{B}} % category A in Euler script
\newcommand{\ccat}{\euscr{C}} % category C in Euler script 
\newcommand{\dcat}{\euscr{D}} % category D in Euler script
\newcommand{\ecat}{\euscr{E}} % category E in Euler script 
\newcommand{\pcat}{\euscr{P}} % category P in Euler script
\newcommand{\vcat}{\euscr{V}} % category V in Euler script
\newcommand{\kcat}{\euscr{K}} % category E in Euler script
\newcommand{\qcat}{\euscr{Q}} % category Q in Euler script
\newcommand{\rcat}{\euscr{R}}
\newcommand{\jcat}{\euscr{J}}
\newcommand{\tcat}{\euscr{T}}
\newcommand{\hcat}{\euscr{H}}
\newcommand{\xcat}{\euscr{X}}
\newcommand{\ctwocat}{\mathsf{C}}
\newcommand{\Fss}{F_\bullet}
\newcommand{\calM}{\euscr{M}}
\newcommand{\calO}{\euscr{O}}
\newcommand{\calV}{\euscr{V}}
\newcommand{\calJ}{\euscr{J}}
\newcommand{\calU}{\euscr{U}}
\newcommand{\calL}{\euscr{L}}
\newcommand{\calR}{\euscr{R}}
\newcommand{\sfO}{\mathsf{O}}
\newcommand{\sfC}{\mathsf{C}}
\newcommand{\sfD}{\mathsf{D}}
\newcommand{\bbZ}{\mathbb{Z}}
\newcommand{\bfE}{\mathbf{E}}
\DeclareMathOperator{\Mod}{Mod}
\DeclareMathOperator{\map}{map}
\DeclareMathOperator{\Fun}{Fun}
\DeclareMathOperator{\Alg}{Alg}
\DeclareMathOperator{\CAlg}{CAlg}
\DeclareMathOperator{\End}{End}
\DeclareMathOperator{\Aut}{Aut}
\DeclareMathOperator{\Spc}{Spc} % motivic spaces
\DeclareMathOperator{\Spiral}{Spiral}
\DeclareMathOperator{\Ext}{Ext}
\DeclareMathOperator{\Tw}{Tw}
\DeclareMathOperator{\aut}{aut}
\DeclareMathOperator{\iso}{iso}
\DeclareMathOperator{\Fib}{Fib}
\DeclareMathOperator{\tFib}{tFib}
\DeclareMathOperator*{\colim}{colim}
\newcommand{\spaces}{\euscr{S}\mathrm{pc}} % the category of spaces
\newcommand{\catinfty}{\euscr{C}\mathrm{at}_{\infty}} % the category of infty categories
\newcommand{\pretheories}{\widehat{\euscr{T}\mathrm{hry}}}
\newcommand{\spectra}{\euscr{S}\mathrm{p}} % the category of spectra
\newcommand{\malcevtheories}{\euscr{M}\mathrm{alc}} % Category of Malcev theories and homomorphisms
\newcommand{\Malc}{\euscr{M}\mathrm{alc}} % Category of Malcev theories and functors
\newcommand{\ukanspaces}{\euscr{S}\mathrm{ps}} % category of universally Kan spaces
\newcommand{\malcevmodifiers}{{\End_\sigma(\ukanspaces)}} % category of modifiers on ukan spaces
\newcommand{\elementarymodifiers}{\euscr{E}\mathrm{lm}} % category of elementary modifiers
\newcommand{\spheresandmore}{\euscr{S}\mathrm{ph}}
\newcommand{\cfrees}{\euscr{L}_0}
\newcommand{\lfrees}{\euscr{L}}
\newcommand{\presheaves}{\euscr{P}\mathrm{sh}}
\newcommand{\LMod}{\mathrm{L}\euscr{M}\mathrm{od}}
\newcommand{\RMod}{\mathrm{R}\euscr{M}\mathrm{od}}
\newcommand{\Model}{\euscr{M}\mathrm{odel}}
\newcommand{\Cat}{{\euscr{C}\mathrm{at}}}
\newcommand{\Ab}{\euscr{A}\mathrm{b}}
\newcommand{\Sph}{\euscr{S}\mathrm{ph}}
\newcommand{\Sp}{{\euscr{S}\mathrm{p}}}
\newcommand{\Lifts}{\mathrm{Lifts}}
\newcommand{\Ob}{\mathrm{Ob}} 
\newcommand{\kinv}{\mathrm{k}}
\newcommand{\integers}{\mathbb{Z}}
\newcommand{\thesphere}{\mathbf{S}}
\newcommand{\pt}{\mathrm{pt}}
\newcommand{\id}{\mathrm{id}}
\newcommand{\h}{\mathrm{h}}
\newcommand{\op}{\mathrm{op}}
\newcommand{\cn}{\mathrm{cn}}
\newcommand{\core}{{\text{core}}}
\newcommand{\ns}{\mathrm{ns}}
\newcommand{\Rezk}{\mathrm{Rezk}}
\newcommand{\nb}{\mathrm{nb}}
\newcommand{\bs}{{-}}
\newcommand{\ol}[1]{\overline{#1}}
\newcommand{\pto}{}% just for safety
\newcommand{\pgets}{}% just for safety
\DeclareRobustCommand{\pto}{\mathrel{\mathpalette\p@to@gets\to}}
\DeclareRobustCommand{\pgets}{\mathrel{\mathpalette\p@to@gets\gets}}
\newcommand{\p@to@gets}[2]{%
  \ooalign{\hidewidth$\m@th#1\mapstochar\mkern5mu$\hidewidth\cr$\m@th#1\to$\cr}%
}
\tikzset{mid vert/.style={/utils/exec=\tikzset{every node/.append style={outer sep=0.8ex}},
postaction=decorate,decoration={markings,
mark=at position 0.5 with {\draw[-] (0,#1) -- (0,-#1);}}},
mid vert/.default=0.75ex}
  \def\subsection{\@startsection{subsection}{1}%
  \z@{.7\linespacing\@plus\linespacing}{.5\linespacing}%
  {\normalfont\bfseries\centering}}% NEW
\let\oldtocsection=\tocsection
\let\oldtocsubsection=\tocsubsection
\let\oldtocsubsubsection=\tocsubsubsection
\renewcommand{\tocsection}[2]{\hspace{0em}\oldtocsection{#1}{#2}}
\renewcommand{\tocsubsection}[2]{\hspace{1em}\oldtocsubsection{#1}{#2}}
\renewcommand{\tocsubsubsection}[2]{\hspace{2em}\oldtocsubsubsection{#1}{#2}}
\begin{document}

\title[Infinitesimal extensions]{Unstable synthetic deformations II: \\ Infinitesimal extensions}
\author{William Balderrama}
\author{Piotr Pstr\k{a}gowski}

\begin{abstract}
This paper is the second in a series devoted to the study of unstable synthetic deformations through the lens of Malcev theories: certain $\infty$-categorical algebraic theories $\pcat$ with well-behaved $\infty$-categories $\Model_\pcat$ of models. In this paper, we show that Malcev theories and their models admit a well-behaved \emph{deformation theory}, generalizing the classical deformation theory of rings and modules.

As our main example, we prove that the Postnikov tower of a Malcev theory $\pcat$ is a tower of square-zero extensions, and that all of this structure is preserved by passage to $\infty$-categories of models. This allows us to control the difference between the $\infty$-categories $\Model_{\h_{n+r}\pcat}$ and $\Model_{\h_n\pcat}$ for $r \leq n$, and forms the basis of a ``cofibre of $\tau$'' formalism in our approach to unstable synthetic homotopy theory. As an application, we derive from this a variety of new Blanc--Dwyer--Goerss style decompositions of moduli spaces of lifts along the tower $\Model_\pcat\to\cdots\to\Model_{\h\pcat}$.
\end{abstract}

\maketitle 

\tableofcontents

\section{Introduction}

This is the second in a series of papers that develops a theory of \emph{unstable synthetic deformations}: deformations of unstable homotopy theories which categorify spectral sequences and obstruction theories. We refer the reader to \cite[Section 1.2]{usd1} for a broad overview of this project. In \cite{usd1}, we defined and studied an $\infty$-categorical and infinitary generalization of the \emph{Malcev theories} of classical universal algebra \cite{smith1976malcev,borceuxbourn2004malcev}, refining an earlier such generalization given in \cite{balderrama2021deformations}, and developed the higher universal algebra of their $\infty$-categories of \emph{models}.

Just as classical theories and models generalize rings and modules, $\infty$-categorical theories and models generalize connective ring spectra and connective modules: associated to any connective ring spectrum $A$ is a Malcev theory $\cfrees(A)$ satisfying $\Model_{\cfrees(A)}\simeq\LMod_A^{\geq 0}$. A key difference is that in higher universal algebra, this generalization is sufficiently robust as to encode \emph{deformations} of both nonconnective and nonstable homotopy theories, including certain filtered deformations of stable $\infty$-categories \cite[Section 9.2]{usd1} and deformations which encode generalizations of the classical Blanc--Dwyer--Goerss \cite{realization_space_of_a_pi_algebra} theory of moduli problems in homotopy theory, previously explored in \cite{pstrkagowski2023moduli,balderrama2021deformations}.

This paper pushes the analogy between theories and connective ring spectra even further. Our goal is to show that Malcev theories and their models admit a categorified \emph{deformation theory}, generalizing the classical deformation theory of rings and modules. This theory forms the underpinning of an unstable ``cofibre of $\tau$'' formalism which is central to the study of synthetic deformations. This formalism has been an important ingredient in many recent breakthroughs in the knowledge of the stable homotopy groups of spheres \cite{burklund2024classical, lin2024last, burklund2025adams}, and the formalism presented here is built with analogous applications in the unstable world in mind. 

The basic idea is as follows. A Malcev theory $\pcat$, as an $\infty$-category, admits a \emph{Postnikov tower}
\[
\pcat\to\cdots\to\h_{n+1}\pcat\to\h_n\pcat\to\cdots\to\h\pcat,
\]
which is again a tower of Malcev theories. After passing to $\infty$-categories of models, this induces what we call the \emph{spiral tower}, or \emph{Goerss--Hopkins tower}, of $\pcat$:
\[
\Model_\pcat\to\cdots\to\Model_{\h_{n+1}\pcat}\to\Model_{\h_n\pcat}\to\cdots\to\Model_{\h\pcat}.
\]
We show that the spiral tower of $\pcat$ can be completed to cartesian \emph{spiral squares} 
\begin{equation}\label{eq:basicspiralsquare}\begin{tikzcd}
\Model_{\h_{n+1}\pcat}\ar[r]\ar[d] &\Model_{\h\pcat}\ar[d]\\
\Model_{\h_n\pcat}\ar[r,"k_!"]&\Model_{\kinv_{n,1}^{n+1}\pcat}
\end{tikzcd},\end{equation}
induced by corresponding categorical \emph{Postnikov squares} of $\pcat$. The theory $\kinv_{n,1}^{n+1}\pcat$ which serves as the target of the corresponding $k$-invariant is in a precise sense \emph{$\integers$-linear} over the homotopy category $\h\pcat$, and these squares realize the spiral tower of $\pcat$ as a tower of \emph{linear extensions} (or \emph{square-zero extensions}) with essentially algebraic layers. The spiral tower of $\pcat$ therefore realizes the $\infty$-category $\Model_\pcat$ as an infinitesimal deformation of $\Model_{\h\pcat}$ in the same way that the Postnikov tower realizes a connective ring spectrum $A$ as an infinitesimal deformation of its $0$th homotopy ring $\pi_0 A\simeq A_{\leq 0}$.

As we will explain, these squares and variants both recover the classical deformation theory of rings and modules, and lead to an extremely flexible framework for constructing decompositions of moduli spaces and mapping spaces in homotopy theory. Our goal for the rest of the introduction is to describe this and our main theorems in more detail.

\subsection{The spiral tower of a Malcev theory}\label{sssec:spiraltower}

The Postnikov squares of a Malcev theory are a special case of a more general categorical construction, inspired by classical work of Dwyer--Kan--Smith \cite{dwyerkansmith1986obstruction} on Postnikov towers of simplicial categories and more recent work of Harpaz--Nuiten--Prasma \cite{harpaznuitenprasma2020kinvariants} on Postnikov towers of $\infty$-categories. 
Any $\infty$-category $\ccat$ admits a tower of homotopy $n$-categories 
\[
\ccat\to\cdots\to\h_{n+1}\ccat\to\h_n\ccat\to\cdots\to\h\ccat.
\]
In \S\ref{sec:modifications}, we study the following generalization of this construction. Let $\spaces_0\subset\spaces$ be a subcategory closed under finite products and let $F\colon \spaces_0\to\spaces$ be a functor which preserves finite products. 

\begin{defn}[Informal]
\label{definition:introduction_modification}
Let $\ccat$ be an $\infty$-category with mapping spaces in $\spaces_0$. The \emph{modification of $\ccat$ along $F$} is a new $\infty$-category $\ccat_F$ with, informally, the same objects as $\ccat$, but mapping spaces given by the formula 
\[
\map_{\ccat_F}(a,b) := F(\map_\ccat(a,b)).
\]
Composition is induced by composition in $\ccat$, using the fact that $F$ preserves products.
\end{defn}
The definition above is informal, and we formalize it in \cref{def:modification} using the theory of enriched $\infty$-categories developed by Gepner--Haugseng \cite{gepnerhaugseng2015enriched}: in short, $\ccat$ is an $\infty$-category enriched in $\spaces_0$, and $\ccat_F$ is its change of enrichment along $F$. In practice, the details of the construction are rarely needed, and in \cref{prop:modificationproperties} we give a more precise formulation of the above informal description which suffices for most purposes.

\begin{ex}
\label{ex:modifytruncate}
If $\tau_{<n} \colon \spaces \rightarrow \spaces$ is the Postnikov $(n-1)$-truncation, then $\ccat_{\tau_{<n}}\simeq\h_n\ccat$. That is, the homotopy $n$-category is an example of a modification in the sense of \cref{definition:introduction_modification}. 
\end{ex}

Classical Postnikov theory asserts that if $X$ is a space, then the truncation $X_{\leq n} \to X_{<n}$ is classified by a $k$-invariant: roughly, a class in $\Hrm^{n+1}(X_{<n};\pi_n X)$, where the cohomology groups are possibly with local coefficients. We revisit and extend this classical theory in \S\ref{sec:postnikovsquares}, showing that the truncation $X_{<n+r} \to X_{<n}$ is classified by linear data for any $1 \leq r < n$, and when $r = n$ when certain Whitehead products vanish in the homotopy groups of $X$. The classical theory is obtained as the special case when $r=1$. Modifying a Malcev theory along these constructions leads to the following.

\begin{theorem}[\S\ref{ssec:categoricalpostnikovsquares}]
\label{introthm:catpost}
Let $\pcat$ be a Malcev theory and fix integers $1\leq r \leq n < \infty$. Then the truncation
\[
\tau_{(n+r,n)}\colon \h_{n+r}\pcat\to\h_n\pcat
\]
is a linear extension of $\infty$-categories in the following sense: there is a spectrum object
\[
\kinv_{n,r}^\bullet\pcat \in \Mod_{\thesphere_{<r}}(\Sp(\malcevtheories_{/\h_r\pcat})),
\]
in $\infty$-categories over $\h_r\pcat$, linear over the truncation $\thesphere_{< r}$ of the sphere spectrum, together with cartesian Postnikov squares
\begin{center}\begin{tikzcd}
\h_{n+r}\pcat\ar[r,"\tau_{(n+r,r)}"]\ar[d,"\tau_{(n+r,n)}"'] &\h_r\pcat\ar[d,"0"]\\
\h_n\pcat\ar[r,"k"]&\kinv_{n,r}^{n+1}\pcat
\end{tikzcd}.\end{center}
\end{theorem}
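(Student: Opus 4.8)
The plan is to deduce \cref{introthm:catpost} from the space-level Postnikov squares of \S\ref{sec:postnikovsquares} by applying the modification construction of \S\ref{sec:modifications} to the Malcev theory $\pcat$, exploiting that the latter is compatible with limits of enriching functors. First, by \cref{ex:modifytruncate} and \cref{prop:modificationproperties}, each homotopy category $\h_m\pcat$ is the modification $\pcat_{\tau_{<m}}$ of $\pcat$ along the finite-product-preserving truncation functor $\tau_{<m}\colon\spaces\to\spaces$, and each truncation $\tau_{(m',m)}\colon\h_{m'}\pcat\to\h_m\pcat$ is induced by the natural transformation $\tau_{<m'}\to\tau_{<m}$. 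So it suffices to realize the remaining corner and structure maps of the asserted square as modifications, and to check that $\pcat_{(-)}$ sends the relevant space-level cartesian square to a cartesian square of Malcev theories.

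Second, I invoke the main output of \S\ref{sec:postnikovsquares}: for $1\le r\le n$ there is a $\thesphere_{<r}$-module spectrum object $G^\bullet_{n,r}$ in finite-product-preserving endofunctors of $\spaces$ over $\tau_{<r}$, together with a cartesian square of finite-product-preserving functors
\begin{center}
\begin{tikzcd}
\tau_{<n+r}\ar[r]\ar[d] & \tau_{<r}\ar[d,"0"]\\
\tau_{<n}\ar[r,"k"] & G^{n+1}_{n,r}
\end{tikzcd}
\end{center}
in which $G^{n+1}_{n,r}$ is the relevant space of $G^\bullet_{n,r}$ and the right vertical map is the zero section. For $r<n$ this holds unconditionally; for $r=n$ it requires the vanishing of certain Whitehead products in the homotopy groups of the spaces being truncated, which here are the mapping spaces $\map_\pcat(a,b)$ of $\pcat$ --- and this is exactly where the Malcev hypothesis enters: by the structure theory of Malcev theories in \cite{usd1}, these Whitehead products vanish. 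One then sets $\kinv^\bullet_{n,r}\pcat := \pcat_{G^\bullet_{n,r}}$, obtained by modifying $\pcat$ levelwise along the spaces of $G^\bullet_{n,r}$; since modification preserves finite limits this is again a spectrum object, now in $\malcevtheories_{/\h_r\pcat}$, and compatibility of modification with the ambient monoidal structure (\cref{prop:modificationproperties}) promotes it to an object of $\Mod_{\thesphere_{<r}}(\Sp(\malcevtheories_{/\h_r\pcat}))$, with $\kinv^{n+1}_{n,r}\pcat = \pcat_{G^{n+1}_{n,r}}$.

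Third, I apply $\pcat_{(-)}$ to the displayed square. Because modification takes limits of enriching functors to limits of enriched $\infty$-categories on a fixed object set (\cref{prop:modificationproperties}), the image square is again cartesian, and by the identifications above its terms and maps are exactly $\h_{n+r}\pcat$, $\h_r\pcat$, $\h_n\pcat$, $\kinv^{n+1}_{n,r}\pcat$ with the morphisms $\tau_{(n+r,r)}$, $\tau_{(n+r,n)}$, $k$, $0$. Finally one checks this is a square of Malcev theories: the $\h_m\pcat$ are Malcev because Postnikov truncation preserves Malcevness \cite{usd1}, and $\kinv^{n+1}_{n,r}\pcat = \pcat_{G^{n+1}_{n,r}}$ is Malcev because modifying a Malcev theory along the finite-product- and loop-preserving functors appearing in the space-level Postnikov squares again yields a Malcev theory, which is verified in \S\ref{sec:modifications} (it applies since $G^{n+1}_{n,r}$ is assembled from Postnikov truncations and Eilenberg--Mac\,Lane-type functors).

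I expect the main obstacle to lie in the $r=n$ case: identifying precisely which Whitehead products must vanish for the space-level square to be cartesian, and showing they do vanish in $\pi_\ast\map_\pcat(a,b)$ for a Malcev $\pcat$ --- this is the one place where more than formal manipulation of modifications is needed. A secondary, essentially bookkeeping point is making the limit- and monoidal-compatibility of $\pcat_{(-)}$ precise enough to transport both the cartesianness of the square and the $\thesphere_{<r}$-linearity of $\kinv^\bullet_{n,r}\pcat$ from $\spaces$ to $\pcat$; granting \cref{prop:modificationproperties} this is routine.
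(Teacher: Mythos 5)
Your overall strategy is exactly the paper's: construct space-level Postnikov squares of product-preserving endofunctors of $\spaces$ (this is \S\ref{ssec:postnikovsquares}), then modify $\pcat$ along them (\cref{constr:categoricalpostnikovsquare}). You also correctly locate where the Malcev hypothesis enters, namely the vanishing of Whitehead products in the mapping spaces. Two corrections, one minor and one substantive.

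The minor one: for the square as stated -- with $\h_r\pcat$, i.e.\ the modifier $\tau_{<r}$, in the upper-right corner -- the condition $[\pi_r X,\pi_n X]=0$ is needed for \emph{all} $1\leq r\leq n$, not only $r=n$ (see \cref{prop:pushouttruncate}.(2); the unconditional statement for $r<n$ is \cref{variation:nonsimplepostnikov}, which replaces $\tau_{<r}$ by $\tau_{\leq r}$). This is harmless here since supersimple mapping spaces kill all Whitehead products, but your localization of the difficulty to $r=n$ is off.

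The substantive gap is your third step. You assert that ``modification takes limits of enriching functors to limits of enriched $\infty$-categories'' and cite \cref{prop:modificationproperties}, calling the rest routine bookkeeping. That proposition says nothing of the sort, and the assertion is false in general: by \cref{prop:modifierlimitsff} the comparison $\ccat_{\lim F_\bullet}\to\lim\ccat_{F_\bullet}$ is always fully faithful, but \cref{ex:trivialaction} shows it generally fails to be essentially surjective (already for a constant diagram, where $\ccat_{\map(T,\bs)}\to\ccat^T$ picks out only the constant functors). Establishing cartesianness of the modified square is therefore the real content of this step, and the paper handles it with \cref{prop:modifierlimitses}, whose hypotheses -- identification of $(\ccat^\core)_{F}\simeq\ccat_F^\core$, bijectivity on $\pi_0$ of cores, and the condition $B\lim F_\bullet\simeq\lim BF_\bullet$ for the span $\tau_{<n}\to K^{n+1}_{n,r}\leftarrow\tau_{<r}$ (here one uses that $\pi_0\tau_{<r}\to\pi_0 K^{n+1}_{n,r}$ is surjective, since $K^{n+1}_{n,r}$ is a fibrewise delooping) -- must actually be verified, using \cref{prop:modifycore} and ultimately the fact that the relevant modifiers are elementary (\cref{prop:elementarypostnikov}, \cref{prop:properties_of_elementary_modifiers}.(4)--(5)). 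Without this your argument does not yield a cartesian square of $\infty$-categories, only a fully faithful comparison into one.
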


Here, the map $k$ can be thought of as a suitable cohomology class classifying $\h_{n+r} \pcat$ as an $\infty$-category over $\h_{n} \pcat$, see \cref{ex:univtoda,rmk:bwcohomology}.

\begin{ex}
\label{ex:cnringpostnikov}
Let $A$ be a connective ring spectrum. If $1 \leq r \leq n < \infty$, then it is known that $A_{<n+r}$ is a square-zero extension of $A_{<n}$ by the $A_{<r}$-bimodule $A_{[n,n+r)} = \tau_{<n+r}\tau_{\geq n}A$. For $r = 1$, this goes back in various forms to work of Kriz, Basterra, Lazarev, and Dugger--Shipley \cite{basterra1999andre, lazarev2001homotopy, duggershipley2006postnikov}, with the general case shown by Lurie in \cite[{Corollary 7.4.1.27}]{higher_algebra}\footnote{More precisely, Lurie shows that an $n$-small extension of ring spectra is square-zero. As $A_{<n+r} \rightarrow A_{<n}$ is $n$-small for any $1 \leq r \leq n$, the result follows.}. 

The Postnikov squares of a Malcev theory of \cref{introthm:catpost} are a nonabelian generalization of these results. In particular, under the embedding $\cfrees(\bs)$ of connective ring spectra into single-sorted Malcev theories, we have
\begin{center}\begin{tikzcd}
\h_{n+r}\cfrees(A)\ar[r]\ar[d] &\h_r\cfrees(A)\ar[d]\\
\h_n\cfrees(A)\ar[r]&\kinv_{n,r}^{n+1}\cfrees(A)
\end{tikzcd}
$\qquad = \qquad$
\begin{tikzcd}
\cfrees(A_{<n+r})\ar[r]\ar[d] &\cfrees(A_{<r})\ar[d]\\
\cfrees(A_{<n})\ar[r]&\cfrees(A_{<r}\oplus\Sigma A_{[n,n+r)}).
\end{tikzcd}\end{center}
\end{ex}

\begin{ex}\label{ex:univtoda}
For $n = r = 1$, \cref{introthm:catpost} provides a cartesian square of the form
\begin{center}
\begin{tikzcd}
\h_2\pcat\ar[r]\ar[d]&\h\pcat\ar[d,"0"]\\
\h\pcat\ar[r,"k"]&\kinv_{1,1}^2\pcat
\end{tikzcd}.
\end{center}
The functor $k$ is a section to the projection $\kinv_{1,1}^2\pcat\to\h\pcat$, and so is classified by an element of the \emph{Baues--Wirsching cohomology} \cite{baueswirsching1985cohomology}
\[
\kappa \in \Hrm^3(\h\pcat;\Pi_1\pcat)
\]
of $\h\pcat$ with coefficients in the natural system $\Pi_1\pcat\colon \Tw(\h\pcat)\to\Ab$ of abelian groups given by
\[
(\Pi_1\pcat)(\phi\colon \nu_1 P \to \nu_1 Q) = \pi_1(\map_\pcat(P,Q),\phi).
\]
The class $\kappa$ is a \emph{universal (ternary) Toda bracket} for the theory $\pcat$. We refer the reader to \cite{bauesdreckmann1989cohomology,bauesjibladze2002classification, sagave2008universal} for further discussion.
\end{ex}

\cref{introthm:catpost} shows that there is a good Postnikov theory of Malcev theories. However we are ultimately interested not in Malcev theories themselves but their $\infty$-categories of models, and in general a cartesian square of Malcev theories need not be preserved by passage to $\infty$-categories of models. The following highly non-formal result shows that these issues cannot occur in the Postnikov case:

\begin{theorem}
\label{introthm:spiralsquares}
All of the structure on the categorical Postnikov tower of a Malcev theory $\pcat$ is preserved by passage to $\infty$-categories of models. In particular, for all $1 \leq r \leq n < \infty$, the Postnikov square of $\pcat$ induces a cartesian \emph{spiral square}
\begin{equation}\label{eq:spiralsquare}\begin{tikzcd}
\Model_{\h_{n+r}\pcat}\ar[r,"\tau_{(n+r,r)!}"]\ar[d,"\tau_{(n+r,n)!}"']&\Model_{\h_r\pcat}\ar[d,"0_!"]\\
\Model_{\h_n\pcat}\ar[r,"k_!"]&\Model_{\kinv_{n,r}^{n+1}\pcat}
\end{tikzcd}\end{equation}
realizing $\Model_{\h_{n+r}\pcat}$ as linear extension of $\Model_{\h_n\pcat}$ by an object
\[
\Model_{\kinv_{n,r}^{n+\bullet}\pcat} \in \Mod_{\thesphere_{<r}}((\catinfty)_{/\Model_{\h_r\pcat}}),
\]
itself with $\thesphere_{<r}$-module structure inherited from that on $\kinv_{n,r}^\bullet\pcat$.
\end{theorem}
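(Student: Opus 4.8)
The plan is to reduce the statement to three claims of increasingly non-formal character: (a) the functor $\Model_{(\bs)}$ carries the categorical Postnikov squares of $\pcat$ to cartesian squares of $\infty$-categories; (b) the resulting square \eqref{eq:spiralsquare} exhibits $\Model_{\h_{n+r}\pcat}$ as a linear extension in the sense of our deformation-theoretic formalism; and (c) the $\thesphere_{<r}$-module structure on $\kinv_{n,r}^\bullet\pcat$ is carried through. Throughout I would work relative to $\h_r\pcat$ (resp.\ $\Model_{\h_r\pcat}$), so that all the constructions live in the appropriate slice categories, and I would use the spectrum object $\kinv_{n,r}^\bullet\pcat$ from \cref{introthm:catpost} as the given datum.

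\emph{Step 1 (cartesianness).} The key structural input is that for a Malcev theory $\qcat$, the $\infty$-category $\Model_\qcat$ depends functorially on $\qcat$ and that this functor, restricted to the relevant diagrams of theories, preserves the pullbacks in question. Abstractly, a morphism of Malcev theories $\qcat\to\qcat'$ induces a restriction functor $\Model_{\qcat'}\to\Model_\qcat$ which is a right adjoint; what makes the Postnikov case special is that the square in \cref{introthm:catpost} is built from \emph{truncation} morphisms $\h_{n+r}\pcat\to\h_n\pcat$, and one shows that $\Model_{\h_n\pcat}$ is itself obtained from $\Model_{\h_{n+r}\pcat}$ by a truncation-type operation compatible with models. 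Concretely, a model of $\h_n\pcat$ is (functorially) an $n$-truncated model, and the square of theories, being a pullback of \emph{$n$-categories} along functors that are fully faithful on enough of the structure, is carried to a pullback of the corresponding full subcategories of models. I would make this precise by identifying $\Model_{\h_n\pcat}$ inside $\Model_{\h_{n+r}\pcat}$ as the image of a localization or colocalization and checking that the gluing functor $k_!$ and the zero section $0_!$ are compatible with these (co)localizations, so that the pullback is computed levelwise. This is where I expect the bulk of the genuinely non-formal work to lie, since it is exactly the failure of $\Model_{(\bs)}$ to preserve general cartesian squares of theories that must be circumvented here.

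\emph{Step 2 (linear extension structure).} Granting the square is cartesian, the right-hand vertical map $0_!\colon \Model_{\h_r\pcat}\to\Model_{\kinv_{n,r}^{n+1}\pcat}$ is the zero section of an abelian-group-like (more precisely, $\thesphere_{<r}$-linear) object in $(\catinfty)_{/\Model_{\h_r\pcat}}$, namely $\Model_{\kinv_{n,r}^{n+\bullet}\pcat}$; this is inherited by applying $\Model_{(\bs)}$ to the spectrum object $\kinv_{n,r}^\bullet\pcat$ over $\h_r\pcat$, using that $\Model_{(\bs)}$ is a product-preserving (indeed limit-preserving along the relevant diagrams) functor and hence sends spectrum objects to spectrum objects and $\thesphere_{<r}$-module objects to $\thesphere_{<r}$-module objects. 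One then observes that $k_!$ is a section of the projection $\Model_{\kinv_{n,r}^{n+1}\pcat}\to\Model_{\h_r\pcat}$ — this follows by applying $\Model_{(\bs)}$ to the factorization $\tau_{(n+r,r)}$ of $k$ through the projection — so the square \eqref{eq:spiralsquare} has precisely the shape of a linear (square-zero) extension in the sense introduced in \S\ref{sec:modifications}--\S\ref{sec:postnikovsquares}. Combined with Step~1 this gives the claimed cartesian spiral square.

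\emph{Step 3 (bookkeeping).} Finally I would check the asserted $\thesphere_{<r}$-module structure on $\Model_{\kinv_{n,r}^{n+\bullet}\pcat}$ explicitly: it is the pushforward along $\Model_{(\bs)}$ of the structure on $\kinv_{n,r}^\bullet\pcat$ given by \cref{introthm:catpost}, and since $\Model_{(\bs)}$ is monoidal for the relevant structures (it takes products of theories to products of model categories up to the appropriate equivalence) the $\thesphere_{<r}$-action is transported verbatim. The degenerate cases — $r=1$ recovering the ordinary spiral square \eqref{eq:basicspiralsquare}, and the identification of $\kinv_{1,1}^2\pcat$ with the Baues--Wirsching datum of \cref{ex:univtoda} — then follow by unwinding definitions. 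The single substantive obstacle, to reiterate, is Step~1: proving that \emph{these particular} cartesian squares of theories survive passage to models, which is not formal and presumably requires the detailed analysis of the interaction between Postnikov truncation of theories and truncation of models carried out in the body of the paper.
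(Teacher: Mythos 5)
Your overall architecture (reduce to: these particular cartesian squares of theories survive passage to models; then transport the $\thesphere_{<r}$-module structure) matches the paper's, and you correctly locate the non-formal crux in Step~1. But the concrete mechanism you propose for Step~1 does not work. You suggest identifying $\Model_{\h_n\pcat}$ inside $\Model_{\h_{n+r}\pcat}$ as "(functorially) the $n$-truncated models," i.e.\ as the image of a (co)localization, and computing the pullback levelwise. This fails: the restriction functor $\tau_{(n+r,n)}^\ast$ is \emph{not} fully faithful, because the counit $\tau_{(n+r,n)!}\tau_{(n+r,n)}^\ast \to \id$ is only $(n+1)$-connective (\cref{lem:localconnectedunit}), not an equivalence; relatedly, the derived truncation $\tau_{n!}X$ agrees with the objectwise Postnikov truncation $X_{<n}$ only for \emph{loop} models. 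Evading exactly this identification is the point of the paper's "external" approach. What the paper actually does is: (i) fullness of the zero section $0\colon\h_r\pcat\to\kinv_{n,r}^{n+1}\pcat$ gives a fully faithful comparison functor $L$ into the pullback of model categories, with an explicit right adjoint $R$ computed as a fibre product of unit maps (\cref{prop:weakpb}); (ii) since $0$ admits a retraction it \emph{reflects connectivity}, and this feeds an inductive connectivity bootstrap (if the $Y$-component of a map in the pullback is $n$-connective then the $X$-component is $n$-connective, which forces the $Y$-component to be $(n+1)$-connective, etc.), via \cref{prop:connectivefibres} and \cref{lem:cubelemmas}, showing $R$ is conservative (\cref{thm:animatesquaregeneral}). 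Nothing in your proposal supplies a substitute for this bootstrap.

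A secondary gap: in Steps~2--3 you assert that $\Model_{(\bs)}$ is "limit-preserving along the relevant diagrams" and hence transports the $\thesphere_{<r}$-module structure "verbatim." But the diagrams exhibiting $\kinv_{n,r}^\bullet\pcat$ as a spectrum object are themselves pullbacks of Malcev theories along full homomorphisms, so their preservation is another instance of the same non-formal clutching theorem, not bookkeeping. Moreover the blanket clause "all of the structure is preserved" is formalized in the paper as the statement that $F\mapsto\Model_{\pcat_F}$ is a convergent spiral system (\cref{thm:modelspiral}), whose proof requires the homology Whitehead theorem (\cref{thm:homologywhitehead}) — that derived functors induced by maps of elementary modifiers reflect connectivity — which your proposal does not touch.
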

The statement given above is informal and intended to give the reader an idea of the kind of results they can find in the current work. We make it more precise later in the introduction: the spiral squares of a Malcev theory $\pcat$ are a special case of a more general theory of infinitesimal extensions of Malcev theories which we will describe in \S\ref{sssec:infinitesimal}, and we specify what we consider to be ``all of the structure'' of the Postnikov tower of $\pcat$ in \S\ref{sssec:spiralsystems}. 

Before doing so, to further motivate our results, we give some examples and applications.

\begin{ex}
Let $A$ be a connective ring spectrum. Then \cref{introthm:spiralsquares} combined with \cref{ex:cnringpostnikov} asserts that the comparison map
\[
\LMod_A^{\geq 0}\to\lim_{n\to\infty}\LMod_{A_{<n}}^{\geq 0}
\]
is an equivalence, and that the squares
\begin{center}\begin{tikzcd}
\LMod_{A_{<n+r}}^{\geq 0}\ar[r]\ar[d,"A_{<n}\otimes_{A_{<n+r}}(\bs)"']&\LMod_{A_{< r}}^{\geq 0}\ar[d]\\
\LMod_{A_{<n}}^{\geq 0}\ar[r]&\LMod_{A_{<r}\oplus\Sigma A_{[n,n+r)}}^{\geq 0}
\end{tikzcd}\end{center}
are cartesian. This gives an inductive construction of the $\infty$-category of connective $A$-modules, and recovers a special case of a theorem of Lurie \cite[Theorem 16.2.0.2]{lurie_spectral_algebraic_geometry}. In fact, our results are strong enough to recover the general case, see \cref{ex:nilextension} below. 
\end{ex}

\begin{ex}
Let $R$ be a discrete commutative ring. Fix $k\geq 2$, and let $\pcat\subset\Alg_{\bfE_k}(\Mod_R^{\geq 0})$ be the full subcategory spanned by those $\bfE_k$-algebras over $R$ which are free on a set. Then there are equivalences
\[
\Model_\pcat\simeq \Alg_{\bfE_k}(\Mod_R^{\geq 0}),\qquad \Model_{\h\pcat}\simeq \CAlg_R^\Delta,
\]
where $\CAlg_R^\Delta$ is the $\infty$-category of animated commutative $R$-algebras. Thus the spiral tower
\[
\Model_\pcat\to\cdots\to\Model_{\h\pcat}\qquad = \qquad \Alg_{\bfE_k}(\Mod_R^{\geq 0}) \to \cdots \to \CAlg_R^\Delta
\]
interpolates between spectral algebra (based on $\bfE_k$-rings) and derived algebra (based on animated rings), and \cref{introthm:spiralsquares} gives a precise sense in which connective spectral algebra can be regarded as an infinitesimal extension of derived algebra.
\end{ex}

\subsection{Applications to moduli problems}\label{ssec:applications}

The spiral tower of a Malcev theory leads to an extremely flexible theory of decompositions of moduli spaces in homotopy theory. At the most basic level, this takes the following form.

\begin{theorem}[\ref{thm:modulidecomposition}]
\label{introthm:modulidecomposition}
Let $\pcat$ be a Malcev theory. Fix $\Lambda \in \Model_{\h_r\pcat}$, and let
\[
\calM(\Lambda) \subset \Model_\pcat^\core
\]
be the full subspace spanned by those $X \in \Model_\pcat$ for which $\tau_{!} X \simeq \Lambda$. Then there is a decomposition
\[
\calM(\Lambda)\simeq\lim_{r\leq n\to\infty}\calM_n(\Lambda),
\]
with layers fitting into cartesian squares
\begin{equation*}\begin{tikzcd}
\calM_{n+r}(\Lambda)\ar[r]\ar[d]&B\!\Aut(\Lambda)\ar[d,"0_!"]\\
\calM_n(\Lambda)\ar[r,"k_!"]&\map_{\h_r\pcat/\Lambda}(\Lambda,B^{n+2}_\Lambda\Lambda_{S^n})_{\h\!\Aut(\Lambda)}.
\end{tikzcd}\end{equation*}
Here, $\Lambda_{S^n} \in \Mod_{\thesphere_{<r}}^{\geq 0}((\Model_{\h_r\pcat})_{/\Lambda})$ is defined to be compatible with geometric realizations and satisfy $(\nu_r P)_{S^n}\simeq \tau_{<r}((\nu P)^{S^n})$ for $P \in \pcat$, with $\thesphere_{<r}$-module structure uniquely extending the $\bfE_n$-cogroup structure on $S^n$. 
\end{theorem}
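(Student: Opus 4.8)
The plan is to derive the decomposition directly from the spiral squares of Theorem \ref{introthm:spiralsquares}. First I would observe that since $\Model_\pcat \simeq \lim_{r \le n \to \infty} \Model_{\h_n\pcat}$ (the limit statement packaged into Theorem \ref{introthm:spiralsquares}), and since taking cores (maximal subgroupoids) commutes with limits of $\infty$-categories, we get $\Model_\pcat^\core \simeq \lim_n \Model_{\h_n\pcat}^\core$. Fixing $\Lambda \in \Model_{\h_r\pcat}$, the moduli space $\calM(\Lambda)$ is by definition the fiber of $\Model_\pcat^\core \to \Model_{\h_r\pcat}^\core$ over $\Lambda$. Defining $\calM_n(\Lambda)$ to be the corresponding fiber of $\Model_{\h_n\pcat}^\core \to \Model_{\h_r\pcat}^\core$, and using that fibers commute with the limit over $n$, we immediately obtain $\calM(\Lambda) \simeq \lim_{r \le n \to \infty} \calM_n(\Lambda)$. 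So the content is entirely in identifying the layers.

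Next I would extract the layer squares by applying the core functor to the spiral square \eqref{eq:spiralsquare} and then taking fibers over $\Lambda$ in the bottom-right corner. Applying $(\bs)^\core$ to a cartesian square of $\infty$-categories yields a cartesian square of spaces, so
\begin{center}\begin{tikzcd}
\Model_{\h_{n+r}\pcat}^\core\ar[r]\ar[d]&\Model_{\h_r\pcat}^\core\ar[d]\\
\Model_{\h_n\pcat}^\core\ar[r]&\Model_{\kinv_{n,r}^{n+1}\pcat}^\core
\end{tikzcd}\end{center}
is cartesian. Now I pull back along the point $\Lambda \to \Model_{\h_r\pcat}^\core \to \Model_{\kinv_{n,r}^{n+1}\pcat}^\core$ (using that $0_!$ records $\Lambda$ together with the zero section). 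Pulling back the top map gives $B\!\Aut(\Lambda)$, since the fiber of $\Model_{\h_r\pcat}^\core$ at $\Lambda$ is $B\!\Aut(\Lambda)$; pulling back the bottom-left corner over its image gives $\calM_n(\Lambda)$ once we check that the composite $\calM_n(\Lambda) \to \Model_{\h_n\pcat}^\core \to \Model_{\kinv_{n,r}^{n+1}\pcat}^\core$ lands in the correct component. This produces a cartesian square whose bottom-right corner is the fiber of $\Model_{\kinv_{n,r}^{n+1}\pcat}^\core \to \Model_{\h_r\pcat}^\core$ over $\Lambda$, equivariantly for $\Aut(\Lambda)$.

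The real work — and the main obstacle — is identifying that bottom-right fiber with the mapping space $\map_{\h_r\pcat/\Lambda}(\Lambda, B^{n+2}_\Lambda \Lambda_{S^n})_{\h\!\Aut(\Lambda)}$. This requires unwinding the $\integers$-linear (square-zero) structure of $\kinv_{n,r}^{n+1}\pcat$ over $\h_r\pcat$ guaranteed by Theorem \ref{introthm:catpost}, passing it through models via Theorem \ref{introthm:spiralsquares}, and recognizing that the fiber of a linear extension $\Model_{\kinv_{n,r}^{n+1}\pcat} \to \Model_{\h_r\pcat}$ over $\Lambda$ is the space of sections, i.e.\ the space of maps from $\Lambda$ to the shifted $\Lambda$-module classifying the extension. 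One needs to compute this module: the coefficient system is governed by $\thesphere_{<r}$-linear cotangent-type data, and the point-set input is the identification $(\nu_r P)_{S^n} \simeq \tau_{<r}((\nu P)^{S^n})$, which forces the abelian-group object $\Lambda_{S^n}$ to be the one described, with its $\thesphere_{<r}$-module structure extending the $\bfE_n$-cogroup structure on $S^n$ (here one invokes that $S^n$ for $n \ge 1$ carries such structure, and that the truncation $\thesphere_{<r}$ of the sphere acts because $r \le n$). The degree shift to $B^{n+2}_\Lambda$ comes from the $k$-invariant living in degree $n+1$ in the relevant cohomology, shifted by one for the classifying-space convention — this is the same bookkeeping as in \cref{ex:univtoda}. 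Once the module and the mapping-space description are pinned down, taking homotopy $\Aut(\Lambda)$-orbits (equivalently, passing from the fiber to the homotopy-quotient description) finishes the identification, and letting $n \to \infty$ and reindexing by $n+r$ gives the stated tower.
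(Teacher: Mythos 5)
Your proposal is correct and follows essentially the same route as the paper: the limit decomposition comes from convergence of the spiral tower plus the fact that cores and fibers commute with limits, and the layer squares come from restricting the spiral square's core to the path components over $B\!\Aut(\Lambda)$. The identification of the bottom-right corner that you flag as ``the real work'' is exactly what the paper's linear-extension machinery (\cref{thm:linearextension}, \cref{thm:spaceoflifts}, together with \cref{prop:animatenaturalsystem} and \cref{ex:lambdahpn} identifying $\Lambda_{H\Pi_{[n,n+r)}}\simeq B^n_\Lambda\Lambda_{S^n}$) supplies, and your sketch of that step matches the paper's argument.
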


If $r=1$, then $\Lambda $ can be often identified with an object of classical algebra, and $X$ with its \emph{realization}; that is, a homotopy-theoretic object whose algebraic invariant is equivalent to $\Lambda$. In this way, \cref{introthm:modulidecomposition} gives a decomposition of the moduli space of realizations of $\Lambda$. We illustrate this with some examples, after which we discuss how the above result significantly improves on the state of the art. 

\begin{ex}
Taking $r=1$, the $\infty$-category $\Model_{\h\pcat}$ is equivalent to the underlying $\infty$-category of Quillen's model category \cite{quillen1967homotopical} of simplicial set-valued models of $\h\pcat$ \cite[Proposition 2.2.2]{balderrama2021deformations}, and the mapping space $\map_{\h\pcat/\Lambda}(\Lambda,B^{n+2}_\Lambda\Lambda_{S^n})$ encodes the classical \emph{Quillen cohomology} of $\Lambda$ with coefficients in $\Lambda_{S^n} \in \Mod_\integers^{\geq 0}((\Model_{\h\pcat})_{/\Lambda})$: 
\[
\pi_0 \map_{\h\pcat/\Lambda}(\Lambda,B^{n+2}_\Lambda\Lambda_{S^n})_{\h\!\Aut(\Lambda)} = \Hrm^{n+2}_{\h\pcat/\Lambda}(\Lambda,\Lambda_{S^n})_{/\pi_0 \!\Aut(\Lambda)}.
\]
In this context, \cref{introthm:modulidecomposition} provides an obstruction theory to realizing a point of $\calM(\Lambda)$, i.e.\ a model of $\pcat$ lifting $\Lambda$, with obstructions living in these Quillen cohomology groups, and higher algebraic analogues for $r > 1$.
\end{ex}

\begin{ex}
Let $A$ be a connective ring spectrum. Taking $\pcat = \cfrees(A)$ to be the theory of connective $A$-modules, \cref{introthm:modulidecomposition} provides for $M \in \LMod_{A_{<n}}^{\geq 0}$ an obstruction class
\[
k_!(M) \in \pi_0 \map_{A_{<n}}(M,\Sigma^{n+2}A_{[n,n+r)} \otimes_{A_{<n}}M) = \Ext^{n+2}_{A_{<n}}(M,A_{[n,n+r)}\otimes_{A_{<n}}M)
\]
to finding a module $\widetilde{M} \in \LMod_{A_{<n+r}}^{\geq 0}$ which lifts $M$ in the sense that $A_{<n}\otimes_{A_{<n+r}} \widetilde{M} = M$.
\end{ex}

A particularly important case is when $\pcat$ is a \emph{loop theory}; that is, a Malcev theory which admits tensors by $S^1$ \cite[\S7]{usd1}. If $\Lambda \in \Model_{\h\pcat}^\heartsuit$ is a discrete model, then $\calM_n(\Lambda)$ is the space of \emph{potential $n$-stages} of $\Lambda$ in the sense of \cite{pstrkagowski2023moduli,balderrama2021deformations}, generalizing the classical definition of Blanc--Dwyer--Goerss \cite[Definition 9.1]{realization_space_of_a_pi_algebra}, and $\calM(\Lambda)$ is the space of loop models $X \in \Model_\pcat^\Omega$ for which $\pi_0 X \simeq \Lambda$. Thus \cref{introthm:modulidecomposition} for $r=1$ specializes to a variation of the decompositions of the \emph{realization space} of $\Lambda$ considered there.

We observe that \cref{introthm:modulidecomposition} generalizes the previous work of \cite{pstrkagowski2023moduli, balderrama2021deformations} discussed above in three main ways:
\begin{enumerate}
\item One need not assume that $\Lambda \in \Model_{\h\pcat}$ is discrete;
\item One need not assume that $\pcat$ is a loop theory;
\item It classifies $\calM_{n+r}(\Lambda)\to\calM_n(\Lambda)$ in terms of linear information for $1\leq r \leq n$.
\end{enumerate}
Informally, the development in \cite{pstrkagowski2023moduli, balderrama2021deformations} applies to ``classical objects'' (i.e.\ loop models), whereas the present work applies to more general ``synthetic objects'' (i.e.\ arbitrary models). The further extension to $r > 1$ is new, and is an essential piece of structure in the theory of unstable synthetic deformations: it corresponds to additional stable structure present on the unstable analogue of the cofibre $C(\tau^r)$, see the discussion at the end of this subsection.

We emphasize that removing the assumptions (1), (2) is not formal, and requires a completely different approach to the relatively hands-on constructions of \cite{pstrkagowski2023moduli, balderrama2021deformations}. Informally, while the previously known constructions were \emph{internal} to the $\infty$-category of models, and produced suitable cartesian squares there, our strategy in the current paper is to work \emph{externally}; that is, to produce the required Postnikov squares at the level of $\infty$-categories. 

The statements about moduli spaces are then produced by mere restriction. The cartesian squares of \cref{introthm:spiralsquares} induce cartesian squares of underlying $\infty$-groupoids, and \cref{introthm:modulidecomposition} follows by restricting to path components living over $\Lambda$. More precisely,
\[
\calM_1(\Lambda) = B\!\Aut(\Lambda)\qquad\text{and}\qquad \calM_n(\Lambda)= \calM_1(\Lambda)\times_{\Model_{\h\pcat}}\Model_{\h_n\pcat},
\]
and the result follows from a diagram chase which ultimately identifies
\[
\calM_1(\Lambda)\times_{\Model_{\h\pcat}}\Model_{\kinv_{n,r}^{n+1}\pcat}\simeq \map_{\h\pcat/\Lambda}(\Lambda,B^{n+2}_\Lambda\Lambda_{S^n})_{\h\!\Aut(\Lambda)}.
\]

The external approach via the general categorical deformation theory of \cref{introthm:spiralsquares} extends this further:
\begin{enumerate}[resume]
\item It gives a decomposition of \emph{$\infty$-categories}, rather than of mere spaces;
\item It constructs a decomposition in a manner which is natural in $\pcat$.
\end{enumerate}
In particular, by not restricting to underlying $\infty$-groupoids, the spiral squares of $\pcat$ allow one to access information about $\Model_\pcat$ involving non-invertible morphisms between objects, such as to derive similar decompositions of realization spaces for \emph{diagrams} of models (\cref{thm:liftdiagrams}). As a simpler example, by analyzing mapping spaces in the spiral squares of $\pcat$, we obtain the following: 

\begin{theorem}[\ref{cor:mappingspacedecomposition}, \ref{thm:linextspaceoflifts}]
\label{introthm:decomposemaps}
Fix $X,Y \in \Model_\pcat$. Then there is a decomposition
\[
\map_\pcat(Y,X)\simeq\lim_{n\to\infty}\map_{\h_n\pcat}(\tau_{n!}Y,\tau_{n!}X)
\]
of mapping spaces, with layers fitting into cartesian squares
\begin{center}\begin{tikzcd}
\map_{\h_{n+r}\pcat}(\tau_{(n+r)!}Y,\tau_{(n+r)!}X)\ar[r]\ar[d]&\map_{\h_r\pcat}(\tau_{r!}Y,\tau_{r!}X)\ar[d,"0"]\\
\map_{\h_n\pcat}(\tau_{n!}Y,\tau_{n!}X)\ar[r,"k_!"]&\map_{\h_r\pcat}(\tau_{r!}Y,B^{n+1}_{\tau_{r!}X}\tau_{r!}X_{S^n})
\end{tikzcd}\end{center}
for $1\leq r \leq n < \infty$.
\end{theorem}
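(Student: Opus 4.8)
The plan is to deduce the statement formally from \cref{introthm:spiralsquares}, using only that mapping spaces are compatible with limits and pullbacks of $\infty$-categories, together with one genuinely non-formal input about the $k$-invariant theory. First I would record the limit decomposition. Since the Postnikov tower exhibits $\pcat\simeq\lim_n\h_n\pcat$ and, by \cref{introthm:spiralsquares}, this limit is preserved by $\Model_{(\bs)}$, we have $\Model_\pcat\simeq\lim_n\Model_{\h_n\pcat}$, with projection to the $n$th stage given by $\tau_{n!}$. Now limits in $\catinfty$ are computed levelwise on complete Segal spaces, and the mapping space $\map_\ccat(a,b)$ is the fibre of $\ccat_1\to\ccat_0\times\ccat_0$ over $(a,b)$; since fibres commute with limits, for any tower $(\ccat_n)$ of $\infty$-categories one has $\map_{\lim_n\ccat_n}(a,b)\simeq\lim_n\map_{\ccat_n}(a_n,b_n)$. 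Applying this to the spiral tower gives $\map_\pcat(Y,X)\simeq\lim_n\map_{\h_n\pcat}(\tau_{n!}Y,\tau_{n!}X)$.

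For the layers I would argue the same way using that pullbacks in $\catinfty$ are likewise computed levelwise on complete Segal spaces: the cartesian spiral square \eqref{eq:spiralsquare} of \cref{introthm:spiralsquares} then induces, for any choice of source and target, a cartesian square of mapping spaces. Taking the source to be $\tau_{(n+r)!}Y$ and the target $\tau_{(n+r)!}X$, and tracking their images under the four functors of \eqref{eq:spiralsquare}, yields a cartesian square
\begin{center}\begin{tikzcd}
\map_{\h_{n+r}\pcat}(\tau_{(n+r)!}Y,\tau_{(n+r)!}X)\ar[r]\ar[d]&\map_{\h_r\pcat}(\tau_{r!}Y,\tau_{r!}X)\ar[d]\\
\map_{\h_n\pcat}(\tau_{n!}Y,\tau_{n!}X)\ar[r]&\map_{\kinv_{n,r}^{n+1}\pcat}(k_!\tau_{n!}Y,0_!\tau_{r!}X)
\end{tikzcd}.\end{center}

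The hard part will be identifying the lower-right mapping space. Here one uses that $\kinv_{n,r}^{n+1}\pcat$ is a \emph{split} square-zero (linear) extension of $\h_r\pcat$---namely the $0$-section target of the spectrum object $\kinv_{n,r}^{n+\bullet}\pcat$---so that mapping any object into one lying in the image of $0_!$ splits off a copy of the ambient mapping space in $\h_r\pcat$. Concretely, I expect to establish an equivalence
\[
\map_{\kinv_{n,r}^{n+1}\pcat}(k_!\tau_{n!}Y,0_!\tau_{r!}X)\simeq\map_{\h_r\pcat}(\tau_{r!}Y,\tau_{r!}X)\times\map_{\h_r\pcat}\bigl(\tau_{r!}Y,B^{n+1}_{\tau_{r!}X}\tau_{r!}X_{S^n}\bigr)
\]
compatible with the right-hand map of the square above, which is then the inclusion of the first factor; cancelling that factor from the pullback reduces the square to the one asserted in the statement, with $0$-section and $k$-invariant as indicated. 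This identification---the content of \cref{cor:mappingspacedecomposition} and \cref{thm:linextspaceoflifts}---is the only step that is not a formal manipulation of cartesian squares and limits: it requires matching the module underlying the abstractly defined spectrum object $\kinv_{n,r}^{n+\bullet}\pcat$ with the explicit object $\tau_{r!}X_{S^n}$ of \cref{introthm:modulidecomposition}, and recognising the resulting mapping space as the Baues--Wirsching / Quillen-type cohomology appearing in the statement.
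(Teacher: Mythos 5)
Your architecture is sound up to the last step: the limit decomposition via $\Model_\pcat\simeq\lim_n\Model_{\h_n\pcat}$ and the passage from the cartesian spiral square to a cartesian square of mapping spaces are both correct (the latter is exactly \cref{ex:mappinglinext}). The genuine gap is in your identification of the lower-right corner. The claimed splitting
\[
\map_{\kinv_{n,r}^{n+1}\pcat}(0_!\tau_{r!}Y,0_!\tau_{r!}X)\simeq\map_{\h_r\pcat}(\tau_{r!}Y,\tau_{r!}X)\times\map_{\h_r\pcat}\bigl(\tau_{r!}Y,B^{n+1}_{\tau_{r!}X}(\tau_{r!}X)_{S^n}\bigr)
\]
is false in general, and the extra factor is spurious. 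The correct move is the adjunction $0_!\dashv 0^\ast$ together with \cref{lem:pseudocotensor}.(2): since $0^\ast 0_!\Lambda\simeq B^{n+1}_\Lambda\Lambda_{S^n}$, one gets \emph{directly}
\[
\map_{\kinv_{n,r}^{n+1}\pcat}(0_!\tau_{r!}Y,0_!\tau_{r!}X)\simeq\map_{\h_r\pcat}\bigl(\tau_{r!}Y,B^{n+1}_{\tau_{r!}X}(\tau_{r!}X)_{S^n}\bigr),
\]
with the right-hand vertical map of the square identified with postcomposition along the zero-section $\tau_{r!}X\to B^{n+1}_{\tau_{r!}X}(\tau_{r!}X)_{S^n}$. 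Note that $\map_{\h_r\pcat}(\tau_{r!}Y,B^{n+1}_{\tau_{r!}X}(\tau_{r!}X)_{S^n})$ already retracts onto $\map_{\h_r\pcat}(\tau_{r!}Y,\tau_{r!}X)$ via $q$, but it is the total space of a bundle of pointed spaces over it, not a product: the fibre over $\phi$ is $\map_{\h_r\pcat/\tau_{r!}X}(\tau_{r!}Y,B^{n+1}_{\tau_{r!}X}(\tau_{r!}X)_{S^n})$, which genuinely depends on $\phi$ (already for modules over a nontrivial ring the bundle need not be trivial, cf.\ \cref{example:derived_cotensor_with_sn_for_the_category_of_modules} where it happens to split, versus the case of spaces where $X^{S^n}\to X$ does not). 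Relatedly, ``cancelling a factor from one corner of a pullback square'' is not a legitimate operation; once the corner is identified correctly via the adjunction there is nothing to cancel.

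Two further remarks on the comparison with the paper. First, the hard input you isolate at the end (matching $\kinv_{n,r}^{n+\bullet}\pcat$ with $(\tau_{r!}X)_{S^n}$) is precisely \cref{lem:pseudocotensor}, which reduces by compatibility with geometric realizations to the computation $0^\ast 0_!\nu_r P\simeq\tau_{<r}((\nu P)^{S^n})$ on representables; so your instinct about where the non-formal content lies is right, you just need to route it through the adjunction rather than a splitting. Second, the paper's own proof of \cref{cor:mappingspacedecomposition} is lighter than your route: it does not use the cartesianness of the spiral squares (\cref{thm:animatesquaregeneral}) at all, but only the weak pullback theorem \cref{prop:weakpb} (full faithfulness of $L$, equivalently that the unit $X\to RLX$ is an equivalence), which produces the derived Postnikov squares of $X$ \emph{inside} $\Model_\pcat$ (\cref{constr:derivedpostnikovsquare}); one then maps $Y$ into that tower and applies $\tau_{n!}\dashv\tau_n^\ast$. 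Your external route is valid once repaired, but it invokes strictly stronger theorems than necessary, as the paper itself notes in the remark following \cref{thm:linextspaceoflifts}.
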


In practical terms, this limit decomposition gives rise to an unstable spectral sequence calculating the homotopy groups of the mapping space $\map_\pcat(X, Y)$, while the cartesian squares allow one to identify its second page in algebraic terms and to handle differentials occuring on the fringe of the spectral sequence. In the very special case where $\pcat$ is a loop theory, $X$ and $Y$ are loop models, and $r=1$, versions of \cref{introthm:decomposemaps} were considered in \cite[Sections 4.4, 5.3]{balderrama2021deformations}. As shown there, these decompositions recover a variety of classical spectral sequences, including universal coefficient and Goerss--Hopkins style spectral sequences. 

In synthetic terms, \cref{introthm:decomposemaps} plays the role of the $\tau$-Bockstein tower, extending it to the unstable setting. In more detail, given $X,Y\in \Model_\pcat$ and a map $f\colon \tau_{n!}Y \to \tau_{n!}X$, one may define the \emph{synthetic homotopy groups} of $\map_\pcat(Y,X)$ as follows :
\begin{enumerate}
    \item If $w\leq n$ and $s=0$, or $w+r\leq n$ and $s\geq 0$, then
\[
\pi_{s,w}(\map_\pcat(Y,X)/(\tau^r),f) = \pi_s\left(\{\tau_{w!}f\}\underset{\map_{\h_w\pcat}(\tau_{w!}Y,\tau_{w!}X)}{\times} \map_{\h_{w+r}\pcat}(\tau_{(w+r)!}Y,\tau_{(w+r)!}X)\right);
\]
    \item If $r\leq \min(n,w)$ and $s \in \integers$, then
\[
\pi_{s,w}(\map_\pcat(Y,X)/(\tau^r),f) = \pi_0 \map_{\h_r\pcat/\tau_{r!}X}(\tau_{r!}Y,B_{\tau_{r!}X}^{w+s}\tau_{r!}X_{S^w}).
\]
\end{enumerate}
These two definitions agree on their overlap. Thus, for example,
\[
f \in \pi_0 \map_{\h_n\pcat}(\tau_{n!}Y,\tau_{n!}X) = \pi_{0,0}(\map_\pcat(Y,X)/(\tau^n),f),
\]
and restricting \cref{introthm:decomposemaps} to its fibre over $\tau_{(n,r)!}f$ leads to an obstruction
\[
k_!(f) \in \pi_0 \map_{\h_r\pcat/\tau_{r!}X}(\tau_{r!}Y,B^{n+1}_{\tau_{r!}X}\tau_{r!}X_{S^n}) = \pi_{-1,n}(\map_\pcat(Y,X)/(\tau^r),f)
\]
to finding a lift of $f$ to $\map_{\h_{n+r}\pcat}(\tau_{(n+r)!}Y,\tau_{(n+r)!}X)$. With more work, one can extend this obstruction theory to an \emph{unstable spectral sequence}, in an extended sense with differential information extending to negative stems in a range, such as seen for instance in work of Bousfield \cite{bousfield1989homotopy}. Details will appear elsewhere.

We emphasize the startling fact that the unstable analogue of the cofibre $C(\tau^r)$ admits \emph{negative} homotopy groups in weights $w\geq r$. and that these groups can contain useful information. This appears to be an essential and surprising feature of unstable synthetic homotopy theory. It uses in an essential way the existence of the spiral squares of \cref{introthm:spiralsquares} for $r \geq 1$.

\subsection{Infinitesimal extensions of Malcev theories}\label{sssec:infinitesimal}

We obtain the spiral squares of a Malcev theory appearing in \cref{introthm:spiralsquares} as a special case of the following more general clutching theorem:

\begin{theorem}[\ref{thm:animatesquaregeneral}]
\label{introthm:nilsquare}
Consider a cartesian square of Malcev theories and homomorphisms, and associated square of $\infty$-categories of models:
\begin{center}\begin{tikzcd}
\pcat'\ar[r]\ar[d,"f'"]&\pcat\ar[d,"f"]\\
\qcat'\ar[r,"g"]&\qcat
\end{tikzcd}
$\qquad\leadsto\qquad$
\begin{tikzcd}
\Model_{\pcat'}\ar[r]\ar[d,"f'_!"]&\Model_{\pcat}\ar[d,"f_!"]\\
\Model_{\qcat'}\ar[r,"g_!"]&\Model_\qcat
\end{tikzcd}.\end{center}
Suppose that $f$ is full and $f'$ is essentially surjective, and that \emph{at least one} of the following conditions is satisfied:
\begin{enumerate}
    \item $f_!$ reflects connectivity, i.e.\ a morphism $\alpha$ in $\Model_\pcat$ is $n$-connective if and only if $f_!\alpha$ is;
    \item $g$ is full; 
    \item $\pcat$, $\qcat$, and $\qcat'$ are additive.
\end{enumerate}
Then the square of $\infty$-categories of models is again cartesian.
\end{theorem}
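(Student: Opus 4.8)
The plan is to show that the natural comparison functor
\[
\Phi\colon\Model_{\pcat'}\longrightarrow\ecat:=\Model_{\pcat}\times_{\Model_{\qcat}}\Model_{\qcat'}
\]
is an equivalence. Writing $p,f'$ for the structure maps out of $\pcat'$, all four functors $p_!,f'_!,f_!,g_!$ are left adjoints, so preserve colimits; since $f_!$ and $g_!$ preserve colimits, colimits in $\ecat$ are computed factorwise, whence $\Phi$ preserves colimits and, all categories being presentable, admits a right adjoint $\Psi$ given by $\Psi(M,N,\phi)\simeq p^{*}M\times_{p^{*}f^{*}g_!N}f'^{*}N$. I would then prove that $\Phi$ is fully faithful and that $\Psi$ is conservative; together these force $\Phi$ to be an equivalence. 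Two elementary reductions come first: since the square of theories is cartesian, fullness of $f$ together with essential surjectivity of $f'$ formally implies that $f'$ is full, because a morphism of $\qcat'$ lifts to $\pcat'$ once one chooses a preimage in $\pcat$ of the induced morphism of $\qcat$; in particular $f'^{*}$ is fully faithful, and under hypothesis (2) the same argument shows $p$ is full.

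For full faithfulness I would resolve the source by free models. Every $X\in\Model_{\pcat'}$ is the geometric realization $X\simeq\lvert\nu G_\bullet\rvert$ of the bar resolution against the free--forgetful monad, with each $G_n$ free; since $\Phi$ preserves geometric realizations while mapping out of one is a totalization, it suffices to treat $X=\nu P'$ free. By the Yoneda lemma $\Map_{\Model_{\pcat'}}(\nu P',Y)\simeq Y(P')$, and evaluating the mapping-space formula in $\ecat$ together with $f_!\nu P\simeq\nu(fP)$ and $f_!p_!\simeq g_!f'_!$ identifies $\Map_{\ecat}(\Phi\nu P',\Phi Y)$ with $(p_!Y)(pP')\times_{(g_!f'_!Y)(fpP')}(f'_!Y)(f'P')$. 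Writing $Y\simeq\lvert\nu G_\bullet\rvert$ and using that evaluation at an object of a Malcev theory preserves sifted colimits, all spaces in sight become geometric realizations of simplicial spaces; and because $\pcat'\simeq\pcat\times_{\qcat}\qcat'$, the simplicial space computing $Y(P')$ is the degreewise fibre product of the three simplicial spaces computing the right-hand side. Thus full faithfulness of $\Phi$ reduces to the statement that geometric realization commutes with this particular fibre product of simplicial spaces. A parallel analysis of the formula for $\Psi$ — using essential surjectivity of $f'$ to recover the values of $N$, and the constraint $f_!M\simeq g_!N$ with $f$ full to recover those of $M$ — reduces conservativity of $\Psi$, equivalently essential surjectivity of $\Phi$ (whose essential image is closed under colimits and contains every $\Phi\nu P'$), to the same commutation statement.

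The main obstacle is precisely this commutation of geometric realization with the fibre product $A_\bullet\times_{C_\bullet}B_\bullet$ of simplicial mapping spaces, which is false for a general fibre product of simplicial spaces; this is the non-formal core of the theorem, and it is here that the three alternative hypotheses do their work, by three genuinely different mechanisms. If $\pcat,\qcat,\qcat'$ are additive (hypothesis (3)), all the mapping objects involved are grouplike, so one may form the relevant simplicial objects and their fibre product in connective spectra, where sifted colimits are sufficiently left exact that realization commutes with pullbacks for free. If $f_!$ reflects connectivity (hypothesis (1)), then $f_!$ is conservative on models with convergent Postnikov towers, and one runs a connectivity induction: the comparison map of simplicial spaces is highly connected in each relevant degree, geometric realization preserves that connectivity, and reflecting connectivity along $f_!$ bootstraps the equivalence up the Postnikov filtration. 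If $g$ — hence also $p$, $f'$ and $f$ — is full (hypothesis (2)), then fullness makes the structure maps of the resolution degreewise $\pi_0$-surjective after projection, so the augmented simplicial objects in play are equifibered and the fibre product is realization-effective. In each case one concludes that $\Phi$ is fully faithful and $\Psi$ conservative, hence $\Phi$ is an equivalence, which restricts to the asserted cartesian square of $\infty$-categories of models. I expect the connectivity induction under hypothesis (1) to demand the most care, since this is where the hypothesis is weakest and one must combine connectivity estimates for geometric realizations with Postnikov-convergence for $\infty$-categories of models.
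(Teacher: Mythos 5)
Your overall architecture (comparison functor $\Phi$ with right adjoint $\Psi$, prove $\Phi$ fully faithful and $\Psi$ conservative) matches the paper, and your full-faithfulness half is essentially sound: in the paper this is \cref{prop:weakpb}, where the needed commutation of geometric realization with the pullback computing $\Psi\Phi$ holds because one leg of the cospan is an effective epimorphism (a consequence of $f$ being full, via \cref{lem:localconnectedunit}), so that \cite[Corollary 4.1.8.(2)]{usd1} applies; note that this step uses only fullness of $f$ and none of the hypotheses (1)--(3).

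The genuine gap is in your treatment of conservativity of $\Psi$. You claim it reduces, ``equivalently essential surjectivity of $\Phi$ (whose essential image is closed under colimits and contains every $\Phi\nu P'$),'' to the same realization-versus-fibre-product commutation statement. This does not work: knowing the essential image is closed under colimits and contains the free objects says nothing unless you already know every object of the pullback $\infty$-category is a colimit of such objects, and that is precisely equivalent to the counit $\Phi\Psi\to\mathrm{id}$ being an equivalence, i.e.\ the thing to be proved. Your proposed mechanisms for hypotheses (2) and (3) (equifiberedness, left exactness of sifted colimits in connective spectra) address only the commutation question and hence only re-prove full faithfulness; they give no handle on conservativity, which is where the paper actually spends the three hypotheses. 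The paper's argument is a connectivity leapfrog on a morphism $\langle y\colon Y\to Y',\,x\colon X\to X'\rangle$ of the pullback category that $R$ inverts: if $x$ is $n$-connective then $y$ is $(n+1)$-connective (this uses that fullness of $f$ makes the square comparing $x$ with $f^\ast f_!x$ $n$-cartesian, \cref{prop:connectivefibres}, together with an effective-epi argument and \cref{lem:cubelemmas}); and if $y$ is $n$-connective then $x$ is $n$-connective, proved separately under each of (1), (2), (3) (directly from reflection of connectivity; via a rotated cube and fullness of $g$; or stably by passing to cofibres in the additive case). Starting from $(-1)$-connectivity, the $+1$ gain drives the induction to show $x,y$ are equivalences. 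Your sketch contains no analogue of this bootstrapping for cases (2) and (3), and only a vague allusion to it for case (1), so as written the essential-surjectivity half of the theorem is not established.
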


Given a cartesian square of Malcev theories as in \cref{introthm:nilsquare} with $f$ full and $f'$ essentially surjective, it is not difficult to show that the induced functor
\[
\Model_\pcat \to \Model_{\pcat'}\times_{\Model_{\qcat'}}\Model_\qcat
\]
is fully faithful and admits a right adjoint (see \cref{prop:weakpb}). To prove \cref{introthm:nilsquare} one must then prove that this right adjoint is conservative, which we do by a careful analysis of connectivity properties of homomorphisms and derived functors between Malcev theories. Our analysis makes essential use of the additional conditions listed;  we do not know to what extent they may be omitted, but they are satisfied in all examples of interest.

\begin{ex}
The zero-section $0\colon \h_r\pcat \to \kinv_{n,r}^{n+1} \pcat$ appearing in \cref{introthm:catpost} is full, and its derived functor reflects connectivity as it admits a retraction. Thus, the fact that the spiral squares of $\pcat$ given in \cref{introthm:spiralsquares} are cartesian is a special case of \cref{introthm:nilsquare}.
\end{ex}

\begin{ex}\label{ex:nilextension}
Consider a cartesian square of connective ring spectra and associated square of $\infty$-categories of connective modules:
\begin{center}\begin{tikzcd}
A'\ar[r]\ar[d]&A\ar[d,"\phi"]\\
B'\ar[r]&B
\end{tikzcd}
$\qquad\leadsto\qquad$
\begin{tikzcd}
\LMod_{A'}^{\geq 0}\ar[r]\ar[d]&\LMod_A^{\geq 0}\ar[d,"B\otimes_A(\bs)"]\\
\LMod_{B'}^{\geq 0}\ar[r]&\LMod_B^{\geq 0}
\end{tikzcd}.\end{center}
In this case, \cref{introthm:nilsquare} implies that this square of $\infty$-categories is cartesian provided that $\phi$ induces a surjection $\pi_0 A \to \pi_0 B$, recovering \cite[Theorem 16.2.0.2]{lurie_spectral_algebraic_geometry}. In this sense, our result can be thought as a nonabelian generalization of the result of Lurie. 
\end{ex}

An important class of cartesian squares of connective ring spectra fitting into \cref{ex:nilextension} are those that encode \emph{square-zero extensions}. The classical theory of square-zero extensions admits a nonabelian generalization to \emph{linear extensions} of theories, studied in the setting of classical Lawvere theories by Jibladze--Pirashvili \cite{jibladzepirashvili1991cohomology, jibladzepirashvili2005linear}. The decompositions of moduli spaces described in \S\ref{ssec:applications} are derived from a more general deformation theory for linear extensions of Malcev theories, which we develop in \S\ref{sec:modulispaces} and describe now. 

To set our terminology, we recall that Harpaz--Nuiten--Prasma prove in \cite{harpaznuitenprasma2018abstract}that
\[
\Sp((\catinfty)_{/\ccat})\simeq\Fun(\Tw(\ccat),\Sp)
\]
for any $\infty$-category $\ccat$, where $\Tw(\ccat)$ is the \emph{twisted arrow category} of $\ccat$. We will refer to functors $D\colon \Tw(\ccat)\to\spectra$ as \emph{natural systems} of spectra on $\ccat$.

\begin{defn}[\ref{def:catsqz}]
\label{def:introlinext}
Let $D$ be a natural system of $R$-modules on an $\infty$-category $\ccat$ and $p\colon \bcat\to\ccat$ be an $\infty$-category over $\ccat$. A functor $f\colon \ecat\to\bcat$ is said to be a \emph{linear extension of $\bcat$ by $D$} if we have specified a cartesian square of $\infty$-categories over $\ccat$ of the form
\begin{center}\begin{tikzcd}
\ecat\ar[r,"t"]\ar[d,"f"]&\ccat\ar[d,"0"]\ar[dr,equals]\\
\bcat\ar[r,"k"]\ar[rr,bend right,"p"]&\dcat^2\ar[r,"q"]&\ccat
\end{tikzcd}.\end{center}
\end{defn}

The above notion strictly generalizes the square-zero extensions of connective ring spectra:

\begin{ex}
Let $A$ be a connective $\bfE_1$-ring spectrum. By \cite[{Theorem 7.3.4.18}]{higher_algebra}, spectrum objects in $\Alg_{\bfE_1}(\Sp)_{/A}$ are equivalent to \emph{$A$-bimodules}. A connective $A$-bimodule may be encoded by a product-preserving functor $\cfrees(A\otimes_\thesphere A^\op) \to \Sp^{\geq 0}$, from which we may define the following natural system $\tilde{I}$ on $\cfrees(A)$:
\begin{center}\begin{tikzcd}[column sep=small]
\Tw(\cfrees(A))\ar[r]&\cfrees(A)^\op\times\cfrees(A)\ar[r,"\simeq"]&(\cfrees(A)\times\cfrees(A^\op))^\op\ar[r,"\otimes_\thesphere"]&\cfrees(A\otimes_\thesphere A^\op)\ar[r,"I"]&\Sp^{\geq 0}
\end{tikzcd}.\end{center}
It follows from full and faithfulness of $\calL_0(\bs)\colon \Alg_{\bfE_1}(\spectra) \to \malcevtheories_{/\calL_0(\thesphere)}$ \cite[Remark 9.1.4]{usd1} and \cref{prop:linearextensionsaremalcev} that linear extensions of the Malcev theory $\cfrees(A)$ by $\tilde{I}$ correspond exactly to square-zero extensions of the $\bfE_1$-ring $A$ by $I$.
\end{ex}

\begin{ex}
In the above language, \cref{introthm:catpost} asserts exactly that if $\pcat$ is a Malcev theory, then for all $1\leq r \leq n$ the truncation
\[
\tau_{(n+r,n)}\colon \h_{n+r}\pcat\to\h_n\pcat
\]
is a linear extension by a certain natural system $\Tw(\h_r\pcat)\to\Mod_{\thesphere_{<r}}(\Sp)$. Taking $r = 1$ for simplicity, this natural system may be concretely described by
\[
(\phi\colon \nu_1 Q \to \nu_1 P) \mapsto \Sigma^n \pi_n(\map_\pcat(Q,P),\phi),
\]
where here we identify $\pi_{n}$ with its associated Eilenberg-MacLane spectrum. Note that this group is automatically abelian even when $n = 1$ as a consequence of the Malcev condition. 
\end{ex}

Both of the above examples have the additional property that the given cartesian square is a diagram of Malcev theories and theory homomorphisms. This is not true for a general linear extension, and comes down to the following additional property. 

\begin{defn}[\ref{def:cartesiansystems}]
Let $\pcat$ be a Malcev theory. A \emph{cartesian system} of connective spectra on $\pcat$ is a functor
\[
D\colon \Tw(\pcat)\to\Sp^{\geq 0}
\]
with the property that for all $P \in \pcat$, the composite
\begin{equation}\label{eq:nupd}
(\nu P)_D\colon \pcat_{/P}^\op \to \Tw(\pcat) \to \Sp^{\geq 0} \to \spaces
\end{equation}
defines a model of the theory $\pcat_{/P}$; that is, sends coproducts to products. 
\end{defn}

In \cref{prop:linearextensionsaremalcev}, we show that this condition ensures that every linear extension of $\pcat$ by $D$ is again a Malcev theory. Given a cartesian system $D$ on a Malcev theory $\pcat$, one may construct for each $\Lambda \in \Model_\pcat$ an object $\Lambda_D \in \Sp^{\geq 0}((\Model_\pcat)_{/\Lambda})$, extending (\ref{eq:nupd}) when $\Lambda = \nu P$ is representable. 

In these terms, \cref{introthm:spiralsquares} is a special case of the following more general result, which describes explicitly how a linear extension of Malcev theories induces a linear extension between $\infty$-categories of models: 

\begin{theorem}[\ref{thm:linearextension}]
Let $f\colon \qcat\to\pcat$ be a linear extension of Malcev theories by a cartesian system $D$ of connective spectra. Then $f_!\colon \Model_\qcat\to\Model_\pcat$ is a linear extensions of $\infty$-categories by a natural system of the form
$
D_!(\phi\colon \Gamma\to\Lambda) = \map_{/\Lambda}(\Gamma,B^\bullet_\Lambda\Lambda_D).
$
\end{theorem}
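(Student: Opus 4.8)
The datum of a linear extension $f\colon\qcat\to\pcat$ by a cartesian system $D$ is, by \cref{def:introlinext}, a cartesian square of Malcev theories
\begin{center}\begin{tikzcd}
\qcat\ar[r,"t"]\ar[d,"f"]&\pcat\ar[d,"0"]\\
\pcat\ar[r,"k"]&\dcat^2
\end{tikzcd},\end{center}
which is a square over $\pcat$ via a retraction $q\colon\dcat^2\to\pcat$ satisfying $q\circ0\simeq q\circ k\simeq\id_\pcat$, and in which $\dcat^2$ is again a Malcev theory (since $D$ is cartesian; \cref{prop:linearextensionsaremalcev}) agreeing with $\pcat$ on $\pi_0$ of mapping spaces. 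The plan is to apply $\Model_{(\bs)}$ to this square, invoke the clutching theorem \cref{introthm:nilsquare}, and then identify the target. To apply \cref{introthm:nilsquare} one checks: the zero section $0$ is full, by the agreement of $\pi_0$-mapping-spaces just noted; the map $f$ is essentially surjective, being a base change of the bijective-on-objects functor $0$; and the first of its three alternative hypotheses holds, because $q_!\circ0_!\simeq\id$ while $q_!$ preserves connectivity, so that $0_!$ reflects connectivity. This is the same reasoning that exhibits the spiral squares as instances of \cref{introthm:nilsquare}, and it yields a cartesian square
\begin{center}\begin{tikzcd}
\Model_\qcat\ar[r,"t_!"]\ar[d,"f_!"]&\Model_\pcat\ar[d,"0_!"]\\
\Model_\pcat\ar[r,"k_!"]&\Model_{\dcat^2}
\end{tikzcd},\end{center}
which by functoriality of $\Model_{(\bs)}$ is a square over $\Model_\pcat$ in which $0_!$ is a section of $q_!$.

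It then remains to recognize this as a linear extension of $\Model_\pcat$ in the sense of \cref{def:introlinext}, i.e.\ to identify $\Model_{\dcat^2}$, as an $\infty$-category over $\Model_\pcat$, with the square-zero $\infty$-category of the natural system $D_!(\phi\colon\Gamma\to\Lambda)=\map_{/\Lambda}(\Gamma,B^\bullet_\Lambda\Lambda_D)$. Via the Harpaz--Nuiten--Prasma identification $\Sp((\catinfty)_{/\ccat})\simeq\Fun(\Tw(\ccat),\Sp)$, this reduces to identifying the spectrum object over $\Model_\pcat$ underlying $\Model_{\dcat^2}$ with the one whose value at $\Lambda$ is $\Lambda_D$; the displayed formula for $D_!$ is then forced by the mapping-space description of natural systems and the definition of the deloopings $B^\bullet_\Lambda$. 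I would prove this by first treating representables and then extending by colimits. For $Q,P\in\pcat$, Yoneda gives $\map_{\Model_\pcat}(\nu Q,\nu P)\simeq\map_\pcat(Q,P)$ and $\map_{\Model_{\dcat^2}}(\nu Q,\nu P)\simeq\map_{\dcat^2}(Q,P)$, so the contribution of $\Model_{\dcat^2}$ over $\nu P$ is read off directly from the square-zero structure of $\dcat^2$ over $\pcat$, i.e.\ from $D$ itself; on the other side, since $\Lambda_D$ is the colimit-extension of $P\mapsto(\nu P)_D$ and since models of the slice theory $\pcat_{/P}$ agree with $(\Model_\pcat)_{/\nu P}$, the deloopings $B^\bullet_{\nu P}(\nu P)_D$ are carried by $\nu$ from the corresponding deloopings in $\pcat_{/P}$, so that $\map_{/\nu P}(\nu Q,B^\bullet_{\nu P}(\nu P)_D)\simeq D(Q\to P)$ as spectra. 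Matching these comparisons naturally on representables and then left Kan extending over $\Tw(\Model_\pcat)$ — using that $\Model_\pcat$ is generated under sifted colimits by the $\nu P$ and that both natural systems are compatible with the relevant colimits — completes the identification, and one verifies in passing that the zero section and retraction become $0_!$ and $q_!$.

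The main obstacle will be the input underlying this identification: that $\Model_{(\bs)}$ intertwines stabilization over a base, so that the tangent-type spectrum object classifying $\Model_{\dcat^2}$ over $\Model_\pcat$ really is $\Lambda\mapsto\Lambda_D$, and hence $D_!$. Concretely, one needs that the change-of-models construction carries the spectrum object $\dcat^\bullet\in\Sp((\malcevtheories)_{/\pcat})$ to $\Lambda\mapsto\Lambda_D$, compatibly with the $\Omega^\infty$-type functors used to form $\dcat^2$ and $B^\bullet_\Lambda\Lambda_D$ and with passage to slices. I would isolate this as a lemma on the effect of derived functors of Malcev theories on stabilizations, drawing on \cref{prop:modificationproperties} and the construction of $\Lambda_D$; granting it, the representable computation pins down the natural system and the general case is formal. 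By contrast, the cartesianness of the square — the part that might look hardest — is comparatively soft, following from \cref{introthm:nilsquare} exactly as for the spiral squares.
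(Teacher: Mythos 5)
Your first step — applying the clutching theorem to the defining square of the extension, with the zero-section full and reflecting connectivity because it is a section of $q$ — is exactly how the paper proceeds in \cref{thm:linearextension}, so that part is fine. The problem is the second step, and it is precisely the point you yourself flag as "the main obstacle": for the conclusion to even be well-formed you must produce a spectrum object in $(\catinfty)_{/\Model_\pcat}$ whose second delooping is $\Model_{\dcat^2}$ (equivalently, the natural system $D_!$ as a functor $\Tw(\Model_\pcat)\to\Sp$), and your proposal leaves this as an unproved lemma, pointing to \cref{prop:modificationproperties} and "the construction of $\Lambda_D$" as the likely source. Those tools do not deliver it: \cref{prop:modificationproperties} concerns modifications of enriched categories, not the behaviour of $\Model_{(\bs)}$ on limit diagrams of theories, and this compatibility of models with fibrewise stabilization is genuinely non-formal — it is the same kind of statement as cartesianness of the main square, not a softer one.

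The paper closes exactly this gap in \cref{prop:animatenaturalsystem}, and the key observation is that the tool you already used suffices: one applies \cref{thm:animatesquaregeneral} again, now to the cartesian squares of theories encoding the $R$-module structure of $\dcat^\bullet$ (the comparisons $\Model_{\dcat(\Omega P\oplus\Omega P')}\to\Model_{\dcat(\Omega P)}\times_{\Model_\pcat}\Model_{\dcat(\Omega P')}$ and $\Model_{\dcat(\Omega^2P)}\to\Model_\pcat\times_{\Model_{\dcat(\Omega P)}}\Model_\pcat$), whose relevant legs are full zero-sections and projections. Once $\Model_{\dcat^{\bullet+1}}$ is known to be a module object over $\Model_\pcat$, the identification of the natural system is purely formal: the paper \emph{defines} $\Lambda_D := 0^\ast 0_!\Lambda$, and then $D_!(\phi)\simeq\map_{\dcat^{\bullet+1}}(0_!\Gamma,0_!\Lambda)_\phi\simeq\map_{\pcat/\Lambda}(\Gamma,0^\ast 0_!\Lambda)$ follows from the adjunction $0_!\dashv 0^\ast$ as in \cref{rmk:fibreoveradjunction}, with no representable computation or left Kan extension over $\Tw(\Model_\pcat)$ needed. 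Your representable-then-colimit route is not wrong in spirit, but it presupposes a different definition of $\Lambda_D$ (the colimit extension of $P\mapsto(\nu P)_D$) whose comparison with the fibres of $\Model_{\dcat^2}$ itself requires the module structure you have not yet constructed; so as written the argument is circular at its crucial point, and the missing lemma is where all the real work of the theorem lives.
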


As we explain in \S\ref{ssec:linearextensions}, there is a generic obstruction theory for lifting objects along a linear extension of $\infty$-categories, see \cref{thm:spaceoflifts}. The decomposition of moduli spaces of \cref{introthm:modulidecomposition} is derived as a consequence of this more general statement, using the linear extension of $\infty$-categories of models associated to $\h_{n+r}\pcat\to\h_n\pcat$.

\subsection{Elementary modifiers and spiral systems}\label{sssec:spiralsystems}

In \cref{introthm:spiralsquares}, we asserted that \emph{all} of the structure on the categorical Postnikov of a Malcev theory $\pcat$ is preserved by passage to $\infty$-categories of models. We now describe what precisely we mean by ``all'', by introducing a novel formalization of the concept of a Postnikov tower which we refer to as a \emph{spiral system} and which captures not only the tower itself but also the various relevant cartesian squares as well as their naturality. 

Recall that the mapping spaces of a Malcev theory lie in the full subcategory $\ukanspaces\subset\spaces$ of \emph{supersimple spaces} \cite[Definition 2.2.9]{usd1}, a natural refinement of the class of spaces with vanishing Whitehead products. 

\begin{defn}[\ref{def:elementarymodifiers}]
Write $\Sph\subset\spaces$ for the full subcategory generated by $S^1$ under products, wedges, and smash products (with respect to any basepoint). The class of \emph{elementary modifiers} is the full subcategory
\[
\elementarymodifiers\subset\End(\ukanspaces)
\]
of endomorphisms of supersimple spaces generated under geometric realizations by modifiers of the form $\map(T,\bs)$ for $T \in \Sph$. 
\end{defn}

Thus, for example, the $n$-iterated free loop space $L^{n}(-) \simeq \map(S^1 \times \cdots \times S^1, -)$ is an elementary modifier. More importantly, in \S\ref{ssec:elementarypostnikov} we show that all functors appearing in the Postnikov theory of supersimple spaces may be constructed as elementary modifiers, including Postnikov truncations and the targets of functorial $k$-invariants.

The class of elementary modifiers should be thought as a natural enlargement of the class of ``Postnikov truncation-like'' functors which is convenient as it is essentially built inductively from the functor $\map(S^{1}, -)$. If $X$ is a supersimple space, then the evaluation $F \mapsto F(X)$ defines a functor 
\[
\elementarymodifiers \rightarrow \spaces 
\]
which can be thought of as encoding the Postnikov tower of $X$, together with all of its extra structure. This motivates the following definition. 

\begin{defn}[\ref{definition:spiral_system}]
A \emph{convergent spiral system} in an $\infty$-category $\dcat$ is a functor
\[
H\colon\elementarymodifiers\to\dcat
\]
which preserves levelwise pullbacks along levelwise effective epimorphisms and which satisfies $H(\id)\simeq \varprojlim H(\tau_{<n})$.
\end{defn}

A spiral system $H$ is a highly structured refinement of the tower
\[
H(\id)\to\cdots\to H(\tau_{\leq n}) \to H(\tau_{<n}) \to \cdots H(\tau_{\leq 0})
\]
which, in addition to the tower itself, also encodes the data of 
\begin{enumerate}
\item Cartesian squares which present $H(\tau_{<n+r}) \to H(\tau_{<n})$ as a square-zero, or linear, extension by a specified object of $\Mod_{\thesphere_{<r}}(\dcat_{/H(\tau_{<r})})$ for each $1 \leq r \leq n$;
\item Various compatibilities between these as $n$ and $r$ vary. 
\end{enumerate}
The promised precise reformulation of \cref{introthm:spiralsquares} is as follows: 

\begin{theorem}[\ref{thm:modelspiral}]
\label{introthm:spiralsystemmodels}
Let $\pcat$ be a Malcev theory. Then the functor 
\[
\elementarymodifiers \to \catinfty,\qquad F \mapsto \Model_{\pcat_F}
\]
is a convergent spiral system of $\infty$-categories. 
\end{theorem}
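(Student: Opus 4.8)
The plan is to verify the two defining conditions of a convergent spiral system for the functor $F \mapsto \Model_{\pcat_F}$: that it preserves levelwise pullbacks along levelwise effective epimorphisms, and that it satisfies the convergence condition $\Model_{\pcat} \simeq \varprojlim_n \Model_{\pcat_{\tau_{<n}}} = \varprojlim_n \Model_{\h_n\pcat}$. The first condition is the substantive one, and is where \cref{introthm:nilsquare} (\cref{thm:animatesquaregeneral}) enters. First I would recall, via \cref{prop:modificationproperties}, that for any elementary modifier $F$ the modification $\pcat_F$ is again a Malcev theory with the same objects as $\pcat$, and that the assignment $F \mapsto \pcat_F$ is functorial in $F$; moreover a levelwise pullback square of elementary modifiers applied to the mapping spaces of $\pcat$ yields a cartesian square of Malcev theories, with a homomorphism full precisely when the corresponding natural transformation of modifiers is a ``full'' inclusion on mapping spaces, and essentially surjective precisely when the corresponding modifier map is a levelwise effective epimorphism (which is exactly the hypothesis built into the definition of spiral system).

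Given such a levelwise pullback square of elementary modifiers
\[
\begin{tikzcd}
F'\ar[r]\ar[d]&F\ar[d]\\
G'\ar[r]&G
\end{tikzcd}
\]
with the vertical maps levelwise effective epimorphisms, applying $\pcat_{(\bs)}$ produces a cartesian square of Malcev theories in which the map $\pcat_{F'}\to\pcat_F$ induced by $F'\to F$ is essentially surjective (it is the identity on objects) and the map $\pcat_{F'}\to\pcat_{G'}$ is full. So \cref{introthm:nilsquare} applies provided one of its three side conditions holds. Here I would invoke the analysis of \S\ref{ssec:elementarypostnikov}: the point is that for the modifiers arising in Postnikov theory — truncations $\tau_{<n}$, the targets $\kinv_{n,r}^{n+1}$ of functorial $k$-invariants, and their pullbacks — the induced derived functors on models reflect connectivity, because these maps of modifiers admit sections (the zero-sections) and hence the corresponding homomorphisms of Malcev theories admit retractions, forcing $f_!$ to reflect connectivity just as in the example after \cref{introthm:nilsquare}. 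Thus condition (1) of \cref{introthm:nilsquare} is met for all the relevant squares, and $F\mapsto\Model_{\pcat_F}$ carries levelwise pullbacks along levelwise effective epimorphisms to cartesian squares.

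For convergence, the content is that $\Model_\pcat \simeq \varprojlim_n \Model_{\h_n\pcat}$, i.e.\ a model of $\pcat$ is the same data as a compatible family of models of the homotopy $n$-categories. This should follow from the higher universal algebra developed in \cite{usd1}: a model is a product-preserving functor $\pcat^\op \to \spaces$, the mapping spaces of $\pcat$ are limits of their Postnikov truncations, and product-preserving functors out of $\h_n\pcat$ assemble correctly under the limit — concretely, one checks that the canonical comparison functor is fully faithful (a mapping space in $\Model_\pcat$ is a limit of mapping spaces in the $\Model_{\h_n\pcat}$, using that mapping spaces in $\pcat$ are so) and essentially surjective (a compatible system of models glues to a product-preserving functor on $\pcat$, using that $\pcat = \varprojlim \h_n\pcat$ as $\infty$-categories and that the relevant Postnikov limit of spaces commutes with the finite products defining the model condition).

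The main obstacle is the first condition. The functoriality and the identification of which squares of modifiers yield full/essentially-surjective homomorphisms (the translation between \cref{def:elementarymodifiers}-level data and the hypotheses of \cref{introthm:nilsquare}) is bookkeeping that rests on \cref{prop:modificationproperties}; but checking that every elementary modifier — not just the manifestly Postnikov-theoretic ones — produces squares to which one of the three conditions of \cref{introthm:nilsquare} applies requires the structural results of \S\ref{ssec:elementarypostnikov} describing $\elementarymodifiers$ as generated from $\map(S^1,\bs)$ under geometric realizations, together with a verification that the ``reflects connectivity'' property (or fullness, or additivity) propagates through these generating operations. This propagation is the technical heart: one must show that the class of modifier-maps whose induced homomorphism yields a cartesian square on models is closed under the relevant colimit and pullback constructions, which is where the deformation-theoretic estimates behind \cref{introthm:nilsquare} — the careful control of connectivity of derived functors between Malcev theories — are genuinely used rather than merely cited.
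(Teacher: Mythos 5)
Your skeleton matches the paper's: reduce the pullback-preservation condition to \cref{thm:animatesquaregeneral} applied to the cartesian square of Malcev theories obtained by modifying $\pcat$ along a levelwise pullback of elementary modifiers, and prove convergence separately. But the mechanism you propose for verifying condition (1) of \cref{thm:animatesquaregeneral} does not work. The homomorphism that must reflect connectivity is $\pi_\eta\colon\pcat_F\to\pcat_G$ for an \emph{arbitrary} levelwise effective epimorphism $\eta\colon F\to G$ of elementary modifiers, and these do not in general admit sections; the retraction argument applies only to the zero-sections $0\colon\h_r\pcat\to\kinv_{n,r}^{n+1}\pcat$ occurring in the Postnikov squares themselves. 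What the paper actually supplies here is \cref{thm:homologywhitehead} (the homology Whitehead theorem): for \emph{every} natural transformation between elementary modifiers the induced derived functor reflects connectivity. Its proof is the technical core of the paper, and is not a propagation of the section argument through colimits: it reduces via \cref{cor:hthickening} and \cref{cor:homologywhiteheadhalf} to showing that $\pcat_{\tau_{\leq 0}F}\to\h\pcat$ reflects connectivity for every elementary $F$ (\cref{prop:discretereflect}), which requires the cell-by-cell induction over $T\in\Sph$ using $\Sigma$-null attaching maps, the descent of free group actions along effective epimorphisms of discrete modifiers, and \cref{prop:thickeningcartesian}. You correctly flag this as "the technical heart," but you point to the wrong section (\S\ref{ssec:elementarypostnikov} is about Postnikov modifiers being elementary, not about reflecting connectivity) and offer no argument that closes it. As stated, the proof has a genuine gap at its central step.

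The convergence argument is also incomplete. An object of $\varprojlim_n\Model_{\h_n\pcat}$ is a system of models $X_n$ compatible under the \emph{derived} functors $\tau_{(n+1,n)!}$ (left Kan extensions), not under restriction, so essential surjectivity is not a matter of gluing a compatible system of product-preserving functors along $\pcat=\varprojlim\h_n\pcat$. The paper instead shows the unit of the adjunction $L\dashv R$ is an equivalence (via the derived Postnikov tower, \cref{thm:spiralmodel}) and that $R=\lim_n\tau_n^\ast(\bs)$ is conservative; the conservativity step again invokes \cref{prop:basichomologywhitehead}, i.e.\ the homology Whitehead theorem, to promote an equivalence on $X_1\to Y_1$ to equivalences on all $X_n\to Y_n$. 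So both halves of the theorem ultimately rest on the same missing input.
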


In other words, the above association is convergent in the sense that 
\[
\Model_\pcat\simeq\lim_{n\to\infty}\Model_{\h_n\pcat}, 
\]
and it takes certain cartesian squares of elementary modifiers to cartesian squares of $\infty$-categories. To prove the latter, we apply the criterion of \cref{introthm:nilsquare} together with the following highly non-formal result, which can be thought of as a version of the homology Whitehead theorem for models of Malcev theories:

\begin{theorem}[\ref{thm:homologywhitehead}]
\label{introthm:homologywhitehead}
Let $\pcat$ be a Malcev theory. For any natural transformation $\alpha\colon F \to G$ between elementary modifiers, the induced derived functor
\[
\pi_{\alpha!}\colon \Model_{\pcat_F}\to\Model_{\pcat_G}
\]
reflects connectivity: a morphism $f$ in $\Model_{\pcat_F}$ is $n$-connective if and only if $\pi_{\alpha!}f$ is $n$-connective.
\end{theorem}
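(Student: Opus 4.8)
The statement concerns the derived functor $\pi_{\alpha!}\colon \Model_{\pcat_F}\to\Model_{\pcat_G}$ induced by a natural transformation $\alpha\colon F\to G$ of elementary modifiers, and the claim is that it reflects connectivity of morphisms. The first thing I would do is reduce to checking that $\pi_{\alpha!}$ reflects the property of a \emph{single object} being $n$-connective, i.e.\ of a map from the zero object being highly connective; since both $\Model_{\pcat_F}$ and $\Model_{\pcat_G}$ are semiadditive (being models of Malcev theories) and the derived functor $\pi_{\alpha!}$ preserves finite products and the zero object, connectivity of a morphism $f$ can be tested on its fibre, and the question becomes whether $\pi_{\alpha!}$ detects $n$-connectivity of objects. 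So I would restate the goal as: for $X\in\Model_{\pcat_F}$, if $\pi_{\alpha!}X$ is $n$-connective then so is $X$ (the other direction, that $\pi_{\alpha!}$ preserves connectivity, should be comparatively soft — derived functors of product-preserving functors between Malcev theories are right-exact and preserve the relevant notion of $n$-skeleton).

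**The core computation.** The key observation is that $n$-connectivity of a model $X$ of a Malcev theory $\pcat_F$ can be detected on \emph{homotopy groups} with respect to the generating objects of the theory — that is, $X$ is $n$-connective if and only if for every object $P$ of $\pcat_F$ (equivalently of $\pcat$, since $\pcat_F$ and $\pcat$ have the same objects), the homotopy groups $\pi_i(X(P))$ vanish for $i<n$. Now $X(P)$ is, by the description of modifications in \cref{prop:modificationproperties}, built from the value of the underlying $\pcat$-model of $X$ on $P$ by applying the modifier $F$ levelwise; and passing from $X$ to $\pi_{\alpha!}X$ replaces $F$ by $G$. The point is therefore that $\alpha\colon F\to G$ being a map of elementary modifiers — functors built inductively from $\map(S^1,-)$ under geometric realizations, all landing in supersimple spaces — forces the induced map on homotopy groups to be compatible with connectivity estimates in a way one can track. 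The essential input is an understanding of how $\map(S^1,-)=L(-)$ interacts with Postnikov connectivity of a supersimple space: $L$ lowers connectivity in a controlled way (the free loop space of an $n$-connective space is $(n-1)$-connective, and its homotopy groups split off the homotopy groups of the original space via the constant-loops section), and iterating and taking geometric realizations preserves this control. I would isolate this as a lemma about elementary modifiers acting on the connectivity of supersimple spaces, essentially recording that each elementary modifier $F$ has a well-defined ``connectivity shift'' and that a map $\alpha\colon F\to G$ of such detects connectivity after application precisely because the constant-loops-type sections are natural.

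**Assembling the argument and the obstacle.** Given the lemma on spaces, the passage to models goes as follows. An object $X\in\Model_{\pcat_F}$ is a product-preserving functor $\pcat_F^\op\to\ukanspaces$; its underlying $\pcat$-model determines values $\ol{X}(P)\in\ukanspaces$ with $X(P)=F(\ol X(P))$ (at the level of the free generators — for a general object of $\pcat_F$ one uses that models are determined by restriction along the generating objects, via the free–forgetful adjunction and the bar resolution). The derived functor $\pi_{\alpha!}$ is computed by levelwise application of $\alpha$ to a cofibrant (free-simplicial) resolution, so $\pi_{\alpha!}X$ has underlying spaces obtained by applying $G$ and then realizing; by the spaces lemma, the hypothesis that $\pi_{\alpha!}X$ is $n$-connective — i.e.\ $G(\ol X(P))$ is $(n-\text{shift of }G)$-connective for all generators — feeds back, via the naturality of the relevant sections and the fact that $G$ detects what $F$ detects, to the conclusion that $\ol X(P)$, hence $X(P)=F(\ol X(P))$, has the required vanishing. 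I expect the \textbf{main obstacle} to be controlling the geometric-realization step: elementary modifiers are closed under colimits, and while $\map(S^1,-)$ manifestly has good connectivity behaviour, one must check that this behaviour is stable under the formation of realizations of simplicial diagrams of modifiers, and moreover that the \emph{derived} functor on models (which itself involves a geometric realization of a simplicial resolution) commutes appropriately with the realization defining the modifier. The interchange of these two realizations, together with the need to keep everything inside the subcategory $\ukanspaces$ of supersimple spaces so that Malcev-theoretic homotopy-group arguments remain valid, is where the real work lies; the rest is bookkeeping with the results already established in \S\ref{sec:modifications} and the structure theory of \cite{usd1}.
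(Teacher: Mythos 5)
Your proposal contains several genuine gaps, and the overall mechanism you describe is not the one that makes the theorem work.

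First, the opening reduction from morphisms to objects does not go through: $\Model_{\pcat_F}$ is in general neither pointed nor semiadditive (for $\pcat$ the theory of free commutative rings it is the $\infty$-category of animated rings), so there is no zero object and no single ``fibre'' on which to test $n$-connectivity of a morphism. Connectivity of morphisms is defined levelwise in presheaves and must be controlled over \emph{all} basepoints, including the $\pi_0$-surjectivity condition; this is precisely where the difficulty sits, and the paper's argument is built around fibrewise statements over arbitrary basepoints (\cref{lem:cubelemmas}, \cref{prop:connectivefibres}) rather than a reduction to objects. Second, your core computation rests on the identity $X(P)=F(\ol X(P))$ for an ``underlying $\pcat$-model'' $\ol X$. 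This holds only for representables: by \cref{prop:modificationproperties} one has $\map_{\pcat_F}(\pi_F a,\pi_F b)\simeq F(\map_\pcat(a,b))$, but a general product-preserving functor $X\colon\pcat_F^\op\to\spaces$ does not factor through $F$ applied to the values of a $\pcat$-model, and $\pi_{\alpha!}X$ is computed by resolving $X$ by representables, not by ``applying $\alpha$ levelwise to $X$.'' So the proposed transfer of connectivity information from $G(\ol X(P))$ back to $F(\ol X(P))$ does not apply to the objects the theorem is about.

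Third, and most importantly, the heuristic that each elementary modifier has a ``connectivity shift'' controlled by the behaviour of $\map(S^1,-)$ misses where the real content lies. The paper's proof proceeds by an entirely different route: it shows that the class of full homomorphisms reflecting connectivity is stable under pullback of Malcev theories (\cref{prop:thickeningcartesian}), uses the categorical Postnikov squares to deduce that $\tau\colon\pcat\to\h\pcat$ reflects connectivity (since the zero-section $0\colon\h\pcat\to\kinv_{m,1}^{m+1}\pcat$ admits a retraction), and thereby reduces the whole theorem (\cref{cor:homologywhiteheadhalf}) to the single statement that $\pcat_{\tau_{\leq 0}F}\to\h\pcat$ reflects connectivity for $F$ elementary. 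That last statement — about the \emph{discrete} part $\tau_{\leq 0}F$ of the modifier, not about any connectivity shift — is the hard part, occupying all of \S\ref{subsection:path_components_of_elementary_modifiers}: it requires producing cell structures on $T\in\Sph$ with $\Sigma$-null attaching maps, and showing that the resulting actions of $\Pi_n$ on $[K',\bs]$ are free and descend along effective epimorphisms of modifiers, so that the relevant maps are principal bundles to which the pullback-stability result applies. Your proposal does not engage with the $\pi_0$-level structure of the modifiers at all, and the obstacle you flag (interchanging the two geometric realizations) is not where the argument actually strains.
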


Convergence is then proved \cref{thm:modelspiral}. To prove \cref{introthm:homologywhitehead}, we show that the class of homomorphisms $f\colon \pcat\to\qcat$ for which $f_!$ reflects connectivity has good closure properties: roughly, if $f$ is full then any pullback of $f$ will again reflect connectivity (see \cref{prop:thickeningcartesian}). Combined with the Postnikov tower of $\pcat$, this allows us to reduce proving that $\tau\colon\pcat\to\h\pcat$ preserves connectivity to proving that the zero-section $0\colon \h\pcat\to\kinv_{n,1}^{n+1}\pcat$ preserves connectivity, which is clear on the account of being a section. 

The general case then follows from a detailed investigation of the structure of $\tau_{\leq 0}F$ for an elementary modifier $F$. The arguments here are somewhat delicate, and take the entirety of \S\ref{subsection:path_components_of_elementary_modifiers}. In short, we make use of the existence of particularly nice cell structures on the spaces in $\Sph$ which trivialize upon mapping into any supersimple space.

To see that the above result can be thought as a nonabelian version of the classical homology Whitehead theorem, observe \cref{introthm:homologywhitehead} implies in particular that the derived truncation
\[
\tau_!\colon \Model_\pcat\to\Model_{\h\pcat}
\]
reflects connectivity, and so is conservative. If $\pcat$ is the theory of grouplike $\mathbf{E}_{2}$-algebras in spaces, then $\h\pcat$ is the theory of abelian groups, and under the Boardman-Vogt-May equivalence
\[
\Model_{\pcat} \simeq \Alg_{\mathbf{E}_{2}}^{\mathrm{grp}}(\spaces) \simeq \spaces_{\ast}^{\geq 2} 
\]
the derived truncation can be identified with a shift of reduced integral chains
\[
\Sigma^{-2}\widetilde{C}_\bullet(\bs;\integers)\colon \spaces_\ast^{\geq 2} \to \Mod_\integers^{\geq 0}. 
\]
The fact that $\widetilde{C}_\bullet(\bs;\integers)$ reflects connectivity is a classical result of Whitehead \cite{whitehead1949combinatorial}. The statement of \cref{introthm:homologywhitehead} generalizes this classical result to a general principle about Malcev theories.

\section{Modifications of \texorpdfstring{$\infty$}{infty}-categories}\label{sec:modifications}

Suppose that $F \colon \spaces \to \spaces$ is an endofunctor of spaces which preserves finite products. Given an $\infty$-category $\ccat$, we can construct a new $\infty$-category $\ccat_{F}$ informally as follows: the objects of $\ccat_F$ are the same as the objects of $\ccat$, but the mapping spaces of $\ccat_F$ are given by
\[
\map_{\ccat_F}(a,b) \colonequals F(\map_\ccat(a,b)),
\]
with composition determined by the commutative diagram 
\[
\begin{tikzcd}[column sep=3mm]
	{\map_{\ccat_{F}}(b, c) \times \map_{\ccat_F}(a,b) } & {F(\map_{\ccat}(b, c)) \times F(\map_{\ccat}(b, a))} & {F(\map_{\ccat}(b, c) \times \map_{\ccat}(a, b)) } \\
	 {\map_{\ccat_{F}}(a, c)} && {F(\map_{\ccat}(a, c)) },
	\arrow["\simeq", no head, from=1-1, to=1-2]
	\arrow[from=1-1, to=2-1, dashed]
	\arrow["\simeq", no head, from=1-2, to=1-3]
	\arrow[from=1-3, to=2-3]
	\arrow["\simeq", no head, from=2-1, to=2-3]
\end{tikzcd}
\]
where the right vertical arrow is induced by composition in $\ccat$. We refer to $\ccat_{F}$ as the \emph{modification of $\ccat$ along $F$}. 

\begin{ex}
Modifying along the $(n-1)$-th Postnikov truncation
\[
\tau_{<n} \colon \spaces\rightarrow\spaces
\]
has the effect of passing to homotopy $n$-categories; that is, we have 
\[
\ccat_{\tau_{< n}} \simeq \h_{n}\ccat.
\]
\end{ex}

Following the above example, for an arbitrary modifier $F$ the assignment $\ccat \mapsto \ccat_F$ can be thought of as a generalized form of passage to the homotopy category. The goal of this section is to carefully formalize this construction and study its properties. 

In \cite{gepnerhaugseng2015enriched}, Gepner--Haugseng develop a theory of \emph{enriched $\infty$-categories}. This theory is self-consistent: $\infty$-categories are equivalent to $\infty$-categories enriched in $\spaces$ with its cartesian monoidal structure. Given a product-preserving functor $F\colon \spaces\to\spaces$, the modification $\ccat_F$ can be formally defined as the $\spaces$-enriched $\infty$-category obtained by changing the enrichment of $\ccat$ along $F$. We review what we need from the theory of enriched $\infty$-categories in \S\ref{ssec:enrichedcategories}, allowing us to make this definition in \cref{def:modification}.

In practice, the specifics of the construction are rarely needed. We give a formalization of the above informal description of $\ccat_F$ in \cref{prop:modificationproperties} which is sufficient for most purposes. After these basic properties are set up, in \S\ref{ssec:modificationlimits} we study how the construction $\ccat \mapsto \ccat_F$ behaves with respect to limits and colimits in $F$, and in \S\ref{ssec:modificationmalcev} we begin our study of modifications of Malcev theories. In particular, in \S\ref{ssec:elemmodifiers} we introduce a class of modifiers, the \emph{elementary modifiers}, which will play an important role in our study of the Postnikov and spiral towers of a Malcev theory.

\begin{remark}[{Previous work}] 
A form of the construction $\ccat \mapsto \ccat_{F}$ was first formalized by Dwyer--Kan--Smith in the setting of simplicial categories with a fixed set of objects \cite{dwyerkansmith1986obstruction}, where it was used to construct the Postnikov tower of a simplicial category, including $k$-invariants. 

More recently, Harpaz--Nuiten--Prasma \cite{harpaznuitenprasma2020kinvariants} used the theory of enriched $\infty$-category theory, as we do, to construct and study the Postnikov tower of an $(\infty,n)$-category.
\end{remark}

\subsection{Enriched \texorpdfstring{$\infty$}{infty}-category theory} 
\label{ssec:enrichedcategories}

We begin by recalling some of the relevant definitions for enriched $\infty$-categories, just enough as needed to define the change of enrichment functors that we will need.

\begin{recollection}[{\cite[Section 4]{gepnerhaugseng2015enriched}}]
Let $\vcat$ be a monoidal $\infty$-category.
\begin{enumerate}
\item Associated to every space $\sfO$ is a nonsymmetric $\sfO\times \sfO$-colored $\infty$-operad $\calO_\sfO$. Informally, an algebra $A\in \Alg_{\calO_\sfO}(\vcat)$ consists of a functor
\[
A \colon \sfO \times \sfO \rightarrow \vcat,
\]
together with composition morphisms
\[
A(b,c) \otimes A(a,b) \rightarrow A(a,c)
\]
in $\vcat$ which are suitably unital and associative.
\item The assignment $\sfO \mapsto \Alg_{\calO_\sfO}$ is contravariantly functorial in $\sfO$. Its associated cartesian unstraightening
\[
\Alg_\Cat(\calV)\rightarrow \spaces
\]
is the $\infty$-category of \emph{categorical $\vcat$-algebras}. Informally, a categorical $\vcat$-algebra is a pair $\sfC = (\Ob(\sfC),\map_\sfC)$ consisting of a space $\Ob(\sfC)$ of objects and a $\calO_{\Ob(\sfC)}$-algebra $\map_\sfC$.
\end{enumerate}
\end{recollection}

\begin{ex}\label{ex:categoricalspcalgebras}
If $\vcat = \spaces$ with its cartesian monoidal structure, then:
\begin{enumerate}
\item Given a space $\sfO$, the $\infty$-category $\Alg_{\calO_\sfO}(\spaces)$ of $\calO_\sfO$-algebras is equivalent to the $\infty$-category of Segal spaces
\[
\sfC_\bullet\colon \Delta^\op\rightarrow\spaces
\]
together with an equivalence $\sfC_0 \simeq \sfO$ \cite[Corollary 4.4.4]{gepnerhaugseng2015enriched}.
\item The $\infty$-category $\Alg_\Cat(\spaces)$ of categorical $\spaces$-algebras is equivalent to the $\infty$-category of Segal spaces.
\end{enumerate}

By \cite{ayalafrancis2018flagged}, Segal spaces $\sfC_\bullet$ are equivalent to \emph{flagged $\infty$-categories}: $\infty$-categories $\ccat$ equipped with an essentially surjective map $\sfO\rightarrow\ccat$ with $\sfO$ an $\infty$-groupoid. Under this equivalence, $\ccat$ is the Rezk completion of $\sfC_\bullet$ and $\sfO = \sfC_0$. In particular, $\calO_\sfO$-algebras are equivalent to $\infty$-categories flagged by $\sfO$.
\end{ex}

As this example shows, categorical $\calV$-algebras are \emph{not} the same as $\calV$-enriched $\infty$-categories, but may be thought of as $\calV$-enriched $\infty$-categories with a fixed space of objects. To obtain $\calV$-enriched $\infty$-categories, one must impose an additional completeness condition.

\begin{recollection}
\label{rec:enrichedinftycats}
Let $\vcat$ be a monoidal $\infty$-category. 
\begin{enumerate}
\item An \emph{equivalence} in a categorical $\calV$-algebra $\sfC$ is a map $E^1 \rightarrow \sfC$, where $E^1$ is the categorical $\calV$-algebra with two objects which is constant on the monoidal unit of $\calV$ \cite[Definition 5.1.8]{gepnerhaugseng2015enriched}. We write $\iota_1\sfC = \map_{\Alg_\Cat(\calV)}(E^1,\sfC)$ for the space of equivalences in $\sfC$.
\item A categorical $\calV$-algebra $\sfC = (\Ob(\sfC),\map_\sfC)$ is said to be \emph{Rezk complete} if the canonical map
\[
\Ob(\sfC) \rightarrow \iota_1 \sfC
\]
is an equivalence \cite[Definition 5.2.2, Corollary 5.2.10]{gepnerhaugseng2015enriched}. 
\item A \emph{$\calV$-enriched $\infty$-category} is a Rezk complete categorical $\calV$-algebra. The inclusion of the full subcategory
\[
\catinfty(\calV)\hookrightarrow \Alg_\Cat(\calV)
\]
spanned by the $\calV$-enriched $\infty$-categories admits a left adjoint
\[
L_\Rezk\colon \Alg_\Cat(\calV)\rightarrow\catinfty(\calV),
\]
which we call \emph{Rezk completion} \cite[Section 5.6]{gepnerhaugseng2015enriched}.
\item Rezk completion realizes $\catinfty(\calV)$ as the localization of $\Alg_\Cat(\calV)$ at those maps $f\colon \sfC \rightarrow\sfD$ of categorical $\calV$-algebras which are
\begin{itemize}
\item Fully faithful: $\map_\sfC(a,b) \rightarrow \map_\sfD(f(a),f(b))$ is an equivalence for all $a,b \in \Ob(\sfC)$;
\item Essentially surjective: if we define $\pi_0 \sfC$ to be the coequalizer of the source and target maps $\pi_0 \iota_1 \sfC \rightrightarrows \pi_0 \Ob(\sfC)$, then $\pi_0 \sfC\rightarrow\pi_0 \sfD$ is a surjection.
\end{itemize}
In particular, if $\sfC$ is any categorical $\calV$-algebra then the completion map $\sfC\rightarrow L_{\Rezk}\sfC$ is fully faithful and essentially surjective.
\end{enumerate}
\end{recollection}

\begin{ex}
As discussed in \cref{ex:categoricalspcalgebras}, the $\infty$-category $\Alg_\Cat(\spaces)$ of categorical $\spaces$-algebras is equivalent to the $\infty$-category of Segal spaces. Under this equivalence, a categorical $\spaces$-algebra is Rezk complete in the sense of \cref{rec:enrichedinftycats} exactly when it is a complete Segal space in the sense of Rezk \cite{rezk2001model}. In particular, $\spaces$-enriched $\infty$-categories are a model for $\infty$-categories.

More generally, if $\spaces_0\subset\spaces$ is any full subcategory closed under finite products, then $\spaces_0$-enriched $\infty$-categories may be identified as the full subcategory of $\catinfty$ spanned by those $\infty$-categories $\ccat$ whose mapping spaces all lie in $\spaces_0$.
\end{ex}

\begin{recollection}
\label{rec:changeofenrichment}
The formation of categorical $\vcat$-algebras is functorial in $\vcat$ by \cite[{Lemma 4.3.9}]{gepnerhaugseng2015enriched}. In more detail, given a lax monoidal functor $F \colon \calV_1\rightarrow\calV_2$:
\begin{enumerate}
\item For any space $\sfO$, there is a natural functor
\[
\Alg_{\calO_\sfO}(F)\colon \Alg_{\calO_\sfO}(\calV_1)\rightarrow\Alg_{\calO_\sfO}(\calV_2)
\]
given, informally, by postcomposition with $F$.
\item As $\sfO$ varies, these assemble into a functor
\[
\Alg_\Cat(F)\colon \Alg_\Cat(\calV_1)\rightarrow\Alg_\Cat(\calV_2).
\]
\item The functor $\Alg_\Cat(F)$ does not preserve Rezk complete objects, but is compatible with Rezk completion, meaning there is a unique functor
\[
\catinfty(F) = L_\Rezk \circ \Alg_\Cat(F)
\]
making the diagram
\begin{center}\begin{tikzcd}[column sep=large]
\Alg_\Cat(\calV_1)\ar[r,"\Alg_\Cat(F)"]\ar[d,"L_\Rezk"]&\Alg_\Cat(\calV_2)\ar[d,"L_\Rezk"]\\
\catinfty(\calV_1)\ar[r,"\catinfty(F)"]&\catinfty(\calV_2)
\end{tikzcd}\end{center}
commute. This follows from the fact that Rezk completion can be identified with localization at the essentially surjective fully faithful functors, and these are preserved by $\Alg_\Cat(F)$ by \cite[{Lemma 5.7.5}]{gepnerhaugseng2015enriched}. 
\end{enumerate}
\end{recollection}

\begin{defn}\label{def:changeofenrichment}
Given a lax monoidal functor $F\colon \vcat_1\to\vcat_2$, we refer to
\[
\catinfty(F)\colon \catinfty(\vcat_1)\to\catinfty(\vcat_2)
\]
as \emph{change of enrichment} along $F$.
\end{defn}

\subsection{The modification of an \texorpdfstring{$\infty$}{infty}-category}\label{ssec:modifyinftycats}

Let $\spaces_0\subset\spaces$ be a subcategory closed under finite products, and let $F \colon \spaces_0\to\spaces$ be a functor which preserves finite products. Fix an $\infty$-category $\ccat$ with mapping spaces in $\spaces_0$, which may therefore be identified as a $\spaces_0$-enriched $\infty$-category.

\begin{defn}\label{def:modification}
The \emph{modification of $\ccat$ along $F$} is the $\infty$-category
\[
\ccat_F \colonequals \catinfty(F)(\ccat)
\]
obtained by changing the enrichment of $\ccat$ along $F$ as in \cref{def:changeofenrichment}. 
\end{defn}

\begin{rmk}
The modification construction is evidently natural in $F$, assembling into a functor
\[
\Fun^\times(\spaces_0,\spaces) \rightarrow \Fun(\catinfty(\spaces_0),\catinfty)
\]
where $\Fun^\times(\spaces_0,\spaces)\subset \Fun(\spaces_0,\spaces)$ is the full subcategory of functors $\spaces_0\to\spaces$ which preserve finite products.
\end{rmk}

Our next goal is to establish some properties of modification that, in most cases, allow one to work with modified categories while avoiding the specific technical details of the construction.

\begin{lemma}
\label{lem:uniqueassembly}
Let $i\colon \spaces_0\hookrightarrow \spaces$ denote the inclusion. Then there is a unique natural transformation $i\rightarrow F$, given on an object $X\in \spaces_0$ by the comparison map
\[
X \simeq \colim_X \pt  \simeq \colim_X F(\pt)\rightarrow F(\colim_X \pt) \simeq F(X).
\]
\end{lemma}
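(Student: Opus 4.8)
The plan is to identify $i$ as a left Kan extension and then read off both existence and uniqueness from the universal property. First, note that since $F$ preserves finite products it preserves the empty product, so $F(\pt) \simeq \pt$; in particular the formula in the statement typechecks, the middle equivalence $\colim_X \pt \simeq \colim_X F(\pt)$ being $\colim_X$ applied to the equivalence $\pt \simeq F(\pt)$, and the remaining arrow $\colim_X F(\pt) \to F(\colim_X \pt)$ being the canonical assembly/comparison map for $F$ (here we use that $\colim_X\pt\simeq X$ lies in $\spaces_0$, so that $F$ may be applied to it, and that this colimit is also computed in the full subcategory $\spaces_0$).

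Next I would show that $i$ is the left Kan extension of the functor $\pt\colon \{\pt\} \to \spaces$ along the inclusion $\{\pt\} \hookrightarrow \spaces_0$ of the full subcategory on the terminal object $\pt$; note $\{\pt\}\simeq\Delta^0$ since $\map_{\spaces_0}(\pt,\pt)$ is a subspace of the contractible space $\map_\spaces(\pt,\pt)$ containing the identity. This is a pointwise computation: the relevant comma $\infty$-category $\{\pt\} \times_{\spaces_0} (\spaces_0)_{/X}$ has as objects the points $\pt \to X$ and as mapping spaces the path spaces of $X$, hence is equivalent to the $\infty$-groupoid $X$ itself. The pointwise Kan extension formula therefore gives $(\mathrm{Lan}\,\pt)(X) \simeq \colim_X \pt \simeq X = i(X)$, naturally in $X$, and one checks that the canonical comparison $\mathrm{Lan}\,\pt \to i$ is this equivalence.

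With this in hand, the adjunction $\mathrm{Lan}\dashv\mathrm{res}$ yields
\[
\Map_{\Fun(\spaces_0, \spaces)}(i, F) \;\simeq\; \Map_{\Fun(\{\pt\},\spaces)}(\pt, F|_{\{\pt\}}) \;\simeq\; \map_\spaces(\pt, F(\pt)) \;\simeq\; \map_\spaces(\pt,\pt) \;\simeq\; \pt,
\]
so the space of natural transformations $i \to F$ is contractible; in particular there is one, and it is unique. Finally I would verify that this unique transformation has the stated components: the unit of the Kan extension adjunction corresponds to the equivalence $\pt \simeq F(\pt)$, and unwinding the pointwise description shows its value at $X$ is precisely the composite $X \simeq \colim_X \pt \simeq \colim_X F(\pt) \to F(\colim_X \pt) \simeq F(X)$, since on the colimit leg indexed by a point $x\colon\pt\to X$ both maps restrict to $F(x)\colon \pt\to F(X)$.

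I do not expect a serious obstacle; the only points requiring care are the identification of the comma $\infty$-category $\{\pt\} \times_{\spaces_0} (\spaces_0)_{/X}$ with the space $X$, the compatibility of $(\mathrm{Lan}\,\pt)(X)\simeq X$ with functoriality in $X$, and the bookkeeping needed to match the Kan-extension unit with the displayed formula. An alternative, more hands-on route avoiding Kan extensions is to argue directly: any natural transformation $\eta\colon i \to F$ has $\eta_\pt$ forced to be the unique map $\pt \to F(\pt)\simeq\pt$, and naturality against each point $x\colon\pt\to X$ then forces $\eta_X \circ x \simeq F(x)$; since these maps assemble into a cocone under the constant diagram $\underline{\pt}\colon X \to \spaces$ exhibiting $X \simeq \colim_X \pt$, the map $\eta_X$ is uniquely determined, which gives uniqueness, while the same cocone produces a transformation, giving existence.
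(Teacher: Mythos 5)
Your proposal is correct and follows essentially the same route as the paper: identify $i$ as the left Kan extension along $\{\pt\}\hookrightarrow\spaces_0$ of its restriction to the point, use the $\mathrm{Lan}\dashv\mathrm{res}$ adjunction to see that $\map_{\Fun(\spaces_0,\spaces)}(i,F)\simeq F(\pt)\simeq\pt$, and trace through to obtain the stated components. The extra details you supply (the comma-category computation and the hands-on cocone argument) are fine but not a different method.
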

\begin{proof}
This lemma requires only the assumption that $\spaces_0\subset\spaces$ contains the terminal object, the one-point space $\pt$, and that $F$ preserves it. As $\spaces_0$ contains the terminal object, the inclusion $i\colon \spaces_0\hookrightarrow\spaces$ can be identified as the left Kan extension of the inclusion $\{\pt\}\hookrightarrow \spaces$ along $\{\pt\}\hookrightarrow\spaces_0$. It follows that
\[
\map_{\Fun(\spaces_0,\spaces)}(i,F)\simeq \map_{\Fun(\{\pt\},\spaces)}(i|_{\{\pt\}},F|_{\{\pt\}})\simeq F(\pt)\simeq \pt
\]
as claimed, with the last equivalence applying the fact that $F$ preserves the terminal object. Tracing through this identification yields the given description of the unique transformation $i\rightarrow F$.
\end{proof}

\begin{construction}
For any $\infty$-category $\ccat$ with mapping spaces in $\spaces_0$, we write
\[
\pi_F\colon \ccat\rightarrow\ccat_F
\]
for the functor determined by the identification $\ccat\simeq \ccat_i$ and unique natural transformation $i\rightarrow F$.
\end{construction}

In most cases, the following proposition is sufficient to work with $\ccat_F$.

\begin{prop}
\label{prop:modificationproperties}
Let $\ccat$ be an $\infty$-category with mapping spaces in $\spaces_0$.
\begin{enumerate}
\item The functor $\pi_F\colon \ccat\rightarrow\ccat_F$ is essentially surjective;
\item For any $a,b\in \ccat$, we may identify
\[
\map_{\ccat_F}(\pi_F(a),\pi_F(b)) \simeq F(\map_\ccat(a,b)),
\]
and $\pi_F\colon \map_\ccat(a,b)\rightarrow \map_{\ccat_F}(\pi_F(a),\pi_F(b))\simeq F(\map_\ccat(a,b))$ is the comparison map of \cref{lem:uniqueassembly}.
\end{enumerate}
\end{prop}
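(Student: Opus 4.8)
The plan is to prove both claims by directly unwinding the definition $\ccat_F = \catinfty(F)(\ccat) = L_\Rezk(\Alg_\Cat(F)(\ccat))$ from \cref{def:modification} together with the construction of $\pi_F$. First I would record that the unique natural transformation $\alpha\colon i\to F$ of \cref{lem:uniqueassembly} is canonically a monoidal transformation — any transformation between product-preserving functors of cartesian monoidal $\infty$-categories is, as the monoidal structures are determined by universal properties — so that it induces $\Alg_\Cat(\alpha)\colon \Alg_\Cat(i)\to\Alg_\Cat(F)$ as in \cref{rec:changeofenrichment}. Next I would check that $\Alg_\Cat(i)(\ccat)$ is already Rezk complete, hence that $\ccat_i = \catinfty(i)(\ccat)\simeq\ccat$, so that by naturality of Rezk completion the functor $\pi_F$ is the composite
\[
\ccat=\Alg_\Cat(i)(\ccat)\xrightarrow{\ \Alg_\Cat(\alpha)_\ccat\ }\Alg_\Cat(F)(\ccat)\xrightarrow{\ L_\Rezk\ }\ccat_F.
\]

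With this factorization in hand, both parts are short. For (1): by the description of $\Alg_\Cat(-)$ as postcomposition in \cref{rec:changeofenrichment}, the functor $\Alg_\Cat(\alpha)_\ccat$ is the identity on the space of objects $\Ob(\ccat)$, hence essentially surjective, and the Rezk completion map $\Alg_\Cat(F)(\ccat)\to\ccat_F$ is essentially surjective by \cref{rec:enrichedinftycats}(4); a composite of essentially surjective functors is essentially surjective. For (2): the same postcomposition description gives $\map_{\Alg_\Cat(F)(\ccat)}(a,b)=F(\map_\ccat(a,b))$ and identifies the effect of $\Alg_\Cat(\alpha)_\ccat$ on mapping objects with $\alpha_{\map_\ccat(a,b)}\colon \map_\ccat(a,b)\to F(\map_\ccat(a,b))$, which is the comparison map by \cref{lem:uniqueassembly}; since the Rezk completion map is fully faithful by \cref{rec:enrichedinftycats}(4), it induces an equivalence $F(\map_\ccat(a,b))\xrightarrow{\ \simeq\ }\map_{\ccat_F}(\pi_F a,\pi_F b)$. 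Composing the two maps yields the asserted identification, under which $\pi_F$ on mapping spaces becomes exactly the comparison map.

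The one genuinely non-formal point — and the step I expect to require the most care — is the claim that $\Alg_\Cat(i)$ carries the Rezk-complete $\spaces_0$-algebra $\ccat$ to a Rezk-complete $\spaces$-algebra (equivalently, that $\Alg_\Cat(\spaces_0)\hookrightarrow\Alg_\Cat(\spaces)$ is fully faithful onto the categorical $\spaces$-algebras with mapping objects in $\spaces_0$). I would prove this by noting that the space of equivalences $\iota_1\sfC=\map_{\Alg_\Cat(\vcat)}(E^1,\sfC)$ depends only on the mapping objects of $\sfC$ and the monoidal unit $\pt$, all of which lie in $\spaces_0\subseteq\spaces$, so $\iota_1$ and hence the completeness condition $\Ob(\sfC)\xrightarrow{\ \simeq\ }\iota_1\sfC$ is unaffected by $\Alg_\Cat(i)$. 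Everything else is a routine manipulation within the Gepner–Haugseng formalism recalled in \S\ref{ssec:enrichedcategories}.
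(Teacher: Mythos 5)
Your proof is correct and follows essentially the same route as the paper: factor $\pi_F$ as the change-of-enrichment map $\ccat\to\Alg_\Cat(F)(\ccat)$ followed by Rezk completion, read off the object space and mapping objects from the explicit description of $\Alg_\Cat(F)$, and conclude using that the completion map is fully faithful and essentially surjective. The extra care you take with the identification $\ccat\simeq\ccat_i$ and the monoidality of $i\to F$ is a welcome tightening of points the paper leaves implicit.
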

\begin{proof}
By definition, $\ccat_F$ is constructed as a composition
\[
\ccat_F = L_\Rezk(\Alg_\Cat(F)(\ccat)),
\]
and $\pi_F$ factors as a composite of maps
\[
L \circ \pi_F' \colon \ccat\rightarrow\Alg_\Cat(F)(\ccat)\rightarrow L_\Rezk(\Alg_\Cat(F)(\ccat))
\]
of categorical $\spaces$-algebras. The claimed properties of $\pi_F$ are clear for $\pi_F'\colon \ccat\rightarrow\Alg_\Cat(F)(\ccat)$: the categorical $\spaces$-algebra $\Alg_\Cat(F)(\ccat)$ is constructed to satisfy $\Ob(\Alg_\Cat(F)(\ccat)) = \Ob(\ccat)$ and $\map_{\Alg_\Cat(F)(\ccat)}(a,b) = F(\map_\ccat(a,b))$, and the natural transformation $\ccat \rightarrow \Alg_\Cat(F)(\ccat)$ is constructed to act on mapping spaces through the natural maps provided by \cref{lem:uniqueassembly}. As the Rezk completion map $L\colon \Alg_\Cat(F)(\ccat)\rightarrow L_{\Rezk}(\Alg_\Cat(F)(\ccat))$ is fully faithful and essentially surjective, it follows that these properties hold also for $\pi_F$.
\end{proof}

\begin{cor}
If $\tau_{< n}\colon \spaces\rightarrow\spaces$ is the Postnikov truncation, then
\[
\pi_{\tau_{< n}}\colon \ccat\rightarrow\ccat_{\tau_{< n}}
\]
realizes 
\[
\ccat_{\tau_{< n}}\simeq \h_{n}\ccat
\]
as the homotopy $n$-category of $\ccat$.
 \qed
\end{cor}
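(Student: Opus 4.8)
The plan is to recognize the pair $(\ccat_{\tau_{<n}},\pi_{\tau_{<n}})$ as the reflective localization of $\ccat$ onto the full subcategory $\catinfty^{(n,1)}\subseteq\catinfty$ of $\infty$-categories whose mapping spaces are $(n-1)$-truncated, and to identify this localization with the homotopy $n$-category, as in \cref{ex:modifytruncate}. Note first that $\tau_{<n}\colon\spaces\to\spaces$ is product-preserving, hence (strong, in particular lax) monoidal for the cartesian monoidal structure, so everything in \cref{rec:changeofenrichment} and the construction of $\ccat_F$ applies.

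The first step is to read off from \cref{prop:modificationproperties} the basic features of $L\colonequals(\bs)_{\tau_{<n}}\colon\catinfty\to\catinfty$ with its natural transformation $\eta\colon\id\to L$ given by $\pi_{\tau_{<n}}$. By part (2), the mapping spaces of $\ccat_{\tau_{<n}}$ are of the form $\tau_{<n}\map_\ccat(a,b)$, so $L\ccat\in\catinfty^{(n,1)}$ for every $\ccat$; conversely, if $\dcat\in\catinfty^{(n,1)}$ then $\pi_{\tau_{<n}}\colon\dcat\to\dcat_{\tau_{<n}}$ acts on mapping spaces by the truncation unit of an already $(n-1)$-truncated space, hence is fully faithful, and it is essentially surjective by part (1), so an equivalence. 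Thus the essential image of $L$ is exactly $\catinfty^{(n,1)}$, and these are precisely the $\infty$-categories fixed by $L$.

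The second step is to verify the recognition criterion for reflective localizations, i.e. that $\eta_{L\ccat}$ and $L(\eta_\ccat)$ are equivalences for every $\ccat$. Using functoriality of change of enrichment (\cref{rec:changeofenrichment}), one has $(\ccat_F)_G\simeq\ccat_{G\circ F}$, naturally in $\ccat$, for product-preserving endofunctors $F,G$ of $\spaces$; applying this to $F=G=\tau_{<n}$ and the idempotence $\tau_{<n}\circ\tau_{<n}\simeq\tau_{<n}$ gives $LL\simeq L$. Now $\eta_{L\ccat}$ is $\pi_{\tau_{<n}}\colon\ccat_{\tau_{<n}}\to(\ccat_{\tau_{<n}})_{\tau_{<n}}$, which is an equivalence by the first step since $\ccat_{\tau_{<n}}\in\catinfty^{(n,1)}$. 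For $L(\eta_\ccat)$, the modification functor acts on a functor by postcomposing mapping spaces with $\tau_{<n}$, and by \cref{prop:modificationproperties}(2) and \cref{lem:uniqueassembly} the functor $\eta_\ccat$ acts on mapping spaces by the truncation unit $\map_\ccat(a,b)\to\tau_{<n}\map_\ccat(a,b)$; hence $L(\eta_\ccat)$ acts by $\tau_{<n}$ of this unit, which is an equivalence, and it is essentially surjective since $L$ preserves essential surjectivity of functors (immediate from the naturality and essential surjectivity of $\pi_F$). Therefore $\eta$ exhibits $L$ as left adjoint to the inclusion $\catinfty^{(n,1)}\hookrightarrow\catinfty$, that is, $\ccat_{\tau_{<n}}\simeq\h_n\ccat$ with $\pi_{\tau_{<n}}$ the canonical functor.

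I do not expect a serious obstacle here: the argument is formal given \cref{prop:modificationproperties}, functoriality of $\catinfty(\bs)$, and idempotence of Postnikov truncation. The point that deserves the most care is extracting the composition law $(\ccat_F)_G\simeq\ccat_{G\circ F}$, with its naturality, from the functoriality recorded in \cref{rec:changeofenrichment}: one must observe that $\Alg_\Cat(\bs)$ sends composites to composites and preserves the essentially surjective fully faithful maps inverted by Rezk completion, so the same holds for $\catinfty(\bs)$. If one instead works with a different definition of the homotopy $n$-category, the only remaining task is to match that definition with the reflection onto $\catinfty^{(n,1)}$; it suffices to know the canonical functor $\ccat\to\h_n\ccat$ is essentially surjective and acts as $(n-1)$-truncation on mapping spaces, after which a routine comparison of essentially surjective fully faithful functors concludes.
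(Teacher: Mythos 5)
Your argument is correct, but it proceeds differently from the paper. The paper treats this corollary as immediate from \cref{prop:modificationproperties}: $\pi_{\tau_{<n}}$ is essentially surjective and acts on mapping spaces by the truncation unit $\map_\ccat(a,b)\to\tau_{<n}\map_\ccat(a,b)$, so $\ccat_{\tau_{<n}}$ has the objects of $\ccat$ and the $(n-1)$-truncated mapping spaces with induced composition, which is exactly the usual description of $\h_n\ccat$ --- hence the bare \qed. You instead establish the \emph{universal property}: you show that $L=(\bs)_{\tau_{<n}}$ with unit $\pi_{\tau_{<n}}$ satisfies the localization recognition criterion ($\eta L$ and $L\eta$ invertible), so $\ccat_{\tau_{<n}}$ is the reflection of $\ccat$ onto the subcategory of $\infty$-categories with $(n-1)$-truncated mapping spaces, and then identify $\h_n\ccat$ with that reflection. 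This is a legitimate and somewhat stronger route: it yields the left-adjoint characterization of $\h_n$ explicitly and is independent of which model of the homotopy $n$-category one starts from. The price, which you correctly flag, is the composition law $(\ccat_F)_G\simeq\ccat_{G\circ F}$ with its naturality; the paper never states this, but it does follow from the functoriality of $\Alg_\Cat(-)$ together with the fact that change of enrichment preserves the fully faithful and essentially surjective maps inverted by Rezk completion (\cref{rec:changeofenrichment}), so this is a genuine but manageable extra verification rather than a gap. Your final remark --- that matching an arbitrary definition of $\h_n\ccat$ with the reflection requires only that $\ccat\to\h_n\ccat$ be essentially surjective and act as truncation on mapping spaces --- is right, but note that the comparison functor one then checks to be an equivalence should be produced from the universal property you proved (factor $\ccat\to\h_n\ccat$ through $\pi_{\tau_{<n}}$), not merely from the existence of two such functors.
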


\begin{cor}
\label{corrolary:explicit_formula_for_the_monad_associated_to_a_modification}
Let $\ccat$ be an $\infty$-category, and consider the left Kan extension and restriction adjunction
\[
\pi_{F!} : \presheaves(\ccat)\rightleftarrows \presheaves(\ccat_F) : \pi_F^\ast.
\]
The composite
\[
\pi_F^\ast\pi_{F!}\colon \presheaves(\ccat)\rightarrow\presheaves(\ccat)
\]
is the unique colimit-preserving functor extending
\[
F\circ y \colon \ccat\rightarrow\presheaves(\ccat)\rightarrow \presheaves(\ccat),\qquad c \mapsto F(\map_\dcat(\bs,c)).
\]
\end{cor}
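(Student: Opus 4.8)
The plan is to exploit the universal property of $\presheaves(\ccat)$ as the free cocompletion of $\ccat$. First I would check that $\pi_F^\ast\pi_{F!}$ preserves all colimits: $\pi_{F!}$ does as a left adjoint, and $\pi_F^\ast$ does because it is restriction along $\pi_F$ and colimits of presheaves are computed pointwise. Since a colimit-preserving functor out of $\presheaves(\ccat)$ is the left Kan extension of its restriction along the Yoneda embedding $y\colon\ccat\to\presheaves(\ccat)$ --- and this left Kan extension is by definition ``the unique colimit-preserving functor extending'' that restriction --- it suffices to produce a natural equivalence $\pi_F^\ast\pi_{F!}\circ y\simeq F\circ y$ of functors $\ccat\to\presheaves(\ccat)$, where the right-hand side is the functor $c\mapsto F(\map_\ccat(\bs,c))$ (well-defined since representables take values in $\spaces_0$).

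Second I would compute this restriction in two steps. Left Kan extension along any functor carries representables to representables, so $\pi_{F!}(y(c))\simeq y_{\ccat_F}(\pi_F(c))$ naturally in $c\in\ccat$; concretely this is the chain
\[
\map_{\presheaves(\ccat_F)}(\pi_{F!}y(c),G)\simeq\map_{\presheaves(\ccat)}(y(c),\pi_F^\ast G)\simeq(\pi_F^\ast G)(c)\simeq G(\pi_F c)\simeq\map_{\presheaves(\ccat_F)}(y_{\ccat_F}(\pi_F c),G),
\]
natural in $G$, followed by the Yoneda lemma. Applying $\pi_F^\ast$, the presheaf $\pi_F^\ast(y_{\ccat_F}(\pi_F c))$ is by definition $a\mapsto\map_{\ccat_F}(\pi_F(a),\pi_F(c))$, which by \cref{prop:modificationproperties}(2) is identified with $a\mapsto F(\map_\ccat(a,c))$, i.e.\ with $(F\circ y)(c)$. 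Assembling these identifications over $c$ gives the desired natural equivalence $\pi_F^\ast\pi_{F!}\circ y\simeq F\circ y$, whence $\pi_F^\ast\pi_{F!}$ is the colimit-preserving extension of $F\circ y$.

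There is no serious obstacle here beyond bookkeeping. The one point that deserves care is that the equivalence $\map_{\ccat_F}(\pi_F a,\pi_F c)\simeq F(\map_\ccat(a,c))$ must be natural in both $a$ and $c$ --- that is, compatible with the composition maps witnessing the functoriality of $\map_{\ccat_F}$ --- and not merely a levelwise equivalence of spaces; this follows from \cref{prop:modificationproperties}(2) together with the fact that the comparison transformation of \cref{lem:uniqueassembly} is a genuine natural transformation $i\to F$ and hence, via the lax monoidal functoriality underlying \cref{rec:changeofenrichment}, respects composition. One should also keep in mind that, as elsewhere in the paper, the free-cocompletion argument is applied with presheaves taken in a sufficiently large universe when $\ccat$ is not essentially small.
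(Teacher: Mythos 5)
Your proposal is correct and follows essentially the same route as the paper: the paper simply cites the general fact that for any functor $f$ the composite $f^\ast f_!$ is the colimit-preserving extension of $c\mapsto\map(f(\bs),f(c))$ and then applies the identification $\map_{\ccat_F}(\pi_F(\bs),\pi_F(c))\simeq F(\map_\ccat(\bs,c))$ from \cref{prop:modificationproperties}, which is exactly the computation you spell out via $\pi_{F!}y(c)\simeq y(\pi_F c)$. Your extra remarks on naturality and universe issues are sensible but not points the paper dwells on.
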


\begin{proof}
If $f\colon \ccat\rightarrow\dcat$ is any functor, then
\[
f^\ast f_!\colon \presheaves(\ccat)\rightarrow\presheaves(\ccat)
\]
is the unique colimit-preserving functor extending
\[
c \mapsto \map_\ccat(f(\bs),f(c)).
\]
The corollary then follows from the identification $\map_{\ccat_F}(\pi_F(\bs),\pi_F(c))\simeq F(\map_\ccat(\bs,c))$.
\end{proof}

\begin{ex}
Suppose that $F(X) = \map(T,X)$ for a fixed space $T$. If $\ccat$ admits constant colimits indexed over $T$, then
\[
\pi_F^\ast\pi_{F!}\colon \presheaves(\ccat)\rightarrow\presheaves(\ccat)
\]
is equivalent to precomposition with the constant $T$-colimit functor
\[
T\otimes (\bs)\colon \ccat\rightarrow\ccat.
\]
\end{ex}

\subsection{Limits of modifications}\label{ssec:modificationlimits}

Let $\spaces_0\subset\spaces$ be a subcategory closed under finite products as before. The full subcategory $\Fun^\times(\spaces_0,\spaces) \subset \Fun(\spaces_0,\spaces)$ of finite product-preserving functors is closed under all small limits. We now analyze some of the extent to which the modification functor
\[
\Fun^\times(\spaces_0,\spaces)\rightarrow\Fun(\catinfty(\spaces_0),\catinfty)
\]
preserves these. In other words, we study the extent to which the construction $\ccat \mapsto \ccat_{F}$ preserves limits in $F$ (not to be confused with limits in $\ccat$). We start with the following.

\begin{prop}
\label{prop:modifierlimitsff}
Fix a diagram $F_\bullet\colon \calJ \rightarrow \Fun^\times(\spaces_0,\spaces)$. Then the comparison map
\[
p\colon \ccat_{\lim F_\bullet}\rightarrow \lim \ccat_{F_\bullet}
\]
is always fully faithful.
\end{prop}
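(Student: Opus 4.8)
The plan is to prove fully faithfulness directly on mapping spaces, reducing everything to \cref{prop:modificationproperties} together with two standard facts: limits in $\Fun^\times(\spaces_0,\spaces)$ are computed pointwise (as it is closed under limits in $\Fun(\spaces_0,\spaces)$), and a mapping space in a limit of $\infty$-categories is the limit of the mapping spaces in the terms. Write $\pi=\pi_{\lim F_\bullet}\colon\ccat\to\ccat_{\lim F_\bullet}$ and $\pi_j=\pi_{F_j}\colon\ccat\to\ccat_{F_j}$. Since $\pi$ is essentially surjective by \cref{prop:modificationproperties}(1), every object of $\ccat_{\lim F_\bullet}$ is equivalent to one of the form $\pi(a)$, so it suffices to show that for all $a,b\in\ccat$ the map
\[
\map_{\ccat_{\lim F_\bullet}}(\pi a,\pi b)\longrightarrow\map_{\lim\ccat_{F_\bullet}}(p\pi a,p\pi b)
\]
induced by $p$ is an equivalence.

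First I would identify both sides. By \cref{prop:modificationproperties}(2) the source is $(\lim_{j\in\calJ}F_j)(\map_\ccat(a,b))\simeq\lim_{j\in\calJ}F_j(\map_\ccat(a,b))$, the last equivalence because $\Fun^\times(\spaces_0,\spaces)$ is closed under limits in $\Fun(\spaces_0,\spaces)$. For the target, recall that $p$ is the comparison functor associated, via the universal property of the limit, to the cone of functors $\ccat_{\lim F_\bullet}\to\ccat_{F_j}$ obtained by modifying $\ccat$ along the projections $\lim_{j\in\calJ}F_j\to F_j$. By the naturality of the modification construction in the modifier, noted in the remark after \cref{def:modification}, the composite $\ccat\xrightarrow{\pi}\ccat_{\lim F_\bullet}\to\ccat_{F_j}$ agrees with $\pi_j$; hence $p\circ\pi\simeq(\pi_j)_{j\in\calJ}$ and $p\pi a=(\pi_j a)_j$. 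Using that a mapping space in a limit of $\infty$-categories is the limit of the mapping spaces in the terms --- which follows from the commutation of the cotensor $(\bs)^{\Delta^1}$ and of fibre products with limits --- and then \cref{prop:modificationproperties}(2) once more, the target becomes $\lim_{j\in\calJ}\map_{\ccat_{F_j}}(\pi_j a,\pi_j b)\simeq\lim_{j\in\calJ}F_j(\map_\ccat(a,b))$. Finally, tracing through these identifications, the $j$-th component of the map in question is induced by the projection $\lim_{j\in\calJ}F_j\to F_j$ evaluated at $\map_\ccat(a,b)$, i.e.\ it is the $j$-th leg of the limit cone; so the map is the canonical equivalence of $\lim_{j\in\calJ}F_j(\map_\ccat(a,b))$ with itself, and we are done.

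I expect the only real obstacle to be the bookkeeping in this last step: confirming that the abstractly defined comparison functor $p$, which is not given by an explicit formula on mapping spaces a priori, acts on $\map_{\ccat_{\lim F_\bullet}}(\pi a,\pi b)$ precisely through the tautological map from $(\lim_{j\in\calJ}F_j)(X)$ to $\lim_{j\in\calJ}F_j(X)$. This amounts to carefully transporting the identification of \cref{prop:modificationproperties}(2) through the functoriality of the modification construction and the universal property of $\lim\ccat_{F_\bullet}$. Nothing here is deep, but it must be written out with care; the remaining ingredients --- \cref{prop:modificationproperties}, the pointwise computation of limits of finite-product-preserving functors, and the mapping-space description of limits of $\infty$-categories --- are standard.
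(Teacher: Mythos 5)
Your proposal is correct and follows essentially the same route as the paper's proof: both identify $\map_{\ccat_{\lim F_\bullet}}(\pi a,\pi b)$ and $\map_{\lim\ccat_{F_\bullet}}(p\pi a,p\pi b)$ with $\lim_j F_j(\map_\ccat(a,b))$ via \cref{prop:modificationproperties}, the pointwise computation of limits of product-preserving functors, and the mapping-space description of limits of $\infty$-categories, then invoke essential surjectivity of $\pi_{\lim F_\bullet}$. Your extra care about checking that $p$ acts on mapping spaces by the tautological comparison is a reasonable point of bookkeeping that the paper itself passes over silently.
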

\begin{proof}
For each $j \in \calJ$, there is a diagram
\begin{center}\begin{tikzcd}
&\ccat\ar[dl,"\pi_{\lim F_\bullet}"']\ar[d,"\pi"]\ar[dr,"\pi_{F_j}"]\\
\ccat_{\lim F_\bullet}\ar[r,"p"]&\lim \ccat_{F_\bullet}\ar[r]&\ccat_{F_j}
\end{tikzcd}.\end{center}
Given $a,b\in \ccat$, as mapping spaces in a limit of categories are computed pointwise, we may identify
\begin{align*}
\map_{\ccat_{\lim F_\bullet}}(\pi_{\lim F_\bullet}(a),\pi_{\lim F_\bullet}(b)) & \simeq \lim F_\bullet(\map_\ccat(a,b))\\
&\simeq  \lim \map_{\ccat_{F_\bullet}}(\pi_{F_\bullet}(a),\pi_{F_\bullet}(b))\simeq \map_{\lim \ccat_{F_\bullet}}(\pi(a),\pi(b)).
\end{align*}
As $\pi_{\lim F_\bullet}$ is essentially surjective, it follows that 
\[
\map_{\ccat_{\lim F_\bullet}}(a,b)\simeq \map_{\lim \ccat_{F_\bullet}}(p(a),p(b))
\]
for any $a,b\in \ccat_{\lim F_\bullet}$, proving that $p$ is always fully faithful.
\end{proof}

It is more delicate to determine when $p\colon \ccat_{\lim F_\bullet}\rightarrow\lim \ccat_{F_\bullet}$ is essentially surjective, and thus an equivalence.

\begin{ex}\label{ex:trivialaction}
If $i\colon \spaces_0\hookrightarrow \spaces$ is the inclusion and $T$ is a space, then the comparison functor associated to the constant diagram $T\to\Fun^\times(\spaces_0,\spaces)$ on $i$ takes the form
\[
\ccat_{\map(T,\bs)}\simeq \ccat_{\lim_T i}\to \lim_T \ccat_i \simeq \ccat^T.
\]
This functor is fully faithful, and realizes $\ccat_{\map(T,\bs)}$ as the full subcategory of $\ccat^T$ spanned by the constant functors $T \to \ccat$. In particular, it is generally not essentially surjective.
\end{ex}

The difficulty in determining when $p$ is essentially surjective arises, at least in part, from the fact that in general there seems no easy way to describe the maximal subgroupoid $(\ccat_F)^\core\subset\ccat_F$. Our next goal is to identify some restrictions on $F$ which allow this space to be nicely identified.

For the rest of this subsection, we suppose that $\spaces_0\subset\spaces$ is closed under subobjects, in the sense that if $X \coprod Y \in \spaces_0$ then $X \in \spaces_0$. This ensures that if $\ccat$ is enriched in $\spaces_0$, then so is $\ccat^\core\subset\ccat$. As an $\infty$-category $\ccat$ is an $\infty$-groupoid if and only if the shearing maps
\[
\map_\ccat(a,b)\times\map_\ccat(a,b) \to \map_\ccat(a,b)\times\map_\ccat(a,a),\qquad (f,g)\mapsto(f,gf)
\]
are all equivalences, we see that modification preserves $\infty$-groupoids. In particular, modifying the inclusion $\ccat^\core \to \ccat$ provides a functor $(\ccat^\core)_F \to \ccat_F$ which factors through the maximal subgroupoid of $\ccat_F$, proving a comparison map
\[
(\ccat^\core)_F \to \ccat_F^\core.
\]
This map is not always an equivalence, but it will be for the modifying functors $F$ that are of most interest to us.

\begin{prop}\label{prop:modifycore}
Let $F \in \Fun^\times(\spaces_0,\spaces)$.
\begin{enumerate}
\item If $F$ preserves coproducts, then there is a unique natural transformation $F \to \tau_{\leq 0}$.
\item If there exists a natural transformation $F\rightarrow\tau_{\leq 0}$, then the map $\pi_F\colon\ccat\rightarrow\ccat_F$ induces a bijection $\tau_{\leq 0}(\ccat^\core) \cong \tau_{\leq 0}(\ccat_F^\core)$.
\item Suppose that there exists a natural transformation $F \to \tau_{\leq 0}$, and that $\tau_{\leq 0} F \to \tau_{\leq 0}$ admits the structure of a group object in $\Fun^\times(\spaces_0,\spaces)_{/\tau_{\leq 0}}$. Then the associated functor $p\colon \ccat_F\rightarrow\h\ccat$ is conservative.
\item If $F$ preserves coproducts and the natural transformation $\ccat_F \to \h\ccat$ guaranteed by (1) is conservative, then $(\ccat^\core)_F\simeq \ccat_F^\core$.
\end{enumerate}
\end{prop}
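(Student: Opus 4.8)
The plan is to prove the four parts in order, reducing each to a mixture of standard facts about homotopy categories, the Eckmann--Hilton argument, and the decomposition of a space into connected components. Throughout I would use freely: \cref{prop:modificationproperties}, so that $\pi_F$ is essentially surjective and $\map_{\ccat_F}(\pi_F a,\pi_F b)\simeq F(\map_\ccat(a,b))$; the functoriality of modification in $F$, so that a natural transformation $F\to\tau_{\leq 0}$ induces a functor $p\colon\ccat_F\to\ccat_{\tau_{\leq 0}}=\h\ccat$ which, by the uniqueness in \cref{lem:uniqueassembly}, fits into a commuting triangle $\ccat\xrightarrow{\pi_F}\ccat_F\xrightarrow{p}\h\ccat$ whose composite is the canonical functor $\pi_{\tau_{\leq 0}}$; and the fact that modification composes, $(\ccat_F)_G\simeq\ccat_{G\circ F}$ (a consequence of the functoriality of change of enrichment, \cref{rec:changeofenrichment}), so in particular $\h(\ccat_F)\simeq\ccat_{\tau_{\leq 0}F}$. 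I would also use the elementary facts that a morphism of an $\infty$-category is an equivalence exactly when its image in the homotopy category is an isomorphism, and hence that $\ccat\to\h\ccat$ induces a bijection on isomorphism classes of objects.

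For (1): as $\tau_{\leq 0}X$ is $0$-truncated, a natural transformation $F\to\tau_{\leq 0}$ is the same datum as one $\tau_{\leq 0}F\to\tau_{\leq 0}$ between $\sets$-valued functors, both of which preserve coproducts. Since $\spaces_0$ is closed under subobjects, every $X\in\spaces_0$ is the coproduct of its connected components, each again in $\spaces_0$; on connected spaces $\tau_{\leq 0}$ is the terminal $\sets$-valued functor, so receives a unique transformation from anything, and by coproduct-preservation this extends uniquely over all of $\spaces_0$. Concretely it sends a path component of $F(X)$ to the component of $X$ over which it lies under $F(X)\simeq\coprod_{[x]\in\pi_0 X}F(X_{[x]})$. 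For (2): passing to isomorphism classes of objects throughout the triangle $\ccat\xrightarrow{\pi_F}\ccat_F\xrightarrow{p}\h\ccat$ gives maps $\tau_{\leq 0}(\ccat^\core)\to\tau_{\leq 0}(\ccat_F^\core)\to\tau_{\leq 0}((\h\ccat)^\core)$; the first is surjective because $\pi_F$ is essentially surjective, the composite is the bijection comparing isomorphism classes in $\ccat$ with those in $\h\ccat$, and a surjection that becomes bijective after composing with a further map is bijective.

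For (3): since $\ccat_F\to\h(\ccat_F)$ is always conservative, it is enough to show that the induced functor $\bar p\colon\ccat_{\tau_{\leq 0}F}\to\h\ccat$ (using $\h(\ccat_F)\simeq\ccat_{\tau_{\leq 0}F}$, and that $\bar p$ is modification along the structure map $\tau_{\leq 0}F\to\tau_{\leq 0}$) reflects isomorphisms. The functor $\h(\pi_F)$ is a section of $\bar p$, so given $\phi$ in $\ccat_{\tau_{\leq 0}F}$ with $\bar p(\phi)$ an isomorphism, composing with a suitable lift of $\bar p(\phi)^{-1}$ reduces to the case that $\phi$ lies in the fibre $G_a$ of $\End_{\ccat_{\tau_{\leq 0}F}}(\pi_F a)=\tau_{\leq 0}F(\map_\ccat(a,a))\to\tau_{\leq 0}(\map_\ccat(a,a))=\End_{\h\ccat}(a)$ over $\id_a$. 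This fibre is a submonoid of $\End_{\ccat_{\tau_{\leq 0}F}}(\pi_F a)$ under composition, and by hypothesis also carries a group structure from the group object $\tau_{\leq 0}F\to\tau_{\leq 0}$ over $\tau_{\leq 0}$. This is the crux: by naturality of that group multiplication against the composition maps $\map_\ccat(a,a)\times\map_\ccat(a,a)\to\map_\ccat(a,a)$, together with its compatibility with the product-preservation equivalences of $F$, the two multiplications on $G_a$ obey an interchange law, and Eckmann--Hilton then forces them to coincide; hence $G_a$ is a group under composition, so $\phi$ is invertible in $\End_{\ccat_{\tau_{\leq 0}F}}(\pi_F a)$ and therefore an isomorphism.

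For (4): by (1) I would take the canonical transformation $F\to\tau_{\leq 0}$ and the resulting conservative $p\colon\ccat_F\to\h\ccat$. The comparison $(\ccat^\core)_F\to\ccat_F^\core$ is essentially surjective because $\pi_F$ is, so it remains to show it is fully faithful, i.e.\ that $F(\iota_\ccat(a,b))\to\map_{\ccat_F^\core}(\pi_F a,\pi_F b)$ is an equivalence, where $\iota_\ccat(a,b)\subseteq\map_\ccat(a,b)$ is the union of equivalence components. Writing $\map_\ccat(a,b)=\iota_\ccat(a,b)\amalg W$ and using that $F$ preserves coproducts gives $F(\map_\ccat(a,b))=F(\iota_\ccat(a,b))\amalg F(W)$, and by the description in (1) the map $F(\map_\ccat(a,b))\to\tau_{\leq 0}(\map_\ccat(a,b))$ carries $F(\iota_\ccat(a,b))$ into the isomorphism components and $F(W)$ into the others. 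Thus a component $\phi$ of $F(\map_\ccat(a,b))$ lies in $F(\iota_\ccat(a,b))$ if and only if $p(\phi)$ is an isomorphism in $\h\ccat$, which by conservativity of $p$ holds if and only if $\phi$ is an equivalence in $\ccat_F$; so $F(\iota_\ccat(a,b))$ is exactly the union of equivalence components of $F(\map_\ccat(a,b))$, namely $\map_{\ccat_F^\core}(\pi_F a,\pi_F b)$. I expect part (3) to be the main obstacle: the hypotheses only supply an \emph{a priori} unrelated group multiplication on the fibres $G_a$, and the work lies in establishing the interchange law with composition so that Eckmann--Hilton applies; the other three parts are essentially bookkeeping with the functoriality of modification and the connected-component decomposition.
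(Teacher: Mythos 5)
Your proposal is correct and follows essentially the same route as the paper: (1) via coproduct-preservation and terminality of $\tau_{\leq 0}$ on connected spaces, (2) via essential surjectivity of $\pi_F$ together with the factorization through $\h\ccat$, (3) by reducing to the fibre over $\id_a$ and running Eckmann--Hilton against the hypothesized group structure on $\tau_{\leq 0}F$ over $\tau_{\leq 0}$ (the paper works directly with $\pi_0$ of mapping spaces rather than through $\h(\ccat_F)\simeq\ccat_{\tau_{\leq 0}F}$, and spells out the left- and right-inverse steps separately, but this is cosmetic), and (4) by the coproduct decomposition of $F(\map_\ccat(a,b))$ plus conservativity of $\ccat_F\to\h\ccat$. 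No gaps beyond the level of detail the paper itself leaves implicit.
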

\begin{proof}
(1)~~In fact this does not require that $F$ preserves products. As $F$ preserves coproducts and $\spaces_0\subset\spaces$ is closed under subobjects, $F$ is determined by its restriction to the full subcategory $\spaces_0^\cn\subset\spaces_0$ of connected spaces in $\spaces_0$. As $\tau_{\leq 0}\colon \spaces_0^\cn \to \spaces$ is the terminal functor, there is a unique natural transformation $F \to \tau_{\leq 0}$.

(2)~~Given a natural transformation $F\rightarrow \pi_0$, we obtain a factorization
\[
\ccat\rightarrow\ccat_F\rightarrow\h\ccat,
\]
and thus a retraction
\[
\tau_{\leq 0} \ccat^\core \rightarrow \tau_{\leq 0}\ccat_F^\core\rightarrow\tau_{\leq 0}\h\ccat^\core \cong \tau_{\leq 0}\ccat^\core,
\]
implying that $\pi_F$ induces an injection $\tau_{\leq 0} \ccat^\core \rightarrow \tau_{\leq 0} \ccat_F^\core$. As $\pi_F$ is essentially surjective, this is also a surjection and thus a bijection.

(3)~~Our assumptions provide for every $a,b\in\ccat$ a bundle
\begin{center}\begin{tikzcd}
\pi_0 F(\map_\ccat(a,b))\ar[d,"p"]\\
\pi_0\map_\ccat(a,b)\ar[u,"\pi_F",bend left]
\end{tikzcd}\end{center}
of groups. Fix $f\in F(\map_\ccat(a,b))$ for which $p(f)$ is an equivalence. We must show that $f$ is an equivalence. As $p(f)$ is an equivalence, we may find $p(f)^{-1} \in \pi_0 \map_\ccat(b,a)$, yielding $g = \pi_F(p(f)^{-1}) \circ f \in \pi_0 F(\map_\ccat(a,a))$ satisfying $p(g) = \id_a$. By the Eckmann--Hilton argument, the monoid structure on the kernel of $\pi_0 F(\map_\ccat(a,a))\rightarrow\pi_0 \map_\ccat(a,a)$ arising from composition agrees with the group structure arising from the group operation on $\pi_0 F$ over $\pi_0$. In particular the kernel is a group under composition, and therefore we may construct
\[
h = (\pi_F (p(f)^{-1})\circ f)^{-1} \circ \pi_F(p(f)^{-1}) \in \pi_0 F(\map_\ccat(b,a)),
\]
satisfying $h\circ f = \id_{\pi_F(a)}$. The same argument shows that $f$ has a right inverse, and is thus invertible as claimed.

(4)~~As $F$ preserves coproducts, the square
\begin{center}\begin{tikzcd}
F(\iso_\ccat(a,b))\ar[r]\ar[d]&F(\map_\ccat(a,b))\ar[d,"p"]\\
\pi_0\iso_\ccat(a,b)\ar[r]&\pi_0 \map_\ccat(a,b)
\end{tikzcd}\end{center}
is cartesian. It therefore suffices to show that this square remains cartesian with $F(\iso_\ccat(a,b))$ replaced by $\iso_{\ccat_F}(a,b)$. As $\iso_{\ccat_F}(a,b)\rightarrow F(\map_\ccat(a,b))$ is an inclusion of path components, this is the claim that if $f\in F(\iso_\ccat(a,b))$ and $p(f)$ is invertible, then $f$ is invertible; in other words, that $\ccat_F\rightarrow \h\ccat$ is conservative.
\end{proof}

\begin{prop}\label{prop:modifierlimitses}
Fix a diagram $F_\bullet\colon \calJ\rightarrow\Fun^\times(\spaces_0,\spaces)$ satisfying the following conditions:
\begin{enumerate}
\item $\calJ$ is connected;
\item The comparison map $(\ccat^\core)_{F_j}\to \ccat_{F_j}^\core$ is an equivalence for all $j\in\jcat$;
\item The functor $\pi_{F_j}\colon \ccat\to \ccat_{F_j}$ induces a bijection $\pi_0\ccat^\core \cong \pi_0 \ccat_{F_j}^\core$ for all $j\in \jcat$;
\item For any loop space $X \in \spaces_0$, the comparison $B \lim F_\bullet X \rightarrow \lim BF_\bullet X$ is an equivalence. This includes the following cases:
\begin{enumerate}
\item $F_\bullet$ is a tower $F_0\leftarrow F_1\leftarrow F_2\leftarrow\cdots$ and $\pi_0 F_\bullet$ is eventually constant;
\item $F_\bullet$ is a span $F_0\rightarrow F_1\leftarrow F_2$ and $\pi_0F_0\rightarrow\pi_0F_1$ is a surjection.
\end{enumerate}
\end{enumerate}
Then $p\colon \ccat_{\lim F_\bullet}\rightarrow\lim \ccat_{F_\bullet}$ is an equivalence and $\ccat_{\lim F_\bullet}^\core \simeq (\ccat_{\lim F_\bullet})^\core$ for any $\ccat$.
\end{prop}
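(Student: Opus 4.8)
The plan is to reduce the whole statement to the case where $\ccat$ is an $\infty$-groupoid, and there to make everything explicit via a coproduct decomposition. By \cref{prop:modifierlimitsff} the comparison $p$ is always fully faithful, so for the first assertion I only need essential surjectivity, which I will check on maximal subgroupoids. Since modification preserves $\infty$-groupoids and $(\bs)^\core$ preserves limits (it is right adjoint to the inclusion of $\infty$-groupoids into $\catinfty$), one has $(\lim\ccat_{F_\bullet})^\core\simeq\lim(\ccat_{F_\bullet}^\core)\overset{(2)}{\simeq}\lim(\ccat^\core)_{F_\bullet}$, and under this identification $p^\core$ becomes the comparison map $(\ccat^\core)_{\lim F_\bullet}\to\lim(\ccat^\core)_{F_\bullet}$ — a valid substitution since $\ccat^\core$ is again enriched in $\spaces_0$ because $\spaces_0$ is closed under subobjects. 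Conditions (1)--(4) pass to $\ccat^\core$: (1) and (4) do not mention the category, (2) is automatic for a groupoid, and (3) for $\ccat^\core$ follows from (2) and (3) for $\ccat$. So it suffices to prove the Proposition for $\ccat=\gcat$ an $\infty$-groupoid; the second assertion will be read off at the end from this case.

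So fix $\gcat$ and write $\gcat\simeq\coprod_{i\in\pi_0\gcat}BG_i$, choosing an object $a_i\in\gcat$ in each component and setting $G_i\colonequals\map_\gcat(a_i,a_i)\in\spaces_0$, a loop space. If $\pi_0\gcat$ has at least two elements then $\emptyset\in\spaces_0$ (as it is a mapping space in $\gcat$), and (3) forces $F_j(\emptyset)=\emptyset$ for every $j$ — otherwise two distinct components of $\gcat$ would become equivalent in the groupoid $\gcat_{F_j}$ — whence also $(\lim F_\bullet)(\emptyset)=\lim_j F_j(\emptyset)=\emptyset$ since $\jcat\neq\emptyset$. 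Using \cref{prop:modificationproperties}, the fact that modification preserves $\infty$-groupoids, and (3), one then obtains for each $F\in\{F_j\}_j\cup\{\lim F_\bullet\}$ a decomposition $\gcat_F\simeq\coprod_{i\in\pi_0\gcat}B(FG_i)$, induced componentwise by the canonical equivalence $B\Omega_{\pi_F a_i}(\gcat_F)\simeq(\text{component of }\pi_F a_i)$ together with the identification $\Omega_{\pi_F a_i}(\gcat_F)\simeq FG_i$; as the basepoints $a_i$ are chosen once and for all, independently of $F$, this is natural in $F$ over the relevant family. Getting this naturality right — so that the decompositions assemble over $j\in\jcat$ — is the main technical hurdle of the proof; everything else is either cited or a short diagram chase.

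Granting that, I compute
\[
\lim_j\gcat_{F_j}\;\simeq\;\lim_j\coprod_iB(F_jG_i)\;\simeq\;\coprod_i\lim_jB(F_jG_i)\;\simeq\;\coprod_iB\big((\lim_jF_j)G_i\big)\;\simeq\;\gcat_{\lim F_\bullet},
\]
where the second equivalence uses that $\jcat$ is connected, so that $\lim_\jcat$ commutes with the $\pi_0\gcat$-indexed coproduct (a cone over $j\mapsto\coprod_iB(F_jG_i)$ is supported in a single summand, as the transition maps respect the decomposition), and the third is precisely condition (4) applied to the loop spaces $X=G_i$, using $\lim_jF_jG_i=(\lim_jF_j)G_i$ as group objects. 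Tracing through, the map $\pi_0\gcat\to\pi_0\lim_j\gcat_{F_j}$ induced by $(\pi_{F_j})_j$ is a bijection; since $\pi_{\lim F_\bullet}\colon\gcat\to\gcat_{\lim F_\bullet}$ is essentially surjective, $p$ is surjective on $\pi_0$, and being fully faithful it is an equivalence. Applying the groupoid case to $\ccat^\core$ and running the first paragraph's reduction backwards then yields $(\ccat^\core)_{\lim F_\bullet}\simeq\lim_j(\ccat^\core)_{F_j}\overset{(2)}{\simeq}\lim_j\ccat_{F_j}^\core\simeq(\lim_j\ccat_{F_j})^\core\simeq\ccat_{\lim F_\bullet}^\core$, which is the second assertion.

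Finally, I will check that the two listed cases are instances of (4). For a loop space $X\in\spaces_0$, the equivalence $B\colon\mathrm{Grp}(\spaces)\xrightarrow{\ \sim\ }\spaces_\ast^{\geq1}$ reduces the comparison $B\lim_jF_jX\to\lim_jBF_jX$ to the statement that $\lim_jBF_jX$ is connected. In case (a) this holds by the Milnor sequence, since $\lim^1_j\pi_1BF_jX=\lim^1_j\pi_0F_jX=0$ as $\pi_0F_\bullet$ is eventually constant; in case (b), the pullback $BF_0X\times_{BF_1X}BF_2X$ has $\pi_0$ equal to a quotient of $\pi_1BF_1X=\pi_0F_1X$ by the image of $\pi_1BF_0X=\pi_0F_0X$, which is everything because $\pi_0F_0\to\pi_0F_1$ is surjective. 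As noted, the one genuinely delicate step is the naturality of the decomposition $\gcat_F\simeq\coprod_iB(FG_i)$ in $F$ (so that it can be fed through $\lim_j$) together with the verification that connected limits commute with the coproduct over $\pi_0\gcat$; all remaining steps are formal consequences of the cited results.
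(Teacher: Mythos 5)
Your argument is correct and takes essentially the same route as the paper: both use full faithfulness from \cref{prop:modifierlimitsff}, reduce essential surjectivity to the case of an $\infty$-groupoid via the identification $(\lim\ccat_{F_\bullet})^\core\simeq\lim(\ccat^\core)_{F_\bullet}$ from hypothesis (2), decompose the groupoid as a coproduct of $B\!\Aut$'s over chosen basepoints, commute the limit with this coproduct using connectedness of $\calJ$, and conclude from hypothesis (4). Your additional verifications — that (3) forces $F_j(\emptyset)=\emptyset$ so the decomposition persists, and that cases (a) and (b) imply (4) via the Milnor $\lim^1$ sequence and surjectivity on $\pi_1$ — are details the paper leaves implicit rather than a genuinely different argument.
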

\begin{proof}
It suffices to prove just that $p$ is an equivalence for any $\ccat$, for then
\[
(\ccat_{\lim F_\bullet})^\core \simeq (\lim \ccat_{F_\bullet})^\core \simeq \lim ((\ccat_{F_{\bullet}})^\core) \simeq  \lim( (\ccat^\core)_{F_\bullet}) \simeq (\ccat^\core)_{\lim F_\bullet}.
\]
By \cref{prop:modifierlimitsff}, to prove that $p$ is an equivalence it suffices to prove that $p$ is essentially surjective. To that end, it suffices to show that the composite
\[
\ccat^\core_{\lim F_\bullet}\rightarrow (\ccat_{\lim F_\bullet})^\core \rightarrow (\lim \ccat_{F_\bullet})^\core \simeq \lim(\ccat_{F_\bullet})^\core \simeq \lim (\ccat^\core)_{F_\bullet}
\]
is an equivalence, i.e.\ we reduce to the case where $\ccat$ is an $\infty$-groupoid. By choosing a section of $\ccat^\core\rightarrow\pi_0\ccat^\core$, we may identify this map as
\[
\coprod_{x\in \pi_0 \ccat^\core} B \lim F_\bullet \aut_\ccat(x) \rightarrow \lim \coprod_{x\in \pi_0\ccat^\core} B F_\bullet \aut_\ccat(x).
\]
As $\calJ$ is connected, we may identify
\[
\lim \coprod_{x\in \pi_0\ccat^\core} B F_\bullet \aut_\ccat(x) \simeq \coprod_{x\in \pi_0\ccat^\core} \lim B F_\bullet \aut_\ccat(x),
\]
so it suffices to prove that if $x\in \ccat$ then the map
\[
B \lim F_\bullet \aut_\ccat(x)\rightarrow \lim BF_\bullet\aut_\ccat(x),
\]
is essentially surjective. This holds under the assumption that $\lim BF_\bullet\aut_\ccat(x)$ is connected.
\end{proof}

Finally, we record how modifications of $\ccat$ behaves with respect to limits and colimits in $\ccat$.

\begin{lemma}\label{prop:colimitsinmodification}
Fix a diagram $\calJ$, and suppose that $F \in \Fun^\times(\spaces_0,\spaces)$ preserves $\calJ$-shaped limits. Then $\pi_F\colon \ccat\rightarrow\ccat_F$ preserves all $\calJ$-shaped limits and colimits.
\end{lemma}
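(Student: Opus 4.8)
The plan is to reduce everything to a formal manipulation of mapping spaces, using the concrete description of $\ccat_F$ from \cref{prop:modificationproperties} and the fact that $\pi_F$ is essentially surjective. Since every object of $\ccat_F$ is equivalent to one of the form $\pi_F(c)$, a cone over a diagram $q\colon\calJ\to\ccat_F$ with apex $x$ is a limit cone precisely when, for every $c\in\ccat$, the induced map $\map_{\ccat_F}(\pi_F(c),x)\to\lim_{\calJ}\map_{\ccat_F}(\pi_F(c),q(\bs))$ is an equivalence of spaces. The first step is to upgrade the identification $\map_{\ccat_F}(\pi_F(c),\pi_F(b))\simeq F(\map_\ccat(c,b))$ of \cref{prop:modificationproperties} to an equivalence of functors of $b$ that is compatible with the map induced by $\pi_F$ on mapping spaces. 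This coherence is exactly what \cref{corrolary:explicit_formula_for_the_monad_associated_to_a_modification} provides: it identifies the presheaf $\pi_F^\ast\pi_{F!}(y_\ccat(c))$, namely $b\mapsto\map_{\ccat_F}(\pi_F(b),\pi_F(c))$, with $b\mapsto F(\map_\ccat(b,c))$ naturally in $b$; the variance needed in the other slot follows by applying the same statement to $\ccat^\op$, using that $(\ccat_F)^\op\simeq(\ccat^\op)_F$ compatibly with $\pi_F$ — modification only alters mapping spaces, and $\map_{\ccat^\op}(a,b)=\map_\ccat(b,a)$.

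Granting this, the limit statement is a direct computation. Let $a=\lim_\calJ p$ in $\ccat$, with limit cone $(a\to p(j))_j$, and apply $\pi_F$ to obtain a cone with apex $\pi_F(a)$ over $\pi_F\circ p$. For fixed $c\in\ccat$, the natural identification of the first paragraph turns the comparison map
\[
\map_{\ccat_F}(\pi_F(c),\pi_F(a))\to\lim_{\calJ}\map_{\ccat_F}(\pi_F(c),\pi_F(p(\bs)))
\]
into the canonical map $F(\map_\ccat(c,a))\to\lim_{\calJ}F(\map_\ccat(c,p(\bs)))$. Because the corepresentable functor $\map_\ccat(c,\bs)$ preserves all limits, $(\map_\ccat(c,a)\to\map_\ccat(c,p(j)))_j$ is a limit cone in $\spaces$; moreover its apex and all its terms are mapping spaces of $\ccat$, hence lie in $\spaces_0$, so the hypothesis that $F$ preserves $\calJ$-shaped limits applies and $F$ carries this cone to a limit cone. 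Thus the displayed map is an equivalence for every $c$, and since $\pi_F$ is essentially surjective, $\pi_F(a)$ with the image cone is a limit of $\pi_F\circ p$ in $\ccat_F$.

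For colimits I would argue by duality: via the identification $(\ccat_F)^\op\simeq(\ccat^\op)_F$ — under which $\pi_F\colon\ccat^\op\to(\ccat^\op)_F$ is the opposite of $\pi_F\colon\ccat\to\ccat_F$ — the functor $\pi_F$ preserves $\calJ$-shaped colimits if and only if its opposite preserves $\calJ^\op$-shaped limits, and the latter is the already-established limit case applied to $\ccat^\op$ and the shape $\calJ^\op$, with $F$ again being applied only to ordinary mapping spaces. I expect the single non-formal point to be the naturality upgrade of the first paragraph: promoting the pointwise equivalence of \cref{prop:modificationproperties} to an equivalence of functors which intertwines the evident cones, so that a limit cone of mapping spaces in $\ccat$ can genuinely be transported to $\ccat_F$. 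This is where the enriched-categorical details of the construction of $\ccat_F$ enter; everything after it is a diagram chase with corepresentable functors and the preservation hypothesis on $F$. As a consistency check, in the situation of \cref{ex:trivialaction} (with $F=\map(T,\bs)$, which preserves all limits) the composite $\ccat\xrightarrow{\pi_F}\ccat_F\hookrightarrow\ccat^T$ is the constant-diagram functor, which indeed preserves all limits and colimits.
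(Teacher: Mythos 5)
Your argument is correct and is essentially the paper's proof in mirror image: the paper tests $\pi_F(\colim a_\bullet)$ against $\map_{\ccat_F}(\bs,\pi_F(b))\simeq F(\map_\ccat(\bs,b))$, commutes $F$ past the limit of mapping spaces, concludes by essential surjectivity of $\pi_F$, and then gets limits via $(\ccat_F)^\op\simeq(\ccat^\op)_F$, whereas you do the limit case first and dualize for colimits. Your extra step making the naturality of the identification $\map_{\ccat_F}(\pi_F(a),\pi_F(b))\simeq F(\map_\ccat(a,b))$ explicit via \cref{corrolary:explicit_formula_for_the_monad_associated_to_a_modification} is a reasonable precaution that the paper leaves implicit, and you share with the paper the harmless conflation of $\calJ$ and $\calJ^\op$ when passing between the two cases.
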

\begin{proof}
As $(\ccat_F)^\op\simeq (\ccat^\op)_F$, it suffices to just prove that $\pi_F$ preserves all $\calJ$-shaped colimits. Given a diagram $a_\bullet\colon \calJ\rightarrow\ccat$ admitting a colimit in $\ccat$, we can compute
\begin{align*}
\map_{\ccat_F}(\pi_F(\colim a_\bullet),\pi_F(b)) &\simeq F(\map_\ccat(\colim a_\bullet,b))\simeq F(\lim \map_\ccat(a_\bullet,b))\\
&\simeq \lim F(\map_\ccat(a_\bullet,b))\simeq \lim \map_{\ccat_F}(\pi_F(a_\bullet),\pi_F(b))
\end{align*}
for any $b\in \ccat$. As $\pi_F$ is essentially surjective, it follows that
\[
\map_{\ccat_F}(\pi_F(\colim a_\bullet),c)\simeq \lim \map_{\ccat_F}(\pi_F(a_\bullet),c)
\]
for all $c\in \ccat_F$, realizing $\pi_F(\colim a_\bullet)$ as a colimit of the diagram $\pi_F(a_\bullet)$.
\end{proof}

\subsection{Modifications of Malcev theories}\label{ssec:modificationmalcev}

We now further specialize the above discussion to the case where $\spaces_0 = \ukanspaces$ is the $\infty$-category of supersimple spaces, introduced in \cite[Definition 2.2.9]{usd1}. By \cite[Theorem 2.4.2, Corollary 2.4.3]{usd1}, the full subcategory $\ukanspaces\subset\spaces$ of supersimple spaces is closed under products, coproducts, and retracts, and has the property that every finite product-preserving functor $F\colon \ukanspaces\to\spaces$ restricts to an endofunctor of $\ukanspaces$. This leads us to the following definition.

\begin{defn}\label{def:modifiers}
The $\infty$-category of \emph{modifiers} is the full subcategory
\[
\malcevmodifiers\subset\End(\ukanspaces)
\]
spanned by those endofunctors of $\ukanspaces$ that preserve all small products.
\end{defn}

\begin{ex}
Let $T$ be any space. Then
\[
\map(T,\bs)\colon\ukanspaces\to\ukanspaces
\]
is a modifier.
\end{ex}

\begin{ex}
For all $n \geq -2$, the $n$-truncation functor
\[
\tau_{\leq n}\colon \ukanspaces\to\ukanspaces
\]
is a modifier.
\end{ex}

\begin{ex}
Let $\calU$ be a nonprincipal ultrafilter on a set. Then the ultrapower functor
\[
\prod_\calU\colon \ukanspaces\to\ukanspaces
\]
preserves finite products but not infinite products, and so is \emph{not} a modifier in the sense of \cref{def:modifiers}.
\end{ex}

Every Malcev theory is enriched in supersimple spaces. We have required that our modifiers preserve all small products in order to make the following true.

\begin{prop}
Let $F\in \malcevmodifiers$ be a modifier. 
\begin{enumerate}
\item If $\pcat$ is a Malcev theory, then $\pcat_F$ is a Malcev theory.
\item If $f\colon \pcat \to \qcat$ is a homomorphism of Malcev theories, then $f_F\colon \pcat_F \to \qcat_F$ is a homomorphism of Malcev theories.
\end{enumerate}
In other words, modification defines a functor
\[
\malcevmodifiers \to \End(\Malc).
\]
\end{prop}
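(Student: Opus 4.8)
The plan is to check directly that $\pcat_F$ satisfies each clause of the definition of a Malcev theory from \cite{usd1}, and then to deduce the statement about homomorphisms from a naturality square. The only facts about $\pi_F\colon\pcat\to\pcat_F$ I would use are the three already recorded: $\pi_F$ is essentially surjective and identifies $\map_{\pcat_F}(\pi_F a,\pi_F b)\simeq F(\map_\pcat(a,b))$, acting on mapping spaces as the comparison map of \cref{lem:uniqueassembly} (\cref{prop:modificationproperties}); and, since a modifier preserves all small products, \cref{prop:colimitsinmodification} shows $\pi_F$ preserves all small products and all small coproducts. Informally, $\pi_F$ behaves like postcomposition with $F$ on mapping spaces, and the whole argument is a matter of transporting structure and identities along it.

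For (1) I would proceed in three steps. First, the mapping spaces of $\pcat_F$ lie in $\ukanspaces$: this is immediate from the mapping space formula together with the fact, recalled before \cref{def:modifiers} from \cite{usd1}, that a finite-product-preserving functor $\ukanspaces\to\spaces$ takes values in $\ukanspaces$. Second, $\pcat_F$ possesses the (possibly infinite) products and coproducts realizing the arities of a theory: given a family $\{a_i\}$ in $\pcat$, the object $\pi_F(\prod a_i)$ satisfies the universal property of the product in $\pcat_F$ — one checks this on mapping spaces using \cref{prop:modificationproperties} and the fact that $F$ preserves products — and dually for $\pi_F(\coprod a_i)$; since $\pi_F$ is essentially surjective these exhaust the objects of $\pcat_F$, so a generating set of sorts for $\pcat$ maps to one for $\pcat_F$. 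Third, the Malcev condition passes to $\pcat_F$: it asserts that the homotopy category admits a Malcev operation, that is, a morphism between suitable (co)products of sorts witnessing the two Malcev identities in $\pi_0$ of the relevant mapping spaces; since $\pi_F$ is a functor that preserves (co)products it carries the diagonal and projection maps of $\pcat$ to those of $\pcat_F$, and it induces on $\pi_0$ of mapping spaces the comparison maps compatibly with composition, so the image under $\pi_F$ of a Malcev operation for $\pcat$ is a Malcev operation for $\pcat_F$, and the identities transport verbatim.

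For (2), the modification construction is functorial in its $\infty$-category variable (it is $\catinfty(F)$ applied to $\ccat$), so a homomorphism $f\colon\pcat\to\qcat$ induces $f_F\colon\pcat_F\to\qcat_F$ together with a commuting square $f_F\circ\pi_F\simeq\pi_F\circ f$. A homomorphism of Malcev theories is a functor preserving the relevant (co)products \cite{usd1}; since $f$ and both copies of $\pi_F$ preserve (co)products and the $\pi_F$ are essentially surjective, evaluating the square on objects of the form $\pi_F(\prod a_i)$ and $\pi_F(\coprod a_i)$ — which by step two of (1) compute (co)products in $\pcat_F$ and $\qcat_F$ — shows $f_F$ preserves (co)products, hence is a homomorphism. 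Finally, for the functor statement I would note that modification is already known to be functorial in the modifier, giving a functor $\Fun^\times(\ukanspaces,\spaces)\to\Fun(\catinfty(\ukanspaces),\catinfty)$; a modifier sends $\ukanspaces$-enriched $\infty$-categories to $\ukanspaces$-enriched $\infty$-categories, so this refines to a functor valued in $\End(\catinfty(\ukanspaces))$, and parts (1) and (2) say exactly that each $(\bs)_F$ carries Malcev theories to Malcev theories and homomorphisms to homomorphisms, so it restricts to the asserted functor $\malcevmodifiers\to\End(\Malc)$.

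The only non-bookkeeping step is the transfer of the Malcev condition, and how hard it is depends entirely on how that condition is packaged in \cite{usd1}: if it is the property of $\h\pcat$ described above the argument is as given, whereas if it were instead phrased as a coherent operation on $\pcat$ itself one would transport it along $\pi_F$ using the coherence built into the modification construction via the enriched $\infty$-category theory of \cite{gepnerhaugseng2015enriched} — still routine, but with more homotopies to track. A secondary point deserving a little care is the second step of (1): that $\pcat_F$ actually has the required (co)products is not automatic from essential surjectivity of $\pi_F$ and genuinely uses that $F$ preserves products.
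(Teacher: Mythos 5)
Your proposal is correct and follows essentially the same route as the paper: both rest on the facts that $\pi_F$ is essentially surjective with $\map_{\pcat_F}(\pi_F a,\pi_F b)\simeq F(\map_\pcat(a,b))$ and preserves coproducts (\cref{prop:modificationproperties,prop:colimitsinmodification}), and your argument for (2) is the paper's computation verbatim. The only divergence is in (1), where the paper finishes by citing the closure lemma \cite[Lemma 4.1.5.(3)]{usd1} (an essentially surjective, coproduct-preserving functor out of a Malcev theory has Malcev target), while you re-derive that closure statement by hand, transporting coproducts, generating sorts, and the Malcev operation along $\pi_F$ --- a correct inlining of the cited lemma, with your closing caveat that the existence of the (possibly infinitary) coproducts in $\pcat_F$ genuinely uses preservation of all small products by $F$ being exactly why the paper imposes that condition on modifiers.
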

\begin{proof}
(1)~~By \cref{prop:modificationproperties} and \cref{prop:colimitsinmodification}, the functor $\pi_F\colon \pcat\to\pcat_F$ preserves coproducts and is essentially surjective, implying by \cite[Lemma 4.1.5.(3)]{usd1} that $\pcat_F$ is again a Malcev theory.

(2)~~Fix a homomorphism $f\colon \pcat\to\qcat$. We must show that $f_F\colon \pcat_F \to \qcat_F$ preserves coproducts. Given a set of objects $\{P_i : i \in I\}$ in $\pcat_F$, for each $i\in I$ we may write $P_i = \pi_F(P_i')$ for some $P_i' \in \pcat$, in which case
\begin{align*}
f_F(\coprod_{i\in I}P_i) &\simeq f_F(\coprod_{i\in I}\pi_F P_i') \simeq f_F\pi_F( \coprod_{i\in I} P_i') \\
&\simeq \pi_F f(\coprod_{i\in I}P_i')\simeq  \coprod_{i\in I}\pi_Ff (P_i') \simeq \coprod_{i\in I} f_F(P_i)
\end{align*}
by \cref{prop:colimitsinmodification}.
\end{proof}

\begin{rmk}\label{rmk:continuousmodifier}
Let $\kappa$ be a regular cardinal. Say that $F \in \malcevmodifiers$ is \emph{$\kappa$-continuous} if the following condition holds:
\begin{itemize}
\item Let $\jcat$ be a $\kappa$-filtered $\infty$-category and suppose that $X \in \Fun(\jcat,\spaces)$ admits a Malcev operation. Then the comparison map $\colim_{j\in\jcat}F(X_j) \to F(\colim_{j\in \jcat}X_j)$ is an equivalence.
\end{itemize}
If $F$ is $\kappa$-continuous and $\pcat$ is a $\kappa$-bounded Malcev theory in the sense of \cite[Definition 3.5.5]{usd1}, then it is easily seen that $\pcat_F$ is again a $\kappa$-bounded Malcev theory.
\end{rmk}

\subsection{Elementary modifiers}\label{ssec:elemmodifiers}

We now single out a particularly well behaved class of modifiers.

\begin{lemma}
The full subcategory
\[
\malcevmodifiers\subset\End(\ukanspaces)
\]
is closed under geometric realizations.
\end{lemma}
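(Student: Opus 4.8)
The geometric realization in question is computed in the functor category $\Fun(\ukanspaces,\spaces)$, in which all colimits exist and are computed pointwise. Thus for a simplicial object $F_\bullet\colon\Delta^\op\to\malcevmodifiers$ the realization $|F_\bullet|$ is the functor $X\mapsto|F_\bullet(X)|$, where $|F_\bullet(X)|$ denotes the geometric realization in $\spaces$ of the simplicial space $F_\bullet(X)$. The plan is to show that $|F_\bullet|$ is again a modifier: that it takes values in $\ukanspaces$ --- so that the ambient realization actually represents the colimit of $F_\bullet$ in $\End(\ukanspaces)$ --- and that it preserves all small products.

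First I would check that $|F_\bullet|$ preserves finite products. Geometric realizations are sifted colimits, and sifted colimits commute with finite products in $\spaces$; since each $F_n$ preserves finite products, so does $X\mapsto|F_\bullet(X)|$. By \cite[Corollary 2.4.3]{usd1}, as recalled in \S\ref{ssec:modificationmalcev}, every finite-product-preserving functor $\ukanspaces\to\spaces$ restricts to an endofunctor of $\ukanspaces$; hence $|F_\bullet|\in\End(\ukanspaces)$. Since this colimit, computed in the ambient $\Fun(\ukanspaces,\spaces)$, lands in the full subcategory $\End(\ukanspaces)$, it also represents the colimit of $F_\bullet$ there.

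It remains to show $|F_\bullet|$ preserves arbitrary small products, which is the main point. Fix a family $\{X_i\}_{i\in I}$ in $\ukanspaces$. As each $F_n$ is a modifier we have an equivalence of simplicial objects $F_\bullet(\prod_i X_i)\simeq\prod_i F_\bullet(X_i)$, so the comparison map $|F_\bullet|(\prod_i X_i)\to\prod_i|F_\bullet|(X_i)$ that we must invert is exactly the canonical map $|\prod_i F_\bullet(X_i)|\to\prod_i|F_\bullet(X_i)|$. In other words, we must show that geometric realization commutes with the (possibly infinite) product of the family of simplicial supersimple spaces $F_\bullet(X_i)$. This fails for general simplicial spaces, and is precisely where the theory of supersimple spaces enters: it is the analogue, for $\ukanspaces$, of the classical fact that the geometric realization of simplicial abelian groups commutes with products, and relies on the structural results of \cite{usd1} concerning $\ukanspaces$ rather than on any formal manipulation. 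Granting this interchange, the comparison map is an equivalence, so $|F_\bullet|$ preserves small products and is therefore a modifier. The pointwise computation of the realization and the case of finite products are routine; the genuine obstacle is this commutation of geometric realization with infinite products of simplicial supersimple spaces.
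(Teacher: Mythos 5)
You have isolated the correct crux: after the routine observations that the realization is computed pointwise, preserves finite products, and hence lands in $\End(\ukanspaces)$, everything comes down to inverting the comparison map from the realization of $\prod_i F_\bullet(X_i)$ to the product of the realizations, i.e.\ to commuting geometric realization with arbitrary (possibly infinite) products of simplicial supersimple spaces. But at precisely this point your argument stops: you write ``granting this interchange'' and appeal vaguely to ``structural results of \cite{usd1}'' without naming a statement or giving a proof. Since, as you yourself note, the rest is formal, this unproved interchange \emph{is} the lemma, so as written the proposal has a genuine gap rather than a complete proof.

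The paper closes this gap by a change of perspective rather than a direct verification: a small-product-preserving functor $\ukanspaces\to\ukanspaces$ is the same datum as a nonbounded model of the Malcev pretheory $(\ukanspaces)^\op$, so $\malcevmodifiers\simeq\Model^{\nb}_{(\ukanspaces)^\op}$, and closure of such model categories under geometric realizations is exactly \cite[Proposition 4.1.7]{usd1}. That cited result is where the Malcev structure (each connected supersimple space is a grouplike $H$-space, so the simplicial spaces $F_\bullet(X_i)$ admit Malcev operations) actually enters, playing the role of your simplicial-abelian-groups analogy. To repair your argument you should either make this identification and cite the proposition, or prove the interchange directly from the Malcev operations on the $F_\bullet(X_i)$ via the corresponding realization results of \cite{usd1}; as it stands, the central step is asserted but not established.
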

\begin{proof}
$\malcevmodifiers$ may be identified as the $\infty$-category of nonbounded models for the Malcev pretheory $(\ukanspaces)^\op$, so this follows from \cite[Proposition 4.1.7]{usd1}.
\end{proof}

As in \cite[Definition 7.1.7]{usd1}, we write
\[
\spheresandmore \subset \spaces
\]
for the full subcategory generated by $S^1$ under finite products, wedges, and smash products (with respect to any basepoint).

\begin{defn}\label{def:elementarymodifiers}\label{definition:elementary_modifier}
The $\infty$-category of \emph{elementary modifiers} is the smallest full subcategory
\[
\elementarymodifiers\subset\malcevmodifiers
\]
which is closed under geometric realizations and contains $\map(T,\bs)$ for $T \in \spheresandmore$.
\end{defn}

\begin{warning}
The $n$-sphere $S^n$ is \emph{not} supersimple unless $n \in \{0,1,3,7\}$. In particular,
\[
\map(T,\bs)\colon \ukanspaces\to\ukanspaces
\]
is rarely a corepresentable functor for $T \in \spheresandmore$.
\end{warning}

In \S\ref{ssec:elementarypostnikov}, we will prove that the class of elementary modifiers contains all modifiers relevant to the theory of Postnikov towers. We record here some general properties of elementary modifiers.

\begin{prop}\label{prop:properties_of_elementary_modifiers}
Let $F$ be an elementary modifier.
\begin{enumerate}
\item If $T \in \spheresandmore$, then $X \mapsto F(X^T)$ is elementary.
\item $F$ preserves coproducts. In particular, there is a unique natural transformation $F \to \tau_{\leq 0}$.
\item $F$ is $\omega$-continuous (see \cref{rmk:continuousmodifier}). 
\item For any $\ccat \in \catinfty(\ukanspaces)$, the projection $\ccat_F \to \h\ccat$ is conservative and the comparison map $(\ccat^\core)_F \to \ccat_F^\core$ is an equivalence.
\item The modification functor
\[
\elementarymodifiers \to \End(\catinfty(\ukanspaces)),\qquad F \mapsto (\ccat \mapsto \ccat_F)
\]
preserves levelwise pullbacks along levelwise effective epimorphisms.
\end{enumerate}
\end{prop}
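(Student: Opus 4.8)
The plan is to exploit the inductive description in \cref{def:elementarymodifiers}: the $\infty$-category $\elementarymodifiers$ is the \emph{smallest} full subcategory of $\malcevmodifiers$ that is closed under geometric realizations and contains $\map(T,\bs)$ for every $T\in\spheresandmore$. Geometric realizations of modifiers are computed pointwise (they are computed as in $\Fun(\ukanspaces,\spaces)$, by the lemma above), so evaluation at a fixed supersimple space preserves them. Consequently, for any property $P$ of modifiers, to prove that every elementary modifier satisfies $P$ it suffices to check $P$ on the generators $\map(T,\bs)$ and to check that the full subcategory of modifiers satisfying $P$ is closed under geometric realizations. I would establish (1)--(3) directly in this way, and then deduce (4) and (5) from them together with \cref{prop:modifycore} and \cref{prop:modifierlimitses}.

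To prove (1) I would introduce the operation $\mathcal{G}_T\colon\malcevmodifiers\to\malcevmodifiers$, $F\mapsto F\circ\map(T,\bs)$, and note that it is well defined (a composite of product-preserving endofunctors of $\ukanspaces$), preserves geometric realizations (these being pointwise and $\map(T,\bs)$ fixed), and sends the generator $\map(T',\bs)$ to $\map(T\times T',\bs)$ by the exponential law, which is again a generator since $\spheresandmore$ is closed under products; hence $\mathcal{G}_T^{-1}(\elementarymodifiers)$ is closed under geometric realizations and contains all generators, so $\mathcal{G}_T$ preserves elementary modifiers. For (2) I would use that every $T\in\spheresandmore$ is connected (being built from $S^1$ under products, wedges and smashes), so any map out of $T$ into a coproduct of spaces factors through a single summand and $\map(T,\bs)$ preserves coproducts; the class of coproduct-preserving modifiers is closed under geometric realizations since colimits commute with colimits, and the ``in particular'' is \cref{prop:modifycore}(1). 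For (3) I would use that every $T\in\spheresandmore$ is a finite complex, hence a compact object of $\spaces$, so $\map(T,\bs)$ preserves all filtered colimits and in particular is $\omega$-continuous in the sense of \cref{rmk:continuousmodifier}; and $\omega$-continuity passes to geometric realizations of modifiers because filtered colimits commute with geometric realizations.

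Part (4) is where the genuine content lies. Using (2), $F$ preserves coproducts, so \cref{prop:modifycore}(1) gives a natural transformation $F\to\tau_{\leq 0}$, and \cref{prop:modifycore}(4) reduces the assertion to the conservativity of $\ccat_F\to\h\ccat$; by \cref{prop:modifycore}(3) this in turn follows as soon as $\tau_{\leq 0}F\to\tau_{\leq 0}$ is shown to admit the structure of a group object in $\Fun^\times(\ukanspaces,\spaces)_{/\tau_{\leq 0}}$ --- equivalently, that $\tau_{\leq 0}F(X)$ is naturally a group over $\pi_0 X$ for every supersimple $X$. This last point does \emph{not} follow from the naive induction above, since reflexive coequalizers of groups need not remain groups at the level of underlying sets, and I expect it to be the main obstacle. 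My plan is to prove it by analyzing $\tau_{\leq 0}F$ directly: the base case $F=\map(S^1,\bs)$ works because $\tau_{\leq 0}\map(S^1,X)\simeq\coprod_{x\in\pi_0 X}\pi_1(X,x)$, which is a group over $\pi_0 X$ precisely because $\pi_1$ of a supersimple space is abelian; one then shows that a group-over-$\tau_{\leq 0}$ structure propagates through the products, wedges, smashes, and geometric realizations out of which a general elementary modifier is assembled, by exploiting the particularly nice cell structures on the spaces $T\in\spheresandmore$, which trivialize upon mapping into a supersimple space. This is the delicate heart of the argument, and I would carry it out in \S\ref{subsection:path_components_of_elementary_modifiers}.

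Finally, for (5): suppose given a levelwise pullback square of elementary modifiers
\[
\begin{tikzcd}
F'\ar[r]\ar[d] & F\ar[d]\\
G'\ar[r] & G
\end{tikzcd}
\]
in which the lower horizontal map $G'\to G$ is a levelwise effective epimorphism. I would regard this square as the limit of the connected span $G'\to G\leftarrow F$, so that $F'\simeq G'\times_G F$ in $\Fun^\times(\ukanspaces,\spaces)$, and then check the hypotheses of \cref{prop:modifierlimitses} for the corresponding diagram: (1) holds as the span is connected; (2) and (3) hold for every elementary modifier, by part (4) of the present proposition and by \cref{prop:modifycore}(2) (applied to the natural transformations $F,G,G'\to\tau_{\leq 0}$ from (2)), respectively; and (4) holds by case (4)(b) of \cref{prop:modifierlimitses}, since $G'\to G$ being a levelwise effective epimorphism means exactly that $\pi_0 G'(X)\to\pi_0 G(X)$ is surjective for all $X$. \cref{prop:modifierlimitses} then yields $\ccat_{F'}\simeq\ccat_{G'}\times_{\ccat_G}\ccat_F$ for every $\ccat\in\catinfty(\ukanspaces)$, i.e.\ the modification functor sends the given square to a levelwise pullback, as desired.
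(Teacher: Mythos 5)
Your handling of (1)--(3) and (5) matches the intended arguments: check the generators $\map(T,\bs)$, use that geometric realizations of modifiers are computed pointwise to get closure, and for (5) feed (2)--(4) into \cref{prop:modifierlimitses} exactly as you do. The genuine gap is in (4). You reduce conservativity of $\ccat_F\to\h\ccat$ to the claim that $\tau_{\leq 0}F\to\tau_{\leq 0}$ admits a group-object structure in $\Fun^\times(\ukanspaces,\spaces)_{/\tau_{\leq 0}}$, so that \cref{prop:modifycore}(3) applies, and you defer that claim to a later analysis. But this claim is both unproven and doubtful, and it is essentially the whole content of (4). Already for a generator such as $F=\map(S^1\times S^1,\bs)$ there is no visible natural group structure: the group structure on $[S^1\times S^1,X]$ over $\pi_0X$ comes from choosing an $H$-space structure on each connected supersimple $X$, and such choices cannot be made naturally in $X$; the torus carries no co-$H$ structure, and the cell-structure analysis only exhibits $[T,X]$ as an iterated \emph{torsor} over homotopy-group bundles (with a natural basepoint section from constant maps, but no natural multiplication). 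Moreover, even granting group structures on the terms of a simplicial resolution $F_\bullet$, the simplicial structure maps need not be homomorphisms, so $\tau_{\leq 0}|F_\bullet|$ would not inherit one; your aside about reflexive coequalizers is not the issue (a reflexive coequalizer of groups \emph{is} computed on underlying sets, by the usual Malcev argument) --- the issue is the absence of compatible group structures to begin with. So the induction you propose stalls at the first generator that is not a co-$H$ space.

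The good news is that (4) needs no group structures at all, and a much softer argument is available. For generators: since $\map(T,\bs)\simeq\lim_T\id$, the comparison $\ccat_{\map(T,\bs)}\to\ccat^T$ is fully faithful by \cref{prop:modifierlimitsff}, hence conservative, and evaluation at a basepoint $\ccat^T\to\ccat$ is conservative because $T$ is connected; composing with the conservative $\ccat\to\h\ccat$ and using uniqueness of the transformation $\map(T,\bs)\to\tau_{\leq 0}$ gives conservativity of the canonical projection $\ccat_{\map(T,\bs)}\to\h\ccat$. For closure under geometric realizations: $F_0\to|F_\bullet|$ is a levelwise effective epimorphism, so $\ccat_{F_0}\to\ccat_{|F_\bullet|}$ is full and essentially surjective, and conservativity of $\ccat_{F_0}\to\h\ccat$ then forces conservativity of $\ccat_{|F_\bullet|}\to\h\ccat$ by lifting morphisms along the full functor. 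With (4) repaired in this way, your deduction of (5) (and the core statement in (4) via \cref{prop:modifycore}(4)) goes through as written.
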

\begin{proof}
(1)~~Consider the class of modifiers $F$ for which $X \mapsto F(X^T)$ is elementary. If $F(X) = \map(S,X)$ for $S\in\spheresandmore$, then
\[
F(X^T) \simeq \map(T\times S,X)
\]
is elementary, so we must show that this class of closed under geometric realizations. If $F_\bullet$ is a simplicial object in modifiers, then
\[
|F_\bullet(X^T)| \simeq |F_\bullet|(X^T),
\]
so this is clear.

(2,3)~~As these properties are preserved by geometric realizations of modifiers, it suffices to prove that if $T\in\spheresandmore$ then $\map(T,\bs)$ preserves coproducts and is $\omega$-continuous. This holds as $T$ is compact and connected. That there exists a unique natural transformation $F \to \tau_{\leq 0}$ follows from \cref{prop:modifycore}.(1).

(4)~~We must prove that $\ccat_F \to \h \ccat$ is conservative; the second statement then follows from (2) and \cref{prop:modifycore}.

First we claim that the class of modifiers $F$ over $\tau_{\leq 0}$ for which $\ccat_F\to\h\ccat$ is conservative is closed under geometric realizations. Let $F_\bullet$ be a simplicial object in $\malcevmodifiers$ and consider the commutative diagram
\begin{center}\begin{tikzcd}
\ccat_{F_0}\ar[rr]\ar[dr]&&\ccat_{|F_\bullet|}\ar[dl]\\
&\h\ccat
\end{tikzcd}.\end{center}
As $F_0 \to |F_\bullet|$ is an effective epimorphism, $\ccat_{F_0} \to \ccat_{|F_\bullet|}$ is full. As $\ccat_{F_0} \to \h\ccat$ is conservative by assumption, it follows that $\ccat_{|F_\bullet|}\to\h\ccat$ is conservative.

Next we claim that if $T$ is any connected space, then $\ccat_{\map(T,\bs)} \to \h\ccat$ is conservative. As $\ccat\to\h\ccat$ is conservative, it suffices to pick a basepoint of $T$ and verify that restriction along this basepoint defines a conservative functor $\ccat_{\map(T,\bs)}\to\ccat$. Observe that $\map(T,\bs)\simeq\lim_T \id$ can be identified as the constant limit of the identity functor indexed over $T$. It follows that $\ccat_{\map(T,\bs)}\to\ccat$ factors as
\[
\ccat_{\map(T,\bs)}\simeq \ccat_{\lim_T \id}\to \lim{}_T\,\ccat_\id\simeq \ccat^T \to \ccat,
\]
where the first possibly non-invertible map is fully faithful and therefore conservative by \cref{prop:modifierlimitsff}. As $T$ is connected, $\ccat^T\to\ccat$ is conservative, and the claim follows.

(5)~~This now follows from \cref{prop:modifierlimitses}.
\end{proof}

Although not needed for the work of this paper, we also record an additional pleasant property of elementary modifiers in the setting of higher universal algebra. We need the following.

\begin{lemma}\label{lem:constantcolimits}
Let $\ccat$ be an $\infty$-category and $T$ be a space. The following are equivalent:
\begin{enumerate}
\item $\ccat$ admits constant colimits indexed by $T$;
\item $\pi_{\map(T,\bs)}\colon \ccat \to \ccat_{\map(T,\bs)}$ admits a left adjoint.
\end{enumerate}
In this case, the left adjoint $L$ satisfies $L \pi_{\map(T,\bs)}(X)\simeq T\otimes X$ for $X \in \ccat$.
\end{lemma}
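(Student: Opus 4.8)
The plan is to make the functor $\pi_{\map(T,\bs)}$ completely explicit using \cref{prop:modificationproperties} and then run the pointwise criterion for the existence of a left adjoint. Write $F=\map(T,\bs)$, regarded as a finite-product-preserving endofunctor of $\spaces$, so that $\ccat_F$ is defined. By \cref{prop:modificationproperties}, $\pi_F\colon\ccat\to\ccat_F$ is essentially surjective and there is an identification $\map_{\ccat_F}(\pi_F a,\pi_F b)\simeq F(\map_\ccat(a,b))=\map(T,\map_\ccat(a,b))$, natural in $a$ and $b$. The key point is that $\map(T,\map_\ccat(a,X))\simeq\lim_{t\in T}\map_\ccat(a,X)$ is exactly the copresheaf on $\ccat$ that the constant colimit $T\otimes a:=\colim_{t\in T}\mathrm{const}_a$ is designed to corepresent; so once one knows $T\otimes a$ exists, one gets $\map_\ccat(T\otimes a,X)\simeq\map_{\ccat_F}(\pi_F a,\pi_F X)$ naturally in $X$.

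For the implication $(1)\Rightarrow(2)$, I would assume $\ccat$ has all constant $T$-indexed colimits and invoke the pointwise criterion: $\pi_F$ admits a left adjoint precisely when, for every $Y\in\ccat_F$, the functor $X\mapsto\map_{\ccat_F}(Y,\pi_F X)$ on $\ccat$ is corepresentable. Choosing $a$ with $Y\simeq\pi_F a$ by essential surjectivity, the displayed identification shows this functor is corepresented by $T\otimes a$. Hence the left adjoint $L$ exists, and unwinding the construction yields the asserted formula $L\pi_F(X)\simeq T\otimes X$.

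For $(2)\Rightarrow(1)$, given a left adjoint $L\dashv\pi_F$ I would fix $a\in\ccat$ and chain the equivalences $\map_\ccat(L\pi_F a,X)\simeq\map_{\ccat_F}(\pi_F a,\pi_F X)\simeq\map(T,\map_\ccat(a,X))$, natural in $X\in\ccat$. The right-hand side is the defining copresheaf of $\colim_{t\in T}\mathrm{const}_a$, so this colimit exists and is computed by $L\pi_F a$. Letting $a$ range over $\ccat$ gives all constant $T$-indexed colimits, again together with the formula $T\otimes X\simeq L\pi_F(X)$.

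There is no substantial obstacle here: the content is entirely in \cref{prop:modificationproperties}, and the only thing requiring care is the bookkeeping of naturality, so that the pointwise criterion genuinely applies and so that one recovers the universal property of a constant colimit rather than a mere pointwise equivalence of mapping spaces. As a sanity check and alternative packaging, one can run the argument through the fully faithful embedding $\ccat_{\map(T,\bs)}\hookrightarrow\ccat^T$ of \cref{ex:trivialaction}: under it $\pi_F$ becomes the constant-diagram functor corestricted to its essential image, and its left adjoint is $\colim_T$ restricted to constant diagrams, which exists exactly when $\ccat$ admits constant $T$-colimits.
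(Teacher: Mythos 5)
Your proposal is correct and follows essentially the same route as the paper's proof: identify $\map_{\ccat_F}(\pi_F a,\pi_F X)\simeq \map_\ccat(a,X)^T$ via \cref{prop:modificationproperties}, use essential surjectivity of $\pi_F$ to reduce to objects in the image, and apply the pointwise corepresentability criterion for left adjoints, noting that $\map_\ccat(a,\bs)^T$ is corepresentable exactly when the constant colimit $T\otimes a$ exists. The extra packaging through $\ccat^T$ is a fine sanity check but adds nothing beyond the paper's argument.
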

\begin{proof}
The functor $\pi_{\map(T,\bs)}\colon \ccat \to \ccat_{\map(T,\bs)}$ admits a left adjoint $L$ if and only if for every $X' \in \ccat_{\map(T,\bs)}$, the functor
\[
\map_{\ccat_{\map(T,\bs)}}(X',\pi_{\map(T,\bs)}(\bs))\colon \ccat\to\spaces
\]
is corepresentable, in which case $LX$ is the corepresenting object. As $\pi_{\map(T,\bs)}$ is essentially surjective, we may suppose without loss of generality that $X' = \pi_{\map(T,\bs)}X$ for some $X \in \ccat$. By construction we have
\[
\map_{\ccat_{\map(T,\bs)}}(\pi_{\map(T,\bs)} X,\pi_{\map(T,\bs)}(\bs)) \simeq \map_\ccat(X,\bs)^T,
\]
and this is corepresentable if and only if the constant colimit $T \otimes X = \colim_T X$ exists, in which case $T \otimes X$ is the corepresenting object. This establishes the lemma.
\end{proof}

\begin{theorem}\label{thm:wle}
Let $\alpha\colon F \to G$ be a natural transformation between elementary modifiers. If $\pcat$ is a loop theory, then $\pi_\alpha\colon \pcat_F \to \pcat_G$ is weakly left exact.
\end{theorem}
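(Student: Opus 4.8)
The plan is to show that the modification functor $\Phi\colon\elementarymodifiers\to\catinfty$, $F\mapsto\pcat_F$, factors through the subcategory of weakly left exact functors — i.e.\ functors preserving those pullback squares which exist in their source — by exploiting that $\elementarymodifiers$ is generated under geometric realizations by the modifiers $\map(T,\bs)$ with $T\in\Sph$. Two preliminary observations set up the reduction. First, every $\pi_\alpha$ automatically preserves finite products: modification preserves finite products, $\pi_F$ is essentially surjective, and $\pi_\alpha\circ\pi_F\simeq\pi_G$, so the content of the theorem lies entirely in genuine, non-product pullbacks. Second, since modifiers preserve products, a short computation identifies the level-$m$ space of the Segal space of $\pcat_F$ with $\colim_{x_0,\dots,x_m}F(\map_\pcat(x_0,x_1)\times\cdots\times\map_\pcat(x_{m-1},x_m))$; here $F$ now occurs linearly, so $\Phi$ preserves geometric realizations, giving $\pcat_{|F_\bullet|}\simeq\colim_{\Delta^\op}\pcat_{F_\bullet}$ after Rezk completion. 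It therefore suffices to (a) establish the base case — that $\pi_{\map(T,\bs)}\colon\pcat\to\pcat_{\map(T,\bs)}$ is weakly left exact for $T\in\Sph$ — relativized to all of the intermediate modifications $\pcat_F$ in place of $\pcat$, and (b) show that weakly left exact functors are closed under the geometric realizations appearing in (a).

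For the base case I would first record that a loop theory admits tensors not only by $S^1$ but by every $T\in\Sph$: the operations (finite products, wedges, smashes) generating $\Sph$ from $S^1$ are carried by $T\mapsto T\otimes P$ to finite colimits available in a loop theory by the analysis of \cite[\S7]{usd1}. Given such tensors, \cref{lem:constantcolimits} supplies a left adjoint to $\pi_{\map(T,\bs)}$, so this functor preserves all limits that exist in its source and is in particular weakly left exact. To relativize, I would prove the auxiliary statement that $\pcat_F$ is again a loop theory for every elementary modifier $F$, with $T$-tensor given by $\pi_F(T\otimes_\pcat\bs)$: for $F=\map(S,\bs)$ this is immediate since $\map(S,\bs)$ commutes with the cotensors $(\bs)^T$, and for a geometric realization $F=|F_\bullet|$ it follows once one knows that $(\bs)^T$ commutes with geometric realizations of supersimple spaces. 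This last point is non-formal and is where the structure theory of $\Sph$ enters, through the nice cell structures on the spaces of $\Sph$ that trivialise upon mapping into a supersimple space (the same input underlying \cref{prop:properties_of_elementary_modifiers}.(1)). With $\pcat_F$ known to be a loop theory, the base-case argument then applies verbatim to each $\pi_{\map(T,\bs)}\colon\pcat_F\to(\pcat_F)_{\map(T,\bs)}$.

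The main obstacle is step (b): colimits of pullback-preserving functors need not preserve pullbacks, so closure of weakly left exact functors under the realizations $\pcat_{|F_\bullet|}\simeq\colim_{\Delta^\op}\pcat_{F_\bullet}$ is not formal, and this is precisely where the loop theory hypothesis is used in an essential way. The point is that because each $\pcat_{F_n}$ is a loop theory its supply of pullback squares is tightly controlled: beyond finite products, the pullbacks that exist are generated by the ``$\Sph$-tensor squares'' produced by the left adjoints of \cref{lem:constantcolimits}, and these are visibly compatible with the colimit over $\Delta^\op$ — each transition functor $\pi_\alpha$ preserves $\Sph$-tensors since $\pi_\alpha(T\otimes_{\pcat_{F_n}}\bs)\simeq T\otimes_{\pcat_{G_n}}\pi_\alpha(\bs)$. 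I would therefore argue that an arbitrary pullback square in $\colim_{\Delta^\op}\pcat_{F_\bullet}$, after passage to representable presheaves $\map_{\pcat_{|F_\bullet|}}(u,\bs)\simeq|\map_{\pcat_{F_\bullet}}(u,\bs)|$, is the geometric realization of a compatible family of pullback squares of presheaves on the levels, so that applying $|\pi_{G_\bullet}|$ — levelwise weakly left exact by the inductive hypothesis — keeps it cartesian. I expect this reduction, which again leans on the cell-structure description of $\Sph$, to be the heart of the proof; the remaining work is the routine inductive unwinding of the generation of $\elementarymodifiers$.
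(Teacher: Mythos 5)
Your plan diverges from what the theorem actually asserts, and two of its load-bearing claims fail. First, ``weakly left exact'' here is a property of the \emph{derived} functor $\pi_{\alpha!}\colon\Model_{\pcat_F}\to\Model_{\pcat_G}$, and it only concerns cartesian squares whose underlying cospan contains an effective epimorphism — not pullbacks existing in the small categories $\pcat_F$ themselves, which is where your whole argument lives. This restriction is not cosmetic: it is precisely what makes the geometric-realization step work. The paper's proof fixes such a square $\sigma$ in $\Model_\pcat$, pulls back along the essentially surjective $\pi_{|\Fss|}$, identifies $\pi_{|\Fss|}^\ast\pi_{|\Fss|!}\sigma\simeq|\pi_{\Fss}^\ast\pi_{\Fss!}\sigma|$, and then uses the Malcev-theoretic fact that geometric realizations commute with pullbacks along effective epimorphisms (\cite[Corollary 4.1.8.(2)]{usd1}). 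Your step (b) instead rests on an unsupported classification of pullbacks in a loop theory as ``generated by $\Sph$-tensor squares''; no such classification is available, and you correctly identify this step as the heart of the matter without supplying an argument for it.

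Second, the relativization is false as stated: $\pcat_F$ need not be a loop theory, and your proposed tensor $\pi_F(T\otimes_\pcat\bs)$ would require $F(X^{S^1})\simeq F(X)^{S^1}$, which fails already for the elementary modifier $F=\tau_{<n}$ (the truncation of a free loop space is not the free loop space of the truncation — the former sees $\pi_n$). The paper never needs this: since $\pi_G\simeq\pi_\alpha\circ\pi_F$ with $\pi_F$ essentially surjective, a cancellation property of weak left exactness (\cite[Proposition 6.3.7.(3)]{usd1}) reduces $\pi_\alpha\colon\pcat_F\to\pcat_G$ to the absolute case $\pi_G\colon\pcat\to\pcat_G$, so the adjoint argument of \cref{lem:constantcolimits} is only ever invoked over the original (idempotent-completed) loop theory $\pcat$, where tensors by all $T\in\Sph$ do exist. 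Your base case for $F=\map(T,\bs)$ over $\pcat$ itself agrees with the paper's, but the relativized base case and the realization step do not go through as written.
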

\begin{proof}
The homomorphism $\pi_\alpha\colon \pcat_F\to\pcat_G$ sits in a commutative diagram
\begin{center}\begin{tikzcd}
&\pcat\ar[dl,"\pi_F"']\ar[dr,"\pi_G"]\\
\pcat_F\ar[rr,"\pi_\alpha"]&&\pcat_G
\end{tikzcd}.\end{center}
As $\pi_F$ is essentially surjective, by \cite[Proposition 6.3.7.(3)]{usd1} we reduce to the case where $\pi_\alpha$ is of the form $\pi_F\colon \pcat\to\pcat_F$ for an elementary modifier $F$.

Without loss of generality, we may suppose that $\pcat$ is idempotent complete. In particular, by \cite[Proposition 7.1.8]{usd1} $\pcat$ admits constant colimits indexed by any space $T \in \spheresandmore$. By \cref{lem:constantcolimits}, this ensures that if $F \simeq \map(T,\bs)$ for some $T \in \spheresandmore$, then $\pi_{\map(T,\bs)}\colon \pcat \to \pcat_{\map(T,\bs)}$ is a right adjoint, and so the same is true of $\pi_{\map(T,\bs)!}\colon \Model_{\pcat} \to \Model_{\pcat_{\map(T,\bs)}}$. In particular, $\pi_{\map(T,\bs)!}$ preserves all limits, and is therefore weakly left exact.

This reduces us to proving that if $\Fss$ is a simplicial modifier for which each $\pi_{F_n}\colon \pcat\to \pcat_{F_n}$ is weakly left exact, then $\pi_{|\Fss|}\colon \pcat\to\pcat_{|\Fss|}$ is weakly left exact. Fix a cartesian square $\sigma$ in $\Model_\pcat$ in which at least one arrow of the underlying cospan is an effective epimorphism. We must show that $\pi_{|\Fss|!}\sigma$ remains cartesian. As $\pi_{|\Fss|}\colon \pcat \to\pcat_{|\Fss|}$ is essentially surjective, it suffices to prove that $\pi_{|\Fss|}^\ast\pi_{|\Fss|!}\sigma$ is cartesian. Observe that
\[
\pi_{|\Fss|}^\ast\pi_{|\Fss|!} \sigma \simeq |\pi_{\Fss}^\ast \pi_{\Fss!}\sigma|.
\]
As each $\pi_{F_n}\colon \pcat\to\pcat_{F_n}$ is weakly left exact and $\pi_{F_n}^\ast\pi_{F_n!}$ preserves effective epimorphisms, $\pi_{\Fss}^\ast \pi_{\Fss!}\sigma$ is a simplicial diagram of cartesian squares in $\Model_\pcat$ in which at least one arrow of the underlying cospan is an effective epimorphism. By \cite[Corollary 4.1.8.(2)]{usd1}, it follows that the geometric realization $|\pi_{\Fss}^\ast \pi_{\Fss!}\sigma|$ remains cartesian as needed.
\end{proof}

\section{Postnikov decompositions revisited}\label{sec:postnikovsquares}

A fundamental construction of homotopy theory associates to a space $X$ its \emph{Postnikov tower}
\[
X \to \cdots\to X_{\leq n}\to X_{<n}\to\cdots \to X_{\leq 0}.
\]
If $X$ is simply connected, then classical Postnikov theory tells us that the maps in this tower are classifed by \emph{Postnikov invariants}, or \emph{$k$-invariants}; that is, there are fibre sequences
\[
X_{\leq n} \to X_{<n} \xrightarrow{k} K(\pi_n X,n+1)
\]
which show that $X_{\leq n}$ is determined by a certain class $k\in \Hrm^{n+1}(X_{<n};\pi_n X)$. With a bit of care, this can be extended to the case when $X$ is not simply connected, by considering $\pi_n X$ not as a group but as a bundle of groups over the fundamental groupoid $X_{\leq 1}$. This classical construction presents an arbitrary space as being inductively built from linear data, one homotopy group at a time. 

Our primary goal in this section is to revisit this classical story and extend it to describe the extent to which the map $X_{<n+r} \rightarrow X_{<n}$ is similarly classified by linear data for $r > 1$. As these constructions are functorial in $X$, combined with the theory of modifications this leads to a corresponding theory of generalized Postnikov squares of $\infty$-categories. 

\subsection{Postnikov squares of spaces}\label{ssec:postnikovsquares}

Our Postnikov theory is derived from the \emph{Blakers--Massey theorem}, which we now recall.

\begin{recollection}\label{rec:tfib}
The \emph{total fibre} of a square
\begin{center}
\begin{tikzcd}
A\ar[r,"f"]\ar[d,"g"']\ar[dr, phantom, "\sigma"]&B\ar[d,"h"]\\
C\ar[r,"k"]&D
\end{tikzcd}\end{center}
of pointed spaces is defined as
\[
\tFib(\sigma) = \Fib\left( A \rightarrow B\times_D C\right) \simeq \Fib(\Fib(g)\rightarrow\Fib(h))\simeq\Fib(\Fib(f)\rightarrow\Fib(k)).
\]
\end{recollection}

\begin{theorem}[{Blakers--Massey theorem, Brown--Loday \cite[Theorem 4.2]{brownloday1987homotopical}}]\label{thm:blakersmassey}
Let $\sigma$ be a cocartesian square of pointed connected spaces as in \cref{rec:tfib}, satisfying
\[
\pi_i \Fib(f) = 0\text{ for }0 \leq i \leq n,\qquad \pi_i \Fib(g) = 0 \text{ for }0 \leq j \leq m.
\]
Then
\[
\pi_i \tFib(\sigma) = 0 \text{ for }i \leq n + m,
\]
and in the critical degree, $\pi_{n+m+1}\tFib(\sigma)$ is generated by the image of the generalized Whitehead product
\[
\pi_{n+1}\Fib(f) \times \pi_{m+1}\Fib(g) \rightarrow \pi_{n+m+1} \tFib(\sigma).
\]
In particular, $\pi_i \Fib(f)\rightarrow \pi_i \Fib(h)$ is an isomorphism for $i \leq n + m$ and a surjection for $i = n + m + 1$, with kernel generated by the image of the Whitehead product
\[
\pi_{n+1}\Fib(f)\times\pi_{m+1}\Fib(g)\rightarrow\pi_{n+m+1}\tFib(\sigma)\rightarrow\pi_{n+m+1}\Fib(g).
\]
\qed
\end{theorem}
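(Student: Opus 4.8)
My plan is to pass to a rigid model of the cocartesian square, bound the connectivity of the total fibre by a cell-by-cell reduction to a universal example, and then read off the critical degree from that example. Concretely, I would first replace $\sigma$ by an equivalent cocartesian square in which $f$ and $g$ are inclusions of connected relative CW complexes, so that $D = B\cup_A C$ strictly and all three descriptions of $\tFib(\sigma)$ recorded in \cref{rec:tfib} hold on the nose; using the connectedness hypotheses and van Kampen to keep fundamental groups under control, the assertion about $\tFib(\sigma)$ becomes the statement that $A\to B\times_D C$ is $(n+m+1)$-connected, together with a description of the first nonvanishing homotopy group of its fibre. Attaching the cells of $(B,A)$ and of $(C,A)$ one at a time, and using that the formation of total fibres turns the relevant pushouts in the ``$B$'' and ``$C$'' directions into the comparison maps in question, this reduces the connectivity estimate to the universal example in which $B$ arises from $A$ by a single cell and $C$ from $A$ by a single cell; after collapsing, this is essentially the Freudenthal statement that $S^k\to\Omega\Sigma S^k$ is highly connected, the archetypal case of \cref{thm:blakersmassey}. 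Alternatively, the connectivity bound has a homological proof: a cocartesian square of spaces induces a pushout (equivalently, Mayer--Vietoris) square of reduced chain complexes, so the relative chains of $(C,A)$ and of $(D,B)$ agree, and the relative Hurewicz theorem then promotes this to the homotopical estimate in the stated range once the relevant pairs have been arranged to be simply connected.

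For the critical degree the same cell-by-cell reduction expresses $\pi_{n+m+1}\tFib(\sigma)$, through the stated range, as a quotient of $\pi_{n+1}\Fib(f)\times\pi_{m+1}\Fib(g)$ computed by naturality from the two-cell universal example built from $(D^{n+1},S^n)$ and $(D^{m+1},S^m)$. There the bottom of the total fibre is the unique cell of $B\times_D C$ not already present in $A$; it lives in dimension $n+m+1$ and is attached by the generalized Whitehead product of the two characteristic maps --- this is the James--Hilton--Milnor computation underlying the critical degree of Freudenthal, where the obstruction is the Whitehead square $[\iota_k,\iota_k]$. Feeding this back through the induction identifies the image of the Whitehead product pairing with all of $\pi_{n+m+1}\tFib(\sigma)$, and the ``in particular'' clauses are then immediate from the long exact sequences of the fibration sequences in \cref{rec:tfib}.

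The main obstacle is the critical-degree analysis: the connectivity estimate is essentially bookkeeping on top of Freudenthal (or on top of homological excision), but pinning down the precise first nonvanishing homotopy group of $\tFib(\sigma)$ and recognising its generator as a Whitehead product requires genuine input. This is what Brown--Loday extract cleanly via the non-abelian tensor product of crossed modules, and it is this identification --- rather than the connectivity bound --- that we import from \cite{brownloday1987homotopical}. A secondary subtlety, already present in the reductions above, is the careful handling of basepoints and $\pi_1$-actions, which accounts for the hypothesis that all four corners of $\sigma$ be connected.
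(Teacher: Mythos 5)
The paper offers no proof of this statement at all: it is imported wholesale from Brown--Loday (hence the \qed directly after the statement), and everything the paper later needs — both the connectivity range and the fact that the critical group is generated by generalized Whitehead products (used in \cref{prop:pushouttruncate}) — is taken on faith from that reference. Your proposal is therefore partly a re-proof and partly the same citation: the cell-by-cell reduction (or the homological excision plus relative Hurewicz route, with the $\pi_1$ caveats you note) is the standard classical argument for the connectivity estimate, and for the critical degree you explicitly defer to Brown--Loday, exactly as the paper does. So in substance you and the paper rely on the same external input for the part that actually matters, and your added sketch of the range statement is sound in outline.

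One inaccuracy worth flagging in your sketch of the universal case: after reducing to $B = A\cup e^{n+1}$, $C = A\cup e^{m+1}$, it is not correct that "after collapsing" one is looking at the Freudenthal map $S^k\to\Omega\Sigma S^k$ — collapsing the common subspace $A$ does not preserve the homotopy pullback $B\times_D C$, so the total fibre of the collapsed square is not that of $\sigma$. The single-cell case requires its own argument (the classical general-position/compression lemma, as in Blakers--Massey's original treatment or Hatcher's proof of homotopy excision), of which Freudenthal is a corollary rather than the engine. Since you import the genuinely hard critical-degree identification from Brown--Loday anyway, this looseness does not leave a gap in what the paper needs, but as written that step of the reduction would not go through literally.
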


\begin{notation}
Given a space $X$ and positive integers $n$ and $r$, write $[\pi_r X,\pi_n X] = 0$ if for every $x\in X$, $a\in \pi_r(X,x)$, $b\in \pi_n(X,x)$, the Whitehead product $[a,b] \in \pi_{r+n-1}(X,x)$ vanishes.
\end{notation}

\begin{prop}\label{prop:pushouttruncate}
Fix a space $X$ and positive integers $r$ and $n$. 
\begin{enumerate}
\item The square
\begin{center}\begin{tikzcd}
X_{< n+r}\ar[r]\ar[d]&X_{\leq r}\ar[d]\\
X_{< n}\ar[r]&(X_{< n}\cup_{X_{< n+r}}X_{\leq r})_{\leq n+r}
\end{tikzcd}\end{center}
is always cartesian.
\item The square
\begin{center}\begin{tikzcd}
X_{< n+r}\ar[r]\ar[d]&X_{<r}\ar[d]\\
X_{< n}\ar[r]&(X_{< n}\cup_{X_{<  n+r}}X_{< r})_{< n+r}
\end{tikzcd}\end{center}
is cartesian if and only if $[\pi_r X,\pi_n X] = 0$.
\end{enumerate}
\end{prop}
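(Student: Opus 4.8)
The plan is to realize both squares as Postnikov truncations of one and the same homotopy-pushout square and to analyze that square with the Blakers--Massey theorem (\cref{thm:blakersmassey}). Since $r,n\geq 1$, every space in sight has the same set of components as $X$, and pushouts, truncations and pullbacks are all computed componentwise, so cartesianness may be checked over each component of the bottom-left corner; I would therefore reduce to the case that $X$ is connected and choose a basepoint. Write $P$ for the honest homotopy pushout of $X_{<n}\leftarrow X_{<n+r}\to X_{\leq r}$ (in case (1)), respectively of $X_{<n}\leftarrow X_{<n+r}\to X_{<r}$ (in case (2)), and $\bar P$ for its Postnikov truncation appearing as the bottom-right corner of the displayed square. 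The square with $P$ in place of $\bar P$ is cocartesian by construction, and the displayed square is its image under $P\to\bar P$.

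From the long exact sequences of the truncation maps one reads off that $\Fib(X_{<n+r}\to X_{<n})$ is $(n-1)$-connected, carrying $\pi_i X$ in degrees $n\leq i\leq n+r-1$, while $\Fib(X_{<n+r}\to X_{\leq r})$ is $r$-connected in case (1) and $\Fib(X_{<n+r}\to X_{<r})$ is $(r-1)$-connected in case (2). Feeding this into \cref{thm:blakersmassey} for the cocartesian square shows that the comparison map $X_{<n+r}\to X_{<n}\times_P(\bs)$ has $(n+r-1)$-connected total fibre in case (1) and $(n+r-2)$-connected total fibre in case (2), and that in the critical degree --- $\pi_{n+r}$ in case (1), $\pi_{n+r-1}$ in case (2) --- the homotopy of the total fibre is generated by the image of the generalized Whitehead product with source $\pi_{r+1}X\times\pi_n X$, respectively $\pi_r X\times\pi_n X$.

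It then remains to pass from $P$ to $\bar P$ and extract the two statements. The map from $X_{<n}\times_P(\bs)$ to $X_{<n}\times_{\bar P}(\bs)$ is highly connected, being a base change of the highly connected truncation map $P\to\bar P$, and the pullback $X_{<n}\times_{\bar P}(\bs)$ is coconnective enough to match $X_{<n+r}$, since a pullback of $k$-truncated spaces over a $(k+1)$-truncated space is again $k$-truncated. In case (1) the sole obstruction lies in $\pi_{n+r}$, a group in which $X_{<n+r}$ already vanishes, so the total-fibre long exact sequence forces $X_{<n+r}\to X_{<n}\times_{\bar P}X_{\leq r}$ to be an isomorphism on all homotopy groups, hence an equivalence; this proves (1). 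In case (2) the same bookkeeping identifies $X_{<n+r}\to X_{<n}\times_{\bar P}X_{<r}$ as an isomorphism on $\pi_{\leq n+r-2}$ which, on $\pi_{n+r-1}$, is the canonical surjection $\pi_{n+r-1}X\twoheadrightarrow\pi_{n+r-1}X/[\pi_r X,\pi_n X]$ onto the pullback; as both sides are suitably truncated, this is an equivalence precisely when $[\pi_r X,\pi_n X]=0$.

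The step I expect to be the main obstacle is the precise identification of this kernel in case (2). Brown--Loday's form of Blakers--Massey produces a generalized Whitehead product out of homotopy groups of the homotopy fibres $\Fib(f)$ and $\Fib(g)$, and one must track it through the maps $\Fib(f)\to X_{<n+r}\leftarrow\Fib(g)$ and $X\to X_{<n+r}$ --- using naturality of Whitehead products and the identifications of the relevant homotopy groups of $\Fib(f)$ and $\Fib(g)$ with $\pi_r X$ and $\pi_n X$ --- to see that its image in $\pi_{n+r-1}X$ is exactly the subgroup $[\pi_r X,\pi_n X]$ generated by ordinary Whitehead products. The remaining ingredients --- the connectivity estimates, the passage from $P$ to $\bar P$, and the truncatedness of the various pullbacks --- are routine but need care with the indices.
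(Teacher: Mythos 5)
Your argument is correct and is essentially the paper's own proof: reduce to connected pointed $X$, apply the Brown--Loday Blakers--Massey theorem to the pushout square using exactly the connectivities of the two fibres you state, and then pass to the truncated pushout and compare fibres, with the critical-degree kernel identified with $[\pi_r X,\pi_n X]$ at the same level of detail as the paper (which likewise takes this identification directly from the Whitehead-product clause of Brown--Loday); the only cosmetic difference is that the paper computes $\pi_*$ of the pushout via the long exact sequence where you invoke connectivity of the truncation map together with truncatedness of the pullback. One remark: your description of the map on $\pi_{n+r-1}$ in case (2) implicitly uses the truncation $(X_{<n}\cup_{X_{<n+r}}X_{<r})_{\leq n+r}$ of the pushout --- which is what \cref{constr:simplepostnikov} in fact uses --- rather than the $(\,\cdot\,)_{<n+r}$ in the printed statement, and your reading is the correct one, since a pullback of $(n+r-2)$-truncated spaces over an $(n+r-1)$-truncated base is $(n+r-2)$-truncated and so could never recover $\pi_{n+r-1}X$ (e.g.\ $X=S^1$, $n=r=1$).
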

\begin{proof}
By working over each path component, we may suppose without loss of generality that $X$ is pointed and connected. We first prove (1). Abbreviate $C = X_{< n}\cup_{X_{< n+r}}X_{\leq r}$ and consider the cocartesian square
\begin{center}
\begin{tikzcd}
X_{< n+r}\ar[r,"f"]\ar[d,"g"]&X_{\leq r}\ar[d,"h"]\\
X_{<n}\ar[r]&C
\end{tikzcd}.
\end{center}
Write
\[
X_{[n,n+r)} = \Fib(g),\qquad X_{(r,n+r)} = \Fib(f).
\]
It follows from Blakers--Massey that
\[
\pi_i X_{[n,n+r)} \rightarrow \pi_i \Fib(h)
\]
is an isomorphism for $i< n+r$, so that there is a long exact sequence
\[
0\rightarrow\pi_{n+r}C \rightarrow\pi_{n+r-1}X_{[n,n+r)}\rightarrow\pi_{n+r-1}X_{\leq r}\rightarrow\pi_{n+r-1}C\rightarrow\cdots.
\]
As $r< n$, this splits into isomorphisms
\[
\begin{cases}
\pi_{i+1}C\cong \pi_i X & n \leq i < n+r,\\
\pi_{i+1}C\cong 0 & r \leq i < n,\\
\pi_i C \cong \pi_i X & 0 \leq i \leq r.
\end{cases}
\]
It follows that in
\begin{center}\begin{tikzcd}
X_{< n+r}\ar[r,"f"]\ar[d,"g"]&X_{\leq r}\ar[d,"h'"]\\
X_{< n}\ar[r]&C_{\leq n+r+1}
\end{tikzcd},\end{center}
the induced map $\Fib(g)\rightarrow\Fib(h')$ is an equivalence, and thus the square is cartesian.

We move on to (2). The same argument as (1) applies with $X_{<r}$ in place of $X_{\leq r}$, except that Blakers--Massey only guarantees that $\pi_i X_{[n,n+r)} \rightarrow \Fib(h)$ is an isomorphism for $i < n + r-1$ and surjection for $i = n + r - 1$. In the critical degree, there is a long exact sequence
\[
\pi_{n} X \otimes \pi_r X \rightarrow \pi_{n+r} X_{[n,n+r)}\rightarrow \pi_{n+r}\Fib(h)\rightarrow 0,
\]
with first map the Whitehead product. It is therefore an isomorphism for $i = n+r$ if and only if $[\pi_r X,\pi_n X] = 0$, and this is necessary for the square to be cartesian.
\end{proof}

\begin{construction}
\label{constr:simplepostnikov}
Fix a space $X$ and integers $1\leq r \leq n$, and suppose that $[\pi_r X, \pi_n X] = 0$. Under these assumptions, we construct, functorially in $X$, the following:
\begin{enumerate}
\item A bundle of loop spaces
\[
\Pi_{[n,n+r)}X \rightarrow X_{<r}
\]
over $X_{<r}$ with the property that, for any $x \in X_{< n+r}$, there is an equivalence of pointed spaces
\[
\{ \tilde{x} \} \times_{X_{<r}} \Pi_{[n,n+r)}X  \simeq \{ x \} \times_{X_{<n}}X_{<n+r},
\]
where $\tilde{x}$ is the image of $x$ in $X_{<r}$. 
\item A cartesian diagram
\begin{equation}
\label{equation:postnikov_squares_of_a_space}
\begin{tikzcd}
X_{< n+r}\ar[r]\ar[d,"\tau"]&X_{<r}\ar[d,"0"]\ar[dr,equals]\\
X_{< n}\ar[r,"k"]&B_{X_{<r}}\Pi_{[n,n+r)}X\ar[r,"p"']&X_{<r}
\end{tikzcd},
\end{equation}
where $B_{X_{<r}}\Pi_{[n,n+r)}X$ is the fibrewise delooping of $\Pi_{[n,n+r)}X$ over $X_{<r}$.
\end{enumerate}
To construct these, first observe that \cref{prop:pushouttruncate} produces a cartesian square of the form in (2), only with $B_{X_{<r}}\Pi_{[n,n+r)}X$ replaced by $(X_{< n}\cup_{X_{< n+r}}X_{< r})_{\leq n+r}$. The retraction
\[
X_{\leq r} \xrightarrow{\tilde{0}} (X_{< n}\cup_{X_{< n+r}}X_{<r})_{\leq n+r} \xrightarrow{\tilde{p}} X_{<r}
\]
realizes $\tilde{p}$ as a bundle of pointed spaces over $X_{<r}$. These fibres satisfy
\[
\Omega \tilde{p}{}^{-1}(x)\simeq \tilde{0}{}^{-1}(x)\simeq \tau^{-1}(\tilde{x}) = \{\tilde{x}\}\times_{X_{<n}}X_{<n+r}
\]
as pointed spaces for any $x\in X_{\leq r}$ and any $\tilde{x} \in X_{< n+r}$ lifting $x$. Therefore if we define 
\[
\Pi_{[n,n+r)} X \colonequals \Omega_{X_{< r}}\left((X_{< n}\cup_{X_{< n+r}}X_{< r})_{\leq n+r}\right),
\]
Then $\Pi_{[n,n+r)}X$ is a bundle of loop spaces over $X_{<r}$ satisfying
\[
B_{X_{<r}}\Pi_{[n,n+r)}X \simeq (X_{< n}\cup_{X_{< n+r}}X_{< r})_{\leq n+r}
\]
as needed.
\end{construction}

\begin{warning}
We warn the reader that when $n = r$, in general
\[
\Pi_{[n,2n)}X \neq X_{<2n}
\]
as spaces over $X_{<n}$, despite both spaces having equivalent fibres over any point of $X_{<n}$. The issue is that as a loop space in $\spaces_{/X_{<n}}$, the projection $\Pi_{[n,2n)}X \rightarrow X_{<n}$ admits a section, while $X_{<2n} \rightarrow X_{<n}$ need not.
\end{warning}

This construction of the bundle $\Pi_{[n,n+r)}X$ is somewhat indirect. We give a more direct construction in the special case where $X$ is supersimple in \cref{prop:homotopymoduleherd} below.

\begin{variation}[{Spaces with non-vanishing Whitehead products}]
\label{variation:nonsimplepostnikov}
For an arbitrary space $X$ and $r < n$, the same construction applies to product a cartesian diagram of the form
\begin{equation*}
\label{equation:non_simple_variant_of_postnikov_square}
\begin{tikzcd}
X_{< n+r}\ar[r]\ar[d,"\tau"]&X_{\leq r}\ar[d,"0"]\ar[dr,equals]\\
X_{< n}\ar[r,"k"]&B_{X_{\leq r}}\Pi^{\ns}_{[n,n+r)}X\ar[r,"p"]&X_{\leq r}
\end{tikzcd},
\end{equation*}
where $\Pi^{\mathrm{ns}}_{[n,n+r)}X \rightarrow X_{\leq r}$ is a bundle of loop spaces over $X_{\leq r}$.  In this case, no asumption on the Whitehead product if necessary. If it does happen that $[\pi_{r} X, \pi_{n} X] = 0$, then this is compatible with \cref{constr:simplepostnikov} in the sense that there is a canonical cartesian square 
\[
\begin{tikzcd}
	{\Pi^{\mathrm{ns}}_{[n,n+r)}X} & {\Pi_{[n,n+r)}X} \\
	{X_{\leq r}} & {X_{< r}}
	\arrow[from=1-1, to=1-2]
	\arrow[from=1-1, to=2-1]
	\arrow[from=1-2, to=2-2]
	\arrow[from=2-1, to=2-2]
\end{tikzcd}.
\]
\end{variation}

\begin{definition}
\label{definition:postnikov_squares_of_a_space}
We call the diagrams (\ref{equation:postnikov_squares_of_a_space}) and (\ref{equation:non_simple_variant_of_postnikov_square}) the \emph{Postnikov squares} of $X$ and refer to $k$ as the \emph{Postnikov invariant map}. 
\end{definition}

\begin{ex}\label{ex:classicalpostnikov}
Taking $r = 1$, these Postnikov squares recover the classical theory of Postnikov invariants. Let us first describe the non-simple variant. 

Given a space $X$ and point $x \in X$ and $n \geq 2$ we have an abelian group $\pi_n (X,x)$. This group depends functorially only on the image of $x$ in the fundamental groupoid $X_{\leq 1}$, and by allowing $x$ to vary this defines a bundle $\Pi_{n}^\ns X$ of abelian groups over $X_{\leq 1}$. Now the bundle $\Pi_{[n, n]}^\ns X$ of \cref{variation:nonsimplepostnikov} can be identified with the $n$th fibrewise delooping of $\Pi_n^\ns X$ over $X_{\leq 1}$, and the corresponding Postnikov square with the classical Postnikov square
\begin{center}
\begin{tikzcd}
X_{\leq n}\ar[d]\ar[r]&X_{\leq 1}\ar[d]\ar[dr,equals]\\
X_{<n}\ar[r]&B_{X_{\leq 1}}^{n+1}\Pi_n^\ns X\ar[r]&X_{\leq 1}
\end{tikzcd}.
\end{center}

We move on to the simple variant. A space $X$ satisfies $[\pi_1 X,\pi_n X] = 0$ if and only if the conjugation action of $\pi_1(X,x)$ on $\pi_n(X,x)$ is trivial for all basepoints $x\in X$. This ensures that the bundle $\Pi_n^\ns X$ of groups on $X_{\leq 1}$ descends uniquely to a bundle $\Pi_n X$ on $X_{\leq 0}$, and \cref{constr:simplepostnikov} recovers the classical Postnikov squares
\begin{center}
\begin{tikzcd}
X_{\leq n}\ar[r]\ar[d]&X_{\leq 0}\ar[d]\ar[dr,equals]\\
X_{<n}\ar[r]&B^{n+1}_{X_{\leq 0}}\Pi_{n}X\ar[r]&X_{\leq 0}
\end{tikzcd}
\end{center}
realizing $X_{\leq n}\rightarrow X_{<n}$ as a disjoint union of principal fibrations. Moreover, this improved square may be constructed when $n = 1$, where the condition that $[\pi_1 X,\pi_1 X] = 0$ asks that $\pi_1(X,x)$ is an abelian group for all $x\in X$.
\end{ex}

As \cref{ex:classicalpostnikov} shows, taking $r=1$ the bundles
\[
\Pi_{[n,n]}X \to X_{\leq 0}
\]
are not merely fibrewise loop spaces: if $x \in X_{\leq 0}$, then the fibre
\[
\{x\}\times_{X_{\leq 0}}\Pi_{[n,n]}X \simeq K(\pi_n (X,x),n),
\]
as a pointed space, carries in a unique way the structure of an abelian group object. In particular, it is an \emph{infinite} loop space. This extends to $r > 1$, as we now explain.

\begin{lemma}[Freudenthal suspension]
Let $X$ be a pointed connected space, and suppose $\pi_i X = 0$ for $0 \leq i < n$. Then the map
\[
\pi_i X \rightarrow \pi_i \Omega^r \Sigma^r X \cong \pi_{i+r}\Sigma^r X
\]
is an isomorphism for $i \leq 2n-2$ and a surjection for $i = 2n-1$, with kernel generated by the Whitehead products $[\pi_n X,\pi_n X] \subset \pi_{2n-1} X$.
\end{lemma}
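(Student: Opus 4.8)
The plan is to reduce immediately to the case $r=1$ and then to feed the suspension pushout square into the Blakers--Massey theorem as recalled in \cref{thm:blakersmassey}. For $r=1$, I would consider the cocartesian square of pointed connected spaces
\[
\begin{tikzcd}
X \ar[r,"f"] \ar[d,"g"'] & \pt \ar[d,"h"] \\
\pt \ar[r] & \Sigma X
\end{tikzcd}
\]
exhibiting $\Sigma X$ as a pushout. In the notation of \cref{rec:tfib} one has $\Fib(f) = \Fib(g) = X$ and $\pt \times_{\Sigma X} \pt \simeq \Omega\Sigma X$, so the comparison map $X \to \pt\times_{\Sigma X}\pt$ is the unit $X \to \Omega\Sigma X$ of the suspension--loops adjunction and $\tFib$ of the square is the fibre of this unit; on homotopy groups the unit realizes the suspension homomorphism under $\pi_i\Omega\Sigma X \cong \pi_{i+1}\Sigma X$. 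Since $\pi_i X = 0$ for $0 \le i \le n-1$, \cref{thm:blakersmassey} applies with both connectivity parameters equal to $n-1$; its ``in particular'' clause then directly gives that $\pi_i X \to \pi_i\Omega\Sigma X$ is an isomorphism for $i \le 2n-2$ and a surjection for $i = 2n-1$ with kernel generated by the image of the composite $\pi_n X \times \pi_n X \to \pi_{2n-1}\tFib \to \pi_{2n-1}X$ (one can also read the same conclusion off the vanishing of $\pi_{\le 2n-2}\tFib$ via the long exact sequence of $\tFib \to X \to \Omega\Sigma X$).

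Given \cref{thm:blakersmassey}, the argument is essentially this short, and the only point requiring care --- the one I would flag as the main obstacle, though it is standard --- is to identify the composite $\pi_n X \times \pi_n X \to \pi_{2n-1} X$ appearing above with the classical Whitehead product pairing $(a,b)\mapsto[a,b]$, so that ``the Whitehead product'' of the recalled statement really is the Whitehead product of $\pi_\ast X$. For this I would invoke the compatibility between the generalized Whitehead product of Brown--Loday and the ordinary one: for the particular suspension square above, the generalized Whitehead product on $\Fib(f)\times\Fib(g) = X\times X$ projects, under $\tFib \to \Fib(g) = X$, to the ordinary Whitehead product on $\pi_\ast X$ up to sign. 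One reduces this to the universal case by restricting along a map $S^n\vee S^n \to X$ realizing a chosen pair of classes, where the identification of the bottom homotopy group of $\Fib\bigl(S^n\vee S^n \to \Omega\Sigma(S^n\vee S^n)\bigr)$ with the subgroup generated by the Whitehead square is classical (it is exactly the input to the EHP sequence). Granting this, the kernel in degree $2n-1$ is precisely the subgroup generated by $[\pi_n X,\pi_n X]$, which finishes the case $r=1$.

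Finally, I would obtain the general case by factoring $\pi_i X \to \pi_i\Omega^r\Sigma^r X \cong \pi_{i+r}\Sigma^r X$ as the composite of the successive suspension homomorphisms
\[
\pi_i X \to \pi_{i+1}\Sigma X \to \pi_{i+2}\Sigma^2 X \to \cdots \to \pi_{i+r}\Sigma^r X.
\]
The first arrow is exactly the case $r=1$ just treated. For $k \ge 2$ the $k$-th arrow is the suspension homomorphism attached to the $(n+k-2)$-connected space $\Sigma^{k-1}X$, so by the case $r=1$ it is an isomorphism whenever $i + k - 1 \le 2(n+k-1)-2$, that is for $i \le 2n+k-3$, a range containing $i \le 2n-1$. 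Hence every arrow after the first is an isomorphism throughout the relevant range, so the composite is an isomorphism for $i \le 2n-2$ and a surjection for $i = 2n-1$ with the same kernel --- the subgroup generated by $[\pi_n X,\pi_n X]$ --- as its first factor, as claimed. Apart from the Whitehead-product identification, the whole argument is bookkeeping with \cref{thm:blakersmassey} and the long exact sequence of a fibration.
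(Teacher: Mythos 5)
Your proposal is correct and follows essentially the same route as the paper, whose entire proof is the single line that the lemma is a special case of the Blakers--Massey theorem (\cref{thm:blakersmassey}) applied to the suspension square. The extra details you supply --- the identification of the generalized Whitehead product with the classical one in the suspension case, and the bookkeeping that reduces $r>1$ to iterated single suspensions via the improved connectivity of $\Sigma^{k-1}X$ --- are exactly the steps the paper leaves implicit, and your range estimates are accurate.
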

\begin{proof}
This is a special case of the Blakers--Massey theorem \cref{thm:blakersmassey}. 
\end{proof}

This admits the following categorification, which describes certain subcategories of pointed spaces with restricted homotopy groups in terms of stable homotopy theory. Similar statements appear in \cite[\S5.1]{mathewstojanoska2016picard}.

\begin{notation}
Given an interval $I\subset \bbZ$, write $\Sp^I\subset\Sp$ and $\spaces_\ast^I \subset \spaces_\ast$ for the full subcategories of objects with homotopy groups vanishing outside $I$.
\end{notation}

\begin{prop}\label{prop:stablespaces}
The functor
\[
\Omega^\infty \colon \spectra_\ast^{[n,2n)}\rightarrow\spaces_\ast^{[n,2n)}
\]
is fully faithful, with essential image spanned by those $X$ satisfying $[\pi_n X,\pi_n X] = 0$.
\end{prop}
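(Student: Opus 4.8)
\emph{Plan.} The strategy is to treat full faithfulness and the identification of the essential image separately, in both cases reducing to the Freudenthal suspension theorem of the previous Lemma. We may assume $n\geq 1$, the case $n=0$ being vacuous. Observe first that in the range $[n,2n)$ the \emph{only} potentially nonzero Whitehead bracket is $[\pi_nX,\pi_nX]\subset\pi_{2n-1}X$: a bracket $[\pi_iX,\pi_jX]$ lands in degree $i+j-1\geq 2n-1$, with equality forcing $i=j=n$. This is why $[\pi_nX,\pi_nX]=0$ is the sole condition that can appear.

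\emph{Full faithfulness.} Fix $E,F\in\spectra_\ast^{[n,2n)}$ and consider the counit $c_E\colon\Sigma^\infty\Omega^\infty E\to E$. The first step is to show $c_E$ induces an isomorphism on $\pi_i$ for all $i\leq 2n-1$. The triangle identities for $\Sigma^\infty\dashv\Omega^\infty$ exhibit the stabilization map $\pi_iE=\pi_i\Omega^\infty E\to\pi_i^s\Omega^\infty E$ as a split monomorphism with $\pi_i(c_E)$ a compatible retraction; on the other hand, since $\Omega^\infty E$ is $(n-1)$-connected, Freudenthal makes this stabilization map an epimorphism for $i\leq 2n-1$. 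A split monomorphism that is also an epimorphism is an isomorphism, so both it and $\pi_i(c_E)$ are isomorphisms in that range. Hence $c_E$ induces an equivalence $\tau_{\leq 2n-1}\Sigma^\infty\Omega^\infty E\xrightarrow{\ \sim\ }\tau_{\leq 2n-1}E=E$. Since $F$ is $(2n-1)$-truncated and $\tau_{\leq 2n-1}$ is a localization of spectra, precomposition along $c_E$ identifies
\[
\Map_{\spectra}(E,F)\simeq\Map_{\spectra}(\Sigma^\infty\Omega^\infty E,F)\simeq\Map_{\spaces_\ast}(\Omega^\infty E,\Omega^\infty F),
\]
the second equivalence being the adjunction; unwinding shows this composite is the map induced by $\Omega^\infty$, so $\Omega^\infty$ is fully faithful on $\spectra_\ast^{[n,2n)}$.

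\emph{Essential image.} For necessity, $\Omega^\infty E$ is an $H$-space, so for $\alpha,\beta\in\pi_n\Omega^\infty E$ the multiplication extends $(\alpha,\beta)\colon S^n\vee S^n\to\Omega^\infty E$ over $S^n\times S^n$, killing the attaching map of the top cell and hence the Whitehead product; thus $[\pi_n\Omega^\infty E,\pi_n\Omega^\infty E]=0$. For sufficiency, given $X\in\spaces_\ast^{[n,2n)}$ with $[\pi_nX,\pi_nX]=0$, set $E\colonequals\tau_{\leq 2n-1}\Sigma^\infty X$; since $X$ is $(n-1)$-connected, $E\in\spectra_\ast^{[n,2n)}$. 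I would then check that the composite $X\xrightarrow{\eta}\Omega^\infty\Sigma^\infty X\to\Omega^\infty E$ (using that $\Omega^\infty$ commutes with connective truncations) is an equivalence: on $\pi_i$ for $i\leq 2n-1$ it is the stabilization map $\pi_iX\to\pi_i^sX$, which Freudenthal makes an isomorphism in degrees $<2n-1$ and, in degree $2n-1$, an epimorphism with kernel generated by $[\pi_nX,\pi_nX]=0$, hence an isomorphism there too; and for $i\geq 2n$ both sides vanish. Therefore $X\simeq\Omega^\infty E$ lies in the essential image.

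The only delicate point — and the place where the hypotheses are actually spent — is the critical degree $i=2n-1$: in the full faithfulness argument one uses both triangle identities to upgrade the Freudenthal epimorphism there to an isomorphism, and in the essential image argument one uses precisely $[\pi_nX,\pi_nX]=0$ to do the same. Everything else is formal: manipulation of the adjunction $\Sigma^\infty\dashv\Omega^\infty$ together with the fact that $\tau_{\leq 2n-1}$ is a localization of spectra.
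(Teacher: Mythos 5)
Your proof is correct and follows essentially the same route as the paper's: the essential image is identified by applying the Freudenthal lemma to $X\to\Omega^\infty\tau_{<2n}\Sigma^\infty X$, where $[\pi_nX,\pi_nX]=0$ kills the kernel in the critical degree, and full faithfulness is obtained by using the triangle-identity retraction together with the Freudenthal surjection to show the counit $\Sigma^\infty\Omega^\infty E\to E$ is an equivalence through degree $2n-1$, then invoking that the target is $(2n-1)$-truncated. The only additions are the (harmless) verification of necessity via the $H$-space structure and the explicit unwinding that the adjunction composite agrees with the map induced by $\Omega^\infty$, both of which the paper leaves implicit.
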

\begin{proof}
Fix $X\in \spaces_\ast^{[n,2n)}$. If $[\pi_n X,\pi_n X] = 0$, then $X\rightarrow\Omega^\infty\Sigma^\infty X$ induces an equivalence $X\simeq X_{< 2n }\simeq (\Omega^\infty\Sigma^\infty X)_{< 2n}$, proving that $X$ is in the essential image of $\Omega^\infty$.

To prove that $\Omega^\infty$ is fully faithful in this range, fix $X,Y\in\Sp^{[n,2n)}$. For
\[
\map_\Sp(X,Y)\rightarrow\map_\ast(\Omega^\infty X,\Omega^\infty Y)\simeq \map_\Sp(\Sigma^\infty\Omega^\infty X,Y)\simeq \map_{\Sp}((\Sigma^\infty \Omega^\infty X)_{< 2n},Y)
\]
to be an equivalence, we must prove that the counit $\epsilon \colon \Sigma^\infty\Omega^\infty X \rightarrow X$ induces an isomorphism on $\pi_i$ for $i < 2n$. To that end, it suffices to prove that $\Omega^\infty\epsilon\colon \Omega^\infty\Sigma^\infty\Omega^\infty X \rightarrow\Omega^\infty X$ is an equivalence in the same range. The triangle identity provides a retraction
\[
\Omega^\infty X \rightarrow\Omega^\infty\Sigma^\infty\Omega^\infty X \rightarrow\Omega^\infty X.
\]
The Freudenthal suspension theorem says that the unit $\eta\colon \Omega^\infty X \rightarrow \Omega^\infty\Sigma^\infty \Omega^\infty X$ induces a surjection on $\pi_i$ for $i < 2n$. The above retraction then implies that it is an isomorphism in this range, and so the same must be true of the counit.
\end{proof}

Let $X \in \spaces_\ast^{[n,2n)}$ be a space satisfying $[\pi_n X,\pi_n X] = 0$. \cref{prop:stablespaces} implies that $X$ carries the structure of an infinite loop space in a unique and natural way. It will be necessary for our purposes to regard this infinite loop space structure as actual \emph{structure}, rather than a mere connectivity property of the space $X$. This allows one to keep track of, for example, the fact that a geometric realization of a simplicial diagram of such pointed spaces is again canonically an infinite loop space, even though it need satisfy the needed connectivity properties. 

It will be convenient to encode the relevant infinite loop space structures diagrammatically, and the formalism of stable deloop theories introduced in \cite[\S7.2]{usd1} gives a convenient framework for this. We recall the particular case we need.

\begin{defn}
\label{def:moduleobjects}
For a connective $\bfE_1$-ring spectrum $R$, let 
\[
\calR_0^\omega(R)\subset\calR_-^\omega(R)\subset\RMod_R
\]
denote the full subcategories of right $R$-modules generated under finite coproducts by $R$ and by $\Omega^n R$ for $n \geq 0$ respectively. Given an $\infty$-category $\dcat$,
\begin{enumerate}
\item A \emph{connective right $R$-module} in $\dcat$ is a functor
\[
X\colon \calR_0^\omega(R)^{\op} \to\dcat
\]
which preserves finite products.
\item A \emph{right $R$-module object} in $\dcat$ is a functor
\[
X\colon \calR_-^\omega(R)^\op \to\dcat
\]
which preserves finite products and for which the canonical map
\[
X(P)\to\Omega X(\Omega P)
\]
is an equivalence for $P \in \calR_-^\omega(R)$.
\end{enumerate}
We write
\[
\RMod_R^{\geq 0}(\dcat) \subset \Fun(\calR_0^\omega(R)^{\op} ,\dcat),\qquad\RMod_R(\dcat) \subset \Fun(\calR_-^\omega(R)^\op,\dcat)
\]
for the full subcategories of these objects.
\end{defn}

When $R$ is an $\bfE_\infty$-ring, as it will be in our examples, we abbreviate $\Mod_R = \RMod_R$. The names are justified by the following.

\begin{lemma}\label{lem:rmoduleobjects}
Let $\dcat$ be a presentable $\infty$-category. Then there are natural equivalence
\[
\RMod_R(\dcat) \simeq \RMod_R\otimes\dcat,\qquad \RMod_R^{{\geq 0}}(\dcat)\simeq \RMod_R^{{\geq 0}}\otimes\dcat
\]
where the right hand side denotes the tensor product of presentable $\infty$-categories.
\end{lemma}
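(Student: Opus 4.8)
First I would reduce to the case $\dcat = \spaces$, which is already known: since $\calR_0^\omega(R)$ and $\calR_-^\omega(R)$ are small $\infty$-categories with finite coproducts, the theory of stable deloop theories \cite[\S7.2]{usd1} (cf.\ \cite{higher_algebra}) identifies $\RMod_R^{\geq 0} \simeq \calP_\Sigma(\calR_0^\omega(R)) \simeq \Fun^\times((\calR_0^\omega(R))^\op, \spaces) = \RMod_R^{\geq 0}(\spaces)$, and likewise $\RMod_R \simeq \RMod_R(\spaces)$, where the latter is the full subcategory of $\calP_\Sigma(\calR_-^\omega(R))$ on those product-preserving presheaves $X$ with $X(P) \xrightarrow{\sim} \Omega X(\Omega P)$. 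This exhibits both $\RMod_R^{\geq 0}$ and $\RMod_R$ as reflective localizations of presheaf $\infty$-categories: in the first case $\calP_\Sigma(\calR_0^\omega(R))$ is the localization of $\presheaves(\calR_0^\omega(R))$ at the finite coproduct comparison maps, and in the second $\RMod_R$ is a further accessible (left-exact) localization of $\calP_\Sigma(\calR_-^\omega(R))$, hence a localization of $\presheaves(\calR_-^\omega(R))$ at some small set $S$ of maps.

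The plan is then to promote these presentations from $\spaces$-coefficients to $\dcat$-coefficients by tensoring with $\dcat$ in $\PrL$. For the connective statement I would invoke the standard fact that for any small $\infty$-category $\ccat$ with finite coproducts and presentable $\dcat$ one has $\calP_\Sigma(\ccat) \otimes \dcat \simeq \Fun^\times(\ccat^\op, \dcat)$, naturally in $\dcat$ (see \cite{higher_algebra}); applied to $\ccat = \calR_0^\omega(R)$ this yields $\RMod_R^{\geq 0} \otimes \dcat \simeq \Fun^\times((\calR_0^\omega(R))^\op, \dcat) = \RMod_R^{\geq 0}(\dcat)$. For the non-connective statement, tensoring the localization $\presheaves(\calR_-^\omega(R)) \to \RMod_R$ at $S$ with $\dcat$ yields a reflective localization of $\presheaves(\calR_-^\omega(R)) \otimes \dcat \simeq \Fun((\calR_-^\omega(R))^\op, \dcat)$ at $\{s \otimes \id_d\}_{s \in S,\, d \in \dcat}$, whose local objects are precisely those $F$ for which $\Map_\dcat(d, F(-))$ is $S$-local for every $d \in \dcat$. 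Since $\Map_\dcat(d,-)$ preserves finite products and commutes with $\Omega$, this says exactly that $F$ is product-preserving and satisfies $F(P) \xrightarrow{\sim} \Omega F(\Omega P)$, i.e.\ $F \in \RMod_R(\dcat)$; hence $\RMod_R \otimes \dcat \simeq \RMod_R(\dcat)$. All equivalences produced this way are natural in $\dcat \in \PrL$ because every step is.

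The step I expect to need the most care — and the one I would spell out rather than merely cite — is the compatibility of the $\PrL$-tensor product with reflective localizations: that tensoring $\presheaves(\ccat) \to S^{-1}\presheaves(\ccat)$ with $\dcat$ gives the localization at $\{s \otimes \id_d\}$ with the stated pointwise description of local objects. This is the precise mechanism translating the concrete conditions of \cref{def:moduleobjects} into the abstract base change $\RMod_R \otimes (-)$. It is standard — $- \otimes \dcat$ preserves colimits in each variable, hence carries a generators-and-relations presentation of a localization to the corresponding one, and the description of local objects follows from the adjunction $\Fun^L(\presheaves(\ccat) \otimes \dcat, \ecat) \simeq \Fun^L(\dcat, \Fun^L(\presheaves(\ccat), \ecat))$ together with the usual characterization of local objects in $\Fun^L(\presheaves(\ccat), \ecat)$ — but it is where the (small amount of) actual content of the lemma sits.
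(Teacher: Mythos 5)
Your proof is correct, but it takes a genuinely different route from the paper's. The paper does not choose a presentation of $\RMod_R$ at all: it writes $\RMod_R\otimes\dcat\simeq\Fun^R(\dcat^\op,\RMod_R)$ (limit-preserving accessible functors), uses the base case $\RMod_R\simeq\RMod_R(\spaces)$ from \cite[Proposition 7.2.3]{usd1} to regard such a functor as a right $R$-module object $\calR_-^\omega(R)^\op\to\presheaves(\dcat)$, and then identifies the essential image as those module objects landing in the representables $\dcat\subset\presheaves(\dcat)$, which is exactly $\RMod_R(\dcat)$; the connective case is treated verbatim. You instead present $\RMod_R^{\geq 0}\simeq\calP_\Sigma(\calR_0^\omega(R))$ and $\RMod_R$ as accessible localizations of presheaf categories and use compatibility of the $\PrL$-tensor product with such presentations, which you correctly isolate and justify as the real content. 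Your route buys a uniform mechanism for both statements and makes explicit how the conditions of \cref{def:moduleobjects} arise as locality conditions after tensoring; the paper's route is shorter and avoids exhibiting generating maps. Two small points if you write yours up: you should record that the delooping condition really is locality with respect to a small set, namely the maps $\Sigma\, y(\Omega P)\to y(P)$ in $\calP_\Sigma(\calR_-^\omega(R))$ (using that $\calR_-^\omega(R)$ is closed under $\Omega$, so $y(\Omega P)\simeq\Omega\, y(P)$ pointwise), rather than just asserting accessibility; and the parenthetical claim that this localization is left exact is neither needed for the argument nor obviously true, so it is best dropped.
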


\begin{proof}
We just treat right $R$-module objects, the connective case being identical. It follows from \cite[Proposition 7.2.3]{usd1} that there is an equivalence
\[
\RMod_R(\spaces)\simeq\RMod_R.
\]
The general case follows as the essential image of
\begin{align*}
\RMod_R\otimes \dcat&\simeq\Fun^R(\dcat^\op,\RMod_R) \subset \Fun(\dcat^\op,\Fun(\calR_-^\omega(R)^\op,\spaces))\\
&\simeq\Fun(\calR_-^\omega(R)^\op \times\dcat^\op,\spaces)\simeq\Fun(\calR_-^\omega(R)^\op,\presheaves(\dcat))
\end{align*}
consists of those right $R$-module objects $\calR_-^\omega(R)^\op \to \presheaves(\dcat)$ which land in the essential image of the Yoneda embedding $\dcat\subset\presheaves(\dcat)$, and this is equivalent $\RMod_R(\dcat)$.
\end{proof}

We also note the following.

\begin{lemma}\label{lem:connectivemodules}
Let $\xcat$ be an $\infty$-topos. Then
\[
\RMod_R^{\geq 0}(\dcat) \simeq \RMod_R^{\geq 0} \otimes \xcat \to \RMod_R\otimes \xcat\simeq\RMod_R(\xcat)
\]
is fully faithful, with essential image spanned by those right $R$-module objects $X\colon \calR_-^\omega(R)^\op \to \xcat$ with the property that the canonical map
\[
\mathrm{B} X(P) \to X(\Omega P)
\]
is an equivalence for $P \in \calR_-^\omega(R)$.
\end{lemma}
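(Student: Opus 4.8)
The plan is to deduce both assertions from \cref{lem:rmoduleobjects}, which identifies the functor in question with $\iota\otimes\id_\xcat\colon\RMod_R^{\geq0}\otimes\xcat\to\RMod_R\otimes\xcat$, where $\iota\colon\RMod_R^{\geq0}\hookrightarrow\RMod_R$ is the inclusion of the full subcategory of connective right $R$-module spectra. I would then prove full faithfulness and identify the essential image by separate arguments.

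For full faithfulness I would use that, since $\xcat$ is an $\infty$-topos, it is a left-exact accessible localization $L\colon\presheaves(\ccat)\rightleftarrows\xcat\noloc j$ of a presheaf category with $j$ fully faithful. Tensoring the reflective localization $L\dashv j$ with $\RMod_R$ and with $\RMod_R^{\geq0}$ yields, by bifunctoriality of $\otimes$, a commuting square whose horizontal maps are $\iota\otimes\id$ (over $\presheaves(\ccat)$ and over $\xcat$) and whose vertical adjunctions are $\id\otimes L\dashv\id\otimes j$, with the functors $\id\otimes j$ again fully faithful. Along the presheaf row the universal property of presheaf categories gives $\RMod_R^{(\geq0)}\otimes\presheaves(\ccat)\simeq\Fun(\ccat^\op,\RMod_R^{(\geq0)})$, under which $\iota\otimes\id$ becomes postcomposition with $\iota$ and is therefore fully faithful. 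Hence $(\id\otimes j)\circ(\iota\otimes\id_\xcat)\simeq(\iota\otimes\id_{\presheaves(\ccat)})\circ(\id\otimes j)$ is fully faithful, and since $\id\otimes j$ is fully faithful, so is $\iota\otimes\id_\xcat$.

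For the essential image I would argue as follows. As a morphism in $\PrL$, $\iota\otimes\id_\xcat$ admits a right adjoint $G$, and being fully faithful its essential image is exactly the class of $X$ for which the counit $(\iota\otimes\id_\xcat)GX\to X$ is an equivalence. It then remains to unwind \cref{lem:rmoduleobjects} together with \cref{def:moduleobjects} and check that $G$ is restriction of module objects along $\calR_0^\omega(R)^\op\hookrightarrow\calR_-^\omega(R)^\op$ and that $(\iota\otimes\id_\xcat)G$ is fibrewise delooping, so that for a right $R$-module object $X$ in $\xcat$ the counit is, on the generating objects $\Omega^nR$, the canonical comparison $\mathrm{B}^nX(R)\to X(\Omega^nR)$ assembled from iterates of the maps $\mathrm{B}X(Q)\to X(\Omega Q)$ (and extends by products to all of $\calR_-^\omega(R)$). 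Because $\mathrm{B}$ preserves equivalences and equivalences of module objects are detected objectwise, this counit is an equivalence precisely when $\mathrm{B}X(P)\to X(\Omega P)$ is one for every $P\in\calR_-^\omega(R)$, which is the asserted description. An alternative is to transport this characterization across the localization square above: verify it on $\presheaves(\ccat)$, where it reduces pointwise over $\ccat$ — via \cref{prop:stablespaces} — to the statement that $M\in\RMod_R$ is connective if and only if $\mathrm{B}\Omega^\infty M\to\Omega^\infty\Sigma M$ is an equivalence, and then descend along $j$.

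The main obstacle is the bookkeeping in the essential-image step: after untangling the equivalences one must verify that $G$ and its counit really are restriction and fibrewise delooping, including that the group and $E_\infty$-structures on $X(P)$ used to form $\mathrm{B}^nX(P)$ are the ones induced by the additive structure of $\RMod_R$, and that $\mathrm{B}$ throughout denotes the delooping internal to $\xcat$. Being an $\infty$-topos — rather than a general presentable $\infty$-category — is used in two places: in writing $\xcat$ as a localization of a presheaf category (full faithfulness), and in having effective, well-behaved deloopings of group objects available internally (essential image).
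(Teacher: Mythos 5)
Your overall strategy (reduce to presheaves via a left-exact localization and use the case of spaces as the base case) is the same as the paper's very terse proof, but the pivotal step of your full-faithfulness argument has a genuine gap. Bifunctoriality of the tensor product gives a commuting square of \emph{left} adjoints, with verticals $\id\otimes L$; the mixed square you actually use, $(\id\otimes j)\circ(\iota\otimes\id_\xcat)\simeq(\iota\otimes\id_{\presheaves(\ccat)})\circ(\id\otimes j)$, is a Beck--Chevalley-type statement that does not follow formally, and here it is false. Concretely, let $\xcat=\Sh(T)$ for a space $T$ and let $A$ be a sheaf of abelian groups with $\Hrm^1(T;A)\neq 0$, viewed as an object of $\RMod_{\thesphere}^{\geq 0}\otimes\xcat$. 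Going one way around the square you get the underlying presheaf of spectra of the Eilenberg--MacLane sheaf $HA$, whose sections over $U$ have $\pi_{-1}=\Hrm^1(U;A)$; going the other way you get a levelwise connective presheaf (sectionwise $HA(U)$), which is not even in the image of $\id\otimes j$ because its sectionwise delooping $U\mapsto K(A(U),1)$ fails descent. Equivalently, postcomposition with $\iota$ does not carry $\RMod_R^{\geq 0}\otimes\xcat\subset\Fun(\ccat^{\op},\RMod_R^{\geq 0})$ into $\RMod_R\otimes\xcat\subset\Fun(\ccat^{\op},\RMod_R)$: the functor $\iota\otimes\id_\xcat$ is built from the \emph{internal} (sheafified) deloopings, not the sectionwise ones. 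So full faithfulness cannot be transported from the presheaf level by the purely formal argument you give; this is exactly where the topos hypothesis must do real work, e.g.\ by identifying $\RMod_R^{\geq 0}\otimes\xcat$ with connective $R$-module objects in the sense of \cref{def:moduleobjects} and the comparison functor with internal iterated delooping, and then running the recognition-principle argument internally --- ``working levelwise'' means checking the resulting objectwise conditions, not that the functor itself is computed levelwise.

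The same conflation undermines your alternative route for the essential image: the condition that $\mathrm{B}X(P)\to X(\Omega P)$ be an equivalence refers to the delooping internal to $\xcat$, which is not computed levelwise over $\ccat$, so it cannot simply be ``verified on $\presheaves(\ccat)$ and descended along $j$.'' Your first route --- identify the right adjoint of $\iota\otimes\id_\xcat$ with restriction along $\calR_0^\omega(R)^{\op}\subset\calR_-^\omega(R)^{\op}$ and the counit with the iterated comparison maps $\mathrm{B}^nX(R)\to X(\Omega^n R)$ --- is the right idea and does yield the stated image once the functor has been identified concretely, but that identification is the actual mathematical content and is precisely what your proposal defers as ``bookkeeping.'' Finally, a small but real slip: the single comparison $\mathrm{B}\Omega^\infty M\to\Omega^\infty\Sigma M$ being an equivalence only says $\pi_{-1}M=0$, not that $M$ is connective; one needs the comparisons at all $P$ (equivalently all $\Omega^n R$), which is what the lemma quantifies over, and \cref{prop:stablespaces} is not the relevant input for this pointwise statement.
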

\begin{proof}
When $\xcat = \spaces$ this is clear; the general case follows by writing $\xcat$ as a left exact localization of a presheaf category and working levelwise.
\end{proof}

We can now give the following.

\begin{prop}\label{prop:pinrsmodule}
Let $X$ be a space. Fix integers $1 \leq r \leq n < \infty$, and suppose that $[\pi_r X,\pi_n X] = 0$. Then the pointed bundle
\[
\Pi_{[n,n+r)}X \in \spaces_{X_{<r}//X_{<r}}
\]
lifts uniquely and naturally to a bundle of connective $\thesphere_{<r}$-module objects over $X$. If $r < n$ but we do not assume that $[\pi_r X,\pi_n X] = 0$, then the same is true of $\Pi_{[n,n+r)}^{\ns}X \in \spaces_{X_{\leq r}//X_{\leq r}}$.
\end{prop}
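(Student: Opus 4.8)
The plan is to reduce both assertions --- which are parametrised over the relevant Postnikov truncation of $X$ --- to the non-parametrised \cref{prop:stablespaces} by working fibrewise, and then to promote the resulting infinite loop structure to a $\thesphere_{<r}$-module structure using that the relevant spectra occupy a homotopy window of length $r$. I would treat the simple case in detail; the non-simple case is handled verbatim with $X_{\leq r}$ in place of $X_{<r}$, and needs no hypothesis for the reason explained in the second paragraph. First I would pass to the slice $\spaces_{/X_{<r}}\simeq\Fun(X_{<r},\spaces)$: because the base $X_{<r}$ is a space, all of limits, colimits, connectivity, truncatedness, passage to spectrum objects, and the functor $\Omega^\infty$ are computed pointwise along $X_{<r}$, and likewise $\spaces_{X_{<r}//X_{<r}}\simeq\Fun(X_{<r},\spaces_\ast)$ and, by \cref{lem:rmoduleobjects} and \cref{lem:connectivemodules}, $\RMod^{\geq 0}_{\thesphere_{<r}}(\spaces_{/X_{<r}})\simeq\Fun(X_{<r},\RMod^{\geq 0}_{\thesphere_{<r}}(\spaces))$. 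By \cref{constr:simplepostnikov}, $\Pi:=\Pi_{[n,n+r)}X$ is the pointed bundle over $X_{<r}$ whose fibre over any point is the corresponding fibre of $X_{<n+r}\to X_{<n}$; a short computation with the long exact sequence of this fibration shows each such fibre has homotopy $\pi_i X$ for $n\leq i<n+r$ and zero outside, so that $\Pi$, regarded in $\Fun(X_{<r},\spaces_\ast)$, is $n$-connective and $(n+r-1)$-truncated. Everything in \cref{constr:simplepostnikov} is functorial in $X$, so all of this is natural in $X$.

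Next I would invoke \cref{prop:stablespaces} fibrewise. As $r\leq n$ we have $n+r\leq 2n$, so $\spectra_\ast^{[n,n+r)}\subseteq\spectra_\ast^{[n,2n)}$, and \cref{prop:stablespaces} restricts to an equivalence from $\spectra_\ast^{[n,n+r)}$ onto the full subcategory of $\spaces_\ast^{[n,n+r)}$ on those $Y$ with $[\pi_n Y,\pi_n Y]=0$. Applying this pointwise over $X_{<r}$ gives an equivalence between spectrum objects of $\spaces_{/X_{<r}}$ with fibrewise homotopy in $[n,n+r)$ and pointed bundles of this type with vanishing fibrewise critical Whitehead product. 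Via the maps $\Fib(X_{<n+r}\to X_{<n})\hookrightarrow X_{<n+r}$ and $X\to X_{<n+r}$, both $\pi_\ast$-isomorphisms in the window $[n,n+r)$, the Whitehead products among the homotopy of each fibre of $\Pi$ are identified with those of $X$ in that window; and since each fibre is $(n-1)$-connected, a product $[\pi_i,\pi_j]\subseteq\pi_{i+j-1}$ with $i,j\geq n$ can land back inside $[n,n+r)$ only if $i=j=n$ and $r=n$, in which case it is exactly the product killed by the hypothesis $[\pi_r X,\pi_n X]=[\pi_n X,\pi_n X]=0$ --- while if $r<n$ no such product lands in the window, so the condition is vacuous and no hypothesis is needed. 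Hence $\Pi$ lifts, uniquely and naturally in $X$, to a spectrum object $E_\Pi$ of $\spaces_{/X_{<r}}$ with fibrewise homotopy in $[n,n+r)$.

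Finally I would upgrade $E_\Pi$ to a $\thesphere_{<r}$-module object. A spectrum $E$ which is $n$-connective and $(n+r-1)$-truncated lifts, uniquely up to contractible choice, to a $\thesphere_{<r}$-module: the obstruction to extending the $\thesphere$-action $\thesphere\otimes E\simeq E$ along $\thesphere\otimes E\to\thesphere_{<r}\otimes E$ is the composite $\tau_{\geq r}\thesphere\otimes E\to\thesphere\otimes E\simeq E$, which is nullhomotopic since its source is $(n+r)$-connective and its target $(n+r-1)$-truncated, and the same connectivity gap makes the space of such extensions --- and, by the standard obstruction-theoretic argument, of coherent $\thesphere_{<r}$-module structures --- contractible; this is part of the stable deloop-theory package of \cite[\S7.2]{usd1}. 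Applying this pointwise over $X_{<r}$ lifts $E_\Pi$ to an object of $\Fun(X_{<r},\RMod_{\thesphere_{<r}})$ with fibrewise homotopy in $[n,n+r)$; unwinding \cref{def:moduleobjects} --- under which a connective $\thesphere_{<r}$-module $M$ corresponds to the finite-product-preserving functor $P\mapsto\Omega^\infty\map_{\thesphere_{<r}}(P,M)$ on $\calR_0^\omega(\thesphere_{<r})^\op$, with value $\Omega^\infty M$ at $P=\thesphere_{<r}$ --- this produces the sought object of $\RMod^{\geq 0}_{\thesphere_{<r}}(\spaces_{/X_{<r}})$ with underlying pointed bundle $\Pi_{[n,n+r)}X$, uniquely and naturally in $X$. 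The non-simple case of \cref{variation:nonsimplepostnikov} follows by the same argument with $X_{\leq r}$ replacing $X_{<r}$ throughout, the Whitehead condition being automatic as $r<n$.

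The only non-formal ingredient is \cref{prop:stablespaces} itself; the key simplification is that since the base $X_{<r}$ is a space, $\spaces_{/X_{<r}}\simeq\Fun(X_{<r},\spaces)$ makes the whole problem fibrewise, so no parametrised or $\infty$-topos-internal Blakers--Massey input is needed. I expect the bulk of the work to be bookkeeping: verifying that ``$n$-connective and $(n+r-1)$-truncated'' is the right condition both fibrewise over $X_{<r}$ and at the level of the module-object formalism of \cref{def:moduleobjects}; that the fibrewise critical Whitehead product of $\Pi_{[n,n+r)}X$ is precisely what $[\pi_r X,\pi_n X]=0$ controls (and, when $r=n$ and $n$ may equal $1$, that the commutator matches the ``$[\pi_1,\pi_1]=0$'' clause of \cref{prop:stablespaces}); and that the two uniqueness statements are exactly the contractibility assertions furnished by the connectivity gaps.
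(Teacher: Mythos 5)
Your proposal is correct and follows essentially the same route as the paper's own proof: straighten over $X_{<r}$, apply \cref{prop:stablespaces} pointwise to lift the fibres (which lie in $\spaces_\ast^{[n,n+r)}$ and have vanishing critical Whitehead products, automatically when $r<n$ and by hypothesis when $r=n$) uniquely through $\Omega^\infty$ to $\Sp^{[n,n+r)}$, and then use the length-$r$ homotopy window together with \cref{lem:rmoduleobjects} to land in connective $\thesphere_{<r}$-module objects over $X_{<r}$. The only difference is that you spell out the Whitehead-product bookkeeping and the essential uniqueness of the $\thesphere_{<r}$-module structure, which the paper leaves implicit.
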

\begin{proof}
We just treat the case where $[\pi_r X,\pi_n X] = 0$. Under the straightening equivalence, we may regard
\[
\Pi_{[n,n+r)}X \in \spaces_{X_{<r}//X_{<r}}\simeq \Fun(X_{<r},\spaces_\ast)
\]
as the functor sending $x \in X_{<r}$ to its fibre $\Pi_{[n,n+r)}(X,x)\simeq \{\tilde{x}\}\times_{X_{<n}}X_{<n+r}$. By \cref{prop:stablespaces}, there is a unique and natural lift in
\begin{center}\begin{tikzcd}[column sep=large]
&\Sp^{[n,n+r)}\ar[d,"\Omega^\infty"]\\
X_{<r}\ar[r,"\Pi_{[n,n+r)}"']\ar[ur,dashed]&\Spc_\ast^{[n,n+r)}
\end{tikzcd}.\end{center}
The proposition follows as
\begin{align*}
\Fun(X_{<r},\Sp^{[n,n+r)})&\simeq\Fun(X_{<r},\Mod_{\thesphere_{<r}}(\Sp^{[n,n+r)}))\\&\subset\Fun(X_{<r},\Mod_{\thesphere_{<r}}^{{\geq 0} })\simeq\Mod_{\thesphere_{<r}}^{\geq 0}(\spaces_{/X_{<r}})
\end{align*}
by \cref{lem:rmoduleobjects}.
\end{proof}

\begin{notation}\label{notation:kinvariantnotation}
Given a space $X$ and $1 \leq r \leq n$, we write
\[
K_{n,r}^\bullet(X) \in \Mod_{\thesphere_{<r}}(\spaces_{/X_{<r}})
\]
for the $\thesphere_{<r}$-module object
\[
\Omega_{X_{<r}}^n\Pi_{[n,n+r)}X \in \Mod_{\thesphere_{<r}}^{\geq 0}(\spaces_{/X_{<r}})\subset\Mod_{\thesphere_{<r}}(\spaces_{/X_{<r}}),
\]
with $\Omega_{X_{<r}}^n\Pi_{[n,n+r)}X$ constructed in \cref{prop:pinrsmodule} and second inclusion as in \cref{lem:connectivemodules}. In particular, $K_{n,r}^\bullet(X)$ has underlying infinite loop spaces
\[
K_{n,r}^m(X) := K_{n,r}^\bullet(X)(\Sigma^m\thesphere_{<r})\simeq \begin{cases}
	\Omega^{n-m}_X \Pi_{[n,n+r)}X & m \leq n, \\
	B^{m-n}_X\Pi_{[n,n+r)}X & m \geq n.
    \end{cases}
\]
for $m\geq 0$, indexed so that $K_{n,r}^n(X) = \Pi_{[n,n+r)}(X)$.
\end{notation}

\begin{rmk}\label{rmk:naturalcartsquare}
All of the constructions of this section are manifestly natural in $X$. In particular, let $\spaces_0\subset\spaces$ denote a full subcategory of spaces $X$ satisfying $[\pi_r X,\pi_n X] = 0$. Then the functors $K_{n,r}^\bullet$ define a $\thesphere_{<r}$-module object
\[
K_{n,r}^\bullet \in \Mod_{\thesphere_{<r}}(\Fun(\spaces_0,\spaces)_{/\tau_{<r}})
\]
in the $\infty$-category of functors over $\tau_{<r}$, and the Postnikov squares of \cref{definition:postnikov_squares_of_a_space} define cartesian squares
\begin{center}\begin{tikzcd}
\tau_{<n+r}\ar[r]\ar[d]&\tau_{<r}\ar[d]\ar[dr,equals]\\
\tau_{<n}\ar[r]&K^{n+1}_{n,r}\ar[r]&\tau_{<r}
\end{tikzcd}\end{center}
of functors. 
\end{rmk}

\subsection{Postnikov squares of \texorpdfstring{$\infty$}{infty}-categories}\label{ssec:categoricalpostnikovsquares}

The Postnikov squares constructed in \S\ref{ssec:postnikovsquares} consist of product-preserving endofunctors of spaces and modifying along them allows us to construct Postnikov squares of categories. The goal of this section is to briefly elaborate on this construction.

\begin{remark}
Since we will be mainly interested in modifying Malcev theories, whose mapping spaces have vanishing Whitehead products, we will restrict to this class of $\infty$-categories. We leave the variations for general categories, which can be obtained using \cref{variation:nonsimplepostnikov} in place of \cref{constr:simplepostnikov}, to the interested reader (but see also \cref{ex:ssysteminftym}).
\end{remark}

\begin{notation}
\label{constr:kinvariantcategory}
Given an $\infty$-category $\ccat$ enriched in spaces with vanishing Whitehead products, for $1 \leq r \leq n$ we write
\[
\kinv_{n,r}^m\ccat = \ccat_{K_{n,r}^m},
\]
for the modification along $K_{n,r}^m$ of \cref{notation:kinvariantnotation}.
\end{notation}

\begin{rmk}
As a consequence of \cref{prop:modifierlimitses}, the composite
\[
\begin{tikzcd}
	{ \calR_-^\omega(\thesphere_{<r})^\op} & {\Fun(\spaces_0,\spaces)_{/\tau_{<r}}} & {(\catinfty)_{/\h_r\ccat}}
	\arrow["{K_{n_r}^{\bullet}}", from=1-1, to=1-2]
	\arrow["{\ccat_{-}}", from=1-2, to=1-3]
\end{tikzcd}
\]
is a $\thesphere_{<r}$-module object. In particular, 
\[
\kinv_{n,r}^m\ccat \simeq (\kinv_{n,r}^\bullet\ccat)(\Sigma^m \thesphere_{<r})
\]
of \cref{constr:kinvariantcategory} assemble into a spectrum object in $\infty$-categories over $\h_{r}\ccat$. 
\end{rmk}

\begin{construction}\label{constr:categoricalpostnikovsquare}
Let $\ccat$ be an $\infty$-category enriched in spaces with vanishing Whitehead products, and let $1 \leq r \leq n < \infty$ be integers. The \emph{categorical Postnikov square}
\begin{center}\begin{tikzcd}
\h_{n+r}\ccat\ar[r,"\tau_{(n+r,r)}"]\ar[d,"\tau_{(n+r,n)}"']&\h_r\ccat\ar[dr,equals]\ar[d,"0"]\\
\h_n\ccat\ar[r,"k"]&\kinv_{n,r}^{n+1}\ccat\ar[r]&\h_r\ccat
\end{tikzcd}\end{center}
is obtained by modifying $\ccat$ along the Postnikov square of \cref{constr:simplepostnikov}. This is guaranteed to be a cartesian square by \cref{prop:modifierlimitses}.
\end{construction}

\begin{ex}\label{ex:categoricalpostnikovspace}
If $X$ is an $\infty$-groupoid, then when considered as an $\infty$-category, $X$ is enriched in spaces with vanishing Whitehead products, and in fact in supersimple spaces: for any two points $x_1,x_2\in X$, we may identify $\map_X(x_1,x_2)\simeq \{x_1\}\times_X\{x_2\}$, which is either empty or equivalent to a loop space of $X$. The categorical Postnikov square of \cref{constr:categoricalpostnikovsquare} applied to $X$ is its ordinary Postnikov square in one dimension higher:
\begin{center}
\begin{tikzcd}
\h_{n+r} X\ar[r]\ar[d]&\h_r X\ar[d]\ar[dr,equals]\\
\h_n X \ar[r]&\kinv_{n,r}^{n+1} X\ar[r]&\h_r X
\end{tikzcd}
$\quad = \quad$
\begin{tikzcd}
X_{\leq n+r}\ar[r]\ar[d]&X_{\leq r}\ar[d]\ar[dr,equals]\\
X_{\leq n}\ar[r]&B_{X_{\leq r}}\Pi_{[n+1,n+1+r)}X^\ns\ar[r]&X_{\leq r}
\end{tikzcd}.\end{center}
In other words, \cref{variation:nonsimplepostnikov} may be recovered from \cref{constr:simplepostnikov} via \cref{constr:categoricalpostnikovsquare}.
\end{ex}

\begin{warning}
The first Postnikov square of a space $X$ with abelian fundamental groupoid takes the form
\begin{equation}\label{eq:firstpostnikov}\begin{tikzcd}
X_{\leq 1}\ar[r]\ar[d]&X_{\leq 0}\ar[d,"0"]\ar[dr,equals]\\
X_{\leq 0}\ar[r,"k"]&B^2_{X_{\leq 0}}\Pi_1 X\ar[r,"p"]&X_{\leq 0}
\end{tikzcd}.\end{equation}
The target of the first $k$-invariant may be written as
\[
B^2_{X_{\leq 0}}\Pi_1 X \simeq \coprod_{x\in \pi_0 X} K(\pi_1 (X,x),2).
\]
As each $K(\pi_1(X,x),2)$ is simply connected, the space of sections of $p$ is connected. It follows that $k\simeq 0$ over $X_{\leq 0}$ and thus 
\[
X_{\leq 1}\simeq B_{X_{\leq 0}}\Pi_1 X  \simeq \kinv^1_{1,1} X\simeq \coprod_{x\in \pi_0 X}K(\pi_1(X,x),1).
\]
However, the homotopy $k\simeq 0$ is \emph{not} natural in $X$, and the two functors $k^1_{1,1}$ and $\tau_{\leq 1}$ are \emph{not} equivalent. To see this formally, observe that by \cref{ex:categoricalpostnikovspace}, the next Postnikov square
\begin{center}\begin{tikzcd}
X_{\leq 2}\ar[r]\ar[d]&X_{\leq 1}\ar[d,"0'"]\ar[dr,equals]\\
X_{\leq 1}\ar[r,"k'"]&B_{X_{\leq 1}}^3\Pi_2^\ns X\ar[r]&X_{\leq 1}
\end{tikzcd}\end{center}
is obtained by modifying $X$ along the squares of (\ref{eq:firstpostnikov}). Thus if there were a natural equivalence $k\simeq 0$, then there would be a natural equivalence $k'\simeq 0'$. Continuing in this way, a natural equivalence $k\simeq 0$ would imply that all spaces have trivial $k$-invariants, an absurdity.
\end{warning}

\subsection{Postnikov modifiers are elementary}\label{ssec:elementarypostnikov}

Our goal in this subsection is to show that all constructions appearing in the Postnikov tower of a supersimple space may be described by means of elementary modifiers in the sense of \cref{def:elementarymodifiers}.

We begin by revisiting the construction of $\Pi_{[n,n+r)}$, whose original description given in \cref{constr:simplepostnikov} was quite indirect. When $X$ is supersimple, it admits the following more convenient characterization. 

\begin{prop}
\label{prop:homotopymoduleherd}
Let $X$ be a supersimple space.
\begin{enumerate}
\item The comparison map
\begin{align*}
\tau_{<r}(X^{S^n\vee \cdots\vee S^n}) &\simeq \tau_{<r}(X^{S^n}\times_X\cdots\times_X X^{S^n}) \\
&\to \tau_{<r}(X^{S^n})\times_{X_{<r}}\cdots\times_{X_{<r}}\tau_{<r}(X^{S^n})
\end{align*}
is an equivalence for all $n\geq 1$ and $r\geq 1$, and therefore the cogroup structure on $S^n$ makes $\tau_{<r}(X^{S^n}) \to X_{<r}$ into a bundle of grouplike $\bfE_n$ spaces.
\item If $r \leq n$, then this refines uniquely to a bundle of connective $\thesphere_{<r}$-modules, and there is a natural equivalence 
\[
B^n_{X_{<r}}\tau_{<r}(X^{S^n})\simeq \Pi_{[n,n+r)}X
\]
of bundles of connective $\thesphere_{<r}$-modules over $X_{<r}$.
\end{enumerate}

In other words, $K_{n,r}^0(X) = \tau_{<r}(X^{S^n})$ when $X$ is supersimple.

\end{prop}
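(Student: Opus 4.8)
The plan is to reduce first to the case where $X$ is connected and pointed---both sides of each asserted equivalence decompose over the path components of $X$---so that every bundle over $X_{<r}$ in sight is a computable piece of the Postnikov tower of $X$. The elementary engine is the observation that the evaluation fibration $\map(\bigvee^{k}S^{n},X)\simeq \map(S^{n},X)\times_{X}\cdots\times_{X}\map(S^{n},X)\to X$ admits a section, namely the constant maps. For part (1), I would pull this sectioned fibration back along the $(r-1)$-connected map $\widetilde{X}:=\Fib(X\to X_{<r})\to X$. The resulting fibration over $\widetilde{X}$ still has a section, so all its connecting homomorphisms vanish, and the monodromy of $\pi_{1}\widetilde{X}$ on $\pi_{0}$ of its fibre $(\Omega^{n}X)^{k}\cong(\pi_{n}X)^{k}$ is trivial since $X$, being supersimple, is simple; hence its homotopy groups agree with those of $(\Omega^{n}X)^{k}$ in degrees $<r$. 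Combined with the fact that $\map(\bigvee^{k}S^{n},X)\to\tau_{<r}\map(\bigvee^{k}S^{n},X)$ is $r$-connective (and that $r$-connectivity is stable under base change), this identifies the fibre of $\tau_{<r}\map(\bigvee^{k}S^{n},X)\to X_{<r}$ over each point with $(\tau_{<r}\Omega^{n}X)^{k}$. The same description holds for $(\tau_{<r}\map(S^{n},X))^{\times_{X_{<r}}k}$, and under these identifications the comparison map is the identity on each fibre; being a map of bundles over $X_{<r}$ that is a fibrewise equivalence, it is an equivalence. The assertion about the grouplike $\bfE_{n}$-structure is then formal: $S^{n}$ is an $\bfE_{n}$-cogroup object of $\Sph$ built out of finite wedges, and $\tau_{<r}\map(-,X)$ now carries those wedges to fibre products over $X_{<r}$.

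For part (2), assume $r\le n$. Using the $\bfE_{n}$-group structure from part (1), form the $n$-fold fibrewise delooping $B^{n}_{X_{<r}}\tau_{<r}\map(S^{n},X)$ over $X_{<r}$. Since the $\bfE_{n}$-structure on the fibre $\tau_{<r}\Omega^{n}X$ is the standard loop-space structure, its $n$-fold delooping is the space with homotopy groups $\pi_{i}X$ in degrees $n\le i<n+r$, equivalently $\Fib(X_{<n+r}\to X_{<n})$; thus $B^{n}_{X_{<r}}\tau_{<r}\map(S^{n},X)\to X_{<r}$ is a bundle with this fibre. Because $r\le n$, this fibre lies in $\spaces_{\ast}^{[n,2n)}$, and its Whitehead product $[\pi_{n},\pi_{n}]$ vanishes, being computed in a truncation of a connected cover of the supersimple space $X$. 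Applying \cref{prop:stablespaces} fibrewise therefore lifts the bundle $B^{n}_{X_{<r}}\tau_{<r}\map(S^{n},X)$ uniquely and naturally to a bundle of spectra in $\Sp^{[n,n+r)}$, and hence---exactly as in the proof of \cref{prop:pinrsmodule}, via \cref{lem:rmoduleobjects,lem:connectivemodules}---to a bundle of connective $\thesphere_{<r}$-modules over $X_{<r}$. Passing back through $\Omega^{n}_{X_{<r}}$ equips $\tau_{<r}\map(S^{n},X)$ itself with the asserted $\thesphere_{<r}$-module structure.

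It then remains to identify $B^{n}_{X_{<r}}\tau_{<r}\map(S^{n},X)$ with $\Pi_{[n,n+r)}X$ as bundles of connective $\thesphere_{<r}$-modules over $X_{<r}$; by the rigidity just invoked this reduces to a natural equivalence of the underlying bundles of pointed spaces, both of which have fibre $\Fib(X_{<n+r}\to X_{<n})$. I would obtain this by applying $\map(S^{n},-)$ to the defining cartesian square of $\Pi_{[n,n+r)}X$ from \cref{constr:simplepostnikov} and then truncating: for $r\le n$ one has $\map(S^{n},X_{<r})\simeq X_{<r}$ and $\map(S^{n},X_{<n})\simeq X_{<n}$ (since $\Omega^{n}$ of an $(n-1)$-truncated space is contractible), while $\tau_{<r}\map(S^{n},X_{<n+r})\simeq\tau_{<r}\map(S^{n},X)$; a total-fibre estimate via \cref{thm:blakersmassey} should then show that $\tau_{<r}$ preserves the resulting cartesian square, exhibiting $\tau_{<r}\map(S^{n},X)$ as the same fibrewise loop construction used to build $\Pi_{[n,n+r)}X$, and yielding the formula $K_{n,r}^{0}(X)=\tau_{<r}(X^{S^{n}})$ of \cref{notation:kinvariantnotation}. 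The step I expect to be the main obstacle is precisely this last one: promoting the pointwise agreement of the two bundles to a natural equivalence, and in particular verifying that truncation interacts correctly with the cartesian square defining $\Pi_{[n,n+r)}X$ after one maps $S^{n}$ into it. Everything else amounts to bookkeeping around the section of the evaluation fibration, the Blakers--Massey input already established, and the $\thesphere_{<r}$-module formalism of \cref{prop:pinrsmodule}.
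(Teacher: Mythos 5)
Parts of your plan are fine and even self-contained where the paper is not: your fibrewise argument for (1) (sections of the evaluation fibration plus triviality of the $\pi_1$-action to control $\pi_0$ of the fibres) is essentially a re-derivation of the citation the paper uses, and your construction of the $\thesphere_{<r}$-module structure on $B^n_{X_{<r}}\tau_{<r}(X^{S^n})$ by applying \cref{prop:stablespaces} fibrewise is the same rigidity mechanism as \cref{prop:pinrsmodule}, so the reduction of (2) to a natural equivalence of pointed bundles is legitimate.

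The genuine gap is exactly the step you flag: producing the natural pointed-bundle equivalence $B^n_{X_{<r}}\tau_{<r}(X^{S^n})\simeq \Pi_{[n,n+r)}X$. Your proposed route --- apply $\map(S^n,-)$ to the square of \cref{constr:simplepostnikov} and then truncate --- at best identifies the \emph{looped} objects, i.e.\ gives $\tau_{<r}(X^{S^n})\simeq \Omega^n_{X_{<r}}\Pi_{[n,n+r)}X$ as bundles of pointed spaces; but to conclude the statement you must compare the $n$-fold deloopings, and a pointed equivalence at the loop level does not respect the $\bfE_n$-group structures for free. Crucially, the rigidity you invoke (\cref{prop:stablespaces}) is unavailable there: the fibres $\tau_{<r}\Omega^n X$ are concentrated in degrees $[0,r)$, far outside the stable range $[n,2n)$, so you cannot upgrade the loop-level equivalence to one of deloopings by uniqueness. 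In addition, two inputs are asserted rather than proved: that $\tau_{<r}((X_{<n+r})^{S^n})\to\tau_{<r}(X^{S^n})$ is an equivalence, and that $\tau_{<r}$ preserves the cartesian square obtained from $\map(S^n,-)$ (note also that the map $X_{<n}\simeq\map(S^n,X_{<n})\to\map(S^n,B_{X_{<r}}\Pi_{[n,n+r)}X)$ has no evident section, so the mechanism from your part (1) does not apply verbatim). The paper avoids all of this by working at the delooped level from the start: it uses the cartesian square $B^n_X(X^{S^n})\simeq X\times_{X_{<n}}X$ exhibiting the fibrewise $n$-connective cover, applies it to $X_{<n+r}$, observes that the composite to $B_{X_{<r}}\Pi_{[n,n+r)}X$ factors through the zero-section to get a cartesian comparison square with $\Pi_{[n,n+r)}X$, and only then passes to fibrewise $n$-fold loops, where the two remaining claims are checked using the grouplike $H$-space splitting $X^{S^n}\simeq \Omega^n X\times X$. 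To repair your argument you should either adopt that delooped comparison or show that your loop-level equivalence is one of grouplike $\bfE_n$-bundles before delooping; as written, the conclusion of (2) is not reached.
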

\begin{proof}
(1)~~This follows from \cite[Corollary 2.2.15.(2)]{usd1}.

(2)~~It suffices to just produce a natural equivalence $B^n_{X_{<r}}\tau_{<r}(X^{S^n})\simeq \Pi_{[n,n+r)}X$ of pointed bundles over $X_{<r}$, as the existence of and compatibility with $\thesphere_{<r}$-module structures then follows by uniqueness. For any space $X$, there is a cartesian square
\begin{center}\begin{tikzcd}
B^n_X X^{S^n}\ar[r]\ar[d]&X\ar[d]\\
X\ar[r]&X_{<n}
\end{tikzcd}\end{center}
realizing $B^n_X X^{S^n}$ as a fibrewise $n$-connective cover of $X$. Applied to $X_{<n+r}$, this produces for us the left square in a cartesian diagram
\begin{center}\begin{tikzcd}
B^n_{X_{<n+r}}( (X_{<n+r})^{S^n})\ar[r]\ar[d]&X_{<n+r}\ar[r]\ar[d]&X_{<r}\ar[d,"0"]\\
X_{<n+r}\ar[r]\ar[ur,dashed]&X_{<n}\ar[r,"k"]&B_{X_{<r}}\Pi_{[n,n+r)}X
\end{tikzcd}.\end{center}
The existence of the dashed arrow shows that the bottom composite factors through the zero-section of $B_{X_{<r}}\Pi_{[n,n+r)}X$, and this provides a cartesian square
\begin{center}\begin{tikzcd}
B^n_{X_{<n+r}}((X_{<n+r})^{S^n})\ar[r]\ar[d]&\Pi_{[n,n+r)}X\ar[d]\\
X_{<n+r}\ar[r]&X_{<r}
\end{tikzcd}\end{center}
compatible with zero-sections. On fibrewise $n$-fold loop spaces, this produces the outer rectangle in a diagram
\begin{center}\begin{tikzcd}
(X_{<n+r})^{S^n}\ar[r]\ar[d]&\tau_{<r}\left((X_{<n+r})^{S^n}\right)\ar[r]\ar[d]&\Omega_{X_{<r}}^n\Pi_{[n,n+r)}X\ar[d]\\
X_{<n+r}\ar[r]&X_{<r}\ar[r]&X_{<r}
\end{tikzcd},\end{center}
which uniquely factors through the middle column. It therefore suffices to prove that the left square (and so also the right square) is cartesian and that the canonical map
\[
\tau_{<r}(X^{S^n}) \to \tau_{<r}\left((X_{<n+r})^{S^n}\right)
\]
is an equivalence. By restricting to various path components of $X$, we may suppose that $X$ is a grouplike $H$-space. This $H$-space structure provides compatible splittings
\[
X^{S^n}\simeq \Omega^n X \times X,\qquad (X_{<n+r})^{S^n}\simeq X_{<n+r}\times \Omega^n(X_{<n+r}),
\]
so it suffices to verify that the canonical maps
\[
\tau_{<r}(X) \to \tau_{<r}(X_{<n+r}),\qquad \tau_{<r}(\Omega^n X) \to \tau_{<r}(\Omega^n(X_{<n+r})) \leftarrow \Omega^n(X_{<n+r})
\]
are equivalences, which is clear.
\end{proof}

We can now give the following.

\begin{prop}\label{prop:elementarypostnikov}
The following modifiers are all elementary:
\begin{enumerate}
\item $X \mapsto B^m_X (X^T)$, where $T = S^{m_1}\vee\cdots \vee S^{m_k}$ is a wedge of spheres of dimension $m_i \geq m$.
\item $X \mapsto \tau_{<r}(X)$ for any $r\geq 1$.
\item $X \mapsto K^m_{n,r}(X)\times_{X_{<r}}\cdots\times_{X_{<r}}K^m_{n,r}(X)$ for $1\leq r \leq n < \infty$ and $m \geq 0$.
\end{enumerate}
\end{prop}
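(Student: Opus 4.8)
The three parts are interlocking; the plan is to prove (2) first and then deduce (1) and (3), with (2) itself proved by induction on $r$. Two closure properties of $\elementarymodifiers\subset\malcevmodifiers$ are used throughout. First, $\elementarymodifiers$ is closed under retracts — idempotents split through geometric realizations — so in particular the identity modifier $\id$ is elementary, being the retract of $\map(S^1,\bs)$ along the constant-loop section $X\to\map(S^1,X)$ and the evaluation $\map(S^1,X)\to X$. Second, $\elementarymodifiers$ is closed under pullbacks along levelwise effective epimorphisms; this I would establish from the construction of $\elementarymodifiers$, using that geometric realizations and finite limits in $\malcevmodifiers$ are computed pointwise (it is the $\infty$-category of nonbounded models of $\ukanspaces^\op$) and that geometric realizations of cartesian squares a leg of whose cospan is a levelwise effective epimorphism remain cartesian by \cite[Corollary 4.1.8.(2)]{usd1}, which lets one propagate the claim along geometric realizations, the generators $\map(T,\bs)$ being handled as in \cref{prop:modifierlimitses}. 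I will also repeatedly invoke \cref{prop:properties_of_elementary_modifiers}.(1), the identity $K^0_{n,r}(X)=\tau_{<r}(X^{S^n})$ of \cref{prop:homotopymoduleherd}, and \cref{prop:homotopymoduleherd}.(1), which gives $\tau_{<r}(X^{\bigvee_p S^n})\simeq\tau_{<r}(X^{S^n})^{\times_{X_{<r}}p}$.

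\emph{Part (2).} I induct on $r$. The base case $r=1$, that $\tau_{\leq 0}=\tau_{<1}$ is elementary, is discussed below. For the inductive step, I first observe that for each $n\geq 1$ and $m\geq 0$ the modifier $K^m_{n,1}=B^m_{(\bs)_{\leq 0}}\tau_{\leq 0}((\bs)^{S^n})$ is elementary: the bar construction presents $X\mapsto B_{X_{\leq 0}}\tau_{\leq 0}(X^{S^n})$ as the pointwise — hence $\malcevmodifiers$-internal — geometric realization of $[p]\mapsto\big(X\mapsto\tau_{\leq 0}(X^{S^n})^{\times_{X_{\leq 0}}p}\big)\simeq[p]\mapsto\big(X\mapsto\tau_{\leq 0}(X^{\bigvee_p S^n})\big)$, each term elementary by the base case and \cref{prop:properties_of_elementary_modifiers}.(1); iterating (using that the $p$-fold fibre power over $X_{\leq 0}$ of $B^l_{X_{\leq 0}}\tau_{\leq 0}(X^{S^n})$ is again $B^l_{X_{\leq 0}}\tau_{\leq 0}(X^{\bigvee_p S^n})$) handles all $m$. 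Now the classical Postnikov square of \cref{ex:classicalpostnikov} exhibits $\tau_{<r+1}=\tau_{\leq r}$ as the pullback $\tau_{\leq 0}\times_{K^{r+1}_{r,1}}\tau_{<r}$ of the zero-section along the $k$-invariant $k\colon\tau_{<r}\to K^{r+1}_{r,1}$; since $k$ covers the identity of $\tau_{\leq 0}$ and $K^{r+1}_{r,1}(X)\to X_{\leq 0}$ is a fibrewise delooping, hence a $\pi_0$-isomorphism, the natural transformation $k$ is a levelwise effective epimorphism, so by the closure property $\tau_{<r+1}$ is elementary once $\tau_{<r}$ and $K^{r+1}_{r,1}$ are.

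\emph{Parts (1) and (3).} Given (2), for (3): $K^0_{n,r}(X)=\tau_{<r}(X^{S^n})$ is elementary by (2) and \cref{prop:properties_of_elementary_modifiers}.(1), and $K^m_{n,r}(X)=B^m_{X_{<r}}\tau_{<r}(X^{S^n})$ together with all its $j$-fold fibre powers over $X_{<r}$, namely $B^m_{X_{<r}}\tau_{<r}(X^{\bigvee_j S^n})$, is handled exactly as $K^m_{n,1}$ above — a bar construction whose terms are elementary by \cref{prop:homotopymoduleherd}.(1) (here using $r\leq n$) and \cref{prop:properties_of_elementary_modifiers}.(1). For (1): writing $T=\bigvee_i S^{m_i}$ with $m_i\geq m$, the modifier $B^m_X(X^T)$ is the fibre product over $X$ of the $B^m_X(X^{S^{m_i}})$; each $B^m_X(X^{S^{m_i}})\to X$ has connected fibres (since $m\geq 1$ forces $(m-1)$-connectivity), hence is a levelwise effective epimorphism, so by the closure property it suffices to treat a single sphere $S^n$ with $n\geq m$. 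There the cartesian square from the proof of \cref{prop:homotopymoduleherd} gives $B^n_X(X^{S^n})\simeq\id\times_{\tau_{<n}}\id$ along the truncation (elementary, by the closure property and (2)), and $B^m_X(X^{S^n})\simeq\Omega^{n-m}_X B^n_X(X^{S^n})$, each fibrewise loop being the pullback of a basepoint-section with itself; these are pullbacks along levelwise effective epimorphisms because the bundle $\Omega^{j-1}_X B^n_X(X^{S^n})$, with fibre $\Omega^{j-1}(X\langle n\rangle)$, is fibrewise connected throughout $1\leq j\leq n-m$ (as $X\langle n\rangle$ is $(n-1)$-connected and $j-1<n$). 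Hence $B^m_X(X^{S^n})$ is elementary.

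\emph{The main obstacle} is the base case of (2): that $\tau_{\leq 0}$ is an elementary modifier. This is the one step that cannot be reduced to the structural results already in hand, precisely because $\map(T,\bs)$ and its geometric realizations are built from the connected spaces $T\in\spheresandmore$, whereas $\tau_{\leq 0}$ must detect path components. I expect it to require a hands-on argument in the spirit of \S\ref{subsection:path_components_of_elementary_modifiers}: a functorial resolution of a supersimple space $X$ by (wedges of) spheres whose geometric realization computes $\tau_{\leq 0}X$, exploiting the rigidity of maps out of $\spheresandmore$-spaces into supersimple spaces. Everything else in the proposition is formal once this input, together with the two closure properties, is in place.
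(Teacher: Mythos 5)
Your plan has a genuine gap, in fact two interlocking ones, and both sit exactly where the paper's (quite short) proof puts its key ideas. First, everything you do rests on the claimed closure of $\elementarymodifiers$ under levelwise pullbacks along levelwise effective epimorphisms. This is not established in the paper (the remark following \cref{definition:spiral_system} is phrased precisely so as not to claim anything of the sort), and your sketch does not prove it: universality of colimits in $\spaces$ lets you commute a geometric realization in one of the two upstairs variables past the pullback, but it gives nothing when the \emph{base} of the cospan is itself a geometric realization, since pullbacks do not commute with colimits in the base variable; and the generator case $\map(T,\bs)\times_{H}\map(T',\bs)$ is not addressed by \cref{prop:modifierlimitses}, which concerns comparison functors for modifications of $\infty$-categories, not closure properties of $\elementarymodifiers$. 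Second, the base case of your induction for (2) --- that $\tau_{\leq 0}$ is elementary --- is left unproved and deferred to a hoped-for hands-on argument; this is not a deferrable technicality but the substantive point. Your parts (1) and (3) inherit both dependencies, since e.g.\ your treatment of (1) writes $B^n_X(X^{S^n})\simeq \id\times_{\tau_{<n}}\id$ and again invokes the unproved closure property together with (2).

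The paper's proof runs in the opposite order and needs neither ingredient. Part (1) is proved first and directly, by induction, using only closure under geometric realizations: the bar construction gives $B^{n+1}_X(X^T)\simeq\bigl|B^n_X(X^{T\vee\cdots\vee T})\bigr|$, with base case $B^0_X(X^T)=\map(T,X)$ elementary by definition. Part (2) --- including $r=1$, i.e.\ exactly your ``main obstacle'' $\tau_{\leq 0}$ --- then follows from the \v{C}ech nerve of the effective epimorphism $X\to X_{<r}$: its terms are identified as $X\times_{X_{<r}}\cdots\times_{X_{<r}}X\simeq B^r_X(X^{S^r\vee\cdots\vee S^r})$, elementary by (1), so $\tau_{<r}$ is a geometric realization of elementary modifiers. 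This is the observation you were missing: path components \emph{can} be detected by modifiers built from connected spheres, because the truncation is recovered as the realization of a \v{C}ech nerve whose terms are fibrewise deloopings of cotensors by wedges of $S^r$; no pullback-closure axiom is needed anywhere. Part (3) is then an induction on $m$ essentially along the lines of your bar-construction argument for $K^m_{n,1}$, using \cref{prop:properties_of_elementary_modifiers}.(1), part (2), and \cref{prop:homotopymoduleherd}. So the bar-construction portions of your write-up are sound, but the architecture --- (2) before (1), resting on an unproved closure property and an unproved base case --- does not constitute a proof.
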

\begin{proof}
(1)~~We induct on $n \leq m$, the case $n = 0$ holding by the definition of an elementary modifier. In the inductive step, observe that
\[
B^{n}_X(X^T)\times_X B^{n}(X^T)\simeq B^{n}(X^T\times_X X^T)\simeq B^{n}(X^{T\vee T}).
\]
Continuing in this way, we can identify the bar construction for $B^{n+1}_X(X^T)$ as
\[
B^{n+1}_X(X^T)\simeq\left| (B^n_X(X^T))^{\times_X \bullet}\right| \simeq \left|B^n_X(X^{T\vee\cdots\vee T})\right|.
\]
Each term in this simplicial object is elementary by induction, and thus the geometric realization is elementary by definition.

(2)~~Observe that
\[
X\times_{X_{<r}}X\simeq B_X^r(X^{S^r}),\qquad X\times_{X_{<r}}X\times_{X_{<r}}X\simeq X\times_{X_{<r}}X\times_X X_{<r}X\simeq B_X^r(X^{S^r\vee S^r}),
\]
and so on. Continuing in this way, the \v{C}ech nerve of the effective epimorphism $X \to X_{<r}$ gives an equivalence
\[
X_{<r}\simeq \colim\left(\begin{tikzcd}X &\ar[l,shift left=1mm]\ar[l,shift right=1mm]B_X^r(X^{S^r})&\ar[l,shift left=1mm]\ar[l,shift right=1mm]\ar[l]B_X^r(X^{S^r\vee S^r})&\ar[l,shift left=1.5mm]\ar[l,shift left=0.5mm]\ar[l,shift right=0.5mm]\ar[l,shift right=1.5mm]\cdots\end{tikzcd}\right),
\]
where each term in this \v{C}ech nerve is an elementary modifier in $X$ by (1). It follows that $X \mapsto X_{<r}$ is elementary.

(3)~~We induct on $m$. Combining \cref{prop:properties_of_elementary_modifiers} with (2), we see that if $T$ is a finite product of finite wedges of spheres then
\[
X \mapsto (X^T)_{<r}
\]
is elementary. \cref{prop:homotopymoduleherd} implies that
\[
K^0_{n,r}(X)\times_{X_{<r}}\cdots\times_{X_{<r}}K^0_{n,r}(X)\simeq (X^{S_n\vee\cdots\vee S^n})_{<r},
\]
so this handles the case $m=0$. The inductive step follows by writing
\[
K^m_{n,r}(X)\times_{X_{<r}}\cdots\times_{X_{<r}}K^m_{n,r}(X)\simeq B^m_{X_{<r}}( (X^{S^n\vee \cdots \vee S^n})_{<r})
\]
as a geometric realization of terms which are all elementary by induction.
\end{proof}

\subsection{Spiral systems}\label{ssec:spiralsystems}

As we have seen, the Postnikov tower of a space carries a significant amount of extra structure. For supersimple spaces, this structure can be conveniently encoded in the following.

\begin{defn}
\label{definition:spiral_system}
Let $\dcat$ be an $\infty$-category. A \emph{spiral system} in $\dcat$ is a functor
\[
H\colon \elementarymodifiers \to \dcat
\]
which preserves levelwise pullbacks along levelwise effective epimorphisms. We say that a spiral system $H$ is \emph{convergent} if the comparison map
\[
H(\id) \to \lim_{n\to\infty}H(\tau_{<n})
\]
is an equivalence. We write
\[
\Spiral(\dcat)\subset\Fun(\elementarymodifiers,\dcat)
\]
for the full subcategory of spiral systems.
\end{defn}

\begin{remark}
Levelwise pullback diagrams of elementary modifiers are exactly those pullback diagrams which are preserved by the inclusion $\elementarymodifiers \subseteq \End_{\sigma}(\ukanspaces)$. We do not know if all pullback diagrams in $\elementarymodifiers$ have this property. 
\end{remark}

A spiral system $H$ is a refinement of the tower
\[
H(\id) \to \cdots \to H(\tau_{\leq n}) \to H(\tau_{<n}) \to \cdots H(\tau_{\leq 0}),
\]
endowing this tower with additional structure modeled on the Postnikov tower of a supersimple space. Examples of spiral systems abound in nature, as we now explain. 

\begin{ex}
\label{ex:ssystemtautological}
Let $X$ be a supersimple space. Then the assignment
\[
\elementarymodifiers\to\spaces,\qquad F \mapsto F(X)
\]
is tautologically a spiral system of spaces.
\end{ex}

\begin{ex}
\label{ex:ssystemssimplecat}
Let $\pcat$ be an $\infty$-category with supersimple mapping spaces (such as a Malcev theory or an $\infty$-groupoid, see \cref{ex:categoricalpostnikovspace}).
Then \cref{prop:properties_of_elementary_modifiers} implies that
\[
\elementarymodifiers\to\catinfty,\qquad F \mapsto \pcat_F
\]
is a convergent spiral system of $\infty$-categories.
\end{ex}

\begin{rmk}
\label{ex:spiralclosure}
If $\dcat$ is any $\infty$-category with limits then
\[
\Spiral(\dcat)\subset\Fun(\elementarymodifiers,\dcat)
\]
is closed under limits. Moreover, postcomposition with any pullback-preserving  $f\colon \ccat\to\dcat$ induces a functor
\[
\Spiral(\ccat)\to\Spiral(\dcat).
\]
\end{rmk}

The spiral system formalism is also sufficiently robust as to encode Postnikov structure in non-supersimple contexts, such as the $\infty$-category $\spaces$ of arbitrary spaces. 

\begin{ex}\label{ex:ssysteminftym}
For $m \geq 0$, let $\Cat_{(\infty,m)}$ denote the $\infty$-category of $(\infty,m)$-categories and let $\End^\times(\Cat_{(\infty,m)})$ denote the $\infty$-category of product-preserving endofunctors of $\Cat_{(\infty,m)}$. We claim that for all $m \geq 0$, there is a spiral system extending the assignment
\begin{equation}\label{eq:ssysteminftym}
\elementarymodifiers\to\End^\times(\Cat_{(\infty,m)}),\qquad \tau_{<n} \mapsto (\ctwocat \mapsto \h_{(m+n,m)}\ctwocat).
\end{equation}
In particular, the linear extensions of \cref{prop:spiralsquares} below extend the Postnikov invariants of $(\infty,m)$-categories constructed by Harpaz--Nuiten--Prasma in \cite{harpaznuitenprasma2020kinvariants}.

This spiral system is constructed by induction on $m$ as follows. If $m = 0$, then $\Cat_{(\infty,0)}\simeq\spaces$ is equivalent to the $\infty$-category of spaces. By treating spaces as $\infty$-groupoids, \cref{ex:ssystemtautological} and \cref{ex:categoricalpostnikovspace} combine to construct for any space $X$ a spiral system extending the assignment
\[
\tau_{<n} \mapsto X_{\leq n}\simeq \h_n X.
\]
This construction is natural in $X$, and determines a spiral system
\[
\elementarymodifiers\to\End^\times(\spaces) \simeq \End^\times(\Cat_{(\infty,0)}),
\]

Now, suppose inductively that $m > 0$ and that we have constructed the spiral system of (\ref{eq:ssysteminftym}). As
\[
\Cat_{(\infty,m+1)}\simeq\Cat(\Cat_{(\infty,m)}),
\]
the change of enrichment process of \cref{rec:changeofenrichment} determines a functor
\[
\End^\times(\Cat_{(\infty,m)}) \to \End(\Cat_{(\infty,m+1)}).
\]
The composite
\[
\elementarymodifiers\to \End^\times(\Cat_{(\infty,m)}) \to \End(\Cat_{(\infty,m+1)})
\]
is seen to define a spiral system, land in the full subcategory of product-preserving functors, and extend the assignment $\tau_{<n} \mapsto (\ctwocat\mapsto \h_{(m+n+1,m+1)}\ctwocat)$.
\end{ex}

\begin{ex}\label{ex:mappingspiral}
Let $H$ be a spiral system of $(\infty,n+1)$-categories. For $X,Y\in H(\id)$, the assignment
\[
\map_H(X,Y)(F) = \map_{H(F)}(\pi_{F!}X,\pi_{F!}Y)
\]
defines a spiral system of $(\infty,n)$-categories, convergent if $H$ is.
\end{ex}

\begin{ex}\label{ex:ssystemsheaf}
Let $\xcat$ be an $\infty$-topos. For $X \in \xcat$, there is a spiral system extending the assignment
\[
\tau_{<n} \mapsto X_{\leq n}. 
\]
Moreover, this system is convergent if and only if $X$ is hypercomplete. This spiral system may be constructed by writing $\xcat$ as a left exact localization of a presheaf $\infty$-topos and sheafifying the construction of \cref{ex:ssysteminftym} for $m=0$. If $X$ is a sheaf of supersimple spaces, then one may instead sheafify $F\mapsto F(X)$ directly.
\end{ex}

Perhaps the most important piece of structure that a spiral system encodes is the following.

\begin{prop}\label{prop:spiralsquares}
Let $H$ be a spiral system in an $\infty$-category $\dcat$. For all $1 \leq r \leq n < \infty$, the map $H(\tau_{<n+r})\to H(\tau_{<n})$ is a square-zero extension in the following sense: the collection
\[
\{H(K^m_{n,r}) : m \geq 0 \}
\]
assembles into an object 
\[
H(K^\bullet_{n,r}) \in \Mod_{\thesphere_{<r}}(\dcat_{/H(\tau_{<r})}),
\]
and there are natural cartesian squares
\begin{center}\begin{tikzcd}
H(\tau_{<n+r})\ar[r,"\tau_{(n+r,r)!}"]\ar[d,"\tau_{(n+r,n)!}"]&H(\tau_{<r})\ar[d,"0_!"]\ar[dr,equals]\\
H(\tau_{<n})\ar[r,"k_!"]&H(K^{n+1}_{n,r})\ar[r,"q_!"]&H(\tau_{<r})
\end{tikzcd}.\end{center}
\end{prop}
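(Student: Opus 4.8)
The plan is to obtain the entire statement by applying the functor $H$ to the universal Postnikov square of functors recorded in \cref{rmk:naturalcartsquare}. The first thing to check is that this makes sense: by \cref{prop:elementarypostnikov}, the modifiers $\tau_{<j}$, the $K^m_{n,r}$, and their iterated fibre products $K^m_{n,r}\times_{\tau_{<r}}\cdots\times_{\tau_{<r}}K^m_{n,r}$ over $\tau_{<r}$ are all elementary, so — taking $\spaces_0=\ukanspaces$ in \cref{rmk:naturalcartsquare}, which is legitimate since supersimple spaces have vanishing Whitehead products — the $\thesphere_{<r}$-module object $K^\bullet_{n,r}$ and the cartesian Postnikov square of functors, together with the coherence data tying them together, live entirely in $\elementarymodifiers$, and $H$ may be applied to all of it.

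Next I would isolate the class of cartesian squares of elementary modifiers that $H$ is guaranteed to preserve, namely those which, after evaluation at every supersimple space, become pullbacks along a map that is an effective epimorphism of spaces (equivalently, surjective on $\pi_0$). The point is that for $m\geq 1$ the bundle $K^m_{n,r}(X)\to X_{<r}$ has \emph{connected} fibres: each such fibre is an iterated fibrewise loop space or delooping of $\Fib(X_{<n+r}\to X_{<n})$, which is $(n-1)$-connected because $n\geq 1$, and connectivity of the fibre follows from a direct homotopy-group count using $1\leq r\leq n$. Consequently the zero section $\tau_{<r}\to K^m_{n,r}$ and the projection $K^m_{n,r}\to\tau_{<r}$ are levelwise $\pi_0$-surjective for $m\geq 1$, and the $k$-invariant $\tau_{<n}\to K^{n+1}_{n,r}$ is a levelwise $\pi_0$-isomorphism; in particular the Postnikov square of \cref{rmk:naturalcartsquare} is of the preserved type, and applying $H$ to it produces the asserted cartesian square, with bottom row $H(\tau_{<n})\to H(K^{n+1}_{n,r})\to H(\tau_{<r})$.

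The remaining point is to see that $H$ transports the module-object structure. By \cref{def:moduleobjects}, the data exhibiting $K^\bullet_{n,r}$ as a $\thesphere_{<r}$-module object over $\tau_{<r}$ consists of the equivalences $K^\bullet_{n,r}(0)\simeq\tau_{<r}$, $K^\bullet_{n,r}(P\oplus Q)\simeq K^\bullet_{n,r}(P)\times_{\tau_{<r}}K^\bullet_{n,r}(Q)$, and $K^\bullet_{n,r}(P)\simeq\Omega_{\tau_{<r}}K^\bullet_{n,r}(\Omega P)$, each of which underlies a cartesian square of elementary modifiers; in positive degrees these are of the preserved type by the previous paragraph. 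I expect the one genuinely delicate point to be the bottom degree: the bundle $K^0_{n,r}(X)=\Omega^n_{X_{<r}}\Pi_{[n,n+r)}X$ can have disconnected fibres, so its zero section and projection need not be levelwise effective epimorphisms. The fix I would use is to rewrite all bottom-degree squares in terms of fibrewise loops of positive-degree ones — e.g.\ $K^0_{n,r}=\Omega_{\tau_{<r}}K^1_{n,r}$ and $K^0_{n,r}\times_{\tau_{<r}}K^0_{n,r}\simeq\Omega_{\tau_{<r}}(K^1_{n,r}\times_{\tau_{<r}}K^1_{n,r})$ — using that fibrewise loops commute with fibre products and are themselves computed by pullbacks along the (effective-epimorphism) zero sections of the connected-fibre bundles $K^m_{n,r}$, $m\geq 1$. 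This exhibits every module-structure square as an iterate of squares $H$ preserves, so $H(K^\bullet_{n,r})$ is a module object in $\dcat_{/H(\tau_{<r})}$, and its degree-$(n+1)$ term and base match the target and base of the image Postnikov square, giving the compatibility in the statement. Naturality in $n$, $r$, and in $H$ is then formal, being the image under the functor $H$ of the natural structure supplied by \cref{rmk:naturalcartsquare}. In summary, the hard part is the bottom-degree bookkeeping: checking that the module-object axioms, which a priori involve arbitrary pullbacks over $\tau_{<r}$, can be re-expressed entirely through pullbacks along levelwise effective epimorphisms, which is where the connectivity of the fibrewise deloopings $K^m_{n,r}$ for $m\geq 1$ is used.
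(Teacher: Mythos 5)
Your proposal is correct and takes essentially the same route as the paper: the module object $K^\bullet_{n,r}$ and the Postnikov square already live in $\elementarymodifiers_{/\tau_{<r}}$, and the spiral-system hypothesis transports all of this structure through $H$ because the relevant pullbacks are along levelwise effective epimorphisms (zero sections into the connected-fibre bundles $K^m_{n,r}$, $m\geq 1$). Your only slip is immaterial: the projection $K^0_{n,r}\to\tau_{<r}$ is automatically a levelwise effective epimorphism since it is split by the zero section, so the degree-zero product squares need no special treatment (only zero sections into $K^m_{n,r}$ with $m\geq 1$ occur in the loop conditions), though your fibrewise-loops workaround is also valid.
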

\begin{proof}
By \cref{notation:kinvariantnotation}, we have
\[
K^\bullet_{n,r} \in \Mod_{\thesphere_{<r}}(\elementarymodifiers_{/\tau_{<r}}).
\]
By definition, this is encoded by a functor
\[
K^\bullet_{n,r}\colon \calR_-^\omega(\thesphere_{<r})^\op \to \elementarymodifiers_{/\tau_{<r}}
\]
which preserves the terminal object and pullbacks along certain maps sent to levelwise effective epimorphisms in $\elementarymodifiers_{/\tau_{<r}}$. The assumption that $H$ is a spiral system therefore ensures that the composite
\[
H\circ K^\bullet_{n,r}\colon \calR_-^\omega(\thesphere_{<r})^\op \to \elementarymodifiers_{/\tau_{<r}} \to \dcat_{/H(\tau_{<r})}
\]
again defines a $\thesphere_{<r}$-module object, and that $H$ sends the cartesian squares of \cref{rmk:naturalcartsquare} to cartesian squares in $\dcat$.
\end{proof}

\begin{rmk}
In the setting of \cref{prop:spiralsquares}, when $\dcat = \catinfty$ one says that $H(\tau_{<n+r})$ is a \emph{linear extension of $H(\tau_{<n})$ by $H(K^\bullet_{n,r})$}. We will study linear extensions in greater detail and generality further on in \cref{ssec:linearextensions}.
\end{rmk}

\begin{rmk}\label{rmk:spiralstructure}
Let $H$ be a spiral system in an $\infty$-category $\dcat$. As $1 \leq r \leq n$ vary, the objects
\[
H(K^\bullet_{n,r}) \in \Mod_{\thesphere_{<r}}(\dcat_{/H(\tau_{<r})})
\]
satisfy various compatibilities: for example, there are fibre sequences
\[
H(\tau_{<r+t})\times_{H(\tau_{<t})}H(K^{\bullet+r}_{n+r,n+r+t})\to H(K^\bullet_{n,n+r+t})\to H(\tau_{<r+t})\times_{H(\tau_{<r})}H(K^\bullet_{n,n+r})
\]
in $\Mod_{\thesphere_{<r}}(\dcat_{/H(\tau_{<r+t})})$, when these terms are defined. Spiral systems give a convenient way of packaging all of this additional structure into one coherent object.
\end{rmk}

\section{Infinitesimal extensions of Malcev theories}

Our goal in this section is to study the extent to which the assignment
\[
\malcevtheories\to\catinfty,\qquad \pcat \mapsto \Model_\pcat
\]
to a Malcev theory $\pcat$ its $\infty$-category of models preserves pullbacks. In particular, we establish sufficient conditions for a cartesian square of Malcev theories to be preserved by passage to categories of models. These sufficient conditions are easily verified for the Postnikov squares of $\pcat$, allowing us to prove that the spiral squares of $\pcat$ discussed in the introduction are cartesian.

\subsection{A weak pullback theorem}

We start by establishing a much easier weak pullback theorem, 
\cref{prop:weakpb}, which gives conditions under which the comparison map into the pullback of $\infty$-categories of models is fully faithful. This result is already sufficient, for example, to construction useful decompositions of mapping spaces for models of a Malcev theory, as we describe in \S\ref{ssec:derivedpostnikov}. Later, in \S\ref{subsection:clutching_for_malcev_theories}, we strengthen the theorem given here to an actual equivalence of $\infty$-categories under further assumptions.

\begin{defn}
Say that a functor $f\colon \ccat\to\dcat$ is \emph{locally $n$-connective} if, for all $a,b\in \ccat$, the induced map $\map_\ccat(a,b)\to\map_\dcat(a,b)$ is $n$-connective.
\end{defn}

\begin{ex}
A functor $f\colon \ccat\to\dcat$ is locally $0$-connective if and only if it is full.
\end{ex}

\begin{ex}\label{ex:additivelocalconnective}
Given a map $\phi\colon R \to S$ of connective ring spectra, the induced homomorphism
\[
\cfrees(R) \to \cfrees(S)
\]
of theories (see \cite[\S9.1]{usd1}) is locally $n$-connective if and only if $\phi$ is $n$-connective.
\end{ex}

\begin{lemma}\label{lem:localconnectedunit}
Let $f\colon \pcat\to\qcat$ be a locally $n$-connective map of Malcev theories. 
\begin{enumerate}
\item If $X \in \Model_\pcat$, then the unit $\eta_X\colon X \to f^\ast f_! X$ is $n$-connective.
\item If $f$ is essentially surjective and $Y\in \Model_\qcat$, then the counit $\epsilon_Y\colon f_! f^\ast Y \to Y$ is $(n+1)$-connective.
\end{enumerate}
\end{lemma}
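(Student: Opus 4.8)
The plan is to prove (1) directly and then derive (2) from it via a triangle identity together with a small observation about retractions. I use freely the following properties of $\Model_\pcat$, recorded in \cite{usd1}: the connectivity of a morphism is detected by evaluation at the objects of $\pcat$, so that $\alpha\colon X\to Y$ is $n$-connective iff each $\alpha(P)\colon X(P)\to Y(P)$ is an $n$-connective map of spaces; sifted colimits in $\Model_\pcat$ are computed pointwise; and $\Model_\pcat$ is generated under sifted colimits by the free models $\nu P$. In particular, $n$-connective morphisms of $\Model_\pcat$ are closed under sifted colimits, since this holds for spaces and passes to $\Model_\pcat$ pointwise.

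For (1), consider the full subcategory of $\Model_\pcat$ on those $X$ for which $\eta_X$ is $n$-connective. It is closed under sifted colimits: $f_!$ preserves all colimits as a left adjoint, while $f^\ast$ preserves sifted colimits (these are pointwise, and $f^\ast$ is evaluation along $f$), so naturality of $\eta$ gives $\eta_{\colim_i X_i}\simeq\colim_i\eta_{X_i}$ over sifted diagrams, and we may invoke the closure property above. It therefore suffices to treat a free model $X=\nu P$. By the adjunction $f_!\dashv f^\ast$ and the Yoneda lemma, $f_!\nu P\simeq\nu(fP)$, and unwinding definitions identifies $\eta_{\nu P}\colon\nu P\to f^\ast\nu(fP)$ with the map of product-preserving presheaves on $\pcat$ given at $Q\in\pcat$ by $f\colon\map_\pcat(Q,P)\to\map_\qcat(fQ,fP)$. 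This is $n$-connective by the hypothesis that $f$ is locally $n$-connective, so $\eta_{\nu P}$ is $n$-connective, completing (1).

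For (2), note first that, since $f$ is essentially surjective, the functor $f^\ast\colon\Model_\qcat\to\Model_\pcat$ reflects connectivity: $f^\ast\alpha$ is $n$-connective iff $\alpha(fQ)$ is $n$-connective for all $Q\in\pcat$, and every object of $\qcat$ is equivalent to one of the form $fQ$. So it suffices to prove that $f^\ast\epsilon_Y$ is $(n+1)$-connective. The triangle identity $(f^\ast\epsilon)\circ(\eta f^\ast)=\id_{f^\ast}$ exhibits $\eta_{f^\ast Y}$ as a section of $f^\ast\epsilon_Y$, and $\eta_{f^\ast Y}$ is $n$-connective by (1) applied to $X=f^\ast Y$. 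Hence (2) reduces to the elementary fact that in $\Model_\pcat$, a morphism $r\colon B\to A$ admitting a section $i\colon A\to B$ with $i$ $n$-connective is itself $(n+1)$-connective. Evaluating at $P\in\pcat$ reduces this to the corresponding statement for spaces, where it is immediate on homotopy groups: at every basepoint $\pi_k(i)$ is split injective for all $k$ and, by $n$-connectivity, surjective for $k\le n$, hence an isomorphism for $k\le n$; consequently $\pi_k(r)$ is an isomorphism for $k\le n$ and, being split surjective, a surjection for $k=n+1$, so $r$ is $(n+1)$-connective.

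The only non-formal ingredients are the properties of the connectivity structure on $\Model_\pcat$ borrowed from \cite{usd1}; given these, the argument is bookkeeping with adjunctions together with the homotopy-group computation just sketched. I expect the main point requiring care when writing this out to be the identification of $\eta_{\nu P}$ with the map induced by $f$ on mapping spaces, as that is precisely where the local $n$-connectivity hypothesis is used; beyond that, the only thing to watch is matching the conventions of \cite{usd1} for the connectivity structure.
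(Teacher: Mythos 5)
Your proof is correct and follows essentially the same route as the paper: reduce (1) to representables using colimit-stability of $n$-connective maps and the identification of the unit with $f$ on mapping spaces, then deduce (2) from the triangle identity together with essential surjectivity. The only cosmetic difference is that you verify the "$n$-connective section implies $(n+1)$-connective retraction" fact by hand on homotopy groups, where the paper cites \cite[Proposition 6.5.1.20]{lurie2017highertopos}.
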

\begin{proof}
(1)~~As $n$-connective maps of spaces are stable under colimits and geometric realizations in $\Model_\pcat$ are computed levelwise, we may resolve $X$ by representables to reduce to the case where $X = \nu P$ for some $P \in \pcat$. The unit $\nu P \to f^\ast f_! \nu P$, when evaluated on $P' \in \pcat$, is equivalent to $\map_\pcat(P',P) \to \map_\qcat(fP',fP)$, which is $n$-connective by assumption.

(2)~~Consider the triangle identity:
\begin{center}\begin{tikzcd}
f^\ast Y\ar[d,"\eta_{f^\ast Y}"']\ar[dr,equals]\\
f^\ast f_! f^\ast Y\ar[r,"f^\ast\epsilon_Y"']&f^\ast Y
\end{tikzcd}.\end{center}
By (1), the unit $\eta_{f^\ast Y}$ is $n$-connective. It follows from \cite[Proposition 6.5.120]{lurie2017highertopos} that $f^\ast\epsilon_Y$ is $(n+1)$-connective. As $f$ is essentially surjective, we deduce that $\epsilon_Y$ is $(n+1)$-connective.
\end{proof}

\begin{theorem}\label{prop:weakpb}
Let
\begin{center}\begin{tikzcd}
\pcat'\ar[r,"g'"]\ar[d,"f'"]&\pcat\ar[d,"f"]\\
\qcat'\ar[r,"g"]&\qcat
\end{tikzcd}
$\qquad$
\begin{tikzcd}
\Model_{\pcat'}\ar[r,"g'_!"]\ar[d,"f'_!"]&\Model_\pcat\ar[d,"f_!"]\\
\Model_{\qcat'}\ar[r,"g_!"]&\Model_\qcat
\end{tikzcd}
\end{center}
be a commuting square of Malcev theories and coproduct-preserving functors, with corresponding commuting square of $\infty$-categories of models. Suppose that $\pcat'\to\qcat'\times_\qcat\pcat$ is fully faithful and that $f$ is full. Then
\begin{enumerate}
\item The comparison functor $L\colon \Model_{\pcat'}\to\Model_{\qcat'}\times_{\Model_\qcat}\Model_\pcat$ admits a right adjoint $R$, sending an object of the pullback $\infty$-category written as a triple $\langle Y,X,\alpha\rangle$ with $Y \in \Model_{\qcat'}$, $X\in \Model_\pcat$, and $\alpha\colon g_! Y \simeq f_! X$, to 
\[
R(\langle Y,X,\alpha\rangle) = f'{}^\ast Y \times_{d^\ast f_! X}g'{}^\ast X,
\]
where $d \colonequals f\circ g' = g\circ f'$.
\item For any $X \in \Model_{\pcat'}$, the unit $X \rightarrow RLX$ is an equivalence. Equivalently, $L$ is fully faithful. 
\end{enumerate}
\end{theorem}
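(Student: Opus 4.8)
The plan is to produce the right adjoint $R$ directly by the stated formula, verify the adjunction by an unwinding of mapping spaces, and then deduce (2) from the hypothesis by a density argument over representables.

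\textbf{Part (1).} I would define $R\langle Y,X,\alpha\rangle := f'{}^{\ast}Y \times_{d^{\ast}f_{!}X} g'{}^{\ast}X$, which is manifestly functorial, being assembled from the restriction functors $f'{}^{\ast},g'{}^{\ast},d^{\ast}$, the left adjoint $f_{!}$, and a pullback; here $d = g\circ f' = f\circ g'$, and the two legs of the pullback come from the units $f'{}^{\ast}Y\to f'{}^{\ast}g^{\ast}g_{!}Y\simeq d^{\ast}g_{!}Y\xrightarrow{d^{\ast}\alpha}d^{\ast}f_{!}X$ and $g'{}^{\ast}X\to g'{}^{\ast}f^{\ast}f_{!}X\simeq d^{\ast}f_{!}X$. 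To see that $R$ is right adjoint to $L$, I would compute, for $Z'\in\Model_{\pcat'}$, that $\Map(Z',R\langle Y,X,\alpha\rangle)$ is the pullback $\Map(Z',f'{}^{\ast}Y)\times_{\Map(Z',d^{\ast}f_{!}X)}\Map(Z',g'{}^{\ast}X)$, and then apply the adjunctions $f'_{!}\dashv f'{}^{\ast}$, $g'_{!}\dashv g'{}^{\ast}$, $d_{!}\dashv d^{\ast}$ (which exist by the general theory of \cite{usd1}) to rewrite this as $\Map(f'_{!}Z',Y)\times_{\Map(d_{!}Z',f_{!}X)}\Map(g'_{!}Z',X)$. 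A chase with the triangle identities and the canonical identifications $d_{!}\simeq f_{!}g'_{!}\simeq g_{!}f'_{!}$ identifies this with the mapping space of $LZ'$ into $\langle Y,X,\alpha\rangle$ in the pullback $\infty$-category; naturality in both variables then yields the adjunction. This part is formal.

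\textbf{Part (2).} Since $L\dashv R$, saying that $L$ is fully faithful is the same as saying that the unit $\eta\colon\id\to RL$ is an equivalence. The plan is to reduce to $Z'=\nu P'$ representable. For this I would observe that both $\id$ and $RL$ preserve sifted colimits: $L$ does as a left adjoint, and $R$ does because sifted colimits in $\Model_{\qcat'}\times_{\Model_{\qcat}}\Model_{\pcat}$ are computed componentwise, $f'_{!},g'_{!},d_{!}$ preserve colimits, the restriction functors preserve sifted colimits (being levelwise), and pullbacks in $\Model_{\pcat'}$ commute with sifted colimits, since both operations are computed levelwise in $\spaces$, where colimits are universal. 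As the representables $\{\nu P' : P'\in\pcat'\}$ generate $\Model_{\pcat'}$ under sifted colimits \cite{usd1}, it then suffices to show that $\eta_{\nu P'}$ is an equivalence for every $P'\in\pcat'$.

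Finally, for $Z'=\nu P'$ I would evaluate the unit pointwise: using $f'_{!}\nu P'\simeq\nu(f'P')$ (and likewise $g'_{!}\nu P'\simeq\nu(g'P')$ and $d_{!}\nu P'\simeq\nu(dP')$, as $d_{!}=f_{!}g'_{!}$) together with the co-Yoneda lemma, the map $\eta_{\nu P'}\colon\nu P'\to f'{}^{\ast}\nu(f'P')\times_{d^{\ast}\nu(dP')}g'{}^{\ast}\nu(g'P')$, evaluated at $Q'\in\pcat'$, is identified with the canonical comparison
\[
\Map_{\pcat'}(P',Q')\longrightarrow\Map_{\qcat'}(f'P',f'Q')\times_{\Map_{\qcat}(dP',dQ')}\Map_{\pcat}(g'P',g'Q')
\]
induced by $f'$ and $g'$, and this is an equivalence for all $P',Q'$ precisely by the hypothesis that $\pcat'\to\qcat'\times_{\qcat}\pcat$ is fully faithful. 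I expect the only real bookkeeping to be in the identifications of Part (1) and in the verification that $R$ commutes with sifted colimits (that is, the pullback--sifted-colimit interchange in $\Model_{\pcat'}$); the single genuinely non-formal ingredient is the fully-faithfulness hypothesis, which enters exactly once, at the level of representables.
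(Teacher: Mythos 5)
Your Part (1) and your overall strategy for Part (2) (reduce to representables, where the unit becomes the comparison map of mapping spaces and full faithfulness of $\pcat'\to\qcat'\times_\qcat\pcat$ is used) are in line with the paper. The gap is in the interchange step: the claim that pullbacks in $\Model_{\pcat'}$ commute with sifted colimits ``since both operations are computed levelwise in $\spaces$, where colimits are universal'' is false. Universality of colimits only says that base change along a \emph{fixed} map preserves colimits in one leg of the cospan; it does not imply that the realization of a simplicial diagram of pullbacks agrees with the pullback of the realizations. Already in $\spaces$ this fails: for a discrete group $G$, the levelwise pullback of the constant cospans $\ast\to G^{\times\bullet}\leftarrow\ast$ over the bar construction is constant at a point, with realization $\ast$, whereas the pullback of the realizations $\ast\to BG\leftarrow\ast$ is $G$. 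So the functor $X\mapsto f'{}^{\ast}f'_{!}X\times_{d^{\ast}d_{!}X}g'{}^{\ast}g'_{!}X$ does not formally preserve geometric realizations, and your argument never repairs this.

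This is exactly where the hypothesis that $f$ is full must enter — and its complete absence from your proof is the red flag (you assert that full faithfulness of $\pcat'\to\qcat'\times_\qcat\pcat$ is the only non-formal input, but the statement carries a second hypothesis for a reason). The paper's proof notes that fullness of $f$ implies, via \cref{lem:localconnectedunit}, that the map $g'{}^{\ast}g'_{!}X\to d^{\ast}d_{!}X$ is an effective epimorphism, and then invokes the Malcev-specific fact \cite[Corollary 4.1.8.(2)]{usd1} that in the $\infty$-category of models of a Malcev theory, pullbacks along effective epimorphisms do commute with geometric realizations. If you replace your general pullback--sifted-colimit interchange by this argument (and restrict the reduction to geometric realizations rather than all sifted colimits, which also avoids the question of whether filtered colimits are computed levelwise for infinitary theories), your outline becomes the paper's proof.
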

\begin{proof}
Claim (1) is formal. For (2), observe that since $\pcat' \to \qcat'\times_{\qcat}\pcat$ is fully faithful, the unit is an equivalence when $X = \nu P'$ for some $P' \in \pcat'$. To show that it is always an equivalence for any $X$, it therefore suffices to verify that 
\[
RL X = f'{}^\ast f'_! X \times_{d^\ast d_! X}g'{}^\ast g'_! X
\]
preserves geometric realizations in $X$. Each of the functors $f'{}^\ast f'_!$, $d^\ast d_!$, and $g'{}^\ast g'_!$ preserve geometric realizations, and the assumption that $f$ is full guarantees by \cref{lem:localconnectedunit} that $g'{}^\ast g'_! X \to d^\ast d_! X$ is an effective epimorphism, so \cite[Corollary 4.1.8.(2)]{usd1} implies that the relevant pullback also preserves with geometric realization. 
\end{proof}

\subsection{Derived Postnikov squares}
\label{ssec:derivedpostnikov}

Fix a Malcev theory $\pcat$. As an application of \cref{prop:weakpb}, in this section we explain how one may associate to any model of $\pcat$ a derived version of its Postnikov tower, previously considered in \cite[\S5.4]{balderrama2021deformations}. In fact, we will do more: we will produce not just a derived Postnikov tower, but the more elaborate data of a spiral system, guaranteeing that derived Postnikov towers have all of the additional structure and naturality discussed in \cref{ssec:spiralsystems}.

\begin{notation}\label{notation:derivedcotensor}
Given a space $T$ and $1\leq r \leq \infty$, we write
\[
(\bs)_T\colon \Model_{\h_r\pcat}\to\Model_{\h_r\pcat}
\]
for the unique geometric realization-preserving construction extending the assignment 
\[
\nu_r(P) \mapsto \tau_{<r}(\nu(P)^{T})
\]
on representables. 
\end{notation}

\begin{rmk}\label{rmk:modifyalongspace}
Given $X \in \Model_{\h_r\pcat}$, we may identify
\[
X_T \simeq \pi_{\alpha}^\ast\pi_{\alpha!}X
\]
where
\[
\pi_\alpha\colon \h_r\pcat \simeq \pcat_{\tau_{<r}} \to \pcat_{\tau_{<r}\map(T,\bs)}\simeq\h_r\pcat_{\map(T,\bs)}
\]
is induced by the natural transformation $\alpha\colon\tau_{<r}\to\tau_{<r}\map(T,\bs)$ restricting along the unique map $T \to \ast$. If $T \in \Sph$, then this is a natural transformation between elementary modifiers by \cref{prop:properties_of_elementary_modifiers}.(1) and \cref{prop:elementarypostnikov}.(2).
\end{rmk}

\begin{warning}
Beware that the functor $(\bs)_T\colon \Model_{\h_r\pcat}\to\Model_{\h_r\pcat}$ depends on the $\infty$-category $\pcat$ and not just its homotopy $r$-category $\h_r\pcat$. This extends the observation that for a space $X$, the homotopy type of $\tau_{<r}(X^{T})$ depends on more than just the truncation $\tau_{<r} X$.
\end{warning}

\begin{ex}
If $\pcat$ admits constant colimits indexed by $T$, then $\Lambda_T$ is characterized by
\[
\map_{\h_r\pcat}(\nu_r P,\Lambda_T)\simeq\map_{\h_r\pcat}(\nu_r(T\otimes P),\Lambda);
\]
that is, $(\bs)_T$ is given by precomposition with the endofunctor of $\h_r\pcat$ induced by $T\otimes (\bs)\colon \pcat\to\pcat$. In particular, in this case $(-)_{T}$ is continuous. 
\end{ex}

\begin{ex}
\label{example:derived_cotensor_with_sn_for_the_category_of_modules}
Let $R$ be a connective $\mathbf{E}_1$-ring and $\lfrees_{0}(R)$ be the Malcev theory of free connective $R$-modules, so that $\Model_{\h\pcat}\simeq\LMod_{\pi_0 R}^{\geq 0}$ (see \cite[\S9.1]{usd1}). If $M \in \LMod_{\pi_0 R}^{\geq 0}$, then
\[
M_{S^n}\simeq (\pi_0 R \oplus \pi_n R) \otimes_{\pi_0 R} M \simeq \pi_n R \otimes_{\pi_0 R} M \oplus M.
\]
The additional copy of $M$ arises as the cotensors in \cref{notation:derivedcotensor} are unpointed.
\end{ex}

\begin{lemma}\label{lem:pseudocotensor}
Fix $1 \leq r,n \leq \infty$ and $\Lambda \in \Model_{\h_r\pcat}$.
\begin{enumerate}
\item The comparison $\Lambda_{S^n\vee S^n} \to \Lambda_{S^n}\times_\Lambda\Lambda_{S^n}$ is an equivalence, and therefore the coproduct on $S^n$ makes $\Lambda_{S^n}$ into a grouplike $\mathbf{E}_n$-algebra in $(\Model_{\h_{r}\pcat})_{/\Lambda}$.
\item If $r \leq n$, then there is a equivalence
\[
\Lambda_{S^n} \simeq 0^\ast 0_! \Lambda
\]
of $\mathbf{E}_{n}$-algebras, where $0\colon \h_r\pcat \to \kinv^{0}_{n,r}\pcat$ is as in \cref{constr:kinvariantcategory}. In particular, there is a unique natural refinement of the $\bfE_{n}$-algebra structure on $\Lambda_{S^n}$ over $\Lambda$ to the structure of a connective $\thesphere_{<r}$-module over $\Lambda$.
\item If $\pcat$ is a loop theory, then $\Lambda \mapsto \Lambda_T$ is compatible with derived truncation in the sense that
\[
\tau_{(r,i)!}(\Lambda_{T}) \simeq (\tau_{(r,i)!}\Lambda)_{T}
\]
for any $i\leq r$ and any space $T$.
\end{enumerate}
\end{lemma}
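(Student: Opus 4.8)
The plan is to prove the three parts in turn, in each case reducing to a computation on representable models via the fact that the relevant functors preserve geometric realizations. For (1), both $(\bs)_{S^n\vee S^n}$ and $(\bs)_{S^n}$ preserve geometric realizations by \cref{notation:derivedcotensor}, and the structure map $\Lambda_{S^n}\to\Lambda$ is a split epimorphism (the splitting induced by the collapse $S^n\to\ast$), hence an effective epimorphism, and effective epimorphisms are stable under geometric realization; so by \cite[Corollary 4.1.8.(2)]{usd1} it suffices to treat $\Lambda=\nu_r(P)$ representable. There the claim is levelwise: $\nu_r(P)_{S^n\vee S^n}$ evaluated at $P'\in\pcat$ is $\tau_{<r}\bigl(\map_\pcat(P',P)^{S^n\vee S^n}\bigr)$, and since $S^n\vee S^n$ is the pushout $S^n\leftarrow\ast\to S^n$ we have a pullback $\map_\pcat(P',P)^{S^n\vee S^n}\simeq\map_\pcat(P',P)^{S^n}\times_{\map_\pcat(P',P)}\map_\pcat(P',P)^{S^n}$ which $\tau_{<r}$ preserves by \cref{prop:homotopymoduleherd}.(1), as the mapping spaces of $\pcat$ are supersimple. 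The identical argument gives $\Lambda_{S^n\vee\cdots\vee S^n}\simeq\Lambda_{S^n}\times_\Lambda\cdots\times_\Lambda\Lambda_{S^n}$, so the functor $T\mapsto\Lambda_T$ carries finite wedges of copies of $S^n$ to finite fibre products over $\Lambda$; transporting along this the structure of a grouplike $\mathbf{E}_n$-cogroup object on $S^n$ in $\spaces_\ast$ (which exists for $n\geq 1$) produces the asserted grouplike $\mathbf{E}_n$-algebra structure on $\Lambda_{S^n}$ in $(\Model_{\h_r\pcat})_{/\Lambda}$.

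For (2), assume $1\leq r\leq n<\infty$. By \cref{prop:homotopymoduleherd}.(2) the modifier $K^0_{n,r}$ is $X\mapsto\tau_{<r}(X^{S^n})$ on supersimple spaces, and the zero-section $0\colon\h_r\pcat=\pcat_{\tau_{<r}}\to\pcat_{K^0_{n,r}}=\kinv^0_{n,r}\pcat$ is the functor associated to the zero-section natural transformation $\tau_{<r}\to K^0_{n,r}$, so that $0\circ\pi_{\tau_{<r}}=\pi_{K^0_{n,r}}$ by uniqueness in \cref{lem:uniqueassembly}. Consequently, by the formula for $f^\ast f_!$ recorded in the proof of \cref{corrolary:explicit_formula_for_the_monad_associated_to_a_modification}, the monad $0^\ast 0_!$ on $\Model_{\h_r\pcat}$ is the unique geometric-realization-preserving endofunctor extending $\nu_r(P)\mapsto K^0_{n,r}(\map_\pcat(\bs,P))=\tau_{<r}(\nu(P)^{S^n})$ — which is precisely $(\bs)_{S^n}$ — yielding the natural equivalence $\Lambda_{S^n}\simeq 0^\ast 0_!\Lambda$. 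Tracking the map to $\Lambda$ (given on one side by the basepoint $\ast\to S^n$ and on the other by the retraction $\kinv^0_{n,r}\pcat\to\h_r\pcat$) and the comultiplication on $S^n$ shows this is an equivalence of grouplike $\mathbf{E}_n$-algebras over $\Lambda$. Finally, $K^0_{n,r}$ is the $\thesphere_{<r}$-component of the $\thesphere_{<r}$-module object $K^\bullet_{n,r}$ of \cref{notation:kinvariantnotation} (see \cref{prop:pinrsmodule}); applying the modification functor $\pcat_{(\bs)}$, which preserves the levelwise pullbacks encoding this module structure by \cref{prop:properties_of_elementary_modifiers}.(5), and passing to models via the formula above produces the desired $\thesphere_{<r}$-module refinement of $\Lambda_{S^n}$ over $\Lambda$. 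Uniqueness follows from \cref{prop:stablespaces}: the range $[n,n+r)$ is nonempty precisely because $r\leq n$, so $\Omega^\infty\colon\Sp^{[n,n+r)}\to\spaces_\ast^{[n,n+r)}$ is fully faithful, and hence a grouplike $\mathbf{E}_n$-algebra lying in this range carries at most one $\thesphere_{<r}$-module structure.

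For (3), both $\tau_{(r,i)!}\circ(\bs)_T$ and $(\bs)_T\circ\tau_{(r,i)!}$ are functors from $\Model_{\h_r\pcat}$ to $\Model_{\h_i\pcat}$ that preserve geometric realizations, so it suffices to exhibit a natural equivalence on representables, i.e.\ $\tau_{(r,i)!}\bigl(\tau_{<r}(\nu(P)^T)\bigr)\simeq\tau_{<i}(\nu(P)^T)$. When $T\in\Sph$, passing to the idempotent completion of $\pcat$ (which changes neither $\Model_\pcat$ nor $\Model_{\h_r\pcat}$) we have the constant colimit $T\otimes P\in\pcat$ by \cite[Proposition 7.1.8]{usd1}, so $\nu(P)^T\simeq\nu(T\otimes P)$ is representable, $\tau_{<r}(\nu(P)^T)\simeq\nu_r(T\otimes P)$, and $\tau_{(r,i)!}\nu_r(T\otimes P)\simeq\nu_i(T\otimes P)\simeq\tau_{<i}(\nu(P)^T)$ by the evident behaviour of derived truncation on representables; naturality is immediate. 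For a general space $T$, one reduces to this case using that for a loop theory the derived truncation agrees with levelwise Postnikov truncation on models (cf.\ \cite[\S5.4]{balderrama2021deformations}) and that derived truncations compose, giving $\tau_{(r,i)!}(\tau_{<r}(\nu(P)^T))\simeq\tau_{(r,i)!}(\tau_{(\infty,r)!}\nu(P)^T)\simeq\tau_{(\infty,i)!}\nu(P)^T\simeq\tau_{<i}(\nu(P)^T)$.

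I expect the main obstacle to be part (2): upgrading $\Lambda_{S^n}\simeq 0^\ast 0_!\Lambda$ from an equivalence of underlying objects to one compatible with the grouplike $\mathbf{E}_n$-algebra structures over $\Lambda$ and, more importantly, producing the $\thesphere_{<r}$-module refinement coherently — this requires carefully matching the cogroup and module structures through the modification functor and the monad formula, and it is here that the module-object packaging of \cref{notation:kinvariantnotation} and \cref{prop:pinrsmodule} does the real work. A secondary delicate point is the reduction of an arbitrary space $T$ in (3) to $T\in\Sph$, which relies on the identification of derived truncation with levelwise truncation in the loop-theoretic setting.
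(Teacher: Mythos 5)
Your treatment of parts (1) and (2) follows the paper's own route: reduce to representables using preservation of geometric realizations (with \cite[Corollary 4.1.8]{usd1} handling the pullback side), and identify $(\bs)_{S^n}$ with $0^\ast 0_!$ by unwinding the modification formalism via the monad formula of \cref{corrolary:explicit_formula_for_the_monad_associated_to_a_modification} --- this is precisely the content of \cref{rmk:modifyalongspace} combined with \cref{prop:homotopymoduleherd}, which is what the paper cites. One precision for (2): for a non-representable $\Lambda$ the fibres of $\Lambda_{S^n}$ need not lie in the range where \cref{prop:stablespaces} applies, so uniqueness of the $\thesphere_{<r}$-module refinement should be read as uniqueness of the \emph{natural} refinement, checked on representables (where \cref{prop:homotopymoduleherd} supplies it) and propagated by naturality and preservation of geometric realizations; your argument gestures at this but states it too strongly.

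In part (3) there is a genuinely wrong step: the claim $\nu(P)^T\simeq\nu(T\otimes P)$ is false. The Yoneda embedding does not convert the tensor $T\otimes P$ (a colimit in $\pcat$) into the cotensor $\nu(P)^T$ (a limit of presheaves); the correct statement, recorded in the example following \cref{notation:derivedcotensor}, is that $(\bs)_T$ is \emph{precomposition} with $T\otimes(\bs)$, i.e.\ $\map(\nu_r Q,\nu_r(P)_T)\simeq\map_{\h_r\pcat}(\nu_r(T\otimes Q),\nu_r P)$, and $\nu_r(P)_T$ is in general not representable, so the appeal to ``the evident behaviour of derived truncation on representables'' has nothing to apply to. (Compare \cref{example:derived_cotensor_with_sn_for_the_category_of_modules}: there $M_{S^n}$ involves $\pi_n R$, whereas mapping into $\nu_1(S^n\otimes R)$ would involve $\pi_{-n}$ of mapping spectra and gives a different answer whenever $\pi_n R\neq 0$.) Fortunately the displayed ``general $T$'' chain does not in fact use this special case and coincides with the paper's argument; to make it correct you must state its key input precisely: $\tau_{n!}X\simeq X_{<n}$ holds for \emph{loop} models \cite[Proposition 7.1.10]{usd1}, not for arbitrary models, and it applies to $\nu(P)^T$ because $\nu(P)$ is a loop model and loop models are closed under limits. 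With that fix, and deleting the $T\in\Sph$ detour, part (3) agrees with the paper's proof.
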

\begin{proof}
(1)~~This holds for $\Lambda = \nu P$ with $P \in \pcat$ by \cref{prop:homotopymoduleherd}. As both sides preserve geometric realizations in $\Lambda$, the left by construction and the right by \cite[Corollary 4.1.8]{usd1}, it therefore holds for $\Lambda$ arbitrary.

(2)~~This is a restatement of \cref{rmk:modifyalongspace} in the case $T = S^n$, as $\tau_{<r}\map(S^n,\bs)\simeq K_{n,r}^0$ by \cref{prop:homotopymoduleherd}.

(3)~~Both constructions
\[
\Lambda \mapsto \tau_{(r,i)!}(\Lambda_T),\qquad \Lambda \mapsto (\tau_{(r,i)!}\Lambda)_T
\]
preserve geometric realizations, so it suffices to show that they agree on restriction to representable objects. If $P\in\pcat$, then because loop models are closed under limits, we can identify
\begin{align*}
\tau_{(r,i)!}(\nu_r(P)_T)&\simeq \tau_{(r,i)!}(\tau_{<r}(\nu(P)^T))\simeq \tau_{(r,i)!}(\tau_{r!}(\nu(P)^T))\\
&\simeq \tau_{i!}(\nu(P)^T)\simeq \tau_{<i}(\nu(P)^T)\simeq \nu_i(P)_T\simeq (\tau_{(r,i)!}\nu_r(P))_T
\end{align*}
as claimed. Here we use the fact that, if $X$ is a loop model, then $\tau_{n!}X\simeq X_{<n}$ \cite[Proposition 7.1.10]{usd1}.
\end{proof}

\begin{ex}
If $X \in \Model_{\pcat}$, then
\[
(\tau_! X)_{S^n} = \Pi_{n!} X,
\]
and this is a $\mathbb{Z}$-module object over $\tau_{!}X$. 
\end{ex}

We now situate these derived homotopy modules inside derived Postnikov squares. The general theorem is the following.

\begin{theorem}
\label{thm:spiralmodel}
Let $\pcat$ be a Malcev theory and $X \in \Model_\pcat$ be a model. Then
\[
\elementarymodifiers\to\Model_\pcat,\qquad F\mapsto \pi_F^\ast\pi_{F!}X
\]
is a convergent spiral system. 
\end{theorem}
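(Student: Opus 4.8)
The plan is to reduce the statement to \cref{ex:ssystemssimplecat} together with \cref{prop:weakpb}. Recall that \cref{ex:ssystemssimplecat} already tells us that $F \mapsto \pcat_F$ is a convergent spiral system of $\infty$-categories. The desired spiral system $F \mapsto \pi_F^\ast\pi_{F!}X$ is obtained by evaluating the model $X$ along the ``co-Yoneda'' transition maps of this system; more precisely, the assignment $F \mapsto \pi_F^\ast\pi_{F!}$ is a functor $\elementarymodifiers \to \End(\presheaves(\pcat))$ by \cref{corrolary:explicit_formula_for_the_monad_associated_to_a_modification} and \cref{prop:modificationproperties}, and evaluating at $X \in \Model_\pcat \subset \presheaves(\pcat)$ gives the functor in question, provided one first checks that $\pi_F^\ast\pi_{F!}X$ actually lies in $\Model_\pcat$ for $F$ elementary. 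This last point follows because $\pi_F$ is coproduct-preserving (\cref{prop:properties_of_elementary_modifiers}.(2)) and essentially surjective, so $\pi_{F!}$ carries models to models and $\pi_F^\ast$ preserves the product-preservation condition.

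The first real task is \textbf{convergence}: the comparison map $X = \pi_\id^\ast\pi_{\id!}X \to \lim_n \pi_{\tau_{<n}}^\ast\pi_{\tau_{<n}!}X$ should be an equivalence. Here I would argue pointwise. For $P \in \pcat$, evaluating $\pi_{\tau_{<n}}^\ast\pi_{\tau_{<n}!}X$ at $\nu P$ computes a certain colimit over the modified category, but it is cleaner to note that each $\pi_{\tau_{<n}}^\ast\pi_{\tau_{<n}!}$ is the colimit-preserving extension of $c \mapsto \tau_{<n}\map_\pcat(\bs,c)$ by \cref{corrolary:explicit_formula_for_the_monad_associated_to_a_modification}, and that the maps $\tau_{<n+1} \to \tau_{<n}$ exhibit $\id \simeq \lim_n \tau_{<n}$ as modifiers (the Postnikov tower converges on supersimple spaces, which is what $\Model_{\h_n\pcat} \simeq \Model_{\pcat_{\tau_{<n}}}$ and the convergence in \cref{ex:ssystemssimplecat} encode). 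Since the spaces $\map_\pcat(\bs, c)$ are supersimple, the limit of the tower of truncated presheaves recovers the original presheaf, and because colimit-preserving endofunctors of $\presheaves(\pcat)$ that agree on representables and assemble into a limit cone do so when evaluated on any presheaf (using that $X$ is a filtered colimit of representables and truncation towers commute with the relevant colimits in the supersimple setting, as in \cref{prop:properties_of_elementary_modifiers}.(3)), convergence follows after evaluating on $X$.

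The second and main task is showing that $F \mapsto \pi_F^\ast\pi_{F!}X$ \textbf{preserves levelwise pullbacks along levelwise effective epimorphisms}. Fix such a pullback square in $\elementarymodifiers$, say with corner $F = G\times_I H$ where $H \to I$ is a levelwise effective epimorphism. Applying $\pcat_{(\bs)}$ gives, by \cref{prop:properties_of_elementary_modifiers}.(5), a cartesian square of Malcev theories
\begin{center}\begin{tikzcd}
\pcat_F\ar[r]\ar[d]&\pcat_G\ar[d]\\
\pcat_H\ar[r]&\pcat_I
\end{tikzcd}\end{center}
in which $\pi\colon \pcat_H \to \pcat_I$ is full (since $H \to I$ is a levelwise effective epimorphism, the map on mapping spaces is an effective epimorphism of spaces, hence full) and $\pcat_F \to \pcat_H\times_{\pcat_I}\pcat_G$ is even an equivalence. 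Now I want to apply \cref{prop:weakpb} to conclude that the square of $\infty$-categories of models
$\Model_{\pcat_F} \to \Model_{\pcat_H}\times_{\Model_{\pcat_I}}\Model_{\pcat_G}$
is fully faithful, and then evaluate at $X$. Concretely, $\pi_F^\ast\pi_{F!}X$ is the image of $X$ under the composite $\Model_\pcat \xrightarrow{\pi_{F!}} \Model_{\pcat_F} \xrightarrow{\pi_F^\ast} \Model_\pcat$, and the claim is that the square of such functors indexed by the square $F, G, H, I$ is cartesian in $\Fun(\Model_\pcat, \Model_\pcat)$. Using \cref{prop:weakpb}, the comparison $\Model_{\pcat_F} \to \Model_{\pcat_H}\times_{\Model_{\pcat_I}}\Model_{\pcat_G}$ is fully faithful with right adjoint $R$ given by the explicit fibre-product formula, so it suffices to check that the unit $\pi_{F!}X \to RL\pi_{F!}X$ is an equivalence and that $\pi_F^\ast$ carries this cartesian square of models to a cartesian square of presheaves. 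The former is handled exactly as in the proof of \cref{prop:weakpb}: one resolves $X$ by representables, where the statement holds because $\pcat_F \simeq \pcat_H\times_{\pcat_I}\pcat_G$, and checks that $X \mapsto RL\pi_{F!}X$ preserves geometric realizations, using that $\pi\colon \pcat_H \to \pcat_I$ being full makes the relevant map an effective epimorphism so that \cite[Corollary 4.1.8.(2)]{usd1} applies. For the latter, $\pi_F^\ast$ preserves all limits. Tracing through, $\pi_F^\ast\pi_{F!}X \simeq \pi_H^\ast\pi_{H!}X \times_{\pi_I^\ast\pi_{I!}X} \pi_G^\ast\pi_{G!}X$, which is precisely the statement that our spiral system preserves the given pullback. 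The main obstacle is this verification: one must be careful that the ``full'' hypothesis on $\pi\colon\pcat_H\to\pcat_I$ genuinely holds at the level of modified categories (it does, since effective epimorphisms of spaces are surjective on $\pi_0$ after modification, via \cref{prop:modificationproperties}), and that the fibre-product formula for $R$ in \cref{prop:weakpb} is compatible with restriction along $\pi_F$ — this is where one invokes that $\pi_F$, $\pi_G$, $\pi_H$, $\pi_I$ are all essentially surjective and coproduct-preserving, so that the relevant units and counits behave as in \cref{lem:localconnectedunit}.
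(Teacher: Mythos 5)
Your treatment of the pullback condition is essentially the paper's own argument: modify along the levelwise cartesian square using \cref{prop:properties_of_elementary_modifiers}.(5), observe that the modification of a levelwise effective epimorphism is a full homomorphism, invoke \cref{prop:weakpb} for the resulting square of Malcev theories, and then apply $\pi_F^\ast$ (which preserves limits) to the unit equivalence evaluated at $\pi_{F!}X$. Note that you do not need to re-run the resolution-by-representables argument: part (2) of \cref{prop:weakpb} already asserts that the unit $Y \to RLY$ is an equivalence for \emph{every} $Y$ in the model category of the pullback theory, so you may simply cite it with $Y = \pi_{F!}X$. That half of the proposal is fine.

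The convergence argument, however, has a genuine gap. You want $X \to \lim_n \tau_n^\ast\tau_{n!}X$ to be an equivalence, and you argue that both sides agree on representables and then pass to general $X$ because ``$X$ is a filtered colimit of representables'' and the truncation towers commute with the relevant colimits via $\omega$-continuity. Neither point holds as stated: a general model of a Malcev theory is generated from representables under \emph{geometric realizations} (sifted colimits), not filtered colimits of representables; and, more importantly, the functor $X \mapsto \lim_n \tau_n^\ast\tau_{n!}X$ is an inverse limit of a tower of colimit-preserving functors, which need not preserve geometric realizations (or any colimits), so agreement on representables does not propagate. Relatedly, $\tau_n^\ast\tau_{n!}X$ is \emph{not} the levelwise Postnikov truncation of $X$ for a general model (this identification only holds for loop models, cf.\ the proof of \cref{lem:pseudocotensor}.(3)), so a purely ``pointwise'' Postnikov-convergence argument cannot work directly. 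The paper closes this gap with a connectivity estimate: since $\tau\colon \pcat \to \h_n\pcat$ is locally $(n-1)$-connective, \cref{lem:localconnectedunit} shows the unit $X \to \tau_n^\ast\tau_{n!}X$ is $(n-1)$-connective, and a tower under $X$ whose structure maps from $X$ have connectivity tending to infinity has limit $X$. Replacing your representable/colimit argument by this connectivity argument repairs the proof.
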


\begin{proof}
We first verify that this is a spiral system. Suppose given a levelwise cartesian square
\begin{center}\begin{tikzcd}
F'\ar[r,"\alpha'"]\ar[d,"\eta'"]&F\ar[d,"\eta"]\\
G'\ar[r,"\alpha"]&G
\end{tikzcd}\end{center}
of elementary modifiers in which $\eta$ is a levelwise effective epimorphism. \cref{prop:properties_of_elementary_modifiers}.(5) implies that if $\pcat$ is a Malcev theory, then
\begin{center}\begin{tikzcd}
\pcat_{F'}\ar[r,"\pi_{\alpha'}"]\ar[d,"\pi_{\eta'}"]&\pcat_F\ar[d,"\pi_\eta"]\\
\pcat_{G'}\ar[r,"\pi_\alpha"]&\pcat_G
\end{tikzcd}\end{center}
is a cartesian square of Malcev theories and homomorphisms. As $\eta$ is a levelwise effective epimorphism, $\pi_\eta$ is full, and therefore \cref{prop:weakpb} implies that if $Y \in \Model_{\pcat_{F'}}$ and we write $\delta = \alpha \circ \eta' = \eta\circ \alpha'$, then
\[
Y \to \pi_{\eta'}^\ast\pi_{\eta'!}Y\times_{\pi_\delta^\ast\pi_{\delta!}Y} \pi_{\alpha'}^\ast\pi_{\alpha'!}Y
\]
is an equivalence. Specializing to the case where $Y = \pi_{F'!}X$ and further applying $\pi_{F'}^\ast$ then proves that
\[
\pi_{F'}^\ast\pi_{F'!} X \to \pi_{G'}^\ast \pi_{G'!}X \times_{\pi_G^\ast \pi_{G!}X}\pi_F^\ast\pi_{F!}X
\]
is an equivalence as claimed. It remains to prove that this spiral system is convergent, i.e.\ that
\[
X \to \lim_{n\to\infty}\tau_n^\ast\tau_{n!}X
\]
is an equivalence. This holds as $X \to \tau_n^\ast \tau_{n!}X$ is $(n-1)$-connective by \cref{lem:localconnectedunit}.
\end{proof}

Having constructed a spiral system, we single out the Postnikov squares themselves. 

\begin{definition}\label{constr:derivedpostnikovsquare}
Let $\pcat$ be a Malcev theory and $1 \leq r \leq n < \infty$ be integers. The \emph{derived Postnikov square} of a model $X \in \Model_{\h_{n+r}\pcat}$ is the cartesian square
\begin{center}\begin{tikzcd}
X\ar[r]\ar[d]&\tau_{(n+r,r)}^\ast\tau_{(n+r,r)!}X\ar[d]\ar[dr,equals]\\
\tau_{(n+r,n)}^\ast\tau_{(n+r,n)!}X\ar[r]&\tau_{(n+r,r)}^\ast B^{n+1}_{\tau_{(n+r,r)!}X}(\tau_{(n+r,r)!}X)_{S^n}\ar[r]&\tau_{(n+r,r)}^\ast\tau_{(n+r,r)!}X 
\end{tikzcd}\end{center}
obtained by applying \cref{prop:weakpb} to the categorical Postnikov square
\begin{center}\begin{tikzcd}
\h_{n+r}\pcat\ar[r,"\tau_{(n+r,r)}"]\ar[d,"\tau_{(n+r,n)}"']&\h_r\pcat\ar[d,"0"]\ar[dr,equals]\\
\h_n\pcat\ar[r,"k"]&\kinv_{n,r}^{n+1}\pcat\ar[r,"k"]&\h_r\pcat
\end{tikzcd}\end{center}
of $\pcat$ in the sense of \cref{constr:categoricalpostnikovsquare}, and applying \cref{lem:pseudocotensor} to identify $0^\ast 0_! \Lambda = B^{n+1}_\Lambda\Lambda_{S^n}$ for $\Lambda \in \Model_{\h_r\pcat}$.
\end{definition}

In particular, any model $X \in \Model_\pcat$ admits a \emph{derived Postnikov tower}
\[
X\simeq \lim_{n\to\infty}\tau_n^\ast\tau_{n!}X,
\]
with layers fitting into cartesian squares 
\begin{center}\begin{tikzcd}
\tau_{(n+r)}^\ast\tau_{(n+r)!}X\ar[r]\ar[d]&\tau_r^\ast\tau_{r!}X\ar[d]\\
\tau_n^\ast\tau_{n!}X\ar[r]&\tau_r^\ast B^{n+1}_{\tau_{r!}X}(\tau_{r!}X)_{S^n}
\end{tikzcd}.\end{center}
for any $r \leq n$. This gives rise to the following useful decomposition of mapping spaces in $\Model_\pcat$:

\begin{prop}\label{cor:mappingspacedecomposition}
Given $X,Y\in \Model_\pcat$, there is a decomposition
\[
\map_\pcat(Y,X)\simeq\lim_{n\to\infty}\map_{\h_n\pcat}(\tau_{n!}Y,\tau_{n!}X),
\]
with layers fitting into cartesian squares
\begin{center}\begin{tikzcd}
\map_{\h_{n+r}\pcat}(\tau_{(n+r)!}Y,\tau_{(n+r)!}X)\ar[r]\ar[d]&\map_{\h_r\pcat}(\tau_{r!}Y,\tau_{r!}X)\ar[d]\ar[dr,equals]\\
\map_{\h_n\pcat}(\tau_{n!}Y,\tau_{n!}X)\ar[r]&\map_{\h_r\pcat}(\tau_{r!}Y,B^{n+1}_{\tau_{r!}X}(\tau_{r!}X)_{S^n})\ar[r]&\map_{\h_r\pcat}(\tau_{r!}Y,\tau_{r!}X)
\end{tikzcd}\end{center}
for all $1 \leq r \leq n < \infty$.
\end{prop}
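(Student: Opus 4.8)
The plan is to deduce this directly from the derived Postnikov tower of $X$ constructed in \cref{constr:derivedpostnikovsquare}, by applying the corepresentable functor $\map_\pcat(Y,-)$ and transposing along restriction adjunctions.

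First I would recall that, by \cref{thm:spiralmodel}, the assignment $F \mapsto \pi_F^\ast\pi_{F!}X$ is a convergent spiral system in $\Model_\pcat$. Convergence gives the equivalence $X \simeq \lim_{n\to\infty}\tau_n^\ast\tau_{n!}X$, and \cref{prop:spiralsquares} together with the identifications of \cref{constr:derivedpostnikovsquare} supplies, for every $1 \le r \le n < \infty$, a natural cartesian square in $\Model_\pcat$ with corners $\tau_{(n+r)}^\ast\tau_{(n+r)!}X$, $\tau_n^\ast\tau_{n!}X$, $\tau_r^\ast\tau_{r!}X$ and $\tau_r^\ast B^{n+1}_{\tau_{r!}X}(\tau_{r!}X)_{S^n}$, where the last corner is $\tau_r^\ast(0^\ast 0_!\tau_{r!}X)$ rewritten using \cref{lem:pseudocotensor}.(2) and the composite restriction $\tau_{(n+r)}^\ast\tau_{(n+r,r)}^\ast \simeq \tau_r^\ast$.

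Next I would apply $\map_\pcat(Y,-)\colon \Model_\pcat \to \spaces$. This functor preserves all limits, so it carries the tower decomposition to $\map_\pcat(Y,X)\simeq \lim_{n\to\infty}\map_\pcat(Y,\tau_n^\ast\tau_{n!}X)$ and each of the cartesian squares above to a cartesian square of spaces. It then remains to identify the terms. For every $m$, the functor $\tau_{m}\colon\pcat \to \h_m\pcat$ is a homomorphism of Malcev theories, so the restriction–left Kan extension adjunction descends to models as $\tau_{m!}\colon \Model_\pcat \rightleftarrows \Model_{\h_m\pcat} \colon \tau_m^\ast$, whence $\map_\pcat(Y,\tau_m^\ast Z) \simeq \map_{\h_m\pcat}(\tau_{m!}Y,Z)$ for all $Z \in \Model_{\h_m\pcat}$. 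Applying this with $Z = \tau_{m!}X$ for $m \in \{n,n+r,r\}$ and with $Z = B^{n+1}_{\tau_{r!}X}(\tau_{r!}X)_{S^n}\in\Model_{\h_r\pcat}$ for the remaining corner turns the limit and cartesian squares of the previous step into exactly the asserted decomposition $\map_\pcat(Y,X)\simeq\lim_{n\to\infty}\map_{\h_n\pcat}(\tau_{n!}Y,\tau_{n!}X)$ and its layer squares.

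The argument is essentially formal once \cref{thm:spiralmodel} and \cref{constr:derivedpostnikovsquare} are in hand; there is no real obstacle to anticipate. The only points requiring care are bookkeeping: verifying that the lower-right corner of the square produced by the spiral system really is $\tau_r^\ast B^{n+1}_{\tau_{r!}X}(\tau_{r!}X)_{S^n}$ (i.e.\ unwinding \cref{constr:derivedpostnikovsquare} and \cref{lem:pseudocotensor}.(2)), and keeping track of the various composite restrictions so that the indices $n$, $n+r$, $r$ land where the statement demands.
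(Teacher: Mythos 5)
Your proposal is correct and follows essentially the same route as the paper, whose proof is precisely ``map $Y$ into the derived Postnikov tower of $X$ and apply the adjunction $\pi_{F!}\dashv\pi_F^\ast$''; your extra steps (limit-preservation of $\map_\pcat(Y,-)$, the identification of the lower-right corner via \cref{lem:pseudocotensor}) are just the bookkeeping the paper leaves implicit.
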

\begin{proof}
Map $Y$ into the derived Postnikov tower of $X$ and apply the adjunction $\pi_{F!}\dashv \pi_F^\ast$.
\end{proof}

\begin{cor}
Fix $X,Y\in \Model_\pcat$ and a map $\phi\colon \tau_{r!}Y\to \tau_{r!}X$. For $n\geq r$, write
\[
\map_{\h_n\pcat}(\tau_{n!}Y,\tau_{n!}X)_\phi = \{\phi\}\times_{\map_{\h_r\pcat}(\tau_{r!}Y,\tau_{r!}X)}\map_{\h_n\pcat}(\tau_{n!}Y,\tau_{n!}X)
\]
for the space of maps $\tau_{n!}Y\to\tau_{n!}X$ lifting $\phi$. Then there is a decomposition
\[
\map_\pcat(Y,X)_\phi\simeq\lim_{r\leq n\to\infty}\map_{\h_n\pcat}(\tau_{n!}Y,\tau_{n!}X)_\phi,
\]
with layers fitting into cartesian squares
\begin{center}\begin{tikzcd}
\map_{\h_{n+r}\pcat}(\tau_{(n+r)!}Y,\tau_{(n+r)!}X) \ar[r]\ar[d]&\{0\}\ar[d]\\
\map_{\h_n\pcat}(\tau_{n!}Y,\tau_{n!}X) \ar[r]& \map_{\h_r\pcat/\tau_{r!}X}(\tau_{r!}Y,B^{n+1}_{\tau_{r!}X}(\tau_{r!}X)_{S^n})
\end{tikzcd}.\end{center}
\end{cor}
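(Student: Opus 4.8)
The plan is to obtain this by restricting the decomposition of \cref{cor:mappingspacedecomposition} to the fibre over $\phi$. Abbreviate $A_m=\map_{\h_m\pcat}(\tau_{m!}Y,\tau_{m!}X)$, so that \cref{cor:mappingspacedecomposition} identifies $\map_\pcat(Y,X)\simeq\lim_{n\to\infty}A_n$, with transition maps induced by the truncations $\tau_{(n+1,n)}$, and equips this tower with cartesian squares
\begin{center}\begin{tikzcd}
A_{n+r}\ar[r,"a"]\ar[d,"b"]&A_r\ar[d,"\sigma"]\ar[dr,equals]\\
A_n\ar[r,"c"]&K\ar[r,"\rho"]&A_r
\end{tikzcd}\end{center}
for $1\leq r\leq n$, where $K=\map_{\h_r\pcat}(\tau_{r!}Y,B^{n+1}_{\tau_{r!}X}(\tau_{r!}X)_{S^n})$, the maps $\sigma$ and $\rho$ are induced by the zero section and the projection $p$ of $B^{n+1}_{\tau_{r!}X}(\tau_{r!}X)_{S^n}$ over $\tau_{r!}X$ so that $\rho\sigma=\id_{A_r}$, the map $a$ is induced by $\tau_{(n+r,r)}$, and the bottom composite $\rho c$ is the map induced by $\tau_{(n,r)}$.

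For the limit decomposition, I would observe that the map $\map_\pcat(Y,X)\to A_r$ used to define $\map_\pcat(Y,X)_\phi$ is the $r$-th structure map of the limit $\lim_n A_n$. Restricting the tower to indices $n\geq r$ does not change the limit, and taking fibres commutes with limits, so
\[
\map_\pcat(Y,X)_\phi\;\simeq\;\lim_{r\leq n}\bigl(\{\phi\}\times_{A_r}A_n\bigr)\;=\;\lim_{r\leq n}\map_{\h_n\pcat}(\tau_{n!}Y,\tau_{n!}X)_\phi,
\]
using that for $m\geq r$ the map $A_m\to A_r$ in the tower is induced by $\tau_{(m,r)}$.

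For the layers, I would take the fibre of $a$ over $\phi$ in the cartesian square above. Since $A_{n+r}\simeq A_n\times_K A_r$ with $a$ the projection onto the $A_r$-factor, this fibre is computed as $A_n\times_{c,K}\{\sigma(\phi)\}$; thus $\map_{\h_{n+r}\pcat}(\tau_{(n+r)!}Y,\tau_{(n+r)!}X)_\phi\simeq A_n\times_{c,K}\{\sigma(\phi)\}$. Now $\rho\sigma=\id_{A_r}$ shows that $\sigma(\phi)$ lies in the fibre $\rho^{-1}(\phi)$, which by definition is the relative mapping space $\map_{\h_r\pcat/\tau_{r!}X}(\tau_{r!}Y,B^{n+1}_{\tau_{r!}X}(\tau_{r!}X)_{S^n})$ with $\tau_{r!}Y$ regarded over $\tau_{r!}X$ via $\phi$; and since $\rho c$ is induced by $\tau_{(n,r)}$, the map $c$ sends $c^{-1}(\rho^{-1}(\phi))=\map_{\h_n\pcat}(\tau_{n!}Y,\tau_{n!}X)_\phi$ into that fibre. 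Rewriting the pullback $A_n\times_{c,K}\{\sigma(\phi)\}$ through the subspace $\rho^{-1}(\phi)\subseteq K$ then exhibits $\map_{\h_{n+r}\pcat}(\tau_{(n+r)!}Y,\tau_{(n+r)!}X)_\phi$ as the fibre product of $\map_{\h_n\pcat}(\tau_{n!}Y,\tau_{n!}X)_\phi$ and $\{0\}=\{\sigma(\phi)\}$ over $\map_{\h_r\pcat/\tau_{r!}X}(\tau_{r!}Y,B^{n+1}_{\tau_{r!}X}(\tau_{r!}X)_{S^n})$, which is the asserted cartesian square.

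No essential difficulty arises beyond \cref{cor:mappingspacedecomposition} itself; the remainder is formal bookkeeping with pullbacks and cofiltered limits. The one point to keep in view is the role of the retraction $\rho\sigma=\id_{A_r}$: it is precisely this that forces the zero section to factor through the relative mapping space $\map_{\h_r\pcat/\tau_{r!}X}(-,-)$, which is why the contractible object $\{0\}$ rather than all of $A_r$ occupies the top-right corner.
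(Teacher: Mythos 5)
Your proposal is correct and is essentially the paper's own argument: the paper proves this corollary in one line by pulling back the decomposition of \cref{cor:mappingspacedecomposition} along $\{\phi\}\to\map_{\h_r\pcat}(\tau_{r!}Y,\tau_{r!}X)$, which is exactly your restriction-to-the-fibre argument, with your observations about the retraction $\rho\sigma=\id$ and the identification of $\rho^{-1}(\phi)$ with the relative mapping space spelling out the bookkeeping the paper leaves implicit.
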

\begin{proof}
This follows from \cref{cor:mappingspacedecomposition} by pulling back along $\{\phi\} \to \map_{\h_r\pcat}(\tau_{r!}Y,\tau_{r!}X)$.
\end{proof}

\subsection{Truncated and connected morphisms}\label{ssec:truncatedconnected}

Our goal in this subsection is to collect some technical properties we will need about connectivity. These will be used in the next section to prove \cref{thm:animatesquaregeneral}. 
Recall from \cite[\S3.3]{usd1} that if $\pcat$ is Malcev theory, then the $(n+1)$-connective and $n$-truncated morphisms form a factorization system on $\Model_\pcat$, with connectivity and factorizations reflected by the embedding $\Model_\pcat\subset\Fun(\pcat^\op,\spaces)$, i.e.\ computed levelwise. General connectivity properties of spaces therefore imply the corresponding properties for models of Malcev theories by working levelwise, and we begin by recording some of these.

\begin{lemma}\label{lem:retractconnect}
Let $X \to Y$ be an $n$-connective morphism. 
\begin{enumerate}
\item $X$ is $n$-connective if and only if $Y$ is $n$-connective.
\item If $X$ is $(n+1)$-connective, then $Y$ is $(n+1)$-connective,
\end{enumerate}
\end{lemma}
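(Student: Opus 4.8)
The plan is to reduce both claims to elementary facts about fibre sequences of spaces, exploiting that connectivity is detected levelwise. Recall from \cite[\S3.3]{usd1} that $k$-connectivity of morphisms, and the property of a model being $k$-connective, are both reflected by the embedding $\Model_\pcat \subset \Fun(\pcat^\op,\spaces)$; equivalently, a model $Z$ is $k$-connective precisely when the map $Z \to 0$ to the zero object is $k$-connective, i.e.\ when $\pi_i Z(P) = 0$ for every $P \in \pcat$ and every $i < k$. It therefore suffices to prove, for an $n$-connective map $f\colon X \to Y$ of pointed spaces, that (1) $X$ is $n$-connective if and only if $Y$ is, and (2) if $X$ is $(n+1)$-connective then $Y$ is $(n+1)$-connective.

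For the space-level statement I would use the fibre sequence $\Fib(f) \to X \to Y$. By hypothesis $\pi_i \Fib(f) = 0$ for all $i < n$, so the long exact sequence of homotopy groups shows that $\pi_i X \to \pi_i Y$ is an isomorphism for $i < n$ and an epimorphism for $i = n$. Claim (1) is then immediate: $\pi_i X$ vanishes for all $i < n$ if and only if $\pi_i Y$ does. For (2), assume $\pi_i X = 0$ for all $i \leq n$; then $\pi_i Y \cong \pi_i X = 0$ for $i < n$, and $\pi_n Y$, being a quotient of $\pi_n X = 0$, also vanishes, so $Y$ is $(n+1)$-connective. Transporting back along the levelwise embedding finishes the proof.

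The only point requiring attention is the behaviour in degrees $0$ and $1$, where the long exact sequence is a sequence of pointed sets and groups rather than abelian groups: here one invokes that an $n$-connective map with $n \geq 1$ is an effective epimorphism with connected fibres (so $\pi_0 X \to \pi_0 Y$ is a bijection and $\pi_1 X \to \pi_1 Y$ is surjective), and that a $0$-connective map is by definition an effective epimorphism. A cleaner way to organise this is to argue directly from the standard closure properties of $k$-connective morphisms in an $\infty$-topos — stability under composition, together with the cancellation property that if $g \circ f$ is $k$-connective and $f$ is $(k-1)$-connective then $g$ is $k$-connective — applied to the composite $X \to Y \to 0$; this is the route I would actually take, as it avoids the low-degree bookkeeping altogether. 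In any case there is no genuine obstacle: the lemma is a formal consequence of the levelwise description of connectivity combined with elementary properties of fibre sequences.
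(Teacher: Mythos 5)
Your argument is correct, but it takes a different route from the paper. The paper disposes of (1) by citing the cancellation property for $n$-connective morphisms in an $\infty$-topos (\cite[Proposition 6.5.1.16(5)]{lurie2017highertopos}), and proves (2) internally, without ever unwinding homotopy groups: it truncates to $X_{\leq n}\to Y_{\leq n}$, notes that $(n+1)$-connectivity of $X$ makes $X_{\leq n}$ terminal so that $X_{\leq n}\to Y_{\leq n}\to X_{\leq n}$ is a retraction, and then applies the section/retraction connectivity criterion \cite[Proposition 6.5.1.20]{lurie2017highertopos}. You instead reduce to spaces levelwise (which is legitimate and is exactly the reduction the paper sets up at the start of the subsection) and run the long exact sequence of the fibration $\Fib(f)\to X\to Y$; with the low-degree and basepoint care you acknowledge, this gives both statements. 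Your approach is more elementary and self-contained; the paper's buys a basepoint-free argument that works verbatim in any $\infty$-topos. Two caveats. First, a model $Z$ is $n$-connective when the map to the \emph{terminal} model is $n$-connective, i.e.\ each space $Z(P)$ is nonempty with vanishing $\pi_i$ for $i<n$ at every basepoint; $\Model_\pcat$ has no zero object in general and its objects are not pointed, so your phrase ``the map $Z\to 0$ to the zero object'' and the reduction to \emph{pointed} spaces should be reworded (choosing basepoints locally, as you in effect do, is fine). Second, the ``cleaner'' alternative via the cancellation ``$g\circ f$ $k$-connective and $f$ $(k-1)$-connective $\Rightarrow$ $g$ $k$-connective'' is true, but applied to $X\to Y\to \ast$ it is essentially statement (2) itself in fibrewise form, and the readily citable cancellation in \cite[Proposition 6.5.1.16]{lurie2017highertopos} requires $f$ to be $k$-connective (same index) — which is precisely why the paper gives a separate truncation argument for (2). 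So if you take that route you should prove the shifted cancellation (e.g.\ by your LES argument), not merely invoke it as standard.
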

\begin{proof}
(1)~~This is \cite[Proposition 6.5.1.16(5)]{lurie2017highertopos}.

(2)~~Consider the diagram
\begin{center}\begin{tikzcd}
X\ar[r]\ar[d]&Y\ar[r]\ar[d]&\ast\ar[d]\\
X_{\leq n}\ar[r]&Y_{\leq n}\ar[r]&\ast
\end{tikzcd}.\end{center}
As $Y \to Y_{\leq n}$ is $(n+1)$-connective, $Y$ is $(n+1)$-connective if and only if $Y_{\leq n}$ is $n$-connective. As $X$ is $(n+1)$-connective, the composite $X_{\leq n} \to Y_{\leq n} \to \ast$ is an equivalence, and therefore \cite[Proposition 6.5.1.20]{lurie2017highertopos} implies that $Y_{\leq n}$ is $(n+1)$-connective if and only if $X_{\leq n} \to Y_{\leq n}$ is $n$-connective. This is a composite of the $n$-connective maps $X \to Y \to Y_{\leq n}$ and is therefore $n$-connective as claimed.
\end{proof}

\begin{defn}
A square
\begin{center}\begin{tikzcd}
A\ar[r,"f"]\ar[d,"p"]&B\ar[d,"q"]\\
C\ar[r,"g"]&D
\end{tikzcd}\end{center}
is said to be \emph{$n$-cartesian} if any of the following equivalent conditions hold:
\begin{enumerate}
\item The comparison map $A \to B\times_D C$ is $n$-connective;
\item The induced map $\Fib(f)\to \Fib(g)$ between fibres at each basepoint of $B$ is $n$-connective;
\item The induced map $\Fib(p) \to \Fib(q)$ between fibres at each basepoint of $C$ is $n$-connective.
\end{enumerate}
\end{defn}

\begin{lemma}
\label{lem:connectiveretract}
Consider a diagram
\begin{center}\begin{tikzcd}
A\ar[r,"f"]\ar[d]&B\ar[d]\\
C\ar[r,"g"]\ar[d]&D\ar[d]\\
A\ar[r,"f"]&B
\end{tikzcd}\end{center}
in which the vertical composites are the identity. If the bottom square is $(n+1)$-cartesian then the top square is $n$-cartesian, and the converse holds if $B \to D$ is an effective epimorphism.
\end{lemma}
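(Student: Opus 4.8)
The plan is to reduce the statement to an elementary fact about retracts of spaces. Since connectivity, pullbacks, and effective epimorphisms in $\Model_\pcat$ are computed and reflected levelwise by the embedding into presheaves \cite[\S3.3]{usd1}, it suffices to work with diagrams of spaces. Label the maps in the diagram: $f\colon A\to B$ is the common top and bottom horizontal, $g\colon C\to D$ the middle horizontal, $i\colon A\to C$ and $j\colon B\to D$ the verticals of the top square (so $jf=gi$), and $r\colon C\to A$ and $s\colon D\to B$ those of the bottom square (so $fr=sg$), with $ri=\id_A$ and $sj=\id_B$. For each $b\in B$, the relation $gi=jf$ shows that $i$ restricts to $\alpha_b\colon\Fib(f,b)\to\Fib(g,j(b))$, the relation $fr=sg$ together with $sj=\id_B$ shows that $r$ restricts to $\beta_b\colon\Fib(g,j(b))\to\Fib(f,b)$, and $ri=\id_A$ gives $\beta_b\alpha_b=\id_{\Fib(f,b)}$. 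By the fibrewise characterization of $m$-cartesian squares (condition~(2) of the definition), the top square is $m$-cartesian if and only if $\alpha_b$ is $m$-connective for all $b\in B$, while the bottom square is $m$-cartesian if and only if for all $d\in D$ the restriction $\Fib(g,d)\to\Fib(f,s(d))$ of $r$ is $m$-connective; and for $d=j(b)$ this restriction is exactly $\beta_b$.

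The key input is the following: for any retract of spaces $\alpha\colon X\to Y$, $\beta\colon Y\to X$ with $\beta\alpha\simeq\id_X$, the map $\alpha$ is $n$-connective if and only if $\beta$ is $(n+1)$-connective. To see this, apply the fibre sequence of a composite to $X\xrightarrow{\alpha}Y\xrightarrow{\beta}X$: over a point $x\in X$, using the canonical lift $\alpha(x)$, it reads $\Fib(\alpha,\alpha(x))\to\ast\to\Fib(\beta,x)$, so that $\Fib(\alpha,\alpha(x))\simeq\Omega\Fib(\beta,x)$. Moreover $\pi_1\beta$ is split surjective by $\pi_1\alpha$, so the fibres of $\beta$ are connected whenever $\pi_0\beta$ is a bijection, which in turn holds whenever $\alpha$ is $0$-connective or $\beta$ is $1$-connective; in that situation $\pi_0\alpha$ is a bijection as well, so every path component of $Y$ is met by the section. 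Feeding these observations into $\Fib(\alpha,\alpha(x))\simeq\Omega\Fib(\beta,x)$ in each direction gives the equivalence; the degenerate range $n\le-1$ is immediate, as $\beta$ admits a section and is therefore an effective epimorphism.

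It remains to assemble. If the bottom square is $(n+1)$-cartesian then each $\beta_b$ is $(n+1)$-connective, hence each $\alpha_b$ is $n$-connective by the retract fact, hence the top square is $n$-cartesian; this uses no extra hypothesis. Conversely, if the top square is $n$-cartesian then each $\alpha_b$ is $n$-connective, hence each $\beta_b$ is $(n+1)$-connective; and if $B\to D$ is an effective epimorphism, then it is surjective on $\pi_0$, so every $d\in D$ lies in the component of some $j(b)$, whence the restriction $\Fib(g,d)\to\Fib(f,s(d))$ is equivalent to $\beta_b$ and so $(n+1)$-connective, and the bottom square is $(n+1)$-cartesian. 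The delicate step, and the one I expect to require the most care, is the $\pi_0$/$\pi_1$ bookkeeping in the retract fact: the equivalence $\Fib(\alpha)\simeq\Omega\Fib(\beta)$ only sees the path components of $Y$ hit by the section, so one must separately control $\pi_0\Fib(\beta,x)$ and the image of $\pi_0\alpha$. Correspondingly, the effective-epimorphism hypothesis enters only to propagate the estimate on $\beta_b$ from the components of $D$ in the image of $j$ to all of $D$ in the converse direction.
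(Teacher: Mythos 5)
Your proof is correct and takes essentially the same route as the paper's: pass to fibres over basepoints of $B$ to obtain a retraction $\Fib(f)\to\Fib(g)\to\Fib(f)$, use that for such a retraction the section is $n$-connective if and only if the retraction map is $(n+1)$-connective, and invoke the effective-epimorphism hypothesis only to reach components of $D$ not hit by $B$ in the converse direction. The sole difference is that the paper cites \cite[Proposition 6.5.1.20]{lurie2017highertopos} for the retraction/connectivity shift, which you instead prove directly via $\Fib(\alpha)\simeq\Omega\,\Fib(\beta)$ together with the $\pi_0$/$\pi_1$ bookkeeping, and that argument is sound.
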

\begin{proof}
Over every basepoint of $B$, this diagram induces a retraction
\[
\Fib(f) \to \Fib(g) \to \Fib(f).
\]
By \cite[Proposition 6.5.1.20]{lurie2017highertopos}, we find that $\Fib(f)\to \Fib(g)$ is $n$-connective if and only if $\Fib(g)\to \Fib(f)$ is $(n+1)$-connective. The lemma follows.
\end{proof}

\begin{lemma}\label{lem:cubelemmas}
Consider a cube
\begin{equation}\label{eq:cubelemma}\begin{tikzcd}
A\ar[rr,"q"]\ar[dr,"a"]\ar[dd,"p"]&&B\ar[dd,"i", near start]\ar[dr,"b"]\\
&A'\ar[rr,"q'" near start]\ar[dd,"p'" near start]&&B'\ar[dd,"i'"]\\
C\ar[rr,"k" near start]\ar[dr,"c"]&&D\ar[dr,"d"]\\
&C'\ar[rr,"k'"]&&D'
\end{tikzcd}\end{equation}
in which the front and back faces are cartesian, i.e.\ $A \simeq B\times_C D$ and $A'\simeq B'\times_{C'}D'$.
\begin{enumerate}
\item If the right face is $n$-cartesian, then the left face is $n$-cartesian, and the converse holds if $k$ is an effective epimorphism.
\item If the right face is $n$-cartesian and $c$ is $n$-connective, then $a$ is $n$-connective.
\item If the right face is $n$-cartesian, $a$ is $(n+1)$-connective, and $p'$ is an effective epimorphism, then $c$ is $(n+1)$-connective.
\end{enumerate}
\end{lemma}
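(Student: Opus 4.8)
The strategy is to express all three statements in terms of the standard closure properties of $n$-connective morphisms in an $\infty$-topos---stability under composition and base change, and detection of $n$-connectivity by base change along effective epimorphisms, all from \cite[\S6.5.1]{lurie2017highertopos}---together with \cref{lem:retractconnect}.(2). The first observation is that, since the front and back faces of (\ref{eq:cubelemma}) are cartesian, i.e.\ $A\simeq B\times_D C$ and $A'\simeq B'\times_{D'}C'$, there is a canonical chain of equivalences
\[
A'\times_{C'}C\;\simeq\;(B'\times_{D'}C')\times_{C'}C\;\simeq\;B'\times_{D'}C\;\simeq\;(B'\times_{D'}D)\times_D C,
\]
the last step using commutativity of the bottom face. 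Under these identifications, the comparison map $\alpha\colon A\simeq B\times_D C\to(B'\times_{D'}D)\times_D C\simeq A'\times_{C'}C$ detecting the failure of the left face to be cartesian is identified with the base change along $k\colon C\to D$ of the comparison map $\beta_R\colon B\to B'\times_{D'}D$ detecting the failure of the right face to be cartesian; this is verified by a short computation of fibres over $C$ and over $D$. Moreover the map $a\colon A\to A'$ factors as $A\xrightarrow{\alpha}A'\times_{C'}C\xrightarrow{\beta}A'$, where $\beta$ is the projection, which is the base change of $c\colon C\to C'$ along $p'\colon A'\to C'$.

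With this in hand, part (1) is immediate: if the right face is $n$-cartesian then $\beta_R$ is $n$-connective, hence so is its base change $\alpha$, so the left face is $n$-cartesian; and if $k$ is an effective epimorphism then, since $\alpha$ is the base change of $\beta_R$ along a morphism which is itself a base change of $k$ (hence again an effective epimorphism), \cite[Proposition 6.5.1.16(6)]{lurie2017highertopos} recovers $n$-connectivity of $\beta_R$ from that of $\alpha$. Part (2) follows from the factorization $a=\beta\circ\alpha$: by part (1), $\alpha$ is $n$-connective; $\beta$ is $n$-connective as the base change of the $n$-connective map $c$; and $n$-connective morphisms compose. For part (3), $\alpha$ is again $n$-connective by part (1), while $a=\beta\circ\alpha$ is $(n+1)$-connective by hypothesis; the relative version of \cref{lem:retractconnect}.(2) over the base $A'$---obtained by applying that lemma to the fibres of $\beta$, which receive the $n$-connective base changes of $\alpha$ from the $(n+1)$-connective fibres of $a$---then shows that $\beta$ is $(n+1)$-connective, and hence, $p'$ being an effective epimorphism and $\beta$ the base change of $c$ along $p'$, that $c$ is $(n+1)$-connective by \cite[Proposition 6.5.1.16(6)]{lurie2017highertopos}.

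The single genuinely non-formal point is the identification of $\alpha$ as a base change of $\beta_R$, together with the companion factorization $a=\beta\circ\alpha$; once these are set up, each of the three items is a two-line deduction from the connectivity calculus. The subtlety that makes the effective-epimorphism hypotheses necessary in the converse direction of (1) and in (3) is that $n$-cartesianness of a square, or $(n+1)$-connectivity of a map, can be read off from fibres over either of two different objects, and these descriptions coincide only once one has surjectivity on components; the cited detection property of effective epimorphisms is exactly what bridges this gap, so I expect the main work to lie in organizing the cube of equivalences cleanly rather than in any substantive new argument.
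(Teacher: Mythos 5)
Your proof is correct, and it is organized differently from the paper's. The paper argues pointwise: it picks basepoints, uses the cartesian front and back faces to identify $\Fib(A\to C)\simeq\Fib(B\to D)$ and $\Fib(A'\to C')\simeq\Fib(B'\to D')$ for (1), and for (2) and (3) forms the cartesian square of fibres $\Fib(a)\to\Fib(b)$, $\Fib(c)\to\Fib(d)$ and applies \cref{lem:retractconnect}; the effective-epimorphism hypotheses are used only to lift basepoints of $D$ (resp.\ $C'$) to $C$ (resp.\ $A'$). You instead work globally: you identify the left-face comparison map $\alpha\colon A\to A'\times_{C'}C$ with the base change along $k$ of the right-face comparison map $\beta_R\colon B\to B'\times_{D'}D$, and factor $a=\beta\circ\alpha$ with $\beta$ the base change of $c$ along $p'$; the three statements then follow from stability of $n$-connective morphisms under base change and composition, detection of $n$-connectivity after base change along an effective epimorphism, and (for (3)) a cancellation step. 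The identification of $\alpha$ as a base change of $\beta_R$ is correct (it follows from the chain $A'\times_{C'}C\simeq B'\times_{D'}C\simeq(B'\times_{D'}D)\times_D C$ together with $A\simeq B\times_D C$, checked on components), and the effective-epimorphism hypotheses enter through descent rather than basepoint lifting, which makes your proofs of (1) and (2) point-free and hence valid verbatim in any $\infty$-topos -- a small gain in generality over the paper's levelwise argument, which suffices for $\Model_\pcat$ since connectivity there is computed levelwise. For (3), your justification of the cancellation ``$\alpha$ $n$-connective and $\beta\alpha$ $(n+1)$-connective imply $\beta$ $(n+1)$-connective'' reverts to fibres and \cref{lem:retractconnect}.(2), so at that point the two arguments coincide in substance; in the end both proofs rest on the same two ingredients (base-change stability of connectivity and \cref{lem:retractconnect}), just packaged at the level of comparison maps rather than fibres.
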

\begin{proof}
(1)~~As the right face is $n$-cartesian and the front and back faces are cartesian, it follows that for any basepoint of $C$ the map
\[
\Fib(A \to C) \simeq \Fib(B \to D) \to \Fib(B' \to D') \simeq \Fib(A' \to C')
\]
is $n$-connective, and thus the left face is $n$-cartesian. Conversely, if $k$ is an effective epimorphism then any basepoint of $D$ may be lifted to a basepoint of $C$, after which the same argument applies.

(2)~~Suppose that $c$ is $n$-connective. The map $a\colon A \to A'$ is $n$-connective if and only if $\Fib (a)$ is $n$-connective at all basepoints of $A'$. These fibres fit into cartesian squares
\begin{equation}\label{eq:cubelemma1} \begin{tikzcd}
\Fib (a) \ar[r]\ar[d]&\Fib (b)\ar[d]\\
\Fib (c) \ar[r]&\Fib (d)
\end{tikzcd}.\end{equation}
As the rightmost square of (\ref{eq:cubelemma}) is $n$-cartesian, the map $\Fib (b) \to \Fib (d)$ is $n$-connective. As $n$-connective morphisms are stable under pullback, it follows that $\Fib(a) \to \Fib(c)$ is $n$-connective. As $\Fib(c)$ is $n$-connective, it follows from \cref{lem:retractconnect}.(1) that $\Fib(a)$ is $n$-connective.

(3)~~Suppose that $a$ is $(n+1)$-connective and that $p'$ is an effective epimorphism. The map $c$ is $(n+1)$-connective if and only if $\Fib(c)$ is $(n+1)$-connective at all basepoints of $C'$. As $p'$ is an effective epimorphism, any basepoint of $C'$ lifts to a basepoint of $A'$, and so $\Fib(c)$ fits into a commutative diagram as in (\ref{eq:cubelemma1}). As the rightmost square of (\ref{eq:cubelemma}) is $n$-cartesian, $\Fib(b) \to \Fib(d)$ is $n$-connective, and therefore $\Fib(a)\to \Fib(c)$ is $n$-connective. As $\Fib(a)$ is $(n+1)$-connective, it follows from \cref{lem:retractconnect}.(2) that $\Fib(c)$ is $(n+1)$-connective.
\end{proof}

We next establish a key technical property of derived functors, \cref{prop:connectivefibres} below, that is special to working with Malcev theories.

\begin{lemma}\label{lem:hsection}
Consider a diagram
\begin{center}\begin{tikzcd}
B\ar[d,"s"]\ar[r,"f"]\ar[dd,"\id_B"',bend right=15mm]&\ar[d,"s'"]\ar[dd,"\id_{B'}",bend left=15mm]B'\\
E\ar[r,"f'"]\ar[d,"p"]&E'\ar[d,"p'"]\\
B\ar[r,"f"]&B'
\end{tikzcd}.\end{center}
Suppose that the bottom square admits a Malcev operation. If $f'$ is $n$-connective, then the bottom square is $n$-cartesian.
\end{lemma}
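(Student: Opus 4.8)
The plan is to reduce to fibres and to realize the relevant fibre map as a retract of $f'$. Recall that the bottom square is $n$-cartesian as soon as, for every point $b$ of $B$, the induced map on fibres
\[
h_b \colon \Fib_b(p) \to \Fib_{f(b)}(p')
\]
is $n$-connective. Since the $n$-connective morphisms are the left class of a factorization system, they are closed under retracts, so it suffices to exhibit each $h_b$ as a retract of $f'\colon E\to E'$ in the arrow $\infty$-category.

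To construct this retract I would use the Malcev operation on the bottom square---that is, Malcev operations $m_E, m_{E'}, m_B, m_{B'}$ compatible with $f$, $f'$, $p$, and $p'$---together with the sections $s$ and $s'$ (which need not be compatible with the Malcev operations). Writing $c_x$ for the constant map with value a point $x$, set
\[
\psi \colonequals m_E\circ(\id_E,\, s\circ p,\, c_{s(b)})\colon E \to E, \qquad \psi' \colonequals m_{E'}\circ(\id_{E'},\, s'\circ p',\, c_{s'(f(b))})\colon E' \to E'.
\]
The identities $p\circ s=\id_B$, $p'\circ s'=\id_{B'}$, the compatibility of $p$, $p'$ with the Malcev operations, and the law $m(x,x,y)\simeq y$ give $p\circ\psi\simeq c_b$ and $p'\circ\psi'\simeq c_{f(b)}$, so that $\psi$ and $\psi'$ lift canonically to maps $\rho\colon E\to\Fib_b(p)$ and $\rho'\colon E'\to\Fib_{f(b)}(p')$. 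The law $m(x,y,y)\simeq x$ shows that $\rho$ and $\rho'$ restrict to the identity on $\Fib_b(p)$ and $\Fib_{f(b)}(p')$ respectively. Finally, using $f'\circ s = s'\circ f$, $p'\circ f' = f\circ p$, and the fact that $f'$ is a Malcev homomorphism, one computes $f'\circ\psi\simeq\psi'\circ f'$. Combining these, the inclusions of the fibres together with $\rho$ and $\rho'$ assemble into a retract diagram exhibiting $h_b$ as a retract of $f'$, whence $h_b$ is $n$-connective and the lemma follows.

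The crux---and the only place the Malcev hypothesis is genuinely used---is the compatibility $f'\circ\psi\simeq\psi'\circ f'$: this is what transports the fibrewise retraction on the source to a compatible one on the target. (Some hypothesis of this kind is necessary: without a Malcev operation the statement already fails for discrete objects in the case $n=0$.) The remaining difficulty is bookkeeping, namely promoting the displayed homotopies of underlying maps to a coherent retract of arrows, carrying along the coherence data supplied by the Malcev laws; I do not expect this to require anything beyond the manipulations above together with closure of the $n$-connective morphisms under retracts.
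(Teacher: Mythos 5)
Your argument is correct in substance, and it is a genuinely different route from the paper's: the paper disposes of \cref{lem:hsection} in one line by citing \cite[Proposition 2.2.14]{usd1}, whereas you give a self-contained proof via the shearing map $\psi(e) = m_E\bigl(e,\, s(p(e)),\, s(b)\bigr)$. Your reduction to fibres matches the paper's own definition of an $n$-cartesian square (condition (3)), your computations $p\circ\psi\simeq c_b$, $\psi|_{\Fib_b(p)}\simeq \id$, and $f'\circ\psi\simeq\psi'\circ f'$ use exactly the data that is actually available (the Malcev operation on the square makes $f,f',p,p'$ compatible with it, while $s,s'$ are only used through $p s=\id_B$, $p's'=\id_{B'}$, $f's=s'f$), and exhibiting $h_b\colon \Fib_b(p)\to\Fib_{f(b)}(p')$ as a retract of $f'$ then gives $n$-connectivity. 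Two remarks. First, the only place where real care remains is the one you flag: closure of $n$-connective maps under retracts, being a left class of a factorization system, requires a retract in the $\infty$-category of arrows, so the homotopies you write down must be assembled coherently (in particular the comparison $h_b\circ\rho\simeq\rho'\circ f'$ needs compatible nullhomotopies of $p'$, not just agreement after composing with the fibre inclusion). If you want to avoid this bookkeeping entirely, note that $n$-connectivity of a map of spaces is detected on $\pi_0$-surjectivity and homotopy groups at basepoints, and for that a retract in the homotopy category of arrows with tracked basepoints already suffices, since retracts of isomorphisms and surjections of groups (or pointed sets) are again isomorphisms and surjections. Second, you point out that the Malcev hypothesis is essential, but the sections are equally essential (without them the statement fails, e.g.\ for $E=E'=B'=\ast$ and $B=K(\mathbb{Z},2)$, where $f'$ is $\infty$-connective but the square is only $2$-cartesian); your proof does use them in an essential way, precisely through the continuous fibrewise basepoint $s(p(e))$, which is worth making explicit.
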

\begin{proof}
This is an easy consequence of \cite[Proposition 2.2.14]{usd1}.
\end{proof}

\begin{prop}\label{prop:connectivefibres}
Suppose that $m\geq -1$ and that $f\colon \pcat\to\qcat$ is a locally $m$-connective homomorphism between Malcev theories. If $X \to Y$ is an $n$-connective map in $\Model_\pcat$, then
\begin{equation}\label{eq:connectivefibredef}
\begin{tikzcd}
X\ar[r]\ar[d]&Y\ar[d]\\
f^\ast f_! X\ar[r]&f^\ast f_! Y
\end{tikzcd}\end{equation}
is $(n+m)$-cartesian.
\end{prop}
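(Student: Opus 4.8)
The plan is to reduce to the case $Y = \nu P$ representable, then to the case $X = \nu Q$ representable as well, and finally to a direct application of Lemma~\ref{lem:hsection} on a suitably chosen square admitting a Malcev operation. The square~(\ref{eq:connectivefibredef}) is functorial in the morphism $X \to Y$, and all four corners of the square are built from colimit-preserving functors ($X$, $Y$, $f^\ast f_!(\bs)$), so the total fibre of~(\ref{eq:connectivefibredef}) is computed levelwise and is compatible with geometric realizations in the $n$-connective map $X \to Y$. Since every $n$-connective map in $\Model_\pcat$ can be written as a geometric realization of $n$-connective maps between coproducts of representables (for instance using the relative cell structure / the factorization system of \cite[\S3.3]{usd1}, resolving the map by free maps), and since $(n+m)$-cartesianness of squares is stable under geometric realization in the relevant sense (the total fibre preserves $(n+m)$-connectivity under the colimits involved, by \cite[Corollary 4.1.8]{usd1} and stability of connective maps under filtered colimits), we reduce to the case where $X \to Y$ is an $n$-connective map between coproducts of representables.

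First I would handle the case where $Y = \coprod_i \nu Q_i$ and $X = \coprod_j \nu P_j$ with the map $n$-connective. Evaluating~(\ref{eq:connectivefibredef}) at an object $R \in \pcat$ gives the square
\begin{center}\begin{tikzcd}
\coprod_j \map_\pcat(R, P_j)\ar[r]\ar[d]&\coprod_i \map_\pcat(R, Q_i)\ar[d]\\
\coprod_j \map_\qcat(fR, fP_j)\ar[r]&\coprod_i \map_\qcat(fR, fQ_i)
\end{tikzcd}.\end{center}
The vertical maps are $m$-connective on each summand since $f$ is locally $m$-connective, but one cannot directly conclude $(n+m)$-cartesianness from a naive connectivity estimate, as the horizontal maps need not be compatible with the coproduct decompositions. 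This is where I would invoke the Malcev structure: the point is that the square in $\Model_\pcat$ obtained by applying $f^\ast f_!$ to an arbitrary map, then comparing with the original, always admits a Malcev operation (because $f^\ast f_! X$ is a model of $\pcat$ and the unit is a morphism of models, so the square~(\ref{eq:connectivefibredef}) is a square of models, hence admits a Malcev operation pointwise by \cite[Proposition 2.2.14]{usd1}, in the form used in Lemma~\ref{lem:hsection}). Thus, by Lemma~\ref{lem:hsection} applied with the roles set up so that the bottom square is $f^\ast f_! X \to f^\ast f_! Y$ over itself with a section, combined with Lemma~\ref{lem:retractconnect} and a careful bookkeeping of the connectivity of the comparison, the square~(\ref{eq:connectivefibredef}) is $(n+m)$-cartesian.

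More precisely, the key computation is at the representable level $X = \nu Q$, $Y = \nu P$, where $n$-connectivity of $\nu Q \to \nu P$ means that $\map_\pcat(R,Q) \to \map_\pcat(R,P)$ is $n$-connective for all $R$. Here I would directly analyze the total fibre: over a basepoint of $\map_\qcat(fR,fP)$ lifting to $\map_\pcat(R,P)$, the total fibre of the square sits in a comparison between $\Fib(\map_\pcat(R,Q)\to\map_\pcat(R,P))$ (which is $(n-1)$-connected, i.e.\ the fibre of an $n$-connective map) and $\Fib(\map_\qcat(fR,fQ)\to\map_\qcat(fR,fP))$, and one must show this comparison is $(n+m-1)$-connected. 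This follows from the Malcev hypothesis: the square of mapping spaces is then $n$-cartesian by Lemma~\ref{lem:hsection} applied to the image square (whose horizontal map $\map_\qcat(fR,fQ)\to\map_\qcat(fR,fP)$ is $n$-connective, being the composite of the $n$-connective $\map_\pcat(R,Q)\to\map_\pcat(R,P)$ with the $m$-connective, hence at least $0$-connective, vertical maps — combined with \cite[Proposition 6.5.1.20]{lurie2017highertopos}), and then the extra $m$ comes from the $m$-connectivity of the vertical legs via \cite[Proposition 6.5.120]{lurie2017highertopos} type estimates on fibres, exactly as in the proof of Lemma~\ref{lem:localconnectedunit}.(2). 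The main obstacle is the bookkeeping in this last step: one must be careful that the Malcev operation is available on precisely the square to which Lemma~\ref{lem:hsection} is applied, and that the connectivity estimates for fibres of composites and for $n$-cartesian squares combine to give exactly $n+m$ and not $n+m-1$ or $n+m+1$; I expect this to require invoking \cref{lem:cubelemmas} with one face being the trivial-section square and tracking connectivity through the cube, together with the reduction to the representable case to ensure the Malcev operation is genuinely present.
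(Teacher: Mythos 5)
There is a genuine gap, and it is exactly where your proposal defers to ``careful bookkeeping.'' \cref{lem:hsection} can only be applied to a square whose vertical maps admit a section with the composite being the identity, and the square (\ref{eq:connectivefibredef}) has no such section: the unit $X \to f^\ast f_! X$ admits no retraction in general, so ``lem:hsection applied with the bottom square $f^\ast f_!X \to f^\ast f_!Y$ over itself with a section'' does not refer to any diagram that actually exists. Moreover, even if one could apply it, the conclusion would only ever be $n$-cartesianness (it is governed by the connectivity of the middle horizontal map), and the claim that ``the extra $m$ comes from the $m$-connectivity of the vertical legs'' is not a valid inference: an $n$-cartesian square with $m$-connective vertical legs need not be $(n+m)$-cartesian, and no estimate in the style of \cref{lem:localconnectedunit}.(2) produces it. The paper's proof supplies both missing ingredients at once by introducing the auxiliary Malcev theory $\pcat\times_\qcat\pcat$ with its diagonal $j$: the weak pullback theorem (\cref{prop:weakpb}) identifies $j^\ast j_! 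X \simeq X\times_{f^\ast f_!X}X$, whose projections give precisely the retraction of $X \to j^\ast j_! X$ that \cref{lem:hsection} needs (this handles $m=0$), and the extra $m$ is then obtained by induction on $m$, using that $j$ is locally $(m-1)$-connective, together with \cref{lem:connectiveretract} to boost cartesianness by one and \cref{lem:cubelemmas} to transfer between the face involving $j$ and the face (\ref{eq:connectivefibredef}) involving $f$. None of this structure appears in your proposal, and without it the $+m$ cannot be recovered.

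The preliminary reduction you propose is also both unjustified and unnecessary. It is not clear (and not argued) that an arbitrary $n$-connective map in $\Model_\pcat$ is a geometric realization of levelwise $n$-connective maps between coproducts of representables; standard free resolutions give levelwise maps of free objects with no connectivity control. Passing $(n+m)$-cartesianness through geometric realizations also requires the effective-epimorphism hypotheses of \cite[Corollary 4.1.8]{usd1} (these are not filtered colimits, and fibres do not commute with realizations in general), which are unavailable in the case $m=-1$, a case the statement includes and which the paper treats separately by a direct estimate. Finally, the reduction buys nothing for the Malcev operation: all four corners of (\ref{eq:connectivefibredef}) are models and all maps are maps of models, so the square admits a Malcev operation pointwise for arbitrary $X \to Y$; the paper accordingly works with a general $X \to Y$ and never reduces to representables.
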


\begin{proof}
When $m=-1$, \cref{lem:localconnectedunit} implies that the vertical arrows of (\ref{eq:connectivefibredef}) are $n$-connective. As any map between $n$-connective objects is at least $(n-1)$-connective, it follows that the square is $(n-1)$-cartesian.

Now suppose that $m\geq 0$, and form the diagram
\begin{center}\begin{tikzcd}
\pcat\ar[drr,"\id",bend left]\ar[ddr,"\id"',bend right]\ar[dr,"j"]\\
&\pcat\times_\qcat\pcat\ar[r]\ar[d]&\pcat\ar[d,"f"]\\
&\pcat\ar[r,"f"]&\qcat
\end{tikzcd}\end{center}
with inner square cartesian. Here, $\pcat\times_\qcat\pcat$ is again a Malcev theory by \cite[Remark 3.1.3, Lemma 4.1.5.(3)]{usd1}
as the condition that $f$ is full ensures that the diagonal $j$ is essentially surjective.

Fix an $n$-connective morphism $X \to Y$ in $\Model_\pcat$. By applying \cref{prop:weakpb} to $j_! X$ and pulling back along $j^\ast$, this may be extended to a diagram in $\Model_\pcat$ of the form
\begin{center}\begin{tikzcd}
X\ar[dr]\ar[dd]\\
&Y\ar[dd]\\
j^\ast j_! X\ar[rr]\ar[dr]\ar[dd]&&X\ar[dr]\ar[dd]\\
&j^\ast j_! Y\ar[rr]\ar[dd]&&Y\ar[dd]\\
X\ar[rr]\ar[dr]&&f^\ast f_! X\ar[dr]\\
&Y\ar[rr]&&f^\ast f_! Y
\end{tikzcd},\end{center}
with front and back facees cartesian and left vertical composites the identity. We must show that the right face is $(n+m)$-cartesian. As $f$ is full, $X \to f^\ast f_! X$ is an effective epimorphism by \cref{lem:localconnectedunit}, and therefore by \cref{lem:cubelemmas}.(1) it suffices to prove that the bottom left face is $(n+m)$-cartesian. 

We now proceed by induction on $m$. First consider the base case $m=0$. After evaluating on an object $P \in \pcat$, the left face is a rectangle of the form
\begin{center}\begin{tikzcd}
X(P)\ar[r]\ar[d]&Y(P)\ar[d]\\
j^\ast j_! X(P) \ar[r]\ar[d]&j^\ast j_! Y(P)\ar[d]\\
X(P)\ar[r]&Y(P)
\end{tikzcd}.\end{center}
This diagram admits a Malcev operation, and therefore \cref{lem:hsection} implies that the square is $n$-cartesian provided $j^\ast j_! X \to j^\ast j_! Y$ is $n$-connective, which holds if $X\to Y$ is $n$-connective by \cite[Proposition 6.2.1]{usd1}.

Next consider the inductive step $m > 0$. As $f$ is locally $m$-connective, $j$ is locally $(m-1)$-connective, and as $m > 0$ this implies that $Y \to j^\ast j_! Y$ is an effective epimorphism. By induction, the top square in the diagram
\begin{center}\begin{tikzcd}
X\ar[r]\ar[d]&Y\ar[d]\\
j^\ast j_! X\ar[r]\ar[d]&j^\ast j_! Y\ar[d]\\
X\ar[r]&Y
\end{tikzcd}\end{center}
is $(n+m-1)$-cartesian, and therefore \cref{lem:connectiveretract} implies that the bottom square is $(n+m)$-cartesian.
\end{proof}

\begin{ex}
A map $f\colon R \to S$ between connective ring spectra induces a derived functor
\[
S\otimes_R(\bs)\colon \LMod_R^{\geq 0} \to \LMod_S^{\geq 0}.
\]
In this context, \cref{prop:connectivefibres} amounts to saying that if $M$ is an $n$-connective $R$-module and $f$ is $m$-connective, then the unit $M \to S\otimes_R M$ is $(n+m)$-connective.
\end{ex}

\subsection{Clutching for models of Malcev theories}
\label{subsection:clutching_for_malcev_theories}

In this section, we extend \cref{prop:weakpb} by giving conditions under which the comparison functor into the pullback of $\infty$-categories of models is an equivalence. 

\begin{defn}
We say that a derived functor $f\colon \pcat\pto\qcat$ \emph{reflects connectivity} if for every $n\geq 0$ and every morphism $\alpha\colon X \to Y$ in $\Model_\pcat$, if $f_! \alpha$ is $n$-connective then $\alpha$ is $n$-connective.
\end{defn}

\begin{rmk}
As a map $\alpha\colon X \to Y$ is an equivalence if and only if it is $n$-connective for all $n$, if $f\colon \pcat\pto\qcat$ reflects connectivity then $f_!\colon \Model_\pcat\to\Model_\qcat$ is conservative.
\end{rmk}

\begin{ex}
Let $f\colon \pcat\to\qcat$ be a homomorphism of Malcev theories. If $f$ is essentially surjective, then the restriction $f^\ast\colon \qcat\pto\pcat$ reflects connectivity.
\end{ex}

\begin{theorem}
\label{thm:animatesquaregeneral}
Consider a cartesian square
\begin{center}\begin{tikzcd}
\pcat'\ar[r,"g'"]\ar[d,"f'"]&\pcat\ar[d,"f"]\\
\qcat'\ar[r,"g"]&\qcat
\end{tikzcd}
$\qquad\qquad$
\begin{tikzcd}
\Model_{\pcat'}\ar[r,"g'_!"]\ar[d,"f'_!"]&\Model_\pcat\ar[d,"f_!"]\\
\Model_{\qcat'}\ar[r,"g_!"]&\Model_\qcat
\end{tikzcd}\end{center}
of Malcev theories and homomorphisms, and associated commutative square of $\infty$-categories of models. Suppose that $f$ is full and $f'$ is essentially surjective and that at least one of the following conditions is satisfied:
\begin{enumerate}
\item $f$ reflects connectivity;
\item $g$ is full;
\item $\pcat$, $\qcat$, and $\qcat'$ are additive,
\end{enumerate}
Then the associated square of $\infty$-categories of models is cartesian.
\end{theorem}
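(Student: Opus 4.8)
The plan is to combine the weak pullback theorem \cref{prop:weakpb} with the connectivity analysis of \cref{ssec:truncatedconnected}. By \cref{prop:weakpb}, the hypotheses that $f$ is full and that $\pcat'\to\qcat'\times_\qcat\pcat$ is fully faithful --- the latter being automatic, indeed an equivalence, since the square of theories is cartesian --- already give that the comparison functor
\[
L\colon \Model_{\pcat'}\to\Model_{\qcat'}\times_{\Model_\qcat}\Model_\pcat
\]
is fully faithful and admits a right adjoint $R$ with the explicit formula $R(\langle Y,X,\alpha\rangle)=f'^\ast Y\times_{d^\ast f_! X}g'^\ast X$, where $d=f\circ g'=g\circ f'$. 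So it remains only to show that $R$ is conservative; equivalently, that the counit $LR\to\id$ is an equivalence; equivalently, that for every triple $\langle Y,X,\alpha\rangle$ with $\alpha\colon g_! Y\simeq f_! X$, the object $R(\langle Y,X,\alpha\rangle)\in\Model_{\pcat'}$ has $f'_!$ and $g'_!$ computing $Y$ and $X$ back (up to the given identification). First I would reduce to showing: for all such triples, the natural maps $f'_! R\to Y$ and $g'_! R\to X$ are equivalences. Since $f'$ is essentially surjective, $f'_!$ is conservative, so it suffices to check that $R\to \langle\bs\rangle$ is "seen" appropriately; concretely I would show $R$ reflects equivalences by showing it reflects $n$-connectivity for all $n$.

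The heart of the argument is then a connectivity estimate for the pullback $R=f'^\ast Y\times_{d^\ast f_! X}g'^\ast X$. Suppose $\beta\colon \langle Y_1,X_1,\alpha_1\rangle\to\langle Y_2,X_2,\alpha_2\rangle$ is a map in the pullback $\infty$-category for which $R(\beta)$ is $n$-connective; I want to conclude that the components $Y_1\to Y_2$ and $X_1\to X_2$ are $n$-connective (this suffices, as $n$-connectivity in the pullback $\infty$-category is detected componentwise). The strategy is to apply $d_!=f_!\circ g'^\ast=g_!\circ f'^\ast$ or various restriction functors and use \cref{prop:connectivefibres}: since $f$ is full it is locally $(-1)$-connective, hence the square
\[
\begin{tikzcd}
Z\ar[r]\ar[d]&W\ar[d]\\
f^\ast f_! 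Z\ar[r]&f^\ast f_! W
\end{tikzcd}
\]
is $n$-cartesian whenever $Z\to W$ is $n$-connective, with better estimates when $f$ is more connected. One then chases the cube built from $R$, $f'^\ast Y$, $g'^\ast X$, $d^\ast f_! X$, together with their images under $f'_!$ and the comparison with $Y$, $X$. Using \cref{lem:cubelemmas} --- in particular parts (2) and (3), which transfer $n$-connectivity of one corner to another across an $n$-cartesian face provided suitable effective epimorphism hypotheses hold --- one deduces that $g'_! R(\beta)\to X(\beta)$-type comparison maps are highly connected, and then that $X(\beta)$ and $Y(\beta)$ themselves are $n$-connective. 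This is where the three alternative hypotheses enter: each of them supplies the missing ingredient needed to run the cube chase.
\begin{itemize}
\item[(1)] If $f$ reflects connectivity, then once we know $f_! X(\beta)$ is $n$-connective (which follows since $f_! X\simeq g_! Y$ and we can control the $Y$-side), we get $X(\beta)$ $n$-connective directly; symmetrically $f'$ being essentially surjective handles the $\qcat'$-side.
\item[(2)] If $g$ is full, then $g_!$ has the same good unit-connectivity properties as $f_!$, so \cref{prop:connectivefibres} applies to $g$ as well and the cube chase closes on the $\qcat'$ side using $g$ in place of $f$.
\item[(3)] If $\pcat,\qcat,\qcat'$ are additive, then $\Model$ is stable, pullbacks are pushouts, connectivity is controlled by homotopy groups of spectra, and the comparison maps become split by the additive structure; the estimate then follows from a long exact sequence chase rather than the Blakers--Massey-type input.
\end{itemize}

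Concretely, I would structure the write-up as: (i) invoke \cref{prop:weakpb} to reduce to conservativity of $R$; (ii) a lemma stating that $R$ reflects $n$-connectivity, proved by the cube/connectivity chase above, case-split on (1)/(2)/(3); (iii) conclude. In each case the key input is \cref{prop:connectivefibres} applied to $f$ (and, in case (2), also to $g$), combined with \cref{lem:cubelemmas} to propagate connectivity through the defining pullback square for $R$ and through the counit comparison. I expect \textbf{the main obstacle} to be case (2) versus case (1): showing that $R$ reflects connectivity when only $g$ (not $f$) is full requires carefully tracking which morphisms are effective epimorphisms along the cube --- one needs $f'_! R\to Y$ to be an effective epimorphism to bootstrap the $(n{+}1)$-cartesian estimates in \cref{lem:cubelemmas}.(3), and establishing this uses fullness of $g$ through \cref{lem:localconnectedunit} in a slightly indirect way (via the restriction $g^\ast$ and the identity $d=g\circ f'$). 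The additive case (3) is conceptually easiest but notationally distinct, since one replaces the fibrewise connectivity bookkeeping with homotopy-group long exact sequences; I would present it last as a short standalone argument.
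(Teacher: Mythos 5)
Your overall route is the same as the paper's: reduce via \cref{prop:weakpb} to conservativity of the right adjoint $R$, then analyze the cube built from $R(\langle Y,X,\alpha\rangle)=f'{}^\ast Y\times_{d^\ast f_!X}g'{}^\ast X$ using \cref{lem:localconnectedunit}, \cref{prop:connectivefibres} and \cref{lem:cubelemmas}, with the case split on (1)/(2)/(3) entering exactly where you say it does (and your case (3) cofibre/long-exact-sequence argument matches the paper's). But there is a genuine gap in the central step. You propose to prove that $R$ \emph{reflects $n$-connectivity}: if $R(\beta)$ is $n$-connective then both components of $\beta$ are $n$-connective. That statement is stronger than what is needed, and the cube chase you describe cannot establish it: \cref{lem:cubelemmas}.(3) converts an $n$-cartesian face plus an \emph{$(n+1)$-connective} edge into an $(n+1)$-connective conclusion, so with the top edge of the cube (the $R$-level map) only $n$-connective you never extract more connectivity than you put in, and the chase does not close.

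What the paper actually does is use only the hypothesis that $R(\beta)$ is an \emph{equivalence}, so that the top edge of the cube is the identity on $Z=R(\langle Y,X,\alpha\rangle)\simeq R(\langle Y',X',\alpha'\rangle)$, hence $\infty$-connective; this is precisely what feeds \cref{lem:cubelemmas}.(3) and produces a gain of one degree. The induction is then asymmetric: if $x\colon X\to X'$ is $n$-connective, then (using fullness of $f$ via \cref{prop:connectivefibres} for the $n$-cartesian right face, the effective epimorphism $g'{}^\ast X'\to d^\ast f_!X'$ from \cref{lem:localconnectedunit}, and essential surjectivity of $f'$) one gets $y\colon Y\to Y'$ \emph{$(n+1)$}-connective, with no case hypothesis; conversely, if $y$ is $n$-connective then $f_!x\simeq g_!y$ is $n$-connective and the case hypotheses (1)/(2)/(3) are used to conclude $x$ is $n$-connective. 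Starting from $(-1)$-connectivity of arbitrary maps, alternating these two implications shows both components are equivalences. Your write-up never identifies this zig-zag with its $+1$ gain, and in case (1) the phrase ``we can control the $Y$-side'' silently presupposes exactly the unconditional direction that requires the identity top edge. So the plan needs to be repaired by replacing ``$R$ reflects $n$-connectivity'' with the bootstrapping argument from the equivalence $R(\beta)$; once that is done, the remaining ingredients you list are the right ones.
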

\begin{proof}
Abbreviate
\[
\dcat = \Model_{\qcat'}\times_{\Model_{\qcat}}\Model_{\pcat}.
\]
As $f$ is full, \cref{prop:weakpb} implies that the comparison map
\[
L\colon \Model_\pcat \to \dcat
\]
is fully faithful. To prove that $L$ is an equivalence, we must prove that its right adjoint $R$ is conservative. As with \cref{prop:weakpb}, an object of $\dcat$ may be represented by a triple $\langle Y,X,\alpha\rangle$ where $Y \in \Model_{\qcat'}$, $X \in \Model_{\pcat}$ and $\alpha\colon g_! Y \simeq f_! X$, and if we abbreviate $d = fg' = gf'$ then the right adjoint $R$ may be described by
\[
R(\langle Y,X,\alpha\rangle) = f'{}^\ast Y \times_{d^\ast f_! X} g'{}^\ast X.
\]
A morphism $\langle Y,X,\alpha\rangle \to \langle Y',X',\alpha'\rangle$ in $\dcat$ is represented by two morphisms $y\colon Y \to Y'$ and $x\colon X \to X'$ together with a commutative diagram
\begin{equation}\label{eq:morphisminpbcat}\begin{tikzcd}
g_! Y\ar[r,"f'_! y"]\ar[d,"\alpha","\simeq"']&g_! Y'\ar[d,"\alpha'","\simeq"']\\
f_!X\ar[r,"g'_! x"]&f_!X'
\end{tikzcd}\end{equation}
identifying $f'_!y\simeq g'_! x$. Fix such a morphism, and suppose that it is sent to an equivalence by $R$. Abbreviate
\[
Z = R(\langle Y,X,\alpha\rangle) \simeq R(\langle Y',X',\alpha'\rangle),
\]
so that we have a cube
\begin{equation}\label{eq:cartesiantheoremcube}\begin{tikzcd}
Z\ar[rr]\ar[dd]\ar[dr,equals]&&g'{}^\ast X\ar[dd]\ar[dr,"g'{}^\ast x"]\\
&Z\ar[rr]\ar[dd]&&g'{}^\ast X'\ar[dd]\\
f'{}^\ast Y\ar[rr]\ar[dr,"f'{}^\ast y"]&&d^\ast f_!X\ar[dr]\\
&f'{}^\ast Y'\ar[rr]&&d^\ast f_! X'
\end{tikzcd}\end{equation}
in which the front and back faces are cartesian. We claim that if $Y \to Y'$ is $n$-connective then $X \to X'$ is $n$-connective, and that if $X \to X'$ is $n$-connective then $Y \to Y'$ is $(n+1)$-connective. As every morphism is $(-1)$-connective, it follows by induction that $X \to X'$ and $Y \to Y'$ are equivalences, and thus $R$ is conservative.

First suppose that $X\to X'$ is $n$-connective. As $f$ is full, \cref{prop:connectivefibres} implies that the right face of (\ref{eq:cartesiantheoremcube}) is $n$-cartesian, and \cref{lem:localconnectedunit} implies that the map $g'{}^\ast X' \to d^\ast f_! X'$ is an effective epimorphism. Therefore $Z \to f'{}^\ast Y'$ is an effective epimorphism, and thus $f'{}^\ast Y \to f'{}^\ast Y'$ is $(n+1)$-connective by \cref{lem:cubelemmas}.(3). As $f'$ is essentially surjective, it follows that $Y \to Y'$ is $(n+1)$-connective.

Next suppose that $y\colon Y \to Y'$ is $n$-connective. By \cite[Proposition 6.2.1]{usd1}, it follows that $g_!y$ is $n$-connective. As $g_! y \simeq f_! x$, this implies that $f_!$ is $n$-connective. We now split into cases based on the additional hypothesies (1--3).

(1)~~Suppose that $f$ reflects connectivity. As $f_! x$ is $n$-connective, it follows that $x$ is $n$-connective.

(2)~~Suppose that $g$ is full. Consider the following rotation of (\ref{eq:cartesiantheoremcube}):
\begin{center}\begin{tikzcd}
Z\ar[rr]\ar[dd]\ar[dr,equals]&&f'{}^\ast Y\ar[dr,"f'{}^\ast y"]\ar[dd]\\
&Z\ar[rr]\ar[dd,two heads]&&f'{}^\ast Y'\ar[dd,two heads]\\
g'{}^\ast X\ar[dr,"g'{}^\ast x"]\ar[rr]&&d^\ast f_! X\ar[dr]\\
&g'{}^\ast X'\ar[rr]&&d^\ast f_! X'
\end{tikzcd}.\end{center}
As $g$ is full, the unit $Y' \to g^\ast f'_! Y'$ is an effective epimorphism by \cref{lem:localconnectedunit}. As $f'{}^\ast$ preserves effective epimorphisms, it follows that
\[
f'{}^\ast Y' \to f'{}^\ast g^\ast f'_! Y'\simeq d^\ast f_! X'
\]
is an effective epimorphism. As effective epimorphisms are stable under pullback, this implies that $Z \to g'{}^\ast X'$ is an effective epimorphism. As $f'{}^\ast Y \to f'{}^\ast Y'$ and $d^\ast f_! X \to f^\ast f_! X'$ are $n$-connective, the right face is $(n-1)$-cartesian. Therefore \cref{lem:cubelemmas}.(3) implies that $g'{}^\ast x$ is $n$-connective. As $g'$ is essentially surjective, it follows that $g'$ is $n$-connective.

(3)~~Suppose $\pcat$, $\qcat$, and $\qcat'$ are additive. By embedding these $\infty$-categories of models in their stabilization, we are free to take cofibres and must show that the cofibre of $x$ is $(n+1)$-connective. By passing to cofibres, we have a cartesian square
\begin{center}\begin{tikzcd}
0\ar[r]\ar[d]&g'{}^\ast C(x)\ar[d]\\
f'{}^\ast C(y)\ar[r]&d^\ast C(f_! x)
\end{tikzcd},\end{center}
implying that
\[
\pi_k f'{}^\ast C(y)\oplus \pi_k g'{}^\ast C(x) \cong \pi_k d^\ast C(f_! x)
\]
for all $k\in\integers$. As $y$ and $f_! x$ are $n$-connective, their cofibres are $(n+1)$-connective. Taking $k \leq n$ in the above isomorphism, this implies that $\pi_k g'{}^\ast C(x) = 0$ for $k \leq n$, and therefore $g'{}^\ast C(x)$ is $(n+1)$-connective. As $g'$ is essentially surjective, it follows that $C(x)$ is $(n+1)$-connective as claimed.
\end{proof}

This allows us to give the following.

\begin{definition}
\label{definition:animatedpostnikov}
Let $\pcat$ be a Malcev theory and $1 \leq r \leq n < \infty$ be integers. The \emph{spiral squares} of $\pcat$ are the diagrams of $\infty$-categories 
\begin{center}\begin{tikzcd}
\Model_{\h_{n+r}\pcat}\ar[r]\ar[d]&\Model_{\h_r\pcat}\ar[d,"0_!"]\ar[dr,equals]\\
\Model_{\h_n\pcat}\ar[r,"k_!"]&\Model_{\kinv_{n,r}^{n+1}\pcat}\ar[r]&\Model_{\h_r\pcat}
\end{tikzcd}\end{center}
obtained from the categorical Postnikov square of \cref{constr:categoricalpostnikovsquare} by passing to $\infty$-categories of models.
\end{definition}

\begin{lemma}\label{lem:composethickening}
Suppose we are given derived functors $f\colon \pcat\pto\qcat$ and $g\colon \qcat\pto\rcat$ between Malcev theories, and that $g$ reflects connectivity. Then $f$ reflects connectivity if and only if $g\circ f$ reflects connectivity.
\end{lemma}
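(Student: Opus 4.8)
The plan is to observe that, modulo two standard facts about derived functors between Malcev theories, both implications reduce to pure diagram chases on connectivity. The two facts are: (i) a derived functor between Malcev theories preserves connectivity, i.e.\ sends $n$-connective morphisms to $n$-connective morphisms, which is \cite[Proposition 6.2.1]{usd1} (already used in the proof of \cref{thm:animatesquaregeneral} to see that $g_!y$ is $n$-connective when $y$ is); and (ii) composition of derived functors is compatible with composition of the associated functors on $\infty$-categories of models, so that $(g\circ f)_!\simeq g_!\circ f_!$ as functors $\Model_\pcat\to\Model_\rcat$.

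First I would prove the forward implication. Assume $f$ reflects connectivity, and let $\alpha\colon X\to Y$ be a morphism in $\Model_\pcat$ with $(g\circ f)_!\alpha$ $n$-connective for some $n\geq 0$. By (ii) this morphism is $g_!(f_!\alpha)$, so the hypothesis that $g$ reflects connectivity gives that $f_!\alpha$ is $n$-connective, and then the hypothesis that $f$ reflects connectivity gives that $\alpha$ is $n$-connective. Hence $g\circ f$ reflects connectivity. Note that this direction does not use fact (i) at all.

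Next I would prove the converse. Assume $g\circ f$ reflects connectivity, and let $\alpha\colon X\to Y$ in $\Model_\pcat$ be such that $f_!\alpha$ is $n$-connective. By fact (i) applied to $g_!$, the morphism $g_!(f_!\alpha)\simeq(g\circ f)_!\alpha$ is $n$-connective, whence $\alpha$ is $n$-connective because $g\circ f$ reflects connectivity. Thus $f$ reflects connectivity, completing the argument.

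The only genuinely non-formal input is fact (i), that $g_!$ preserves connectivity; the care required there is simply to invoke it in the correct generality, namely for the derived functor $g\colon\qcat\pto\rcat$ rather than merely for a strict homomorphism, which is the version recorded in \cite[Proposition 6.2.1]{usd1}. I do not anticipate any real obstacle beyond this bookkeeping.
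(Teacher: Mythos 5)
Your argument is correct and is exactly the paper's proof spelled out: the paper's one-line proof cites precisely the fact that derived functors preserve connectivity (\cite[Proposition 6.2.1]{usd1}), and the two diagram chases you give are the implicit content of that remark.
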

\begin{proof}
This is clear in light of the fact that all derived functors also preserve connectivity \cite[Proposition 6.2.1]{usd1}.
\end{proof}

\begin{ex}\label{ex:retractionthickening}
Let $f\colon \pcat\to\qcat$ be a homomorphism which admits a retraction:
\begin{center}\begin{tikzcd}
\pcat\ar[d,"f"]\ar[dr,equals]\\
\qcat\ar[r]&\pcat
\end{tikzcd}.\end{center}
As the identity clearly reflects connectivity, \cref{lem:composethickening} implies that $f$ reflects connectivity.
\end{ex}

\begin{prop}
\label{proposition:spiral_squares_of_a_malcev_theory_are_cartesian}
The spiral squares of a Malcev theory are cartesian.
\end{prop}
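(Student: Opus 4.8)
The plan is to apply \cref{thm:animatesquaregeneral} to the categorical Postnikov square
\begin{center}\begin{tikzcd}
\h_{n+r}\pcat\ar[r,"\tau_{(n+r,r)}"]\ar[d,"\tau_{(n+r,n)}"']&\h_r\pcat\ar[d,"0"]\\
\h_n\pcat\ar[r,"k"]&\kinv_{n,r}^{n+1}\pcat
\end{tikzcd}\end{center}
of \cref{constr:categoricalpostnikovsquare}, which is cartesian by \cref{prop:modifierlimitses}. In the notation of \cref{thm:animatesquaregeneral}, we take $\pcat = \h_r\pcat$, $\qcat = \kinv_{n,r}^{n+1}\pcat$, $\pcat' = \h_{n+r}\pcat$, $\qcat' = \h_n\pcat$, with $f = 0\colon \h_r\pcat \to \kinv_{n,r}^{n+1}\pcat$ the zero-section, $f' = \tau_{(n+r,n)}$, $g' = \tau_{(n+r,r)}$, and $g = k$. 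To invoke the theorem we must check: (i) $f$ is full; (ii) $f'$ is essentially surjective; (iii) at least one of the three extra conditions holds. Condition (iii) will be verified via condition (1) of the theorem, namely that $f = 0$ reflects connectivity.

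First I would verify the hypotheses on $f$ and $f'$. The zero-section $0\colon \h_r\pcat \to \kinv_{n,r}^{n+1}\pcat$ is full: by \cref{constr:categoricalpostnikovsquare} it is obtained by modifying the zero-section $0\colon \h_r\ccat \to B_{X_{<r}}\Pi_{[n,n+r)}X$ of \cref{constr:simplepostnikov}, and on mapping spaces this is the inclusion of a base point into a loop space, which is in particular an isomorphism onto a path component; applying a product-preserving $F$ and using \cref{prop:modificationproperties}.(2) shows the induced map on mapping spaces in $\kinv_{n,r}^{n+1}\pcat$ is surjective on $\pi_0$, i.e.\ $0$ is full. (Alternatively, $0$ admits the retraction $q\colon \kinv_{n,r}^{n+1}\pcat \to \h_r\pcat$, and any functor with a retraction whose composite is the identity is in particular full.) Essential surjectivity of $\tau_{(n+r,n)} = \pi_{\tau_{<n}}$ restricted along $\pi_{\tau_{<n+r}}$ follows from \cref{prop:modificationproperties}.(1): each of the truncation maps is essentially surjective, being a modification map.

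Next I would verify that $0_! = 0\colon \h_r\pcat \pto \kinv_{n,r}^{n+1}\pcat$ reflects connectivity, which is \cref{ex:retractionthickening}: the zero-section admits the retraction $q$ with $q \circ 0 \simeq \id_{\h_r\pcat}$, and \cref{lem:composethickening} then shows $0$ reflects connectivity since the identity does. This confirms hypothesis (1) of \cref{thm:animatesquaregeneral}. With all hypotheses in place, \cref{thm:animatesquaregeneral} asserts that the associated square of $\infty$-categories of models
\begin{center}\begin{tikzcd}
\Model_{\h_{n+r}\pcat}\ar[r]\ar[d]&\Model_{\h_r\pcat}\ar[d,"0_!"]\\
\Model_{\h_n\pcat}\ar[r,"k_!"]&\Model_{\kinv_{n,r}^{n+1}\pcat}
\end{tikzcd}\end{center}
is cartesian. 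By \cref{definition:animatedpostnikov} this square (together with the equality leg $q_! \circ 0_! \simeq \id$, which is preserved since $(\bs)_!$ is functorial) is precisely the spiral square of $\pcat$, so the spiral squares of $\pcat$ are cartesian.

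There is essentially no obstacle here: the real content — the clutching theorem \cref{thm:animatesquaregeneral}, the identification of the Postnikov modifiers as elementary (\cref{prop:elementarypostnikov}), and the fact that modification along limits of modifiers along effective epis is compatible with taking models — has already been established. The only thing to be careful about is bookkeeping: making sure the fullness of the zero-section is genuinely available (it is, either directly or via the retraction), and that the retraction used to invoke \cref{ex:retractionthickening} exists at the categorical level, which it does since $\kinv_{n,r}^{n+1}\pcat = \pcat_{K_{n,r}^{n+1}}$ and the map $K_{n,r}^{n+1} \to \tau_{<r}$ of modifiers (the bundle projection) splits the zero-section $\tau_{<r} \to K_{n,r}^{n+1}$, so modification yields the required retraction $q \colon \kinv_{n,r}^{n+1}\pcat \to \h_r\pcat$ with $q \circ 0 \simeq \id$.
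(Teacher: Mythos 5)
Your proposal is correct and follows essentially the same route as the paper: apply \cref{thm:animatesquaregeneral} to the categorical Postnikov square, noting that the zero-section $0\colon \h_r\pcat\to\kinv_{n,r}^{n+1}\pcat$ is full and reflects connectivity via its retraction (\cref{ex:retractionthickening}). The extra bookkeeping you supply (fullness via the modifier-level splitting, essential surjectivity of the truncation) is consistent with, if more explicit than, the paper's argument.
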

\begin{proof}
By \cref{thm:animatesquaregeneral}, it suffices to show that the zero-section $0\colon \h_r\pcat\to\kinv_{n,r}^{n+1}\pcat$ is full and reflects connectivity. This is clear: it is full by construction, and reflects connectivity by \cref{ex:retractionthickening} as it is a section.
\end{proof}

\section{The spiral system of a Malcev theory}

Associated to a Malcev theory $\pcat$ is its categorical Postnikov tower
\[
\pcat\to\cdots\to\h_{n+1}\pcat\to\h_n\pcat\to\cdots\to\h_1\pcat.
\]
Passage to categories of models produces the \emph{spiral tower}
\[
\Model_\pcat\to\cdots\to\Model_{\h_{n+1}\pcat}\to\Model_{\h_n\pcat}\to\cdots\to\Model_{\h_1\pcat}.
\]
In \cref{proposition:spiral_squares_of_a_malcev_theory_are_cartesian}, we say that the functors appearing in this tower can be completed to certain cartesian spiral squares. The main goal of this section is to prove \cref{thm:modelspiral}, which shows that this tower in fact naturally extends to a convergent spiral system in the sense of \cref{definition:spiral_system}.

\subsection{A homology Whitehead theorem}
\label{ssec:homologywhitehead}

The main technical work of this section is to establish the following result. 

\begin{theorem}
\label{thm:homologywhitehead}
Let $\alpha\colon F \to G$ be a natural transformation between elementary modifiers. Then for any Malcev theory $\pcat$, the induced homomorphism $\pi_{\alpha} \colon \pcat_{F} \rightarrow \pcat_{G}$ reflects connectivity.
\end{theorem}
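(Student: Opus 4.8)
The plan is to reduce the general statement to the single case of the derived truncation $\tau\colon\pcat\to\h\pcat$, and then to bootstrap that case from the already-established fact that sections reflect connectivity (\cref{ex:retractionthickening}). The main structural tool will be a closure property: the class of homomorphisms $f\colon\pcat\to\qcat$ for which $f$ (equivalently $f_!$) reflects connectivity is closed under pullback along full homomorphisms, and contains sections. Let me sketch this in more detail.

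\textbf{Step 1: Closure under pullback.} First I would isolate a proposition (``thickening cartesian'', in the language of \cref{prop:thickeningcartesian}) of the following shape. Given a cartesian square of Malcev theories
\begin{center}\begin{tikzcd}
\pcat'\ar[r,"g'"]\ar[d,"f'"]&\pcat\ar[d,"f"]\\
\qcat'\ar[r,"g"]&\qcat
\end{tikzcd}\end{center}
with $f$ full and $f'$ essentially surjective, if $f$ reflects connectivity then so does $f'$. This follows by rerunning the diagram chase in the proof of \cref{thm:animatesquaregeneral}: indeed, under hypothesis (1) of that theorem the square of models is cartesian, and one extracts from the cube (\ref{eq:cartesiantheoremcube}) together with \cref{lem:cubelemmas} and \cref{prop:connectivefibres} that if $f'_! x$ is $n$-connective then $x$ is $n$-connective. (One applies the argument to the comparison morphism $\langle Y,X,\alpha\rangle\to\langle Y',X',\alpha'\rangle$ built from $x$ and its image.) The point is that reflection of connectivity descends along the vertical legs of a Postnikov-style pullback.

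\textbf{Step 2: The case $\pi_\alpha = \tau$.} By \cref{prop:modificationproperties} the homomorphism $\pi_\alpha\colon\pcat_F\to\pcat_G$ factors through $\pcat$ via the essentially surjective $\pi_F$, so by \cite[Proposition 6.3.7.(3)]{usd1} and \cref{lem:composethickening} it suffices to treat $\pi_F\colon\pcat\to\pcat_F$ for a single elementary modifier $F$, and in fact — since $F$ is built from $\map(T,\bs)$ for $T\in\Sph$ under geometric realizations, and $\pcat_F\to\h\pcat$ is conservative by \cref{prop:properties_of_elementary_modifiers}.(4) — it suffices to handle the composite $\pcat\to\pcat_F\to\h\pcat$, i.e.\ to show $\tau\colon\pcat\to\h\pcat$ reflects connectivity, and then invoke \cref{lem:composethickening} again. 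Now I would run the Postnikov tower: $\tau$ is a limit of the truncations $\tau_{(n+1,n)}\colon\h_{n+1}\pcat\to\h_n\pcat$, and each such truncation sits in the categorical Postnikov square of \cref{constr:categoricalpostnikovsquare} as a pullback of the zero-section $0\colon\h_1\pcat\to\kinv_{n,1}^{n+1}\pcat$, which is full and reflects connectivity by \cref{ex:retractionthickening}. By Step 1 each $\tau_{(n+1,n)}$ reflects connectivity, hence so does each finite composite $\h_{n+r}\pcat\to\h_n\pcat$; a short connectivity bookkeeping argument using \cref{lem:localconnectedunit} (the unit $X\to\tau_{(n)}^\ast\tau_{(n)!}X$ is $(n-1)$-connective) then upgrades this to the full tower, giving that $\tau\colon\pcat\to\h\pcat$ reflects connectivity.

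\textbf{Step 3: Deducing the general elementary case.} With $\tau$ handled, \cref{lem:composethickening} applied to $\pcat\to\pcat_F\to\h\pcat$ forces $\pi_F$ to reflect connectivity for every elementary $F$, and then the factorization $\pi_\alpha\circ\pi_F\simeq\pi_G$ with $\pi_G$ reflecting connectivity, together with \cref{lem:composethickening}, forces $\pi_\alpha$ itself to reflect connectivity. The only genuinely delicate point — and the one I expect to absorb most of the work — is \emph{Step 1}: verifying that the connectivity-reflection property really does descend along the vertical legs of these pullback squares. This requires carefully re-examining which arrows in the cube (\ref{eq:cartesiantheoremcube}) are effective epimorphisms (so that \cref{lem:cubelemmas}.(3) applies) and confirming that fullness of $f$ plus \cref{prop:connectivefibres} gives the needed $n$-cartesianness of the relevant face; the additive/fullness alternatives of \cref{thm:animatesquaregeneral} are not available here, so one must lean entirely on hypothesis (1), which is exactly what the hypothesis ``$f$ reflects connectivity'' supplies.
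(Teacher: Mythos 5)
Your Steps 1 and 2 are sound and match the paper's own route: the closure of connectivity-reflection under pullback along full homomorphisms is exactly \cref{prop:thickeningcartesian}, and the Postnikov-tower argument reducing $\tau\colon\pcat\to\h\pcat$ to the zero-sections $0\colon\h\pcat\to\kinv_{n,1}^{n+1}\pcat$ is exactly \cref{prop:basichomologywhitehead}. (Contrary to your closing remark, this is the comparatively easy half.) The gap is in your reduction of the general case to $\tau$. Your final step argues: $\pi_G\simeq\pi_\alpha\circ\pi_F$ reflects connectivity and $\pi_F$ reflects connectivity, hence $\pi_\alpha$ does. No version of \cref{lem:composethickening} gives this: reflection of connectivity does not cancel off the \emph{first} factor of a composite, because a morphism $x$ in $\Model_{\pcat_F}$ with $\pi_{\alpha!}x$ $n$-connective need not lie in the essential image of $\pi_{F!}$, so knowing that $\pi_{G!}=\pi_{\alpha!}\pi_{F!}$ reflects connectivity says nothing about $x$. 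Likewise, your appeal to \cref{prop:properties_of_elementary_modifiers}.(4) conflates conservativity of the functor $\pcat_F\to\h\pcat$ of $\infty$-categories with the much stronger statement that the \emph{derived} functor $\Model_{\pcat_F}\to\Model_{\h\pcat}$ reflects connectivity; only the latter would let you run a cancellation argument, and it is not available at this stage.

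What is missing is precisely the paper's hard ingredient. The correct reduction (\cref{cor:hthickening}, \cref{cor:homologywhiteheadhalf}) is: reflection for $\pi_\alpha\colon\pcat_F\to\pcat_G$ follows once one knows that $\epsilon_F\colon\h(\pcat_F)\simeq\pcat_{\tau_{\leq 0}F}\to\h\pcat$ reflects connectivity for every elementary $F$ (together with the already-established case of $\tau$). The point your proposal never confronts is that $\tau_{\leq 0}F$ is in general \emph{not} $\tau_{\leq 0}$ — e.g.\ $F=\map(S^n,\bs)$ has $\tau_{\leq 0}F\simeq\Pi_n$, treated in \cref{ex:pireflect} — so $\h(\pcat_F)$ is a genuinely larger theory than $\h\pcat$, and showing that $\epsilon_F$ reflects connectivity for an arbitrary elementary modifier is where the real work lies: the paper's \cref{prop:discretereflect} proves it by an induction over cell structures of $T\in\Sph$ with $\Sigma$-null attaching maps (\cref{lem:ssnull}), descending free group actions through effective epimorphisms of discrete modifiers via the Malcev/Eckmann--Hilton argument (\cref{lem:descendaction}), and then applying \cref{prop:thickeningcartesian} to the resulting principal-bundle squares (\cref{prop:nullattach}). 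Without an argument of this kind (or some substitute for it), your Step 3 does not close, so the proposal as written does not prove the theorem.
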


Here, we say that a homomorphism of Malcev theories reflects connectivity if the induced derived functor between $\infty$-categories of models does. 
The proof of \cref{thm:homologywhitehead} occupies this subsection and the next. In this subsection, we consider the special case of the truncation $\id \to \tau_{\leq 0}$, which asserts that $\tau\colon \pcat\to\h\pcat$ reflects connectivity for any Malcev theory $\pcat$. In view of the following example, this can be thought of as a version of the \emph{homology Whitehead theorem} in the context of Malcev theories. 

\begin{ex}
Let $\pcat = \langle \bigvee S^2\rangle \subset\spaces_\ast^{\geq 2}$ be the full subcategory of pointed spaces spanned by wedges of the $2$-sphere. Then $\h\pcat\simeq\lfrees_0(\integers)$ is equivalent to the theory of abelian groups, and the derived functor
\[
\tau_!\colon \Model_{\pcat} \to \Model_{\h\pcat}
\]
may be identified with the functor
\[
\Sigma^{-2}\widetilde{C}_\ast(X;\integers) \colon \spaces_\ast^{\geq 2} \to \Mod_{H\integers}^{\geq 0}
\]
of twice desuspended reduced integral chains on simply connected spaces. \cref{thm:homologywhitehead} now asserts that if $f\colon X \to Y$ is a map of simply connected spaces, then $f$ is $n$-connective if and only if the induced map $\widetilde{C}_\ast(X;\integers) \to \widetilde{C}_\ast(Y;\integers)$ is $n$-connective, recovering the classical homology Whitehead theorem for simply connected spaces \cite{whitehead1949combinatorial}.
\end{ex}

The key technical input we need for this case is the following.

\begin{prop}
\label{prop:thickeningcartesian}
Suppose we are given a cartesian square
\begin{center}\begin{tikzcd}
\pcat'\ar[r,"g'"]\ar[d,"f'"]&\pcat\ar[d,"f"]\\
\qcat'\ar[r,"g"]&\qcat
\end{tikzcd}\end{center}
of Malcev theories and homomorphisms. If $f$ is full and reflects connectivity, then the same is true of $f'$.
\end{prop}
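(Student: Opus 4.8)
The plan is to prove the two assertions about $f'$ separately. Fullness of $f'$ is immediate from the cartesian hypothesis: since $\pcat'\simeq\qcat'\times_\qcat\pcat$, mapping spaces in $\pcat'$ are fibre products, so writing $d\colonequals fg'=gf'$ we have
\[
\map_{\pcat'}(a,b)\simeq\map_{\qcat'}(f'a,f'b)\times_{\map_\qcat(da,db)}\map_\pcat(g'a,g'b),
\]
with $f'$ acting on mapping spaces as the projection onto the first factor. The fibre of this projection over a point of $\map_{\qcat'}(f'a,f'b)$ is a fibre of $\map_\pcat(g'a,g'b)\to\map_\qcat(fg'a,fg'b)$, which is nonempty since $f$ is full; hence $f'$ is full.

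For the remaining assertion, fix a morphism $\alpha\colon X\to Y$ in $\Model_{\pcat'}$ with $f'_!\alpha$ $n$-connective; we must show $\alpha$ is $n$-connective. The first step is to propagate the connectivity to the other leg. Since $f_!(g'_!\alpha)\simeq d_!\alpha\simeq g_!(f'_!\alpha)$ and $g_!$ preserves connectivity \cite[Proposition 6.2.1]{usd1}, the morphism $f_!(g'_!\alpha)$ is $n$-connective, and then $g'_!\alpha$ is $n$-connective because $f$ reflects connectivity. This is the only place the hypothesis on $f$ is used beyond its fullness.

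The second step uses \cref{prop:weakpb}: as $\pcat'\to\qcat'\times_\qcat\pcat$ is an equivalence and $f$ is full, part (2) of that result identifies $X\simeq f'{}^\ast f'_!X\times_{d^\ast d_!X}g'{}^\ast g'_!X$ naturally in $X$, and likewise for $Y$. Applying the endofunctors $f'{}^\ast f'_!$, $d^\ast d_!$ and $g'{}^\ast g'_!$ to $\alpha$ and using these identifications for $X$ and $Y$ assembles a cube in $\Model_{\pcat'}$ whose two faces with apexes $X$ and $Y$ are cartesian, whose ``horizontal'' maps are $\alpha$, $f'{}^\ast f'_!\alpha$, $d^\ast d_!\alpha$ and $g'{}^\ast g'_!\alpha$, and one of whose side faces has vertical maps the canonical comparisons $g'{}^\ast g'_!X\to d^\ast d_!X$, $g'{}^\ast g'_!Y\to d^\ast d_!Y$ and horizontal maps $g'{}^\ast g'_!\alpha$, $d^\ast d_!\alpha$. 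Because $d=fg'$ we have $d^\ast d_!\simeq g'{}^\ast f^\ast f_!g'_!$, so this side face is obtained by applying $g'{}^\ast$ to the square in $\Model_\pcat$ formed by the units $g'_!X\to f^\ast f_!g'_!X$ and $g'_!Y\to f^\ast f_!g'_!Y$. By \cref{prop:connectivefibres}, applied to the full — hence locally $0$-connective — homomorphism $f$ and to the $n$-connective morphism $g'_!\alpha$ of the first step, that square is $n$-cartesian; since $g'{}^\ast$ is a levelwise restriction it preserves pullbacks and connectivity, so the side face is $n$-cartesian as well. As $f'{}^\ast f'_!\alpha$ is also $n$-connective (again because levelwise restriction preserves connectivity and $f'_!\alpha$ is $n$-connective by hypothesis), \cref{lem:cubelemmas}.(2) applied to this cube yields that $\alpha$ is $n$-connective, which completes the argument.

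The step I expect to be the main obstacle is this second one, and more precisely the point that one cannot simply conclude $\alpha$ is $n$-connective from its three ``corner'' images $f'{}^\ast f'_!\alpha$, $d^\ast d_!\alpha$, $g'{}^\ast g'_!\alpha$ all being $n$-connective — a morphism between two pullback squares with $n$-connective corner maps need not be $n$-connective. What makes the cube argument go through is that one of its side faces is genuinely $n$-\emph{cartesian}, and establishing this uses the Malcev hypothesis through \cref{prop:connectivefibres}. Since \cref{prop:connectivefibres} only controls unit squares attached to a \emph{full} homomorphism, and no fullness is assumed of $g$, it is essential both to orient the cube so that the controlled side face is the one built from $f$, and to have first transferred $n$-connectivity from $f'_!\alpha$ to $g'_!\alpha$ using that $f$ reflects connectivity.
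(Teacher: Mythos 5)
Your proof is correct and follows essentially the same route as the paper: transfer $n$-connectivity from $f'_!\alpha$ to $g'_!\alpha$ via $g_!f'_!\simeq f_!g'_!$ and the hypothesis that $f$ reflects connectivity, then apply \cref{prop:weakpb} to build the cube with cartesian front and back faces, use \cref{prop:connectivefibres} (for the full homomorphism $f$) together with the fact that $g'{}^\ast$ is computed levelwise to see that the relevant side face is $n$-cartesian, and conclude with \cref{lem:cubelemmas}.(2). The only cosmetic difference is that you verify fullness of $f'$ by hand on mapping-space fibre products, where the paper simply notes that full functors are stable under pullback.
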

\begin{proof}
As full functors are closed under pullbacks, we must only prove that $f'$ reflects $n$-connectivity. Fix a morphism $X \to Y$ in $\Model_{\pcat'}$. By \cref{prop:weakpb}, we may form a cube
\begin{center}\begin{tikzcd}
X\ar[dr]\ar[rr]\ar[dd]&&g'{}^\ast g'_! X\ar[dd]\ar[dr]\\
&Y\ar[rr]\ar[dd]&&g'{}^\ast g'_! Y\ar[dd]\\
f'{}^\ast f'_! X\ar[rr]\ar[dr]&&g'{}^\ast f^\ast f_! g'_! X\ar[dr]\\
&f'{}^\ast f'_! Y\ar[rr]&&g'{}^\ast f^\ast f_! g'_! Y
\end{tikzcd}\end{center}
in which the front and back faces are cartesian. Suppose that $f'_! X \to f'_! Y$ is $n$-connective. By \cite[Proposition 6.2.1]{usd1}, it follows that $g_! f'_! X \to g_! f'_! Y$ is $n$-connective. This is equivalent to the map $f_! g'_! X \to f_! g'_! Y$, and therefore as $f_!$ reflects $n$-connective maps it follows that $g'_! X \to g'_! Y$ is $n$-connective. As $f$ is full it follows from \cref{prop:connectivefibres} that the right face of this cube is $n$-cartesian, and thus from \cref{lem:cubelemmas}.(2) that $X \to Y$ is $n$-connective.
\end{proof}

We can now give the following.

\begin{prop}\label{prop:basichomologywhitehead}
Let $\pcat$ be a Malcev theory. Then $\tau\colon \pcat\to\h\pcat$ reflects connectivity.
\end{prop}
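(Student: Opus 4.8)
The plan is to bootstrap from the fact that the zero section is a section, using the Postnikov tower of $\pcat$ together with the closure property established in \cref{prop:thickeningcartesian}. First I would recall that $\pcat$ admits a categorical Postnikov tower $\pcat \to \cdots \to \h_{n+1}\pcat \to \h_n\pcat \to \cdots \to \h\pcat$, and that by \cref{lem:localconnectedunit}.(1) applied to the locally $(n-1)$-connective maps in this tower, the induced derived functor $\Model_\pcat \to \lim_n \Model_{\h_n\pcat}$ exhibits $\tau_!$ as a limit of the $\tau_{n!}$; since a morphism is an equivalence iff it is $n$-connective for all $n$, and $n$-connectivity is detected after truncating, it suffices to show that each truncation $\tau_{(n,n-1)}\colon \h_n\pcat \to \h_{n-1}\pcat$ reflects connectivity—or more precisely, to run an induction reducing $\tau\colon\pcat\to\h\pcat$ reflecting connectivity to the individual steps. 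Actually the cleanest route is the following: by \cref{lem:composethickening}, the class of homomorphisms between Malcev theories which reflect connectivity is closed under composition provided the outer factors do, so it is enough to prove that each single-stage truncation $\tau_{(n+1,n)}\colon \h_{n+1}\pcat \to \h_n\pcat$ reflects connectivity, and then (using that $\tau\colon\pcat\to\h\pcat$ is the limit of these, together with the connectivity estimate above) conclude the statement for $\pcat \to \h\pcat$ itself.

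For the single-stage truncation $\tau_{(n+1,n)}\colon \h_{n+1}\pcat \to \h_n\pcat$, I would invoke the categorical Postnikov square of \cref{constr:categoricalpostnikovsquare} with $r = 1$:
\begin{center}\begin{tikzcd}
\h_{n+1}\pcat\ar[r]\ar[d,"\tau_{(n+1,n)}"']&\h\pcat\ar[d,"0"]\\
\h_n\pcat\ar[r,"k"]&\kinv_{n,1}^{n+1}\pcat
\end{tikzcd}.\end{center}
This is a cartesian square of Malcev theories, the zero section $0\colon \h\pcat \to \kinv_{n,1}^{n+1}\pcat$ is full (it is full by construction) and reflects connectivity (by \cref{ex:retractionthickening}, being a section to the projection $\kinv_{n,1}^{n+1}\pcat \to \h\pcat$). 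Therefore \cref{prop:thickeningcartesian}, applied to the pullback of $0$ along $k$, shows that $\tau_{(n+1,n)}\colon \h_{n+1}\pcat \to \h_n\pcat$—which is precisely the pulled-back functor $f'$ in that proposition—is full and reflects connectivity.

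To finish, I would combine these steps. Fix $\alpha\colon X \to Y$ in $\Model_\pcat$ with $\tau_! \alpha$ an $n$-connective map in $\Model_{\h\pcat}$; we want $\alpha$ to be $n$-connective. Since all derived functors preserve connectivity (\cite[Proposition 6.2.1]{usd1}), $\tau_{m!}\alpha$ is a fortiori $n$-connective for $m \geq 1$ because it maps to the $n$-connective $\tau_!\alpha$ under the connectivity-reflecting composite $\h_m\pcat \to \h\pcat$ — wait, one must be careful about the direction; the right statement is: from $\tau_!\alpha$ $n$-connective and the factorizations $\tau_{m!} = (\text{truncation})\circ \tau_{!}$... rather, the honest argument is to run downward induction using that each $\tau_{(m+1,m)}$ reflects connectivity: $\tau_!\alpha = \tau_{1!}\alpha$ being $n$-connective and $\tau_{(2,1)}$ reflecting connectivity gives $\tau_{2!}\alpha$ is $n$-connective, and inductively $\tau_{m!}\alpha$ is $n$-connective for all $m$. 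Since $\Model_\pcat \simeq \lim_m \Model_{\h_m\pcat}$ with the transition maps as above, $\alpha$ is itself $n$-connective by the levelwise description of connectivity in the limit (each $\tau_{m!}\alpha$ being $n$-connective, and $X \to \tau_{m!}^\ast\tau_{m!}X$ becoming highly connected as $m \to \infty$ by \cref{lem:localconnectedunit}). The one step requiring a little care—and the main obstacle—is the reduction from the single-stage statements to the statement for $\pcat \to \h\pcat$, since $\h\pcat$ is not literally a finite-stage truncation of $\pcat$; this is handled by the convergence estimate of \cref{lem:localconnectedunit} exactly as in the proof of \cref{thm:spiralmodel}, observing that if $\tau_{m!}\alpha$ is $n$-connective for all $m$ then so is $\alpha$, because $\alpha$ and $\lim_m \tau_m^\ast\tau_{m!}\alpha$ agree and $n$-connectivity of a map is inherited from an $n$-connective approximation whose fibre-difference becomes arbitrarily highly connected.
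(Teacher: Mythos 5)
Your proof is correct and takes essentially the same route as the paper: the single-stage reflection for $\h_{m+1}\pcat\to\h_m\pcat$ via the categorical Postnikov square, \cref{ex:retractionthickening} and \cref{prop:thickeningcartesian}, followed by a reduction to finitely many stages using the $(m-1)$-connectivity of the unit from \cref{lem:localconnectedunit}, which is exactly the paper's "replace $\pcat$ by $\h_m\pcat$ for $m$ large" step. One caution: do not lean on $\Model_\pcat\simeq\lim_m\Model_{\h_m\pcat}$, since that is \cref{thm:modelspiral}, proved later and depending on this very proposition — but your closing justification via \cref{lem:localconnectedunit} already suffices, so there is no circularity in the end.
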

\begin{proof}
It suffices to fix $n \geq 0$ and prove that $\tau_!$ reflects $n$-connective morphisms. As $\h\pcat\simeq\h(\h_m\pcat)$ and $X \to \tau_m^\ast\tau_{m!} X$ is $(m-1)$-connective for all $m > 0$, we are then free to replace $\pcat$ with $\h_m\pcat$ for some sufficiently large $m$. As derived functors which reflect connectivity are closed under composition by \cref{lem:composethickening}, by induction this reduces us to proving that if $\pcat$ is a Malcev theory then $\h_{m+1}\pcat\to\h_m\pcat$ reflects connectivity. Indeed, $\h_{m+1}\pcat\to\h_m\pcat$ sits in the categorical Postnikov square of \cref{constr:categoricalpostnikovsquare}:
\begin{center}\begin{tikzcd}
\h_{m+1}\pcat\ar[r]\ar[d]&\h\pcat\ar[d,"0"]\ar[dr,equals]\\
\h_m\pcat\ar[r]&\kinv_{m,1}^{m+1}\pcat\ar[r]&\pcat
\end{tikzcd}.\end{center}
The map $0\colon \h\pcat\to\kinv_{m,1}^{m+1}\pcat$ reflects connectivity by \cref{ex:retractionthickening}, and thus $\h_{m+1}\pcat\to\h_m\pcat$ reflects connectivity by \cref{prop:thickeningcartesian}.
\end{proof}

\begin{cor}\label{cor:hthickening}
Let $f\colon \pcat\to\qcat$ be a homomorphism between Malcev theories. If $f_1\colon \h\pcat\to\h\qcat$ reflects connectivity, then $f$ reflects connectivity.
\end{cor}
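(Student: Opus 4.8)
The plan is to factor the homomorphism $f\colon \pcat\to\qcat$ through its homotopy category and apply the results already established. First I would observe that there is a commuting square
\begin{center}\begin{tikzcd}
\pcat\ar[r,"f"]\ar[d,"\tau_\pcat"']&\qcat\ar[d,"\tau_\qcat"]\\
\h\pcat\ar[r,"f_1"]&\h\qcat
\end{tikzcd},\end{center}
where $\tau_\pcat$ and $\tau_\qcat$ are the truncation homomorphisms. By \cref{prop:basichomologywhitehead}, both $\tau_\pcat$ and $\tau_\qcat$ reflect connectivity; by hypothesis so does $f_1$. Applying \cref{lem:composethickening} to the pair $(\tau_\pcat, f_1)$, since $f_1$ reflects connectivity, the composite $f_1\circ\tau_\pcat$ reflects connectivity if and only if $\tau_\pcat$ does---and $\tau_\pcat$ does. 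Hence $f_1\circ\tau_\pcat$ reflects connectivity.

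Now the square above shows $f_1\circ\tau_\pcat \simeq \tau_\qcat\circ f$ as derived functors. Thus $\tau_\qcat\circ f$ reflects connectivity. Applying \cref{lem:composethickening} once more, this time to the pair $(f,\tau_\qcat)$: since $\tau_\qcat$ reflects connectivity, the composite $\tau_\qcat\circ f$ reflects connectivity if and only if $f$ does. Since we have just shown $\tau_\qcat\circ f$ reflects connectivity, we conclude that $f$ reflects connectivity, as desired. The only point requiring a word of care is that \cref{lem:composethickening} is stated for derived functors $\pcat\pto\qcat$, so one should pass to the induced derived functors throughout; the commutativity of the square at the level of derived functors follows from the fact that $(\bs)_!$ is functorial and the original square of homomorphisms commutes.
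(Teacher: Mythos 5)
Your proof is correct and is essentially identical to the paper's: the same commuting square, two applications of \cref{lem:composethickening}, and the fact that both truncations reflect connectivity. If anything, your citation of \cref{prop:basichomologywhitehead} (rather than the full homology Whitehead theorem, which is later deduced using this very corollary) is the cleaner choice.
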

\begin{proof}
Consider the diagram
\begin{center}\begin{tikzcd}
\pcat\ar[r,"f"]\ar[d,"\tau"]&\qcat\ar[d,"\tau"]\\
\h\pcat\ar[r,"f_1"]&\h\qcat
\end{tikzcd}.\end{center}
If $f_1$ reflects connectivity, then as $\pcat\to\h\pcat$ reflects connectivity by \cref{thm:homologywhitehead}, the composite $\pcat\to\h\qcat$ reflects connectivity by \cref{lem:composethickening}, and therefore $\pcat\to\qcat$ reflects connectivity by \cref{lem:composethickening}.
\end{proof}

\begin{cor}\label{cor:homologywhiteheadhalf}
To establish \cref{thm:homologywhitehead}, it suffices to prove that if $\pcat$ is a Malcev theory and $F$ is an elementary modifier, then the unique natural transformation $\tau_{\leq 0} F \to \tau_{\leq 0}$ induces a homomorphism $\epsilon_F\colon \pcat_{\tau_{\leq 0} F}\simeq \h(\pcat_F) \to \h\pcat$ which reflects connectivity.
\end{cor}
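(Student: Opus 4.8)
The plan is to pass $\pi_\alpha\colon \pcat_F \to \pcat_G$ to homotopy $1$-categories, where the resulting functor can be recognized as a composite of two of the homomorphisms $\epsilon_{(-)}$ appearing in the hypothesis.

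First I would note that $\pi_\alpha$ is a homomorphism of Malcev theories: since $\pi_F\colon \pcat \to \pcat_F$ is an essentially surjective coproduct-preserving homomorphism (by \cref{prop:modificationproperties} and \cref{prop:colimitsinmodification}) with $\pi_\alpha\circ\pi_F = \pi_G$, writing objects of $\pcat_F$ as $\pi_F$ of objects of $\pcat$ shows $\pi_\alpha$ preserves coproducts. By \cref{cor:hthickening}, it then suffices to prove that $\h(\pi_\alpha)\colon \h(\pcat_F) \to \h(\pcat_G)$ reflects connectivity. The next step is functoriality of the modification construction: since $\h\ccat \simeq \ccat_{\tau_{\leq 0}}$ and modifications compose, $(\ccat_F)_G \simeq \ccat_{G \circ F}$ — this follows from the identification $\catinfty(G)\circ\catinfty(F)\simeq\catinfty(G\circ F)$ of change-of-enrichment functors, which in turn uses that $\Alg_\Cat(-)$ is functorial and preserves essentially surjective fully faithful maps, hence is compatible with Rezk completion as recalled in \cref{rec:changeofenrichment} — so there are natural equivalences $\h(\pcat_F)\simeq\pcat_{\tau_{\leq 0}\circ F}$ and $\h(\pcat_G)\simeq\pcat_{\tau_{\leq 0}\circ G}$ under which $\h(\pi_\alpha)$ becomes $\pi_{\tau_{\leq 0}\alpha}$, where $\tau_{\leq 0}\alpha\colon\tau_{\leq 0}\circ F\to\tau_{\leq 0}\circ G$. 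Here $\tau_{\leq 0}\circ F$ and $\tau_{\leq 0}\circ G$ are again modifiers, being composites of modifiers, and they preserve coproducts since $\tau_{\leq 0}$ and the elementary modifiers $F, G$ do (the latter by \cref{prop:properties_of_elementary_modifiers}.(2)).

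I would then bring in the hypothesis. The unique natural transformations $\tau_{\leq 0}F\to\tau_{\leq 0}$ and $\tau_{\leq 0}G\to\tau_{\leq 0}$ furnished by \cref{prop:modifycore}.(1) induce the homomorphisms $\epsilon_F\colon \pcat_{\tau_{\leq 0}F}\to\h\pcat$ and $\epsilon_G\colon \pcat_{\tau_{\leq 0}G}\to\h\pcat$, and functoriality of $\pi_{(-)}$ in the modifying transformation, combined with the uniqueness of natural transformations into $\tau_{\leq 0}$, gives $\epsilon_G\circ\pi_{\tau_{\leq 0}\alpha} = \epsilon_F$. By the assumed hypothesis, both $\epsilon_F$ and $\epsilon_G$ reflect connectivity; applying \cref{lem:composethickening} to this factorization (with $\epsilon_G$ the connectivity-reflecting outer functor) shows that $\pi_{\tau_{\leq 0}\alpha}$, and hence $\h(\pi_\alpha)$, reflects connectivity. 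Feeding this back through \cref{cor:hthickening} shows $\pi_\alpha$ reflects connectivity, which is the assertion of \cref{thm:homologywhitehead} for the pair $(F,G)$; as $\pcat$ and $\alpha$ were arbitrary, this completes the reduction.

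The one step that is more than bookkeeping is the identification $\h(\pcat_F)\simeq\pcat_{\tau_{\leq 0}\circ F}$ together with the compatibility of $\h(\pi_\alpha)$ with $\pi_{\tau_{\leq 0}\alpha}$ and of the triangle $\epsilon_G\circ\pi_{\tau_{\leq 0}\alpha} = \epsilon_F$. This is a diagram chase inside the Gepner--Haugseng change-of-enrichment formalism recalled in \S\ref{ssec:enrichedcategories}, and while somewhat tedious it uses nothing beyond the functoriality of $\Alg_\Cat(-)$ and its interaction with Rezk completion already in place; I do not anticipate any conceptual difficulty.
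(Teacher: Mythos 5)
Your proposal is correct and takes essentially the same route as the paper's proof: reduce via \cref{cor:hthickening} to the induced functor on homotopy categories, identify $\h(\pcat_F)\simeq\pcat_{\tau_{\leq 0}F}$, and apply \cref{lem:composethickening} to the commuting triangle over $\h\pcat$ formed by $\epsilon_F$ and $\epsilon_G$. The extra details you supply (uniqueness of the transformations into $\tau_{\leq 0}$, compatibility of modification with composition) are exactly the bookkeeping the paper leaves implicit.
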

\begin{proof}
Let $\alpha\colon F \to G$ be a natural transformation between elementary modifiers and $\pcat$ be a Malcev theory. By \cref{cor:hthickening}, to prove that $\pi_\alpha\colon \pcat_F \to \pcat_G$ reflects connectivity it suffices to prove that $\pi_{\ol{\alpha}}\colon \h(\pcat_F) \to \h(\pcat_G)$ reflects connectivity. As $\h(\pcat_F)\simeq \pcat_{\tau_{\leq 0} F}$ for any modifier $F$, this homomorphism sits in a diagram
\begin{center}\begin{tikzcd}
\pcat_{\tau_{\leq 0} F}\ar[rr,"\pi_{\ol{\alpha}}"]\ar[dr,"\epsilon_F"']&&\pcat_{\tau_{\leq 0} G}\ar[dl,"\epsilon_G"]\\
&\h\pcat
\end{tikzcd}.\end{center}
By \cref{lem:composethickening}, it therefore suffices to prove that $\epsilon_F$ and $\epsilon_G$ reflect connectivity.
\end{proof}

\subsection{Path components of elementary modifiers}
\label{subsection:path_components_of_elementary_modifiers}

By \cref{cor:homologywhiteheadhalf}, the proof of \cref{thm:homologywhitehead} will be completed once we have established the following.

\begin{prop}\label{prop:discretereflect}
Let $F$ be an elementary modifier and $\pcat$ be a Malcev theory. Then the unique natural transformation $\tau_{\leq 0}F \to \tau_{\leq 0}$ induces a homomorphism $\epsilon_F\colon \pcat_{\tau_{\leq 0}F} \to \h\pcat$ which reflects connectivity.
\end{prop}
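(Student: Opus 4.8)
The plan is to establish \cref{prop:discretereflect} by transfinite induction on the generation of the elementary modifier $F$ in the sense of \cref{def:elementarymodifiers}: every $F \in \elementarymodifiers$ is either of the form $\map(T,\bs)$ for some $T \in \Sph$, or a geometric realization $\lvert F_\bullet\rvert$ of a simplicial object in elementary modifiers produced at earlier stages, and I would handle these two cases separately, using the inductive hypothesis in the second. In either case the homomorphism $\epsilon_F$ is automatically full: since $F$ preserves coproducts (\cref{prop:properties_of_elementary_modifiers}), the canonical transformation $F \to \tau_{\leq 0}$ of \cref{prop:modifycore}.(1) is split on mapping sets by the assembly map, so $\epsilon_F$ is surjective on mapping sets. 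The real content of the proposition is thus the reflection of connectivity.

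\emph{The generators.} For $T \in \Sph$, I would choose a finite CW structure on $T$ compatible with its presentation out of $S^{1}$ under products, wedges and smashes. Since every space in $\Sph$ is connected this has a unique $0$-cell, and one obtains a finite skeletal filtration $\ast = T^{(0)} \subseteq \cdots \subseteq T^{(k)} = T$, hence modifiers $H_{j} = \map(T^{(j)},\bs)$ with $H_{0} = \id$ and $H_{k} = \map(T,\bs)$, together with a tower of transformations $H_{k} \to \cdots \to H_{0}$ whose composite induces $\epsilon_{\map(T,\bs)}$ after modifying $\pcat$ and passing to homotopy categories. The crucial input is that the attaching maps of $T$ are assembled from Whitehead products and composites of sphere classes, and therefore become null after mapping into any supersimple space $Y$ --- this is the sense in which ``the cells trivialize.'' Extracting this from the supersimplicity axioms of \cite[\S2.2]{usd1} in the spirit of \cref{prop:homotopymoduleherd}, one gets for each $j$ a natural fibrewise splitting of $\map(T^{(j)},Y)$ over $\map(T^{(j-1)},Y)$ into a product, indexed by the $j$-cells $\sigma$, of $\lvert\sigma\rvert$-fold loop spaces of $Y$. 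Applying $\tau_{\leq 0}$ and using that the new factors are connected exhibits the tower $\pcat_{\tau_{\leq 0}H_{k}} \to \cdots \to \pcat_{\tau_{\leq 0}H_{0}} = \h\pcat$ as a finite tower of Malcev theories in which the $j$-th stage is a linear extension, in the sense of \cref{def:introlinext}, of the preceding theory by a natural system on $\h\pcat$ sending a morphism $\phi\colon a \to b$ of $\h\pcat$ to $\bigoplus_{\sigma}\pi_{\lvert\sigma\rvert}(\map_\pcat(a,b),\phi)$; this is a cartesian system in the sense of \cref{def:cartesiansystems} because $\pi_{\lvert\sigma\rvert}$ preserves products and, by the Malcev condition, is valued in abelian groups even when $\lvert\sigma\rvert = 1$. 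By \cref{prop:linearextensionsaremalcev} these stages are genuine homomorphisms of Malcev theories, and by \cref{def:introlinext} each is a pullback of the zero-section of a linear extension datum over $\h\pcat$; such a zero-section admits a retraction, hence reflects connectivity (\cref{ex:retractionthickening}), and is full because a square-zero extension by a connective system has connected fibres, so \cref{prop:thickeningcartesian} makes each stage full and connectivity-reflecting. Composing the finitely many stages via \cref{lem:composethickening} then proves that $\epsilon_{\map(T,\bs)}$ reflects connectivity.

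\emph{Geometric realizations.} Suppose $F = \lvert F_\bullet\rvert$ with each $F_{n}$ elementary and $\epsilon_{F_{0}}$ already known to reflect connectivity. Since $\tau_{\leq 0}$ preserves colimits, the zeroth coface induces a transformation $\tau_{\leq 0}F_{0} \to \tau_{\leq 0}F$ over $\tau_{\leq 0}$ which is surjective on mapping sets, being a coequalizer projection from the set of $0$-simplices; hence the induced homomorphism $u\colon \pcat_{\tau_{\leq 0}F_{0}} \to \pcat_{\tau_{\leq 0}F}$ is full and essentially surjective, with $\epsilon_{F} \circ u = \epsilon_{F_{0}}$. As $\tau_{\leq 0}F_{0}$ and $\tau_{\leq 0}F$ take values in sets, $\pcat_{\tau_{\leq 0}F}$ is a quotient of $\pcat_{\tau_{\leq 0}F_{0}}$, so $u^{\ast}\colon \Model_{\pcat_{\tau_{\leq 0}F}} \to \Model_{\pcat_{\tau_{\leq 0}F_{0}}}$ is fully faithful; it moreover both preserves and reflects connectivity, since $u$ is essentially surjective and connectivity of a morphism of models is detected on representables, and therefore the counit $u_{!}u^{\ast} \to \id$ is an equivalence. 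If $\beta$ is a morphism in $\Model_{\pcat_{\tau_{\leq 0}F}}$ with $\epsilon_{F!}\beta$ $n$-connective, then $\epsilon_{F_{0}!}(u^{\ast}\beta) \simeq \epsilon_{F!}(u_{!}u^{\ast}\beta) \simeq \epsilon_{F!}\beta$ is $n$-connective, so $u^{\ast}\beta$ is $n$-connective since $\epsilon_{F_{0}}$ reflects connectivity, whence $\beta$ is $n$-connective since $u^{\ast}$ reflects connectivity. This completes the induction, and together with \cref{cor:homologywhiteheadhalf} finishes the proof of \cref{thm:homologywhitehead}.

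The step I expect to be the main obstacle is the generator case: producing the natural fibrewise splitting of $\map(T^{(j)},\bs)$ over $\map(T^{(j-1)},\bs)$ with full naturality, choosing CW structures on the spaces of $\Sph$ for which this holds uniformly (the evident structures on smash products do not suffice), and checking that the resulting layers really are cartesian systems on the discrete theories $\pcat_{\tau_{\leq 0}H_{j-1}}$, all require a careful hands-on analysis of mapping spaces into supersimple spaces. Once this is available, the rest is bookkeeping with the closure properties of connectivity-reflecting homomorphisms established earlier in the section.
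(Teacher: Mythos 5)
There is a genuine gap, and it sits exactly where the real difficulty of \cref{prop:discretereflect} lies: the geometric realization step. Your argument there rests on the claim that for the full, essentially surjective homomorphism $u\colon \pcat_{\tau_{\leq 0}F_0}\to\pcat_{\tau_{\leq 0}F}$ the restriction $u^\ast$ is fully faithful, equivalently that the counit $u_!u^\ast\to\id$ is an equivalence. This is false for hom-surjective quotients of discrete theories: for the quotient from the theory of groups to the theory of abelian groups, $u_!$ is derived abelianization, and the counit evaluated on a discrete abelian model such as $\integers^2$ is not an equivalence (its $\pi_1$ is $\Hrm_2(\integers^2)\neq 0$). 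Connectivity of a morphism being detected on representables gives you nothing here, since $u^\ast\beta$ need not be of the form $u^\ast u_!(\text{anything})$ in a useful way; the chain $\epsilon_{F_0!}(u^\ast\beta)\simeq\epsilon_{F!}(u_!u^\ast\beta)\simeq\epsilon_{F!}\beta$ therefore breaks. More structurally: there is no general principle transferring connectivity-reflection from $g\circ w$ to $g$ along a full essentially surjective $w$, and if there were, the hard part of the paper's proof would evaporate — one could reduce everything to $\map(T,\bs)$ immediately. The paper instead proves a strictly stronger statement by induction: it calls $T\in\Sph$ admissible when \emph{every} discrete coproduct-preserving quotient $F$ of $\map(T,\bs)$ reflects connectivity, and propagates this along a $\Sigma$-null cell filtration of $T$ (\cref{lem:ssnull}, \cref{prop:nullattach}). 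Handling the quotient is done not by transfer but by descending the principal $\Pi_n$-action on $[K',\bs]$ to a \emph{free} action of its image $\Pi'$ on $F$ (\cref{lem:descendaction}, using the Malcev/herd structure and the $\Sigma$-nullity of the attaching map to get injectivity of $\Pi_n\to[K',\bs]$), producing a cube with cartesian faces to which \cref{prop:thickeningcartesian} and \cref{cor:hthickening} apply. Your induction hypothesis, which only records that $\epsilon_{F_0}$ reflects connectivity, is too weak to reach the quotient $\tau_{\leq 0}F$.

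The generator case of your proposal is closer in spirit to the truth but also overstated: you assert a natural fibrewise \emph{splitting} of $\map(T^{(j)},Y)$ over $\map(T^{(j-1)},Y)$ as a product of loop-space factors, whereas what the $\Sigma$-null cell structure actually yields (and what the paper uses) is an extension — surjectivity of $[T^{(j)},Y]\to[T^{(j-1)},Y]$ with fibres a torsor/principal object under $\Pi_j Y$ — not a chosen natural trivialization; and the parenthetical ``the new factors are connected'' is inconsistent with your own identification of the layers as $\bigoplus_\sigma\pi_{|\sigma|}$. If you repair the generator case to the extension (rather than splitting) statement, the remaining bookkeeping with \cref{def:catsqz}, \cref{prop:linearextensionsaremalcev}, \cref{ex:retractionthickening}, \cref{prop:thickeningcartesian} and \cref{lem:composethickening} is fine, but it still only treats $F=\map(T,\bs)$; without a correct mechanism for quotients, the induction over geometric realizations does not close.
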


The proof will occupy the rest of this subsection. 

\begin{rmk}
\cref{prop:discretereflect} is one of the more technical results of this paper. In practice, the elementary modifiers $F$ that one considers frequently satisfy $\tau_{\leq 0}F\simeq \tau_{\leq 0}$, where there is nothing to show. In particular, the work of this subsection is not needed for our applications to decompositions of moduli spaces in \S\ref{sec:modulispaces}.
\end{rmk}

\begin{ex}\label{ex:pireflect}
Let $\pcat$ be a Malcev theory. We claim that $\pcat_{\tau_{\leq 0}\map(S^n,\bs)}\to \h\pcat$ reflects connectivity for all $n\geq 1$. By \cref{prop:homotopymoduleherd}, we may identify $\tau_{\leq 0}\map(S^n,\bs)\cong \Pi_n(\bs)$. As a consequence, the $\infty$-category $\pcat_{\tau_{\leq 0}\map(S^n,\bs)}\simeq\pcat_{\Pi_n}$ sits in a cartesian square
\begin{center}\begin{tikzcd}
\pcat_{\tau_{\leq 0}\map(S^n,\bs)}\ar[r]\ar[d]&\h\pcat\ar[d,"i"]\ar[dr,equals]\\
\h\pcat\ar[r,"i"]&\pcat_{B_{\tau_{\leq 0}}\Pi_n}\ar[r]&\h\pcat
\end{tikzcd}.\end{center}
As $i$ admits a retraction, it reflects connectivity. As $i$ is full, it follows from \cref{prop:thickeningcartesian} that $\pcat_{\tau_{\leq 0}\map(S^n,\bs)} \to \h\pcat$ reflects connectivity.
\end{ex}

Our basic strategy for proving \cref{prop:discretereflect} is inspired by \cref{ex:pireflect}. For inductive purposes it will be convenient to introduce the following definitions, which will not be used anywhere else in the text.

\begin{defn}
A discrete coproduct-preserving modifier $F$ is \emph{admissible} if for every Malcev theory $\pcat$, the unique natural transformation $F \to \tau_{\leq 0}$ induces a homomorphism $\pcat_F\to\h\pcat$ which reflects connectivity.

A connected space $K$ is \emph{admissible} if every discrete coproduct-preserving modifier $F$ that admits an effective epimorphism $\map(K,\bs) \to F$ is admissible.
\end{defn}

If $F$ is an elementary modifier, then there exists some $T \in \Sph$ for which there exists an effective epimorphism $\map(T,\bs) \to F$. Thus \cref{prop:discretereflect} will be proved once we have shown that every $T\in \Sph$ is admissible. We will prove this by induction up a cellular filtration of $T$. To carry out this argument, we need some information about the structure of cell attachments.

\begin{recollection}[{The action of a loop space on fibre, \cite{safronov2017free-loop-space}}]
\label{recollection:action_of_loop_space_on_fibre}
If $\ast \rightarrow A$ is a pointed object in an $\infty$-topos, then by \cite[6.1.2.11]{lurie2017highertopos} its C\v{e}ch nerve is a groupoid which exhibits an $\mathbf{E}_{1}$-group structure on $\Omega A \colonequals \ast \times_{A} \ast$. If $B \rightarrow A$ is a morphism with fibre $F$, then taking \v{C}ech nerves horizontally in the cartesian square
\[
\begin{tikzcd}
	F & B \\
	\ast & A
	\arrow[from=1-1, to=1-2]
	\arrow[from=1-1, to=2-1]
	\arrow[from=1-2, to=2-2]
	\arrow[from=2-1, to=2-2]
\end{tikzcd}
\]
yields a morphism of simplicial objects which is a left action object in the sense of \cite[{Definition 4.2.2.2}]{higher_algebra}. That is, the \v{C}ech nerve of $F \rightarrow B$ can be identified with an action groupoid 
\begin{center}\begin{tikzcd}
\cdots\ar[r,shift left=1mm]\ar[r]\ar[r,shift right=1mm]&\Omega A \times F \ar[r,shift left=0.5mm]\ar[r,shift right=0.5mm]&F
\end{tikzcd}\end{center}
exhibiting $\Omega A$ as acting on $F$.
\end{recollection}

\begin{remark}
\label{remark:homotopy_quotient_of_action_of_loop_space_on_fibre}
In the context of \cref{recollection:action_of_loop_space_on_fibre}, as the geometric realization of an action groupoid can be identified with the corresponding homotopy orbits, we may identify
\[
F_{h \Omega A} \simeq | F^{\times_{B} \bullet} | \simeq |\check{C}(F\to B)| \simeq \mathrm{im}(F \rightarrow B).
\]
This is the union of those path components of $B$ which live over the basepoint component of $A$. 
\end{remark}

\begin{construction}\label{constr:coact}
Suppose we are given a cocartesian square 
\begin{center}\begin{tikzcd}
S^{n-1}\ar[r]\ar[d]&K\ar[d]\\
\ast\ar[r]&K'
\end{tikzcd}\end{center}
for some $n\geq 1$, realizing $K'$ as obtained by attaching a positive-dimensional cell for $K$. Mapping into a space $X$ we obtain a cartesian diagram 
\[
\begin{tikzcd}
	{\map(K', X)} & {\map(K, X)} \\
	X & {\map(S^{n-1},X)}
	\arrow[from=1-1, to=1-2]
	\arrow[from=1-1, to=2-1]
	\arrow[from=1-2, to=2-2]
	\arrow[from=2-1, to=2-2]
\end{tikzcd}
\]
of spaces over $X$. Applying \cref{recollection:action_of_loop_space_on_fibre} in the $\infty$-topos $\spaces_{/X}$, we see that 
\begin{enumerate}
    \item $\map(S^{n}, X) \simeq \Omega_{X} \map(S^{n-1}, X)$ is an $\mathbf{E}_{1}$-group object in spaces over $X$;
    \item $\map(S^n,X)$ acts on $\map(K', X)$;
    \item The homotopy orbits for this action can be identified
    \[
\map(K', X)_{\h \map(S^{n}, X)} \simeq \map(K, X)_{\mathrm{ext}}
    \]
    with the union of those path components of $\map(K, X)$ which correspond to maps $K \to X$ which extend to $K'$; equivalently, for which the composite $S^{n-1} \rightarrow K \rightarrow X$ is null-homotopic. 
\end{enumerate}
Here, the last part is \cref{remark:homotopy_quotient_of_action_of_loop_space_on_fibre}. 
\end{construction}

We now turn our attention to the cell structures on the objects of $\spheresandmore$.

\begin{defn}
We say that a map $f\colon A \to B$ of connected spaces is \emph{$\Sigma$-null} if $\Sigma f$ is nullhomotopic.
\end{defn}

The $\Sigma$-null maps play a role in the study of supersimple spaces by way of the following.

\begin{lemma}\label{lem:sigmanull}
A map $f\colon A \to B$ of connected spaces is $\Sigma$-null if and only if the following condition is satisfied:
\begin{enumerate}
\item[$(\ast)$] For every supersimple space $X\in \ukanspaces$ and every map $B \to X$, the composite $A \to B \to X$ is nullhomotopic, i.e.\ homotopic to a constant map.
\end{enumerate}
\end{lemma}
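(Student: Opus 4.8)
The plan is to prove the two implications of Lemma~\ref{lem:sigmanull} separately, with the forward direction being essentially formal and the reverse direction requiring a construction of a suitable supersimple test space.

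\textbf{The forward direction.} Suppose $f\colon A \to B$ is $\Sigma$-null, so $\Sigma f \simeq \ast$. Fix a supersimple space $X \in \ukanspaces$ and a map $g\colon B \to X$. Since $A$ is connected, the composite $g \circ f\colon A \to X$ lands in a single path component of $X$, which (being a component of a supersimple space) is again supersimple; so we may assume $X$ is connected and pick a basepoint. Then $g \circ f$ factors, up to homotopy, through the connected cover, and the key point is that for connected $X$ the suspension-loop adjunction gives $[A, X] \cong [A, \Omega \Sigma X \text{-ish}]$ only after stabilizing, so instead I would argue directly: the map $g \circ f$ is classified by $\Sigma^\infty$-adjoint data only if $X$ is an infinite loop space. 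The cleanest route is to use that $X$ supersimple means all Whitehead products in $\pi_*X$ vanish, together with \cite[Corollary 2.2.15]{usd1} or the surrounding results in \cite{usd1}, which should give that $[A,X] \to [A, X]$ factors through $[\Sigma A, \Sigma X]$-type data; since $\Sigma f \simeq \ast$ this forces $g\circ f$ to be null. More precisely, I expect the relevant statement from \cite{usd1} is that for supersimple $X$ the canonical map $\map(B,X) \to \map(\Sigma B, \Sigma X)$-adjoint, i.e. the comparison with the "stable" mapping space, is injective on $\pi_0$ in the relevant range, so nullity of $\Sigma f$ pulls back to nullity of $g \circ f$.

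\textbf{The reverse direction.} This is the substantive half. Assuming condition $(\ast)$, I must produce a supersimple space $X$ and a map $B \to X$ detecting $f$, and conclude $\Sigma f \simeq \ast$. The natural candidate is $X = B_{<k}^{\mathrm{ss}}$ or a supersimple approximation of $B$ itself; but $B$ need not be supersimple. The right move is to take $X$ to be a supersimple space built so that $[B, X]$ sees the stable homotopy type of $B$ in a range: concretely, one can take $X = \Omega^\infty \Sigma^\infty_+ B$ truncated appropriately, or better, use that condition $(\ast)$ applied to the universal supersimple receptacle forces $f$ to become null after the "supersimplification" functor, and then invoke \cref{prop:stablespaces} (the identification $\spectra_\ast^{[n,2n)} \simeq \spaces_\ast^{[n,2n)}$ on spaces with vanishing Whitehead products) to transfer nullity to the stable category degreewise, finally assembling over all degrees to get $\Sigma f \simeq \ast$. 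The cleanest formulation: apply $(\ast)$ with $X$ running over the spaces $\Omega^\infty (\tau_{[m, 2m)} \Sigma^\infty \Sigma B)$-type objects (which are supersimple by \cref{prop:stablespaces} since their Whitehead products vanish — they are infinite loop spaces), deduce that the composites $A \to B \to X$ are null for all such $X$, and then a Postnikov/convergence argument (each stable stage of $\Sigma f$ vanishes, and $\Sigma B$ is the limit of these stages) gives $\Sigma f \simeq \ast$.

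\textbf{Main obstacle.} The hard part will be the reverse direction: condition $(\ast)$ only tests maps into \emph{supersimple} spaces, and supersimple spaces are a genuinely restricted class (e.g. $S^n$ is not supersimple for $n \notin \{0,1,3,7\}$), so one cannot simply map into an Eilenberg--MacLane space or into $B$ itself. The key technical input must be \cref{prop:stablespaces}: infinite loop spaces with homotopy concentrated in a range $[m,2m)$ are supersimple, and these suffice to detect the stable homotopy type of $\Sigma B$ one "chunk" at a time. I expect the proof to hinge on checking that the association $B \mapsto \{\Omega^\infty \tau_{[m,2m)}\Sigma^\infty \Sigma B\}_m$ has enough supersimple members, that each detects the corresponding Postnikov layer of $\Sigma f$, and that these layers assemble — i.e. that $\Sigma f$ is null iff all its composites with the layer-projections are null, which is the convergence of the Postnikov tower of $\Sigma B$ (or a connective cover thereof). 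Packaging this assembly cleanly, rather than the individual detection step, is where the care is needed.
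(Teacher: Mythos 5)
There is a genuine gap in each direction, and you also have the relative difficulty of the two halves backwards. The direction you call "the reverse direction" ($(\ast)\Rightarrow\Sigma f\simeq\ast$) is the formal one, and your proposed route for it does not reach the conclusion: testing against the supersimple infinite loop spaces $\Omega^\infty\tau_{[m,2m)}\Sigma^\infty\Sigma B$ and assembling can at best show that $f$ is \emph{stably} null, because the stages you use are stages of $Q\Sigma B=\Omega^\infty\Sigma^\infty\Sigma B$, not of $\Sigma B$; your claim that "$\Sigma B$ is the limit of these stages" is false, as the limit is $Q\Sigma B$ and $\Sigma B\to Q\Sigma B$ is not an equivalence. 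Since a map can be stably null while its single suspension is essential (the suspension homomorphisms have kernels at every stage, by EHP), this cannot yield $\Sigma f\simeq\ast$. The correct argument is a one-liner: $\Omega\Sigma B$ is itself supersimple (it is a loop space), so applying $(\ast)$ to the unit $\eta\colon B\to\Omega\Sigma B$ shows $\eta\circ f\simeq\ast$, and $\eta\circ f$ is adjoint to $\Sigma f$, whence $\Sigma f$ is null. You gesture at a "universal supersimple receptacle" but then bypass the evident one.

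The other direction ($\Sigma f$ null $\Rightarrow(\ast)$) is where the actual content lies, and your sketch does not supply the key ingredient. The reduction to connected pointed $X$ is fine, but you then only "expect" an injectivity statement for suspension on homotopy classes of maps into a supersimple target "in the relevant range". No range (Freudenthal-type) argument can work here, since there are no connectivity hypotheses on $A$, $B$, or $X$. What makes the statement true unconditionally is that a connected supersimple space is an $H$-space \cite[Theorem 2.2.13]{usd1}, and by James's theorem \cite{james1955reduced} the unit $\eta\colon X\to\Omega\Sigma X$ then admits a retraction; hence $[A,X]\to[A,\Omega\Sigma X]\cong[\Sigma A,\Sigma X]$ is split injective for every $A$, so nullity of $\Sigma g\circ\Sigma f$ pulls back to nullity of $g\circ f$. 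Until you identify this splitting (or an equivalent substitute), this implication is not proved.
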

\begin{proof}
After choosing a basepoint of $A$, we may suppose without loss of generality that $f$ is a map of pointed connected spaces.

Suppose first that $(\ast)$ is satisfied. As $\eta\colon B \to \Omega \Sigma B$ is a map into a supersimple space, the assumption guarantees that the composite $\eta\circ f\colon A \to B \to \Omega\Sigma B$ is nullhomotopic. As $\Sigma f$ is adjoint to $f\circ \eta$, it follows that $\Sigma f$ is nullhomotopic.

Suppose conversely that $f$ is $\Sigma$-null. Fix a supersimple space $X$ and map $g\colon B \to X$. As $B$ is connected, this map lands in a single path component of $X$. By replacing $X$ with this path component and pointing $X$ by the image of the basepoint of $B$, we may as well suppose that $X$ is pointed and connected. Thus $X$ admits the structure of an $H$-space \cite[Theorem 2.2.13]{usd1}, and by a classical result of James \cite[{Theorem 1.8}]{james1955reduced} it follows that $\eta\colon X \to \Omega\Sigma X$ admits a retraction. Thus to show that $g\circ f\colon A \to B \to X$ nullhomotopic it suffices to show that $\eta\circ g \circ f\colon A \to B \to X \to \Omega\Sigma X$ is nullhomotopic, which follows as this composite is adjoint to $\Sigma g\circ\Sigma f\colon \Sigma A \to \Sigma B \to \Sigma X$ and $\Sigma f$ is nullhomotopic by assumption.
\end{proof}

\begin{lemma}
\label{lem:ssnull}
Let $T\in\spheresandmore$. Then $T$ admits a finite filtration
\[
\ast = T_0 \subset \cdots \subset T_m = T
\]
with the following property:
\begin{enumerate}
\item[($\ast$)] For all $0 \leq k < m$, the space $T_{k+1}$ is obtained by attaching a positive-dimensional cell to $T_k$ via a $\Sigma$-null attaching map, i.e.\ there are cofibre sequences
\[
S^{n-1}\xrightarrow{\alpha} T_k \to T_{k+1}
\]
with $n \geq 1$ and for which $\alpha$ is $\Sigma$-null.
\end{enumerate}
\end{lemma}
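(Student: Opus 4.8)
The assertion is equivalent to the claim that the class $\mathcal{C}$ of connected spaces admitting a finite filtration as in the statement contains $S^1$ and is closed under finite wedges, finite products, and smash products: indeed every object of $\mathcal{C}$ is connected, and $\Sph$ is by definition the smallest full subcategory of $\spaces$ containing $S^1$ and closed under these three operations, so this gives $\Sph\subseteq\mathcal{C}$. Two remarks streamline the bookkeeping. First, $\Sigma$-null maps of connected spaces form a two-sided ideal under composition which is moreover closed under suspension and under smashing with an arbitrary map; this follows at once from $\Sigma(g\circ f)=\Sigma g\circ\Sigma f$, $\Sigma\Sigma f=\Sigma(\Sigma f)$, and $\Sigma(f\wedge g)\simeq\Sigma f\wedge g$. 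In particular, suspending a filtration of the given form yields another one, so $\mathcal{C}$ is closed under $\Sigma$. Second, attaching a $1$-cell is never an obstruction: when $n=1$ the condition ``$\Sigma\alpha\simeq\ast$'' for $\alpha\colon S^0\to T_k$ holds automatically, since $\Sigma T_k$ is simply connected for $T_k$ connected. Hence $S^1\in\mathcal{C}$, and $\mathcal{C}$ is closed under wedges by concatenating the filtrations of $A$ and $B$: the attaching map of a cell of $B$, postcomposed with $B_j\hookrightarrow A\vee B_j$, is still $\Sigma$-null.

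The technical heart is the following clutching statement, which needs nothing beyond the formal facts above: \emph{if $\phi\colon Z\to Y$ is $\Sigma$-null with $Y,Z\in\mathcal{C}$, then $\cofib(\phi)\in\mathcal{C}$.} One argues by induction on the number of cells of $Z$, the case $Z=\ast$ being trivial. Writing $Z=Z'\cup_\beta D^d$ with $\beta$ $\Sigma$-null and $Z'$ smaller, the restriction $\phi|_{Z'}=\phi\circ(Z'\hookrightarrow Z)$ is $\Sigma$-null, so $\cofib(\phi|_{Z'})\in\mathcal{C}$ by induction; coning the cofibre sequence $S^{d-1}\xrightarrow{\beta}Z'\hookrightarrow Z$ produces a cofibre sequence $S^d\xrightarrow{\psi}\cofib(\phi|_{Z'})\to\cofib(\phi)$, which exhibits $\cofib(\phi)$ as $\cofib(\phi|_{Z'})$ with a $(d+1)$-cell attached along $\psi$, so it suffices to check $\psi$ is $\Sigma$-null. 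Suspending and using that $\phi$, $\phi|_{Z'}$, and $\beta$ are $\Sigma$-null, together with the fact that the cofibre of a null map is a wedge with the suspension of its source, one identifies $\Sigma\cofib(\phi|_{Z'})\simeq\Sigma Y\vee\Sigma^2 Z'$ and $\Sigma\cofib(\phi)\simeq\Sigma Y\vee\Sigma^2 Z\simeq\Sigma Y\vee\Sigma^2 Z'\vee S^{d+2}$, under which the map $\Sigma\cofib(\phi|_{Z'})\to\Sigma\cofib(\phi)$ becomes the evident wedge inclusion and hence admits a retraction; since the second map of a cofibre sequence $S^{d+1}\xrightarrow{\Sigma\psi}W\to W'$ with a retraction on $W\to W'$ forces $\Sigma\psi\simeq\ast$, this proves the clutching statement. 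Closure of $\mathcal{C}$ under smash products is then immediate by induction on the cells of $A$: for $A=A'\cup_\alpha D^n$ one has $A\wedge B\simeq\cofib(\alpha\wedge\id_B\colon S^{n-1}\wedge B\to A'\wedge B)$ with $\alpha\wedge\id_B$ $\Sigma$-null, $A'\wedge B\in\mathcal{C}$ by induction, and $S^{n-1}\wedge B\simeq\Sigma^{n-1}B\in\mathcal{C}$ (or $=B$ when $n=1$).

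The remaining and hardest point is closure under finite products, where I would again induct on the cells of $A$, say $A=A'\cup_\alpha D^n$ with $\alpha$ $\Sigma$-null, and build $A\times B$ from $A'\times B\in\mathcal{C}$ by attaching, in order of dimension, the cell $e^n_A=e^n_A\times\{\mathrm{pt}\}$ and then the product cells $e^n_A\times e^j_B$ for $j\ge 1$. The attaching map of $e^n_A$ is $\alpha$ followed by an inclusion, hence $\Sigma$-null; the attaching map $a$ of $e^n_A\times e^j_B$ decomposes along $\partial(D^n\times D^j)=(S^{n-1}\times D^j)\cup(D^n\times S^{j-1})$ into a piece homotopic over $A'\times B$ to $\alpha$ and a piece homotopic to the attaching map $\beta^B_j$ of $e^j_B$ in $B$, both of which are $\Sigma$-null, so for any map $h$ from the current skeleton to a supersimple space $X$ the composite $h\circ a$ is null on each hemisphere by \cref{lem:sigmanull}. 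The obstruction to gluing these two null-homotopies into a null-homotopy of $h\circ a$ is a generalized Whitehead product in $\pi_*(X)$, which vanishes because supersimple spaces are homotopy-unital $H$-spaces with vanishing generalized Whitehead products (\cite{usd1}); hence $h\circ a\simeq\ast$ and $a$ is $\Sigma$-null by \cref{lem:sigmanull}, giving $A\times B\in\mathcal{C}$. I expect this last step — the precise identification of the attaching maps of product cells and the verification that the residual Whitehead-product obstructions vanish in supersimple spaces — to be the main difficulty; the rest of the argument is formal manipulation with $\Sigma$-null maps.
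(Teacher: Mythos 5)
Your preliminary reductions are sound, and in fact your treatment of smash products is a genuinely different route from the paper's: the clutching statement (if $\phi\colon Z\to\cofib$... rather, if $\phi\colon Z\to Y$ is $\Sigma$-null with $Y,Z\in\mathcal{C}$ then $\cofib(\phi)\in\mathcal{C}$) is correct as sketched, and it gives closure of $\mathcal{C}$ under smash products of arbitrary members, whereas the paper only needs and only proves closure under the operations $S^1\vee(-)$, $S^1\wedge(-)$, $S^1\times(-)$, disposing of $S^1\wedge T$ by the elementary observation that it splits as a wedge of spheres. The wedge and suspension closures and the retraction trick forcing $\Sigma\psi\simeq\ast$ are all fine.

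The gap is exactly where you locate it, and the sketched reduction for products is not merely incomplete but incorrect as stated. Knowing that $h\circ a$ is nullhomotopic on the two hemispheres $S^{n-1}\times D^j$ and $D^n\times S^{j-1}$ does not produce a canonical Whitehead-product obstruction: choosing nullhomotopies on the two pieces and coning them off only shows that $h\circ a$ factors, up to homotopy, as $\overline{g}\circ c$, where $c\colon S^{n-1}\ast S^{j-1}\to\Sigma(S^{n-1}\times S^{j-1})$ is the map collapsing the two hemispheres and $\overline{g}$ is some map depending on the chosen nullhomotopies. Such composites into an $H$-space need not be null: taking $X=\Omega Y$, precomposition with $c$ corresponds to precomposition with $\Sigma c$, which is a stably split inclusion onto the top wedge summand of $\Sigma^2(S^{n-1}\times S^{j-1})$, not zero. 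So "null on each hemisphere plus vanishing of generalized Whitehead products in supersimple spaces" does not suffice; what you actually need is the precise classical identification of the attaching map of the product cell $e^n_A\times e^j_B$ (a James/Whitehead-type formula expressing it, modulo terms killed by $\Sigma\alpha\simeq\ast$ and $\Sigma\beta_j\simeq\ast$, as an honest generalized Whitehead product of two specified classes), and this identification is precisely the step you defer and flag as the main difficulty. The paper sidesteps it entirely: it reduces to $S^1\times T$, where passing from $S^1\times T$ to $S^1\times T'$ requires attaching only two cells, and it verifies $\Sigma$-nullity of the second attaching map $\beta$ by suspending and using the splitting $\Sigma(S^1\times T)\simeq\Sigma S^1\vee\Sigma(S^1\wedge T)\vee\Sigma T$, under which the map $(S^1\times T)\cup_T T'\to S^1\times T'$ suspends to a wedge-summand inclusion (because $\Sigma\alpha\simeq\ast$ makes $S^1\wedge T\to S^1\wedge T'$ a wedge-summand inclusion); the resulting retraction forces $\Sigma\beta\simeq\ast$. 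To complete your argument you would either have to carry out the Whitehead-product identification of product-cell attaching maps in full, or replace that step by a suspension-splitting argument of the paper's kind.
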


\begin{proof}
For the purposes of this proof, we say that a connected space $T$ is \emph{strongly $\Sigma$-null} if it admits a filtration as above; this implies that $\Sigma T$ splits as a wedge of spheres, but is at least a priori a more stringent condition.

The class of strongly $\Sigma$-null spaces clearly contains $S^1$ and $\ast$, so we must show that if $T \in \spheresandmore$ is strongly $\Sigma$-null then so too are $S^1\vee T$, $S^1\wedge T$, and $S^1\times T$. For $S^1\vee T$ this is clear, as $S^1\vee T$ is obtained by attaching a $1$-dimensional cell to $T$ along the constant map $S^0 \to T$. For $S^1 \wedge T$, an easy induction based on the general equivalences
\[
S^1\wedge (A\times B)\simeq (S^1\wedge A) \vee (S^1\wedge A \wedge B)\vee (S^1\wedge B)
\]
shows that if $T \in \spheresandmore$ then $S^1\wedge T$ splits as a wedge of positive-dimensional spheres, which is clearly strongly $\Sigma$-null.

It remains to show that if $T \in \spheresandmore$ is strongly $\Sigma$-null then $S^1\times T$ is strongly $\Sigma$-null. By induction up an $\Sigma$-null filtration for $T$, we may suppose inductively that we are given a cocartesian square
\begin{center}\begin{tikzcd}
S^{n-1}\ar[r,"\alpha"]\ar[d]&T\ar[d]\\
\ast\ar[r]&T'
\end{tikzcd}\end{center}
with the properties that $S^1\times T$ is strongly $\Sigma$-null and $\alpha$ is $\Sigma$-null, and must show that $S^1\times T'$ is strongly $\Sigma$-null. The space $S^1\times T'$ may be obtained from $S^1\times T$ by attaching two additional cells, through two cocartesian diagrams: the outer rectangle in
\begin{center}\begin{tikzcd}
S^{n-1}\ar[r,"\alpha"]\ar[d]&T\ar[r,"i"]\ar[d]&S^1\times T\ar[d]\\
\ast\ar[r]&T'\ar[r]&(S^1\times T) \cup_T T'
\end{tikzcd},\end{center}
where $i\colon T \to S^1\times T$ is induced by the basepoint of $S^1$, and
\begin{center}\begin{tikzcd}
S^{n-1}\ar[r,"\beta"]\ar[d]&(S^1\times T)\cup_T T'\ar[d]\\
\ast\ar[r]&S^1\times T'
\end{tikzcd}.\end{center}
As $\alpha$ is $\Sigma$-null, so is $i\circ \alpha$, so we must verify that $\beta$ is $\Sigma$-null. Indeed, the standard splitting
\[
\Sigma(S^1\times T)\simeq \Sigma S^1\vee \Sigma(S^1\wedge T)\vee \Sigma T
\]
allows us to identify the suspension of $(S^1\times T)\cup_T T' \to S^1\times T'$ with the map
\[
\Sigma S^1\vee \Sigma(S^1\wedge T) \vee \Sigma T' \to \Sigma S^1 \vee \Sigma(S^1\wedge T') \vee \Sigma T'.
\]
As $\alpha$ is $\Sigma$-null, $S^1\wedge T \to S^1\wedge T'$ is the inclusion of a wedge summand, hence so too is this map, implying that $\Sigma\beta$ is nullhomotopic and thus that $\beta$ is $\Sigma$-null as claimed.
\end{proof}

The preceding lemmas could be used to prove \cref{prop:discretereflect} in the special case where $F = \map(T,\bs)$ for some $T \in \Sph$, by an argument directly analogous to \cref{ex:pireflect}. To handle more general elementary modifiers, we will need some information about how group actions interact with Malcev operations.

\begin{lemma}\label{lem:eckmannhilton}
Let $G$ be a group object in the $1$-category of herds. Then $G$ is abelian and its group operation is determined by the unit $e\in G$ and Malcev operation by $x\cdot y = t(x,e,y)$.
\end{lemma}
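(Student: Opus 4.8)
The plan is to exploit the interplay between the group structure and the Malcev (ternary) operation $t$, using a version of the classical Eckmann--Hilton argument. Recall that a herd is an algebra equipped with a ternary Malcev operation $t(x,y,z)$ satisfying $t(x,y,y) = x$ and $t(x,x,y) = y$, and that a group object $G$ in the $1$-category of herds consists of a herd together with a group multiplication $\mu\colon G\times G\to G$, a unit $e$, and an inverse, all of which are homomorphisms of herds. The key point is that ``$\mu$ is a homomorphism of herds'' means precisely that $\mu$ commutes with $t$ in each variable: $\mu(t(x_1,y_1,z_1),t(x_2,y_2,z_2)) = t(\mu(x_1,x_2),\mu(y_1,y_2),\mu(z_1,z_2))$.

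First I would record that the unit $e$ is preserved by $t$ in the sense that $t(e,e,e)=e$ (immediate from $t(x,x,y)=y$). Next, I would show that the group multiplication $\mu$ can be recovered from $t$ and $e$ via the formula $x\cdot y = t(x,e,y)$. To see this, consider the two herd-homomorphisms $G\times G\to G$ given by $(x,y)\mapsto \mu(x,y)$ and $(x,y)\mapsto t(x,e,y)$. Using that $\mu$ is a herd homomorphism, compute
\[
\mu(x,y) = \mu(t(x,e,e),\,t(e,e,y)) = t(\mu(x,e),\,\mu(e,e),\,\mu(e,y)) = t(x,e,y),
\]
where the outer equalities use the herd identities $t(x,e,e)=x$, $t(e,e,y)=y$, and the middle equality uses that $\mu$ commutes with $t$; the unit axioms $\mu(x,e)=x=\mu(e,y)$ and $\mu(e,e)=e$ finish the computation. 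This gives $x\cdot y = t(x,e,y)$.

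Then I would deduce commutativity by a symmetric manipulation. Using the just-established formula together with the herd homomorphism property of $\mu$ once more, one computes $x\cdot y = t(x,e,y)$ and $y\cdot x = t(y,e,x)$, and I would show these agree by writing $x = t(e,e,x)$, $y = t(y,e,e)$ and expanding $t(x,e,y)=t(t(e,e,x),\,t(e,e,e),\,t(y,e,e))$ versus $t(y,e,x)=t(t(y,e,e),\,t(e,e,e),\,t(e,e,x))$ and invoking a Mal'cev identity — concretely, this is the standard fact that in any herd the operation $t$ satisfies the associativity/commutativity relations needed so that $(x,y)\mapsto t(x,e,y)$ is a commutative group operation with unit $e$ (this is the classical equivalence, going back to Mal'cev, between herds with a chosen ``constant'' and abelian groups). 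Alternatively, and perhaps more cleanly, I would run the genuine Eckmann--Hilton argument: the set $G$ carries two binary operations, $\mu$ and the operation $x\ast y := t(x,e,y)$, they share the unit $e$, and the herd-homomorphism property of $\mu$ (applied to suitable inputs involving $e$) is exactly the interchange law $\mu(x\ast y, z\ast w) = \mu(x,z)\ast\mu(y,w)$; Eckmann--Hilton then forces $\mu = \ast$ and both to be commutative and associative.

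The main obstacle is the bookkeeping in the middle step: making sure the interchange law one needs is genuinely a consequence of ``$\mu$ is a homomorphism of herds'' and not an extra hypothesis, and choosing the inputs to $t$ and $\mu$ so that the herd identities $t(x,x,y)=y$, $t(x,y,y)=x$ collapse everything correctly. Once the interchange law is identified in the right form, the Eckmann--Hilton argument is purely formal and yields both that $G$ is abelian and that its operation is $x\cdot y = t(x,e,y)$, as claimed.
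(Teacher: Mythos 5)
Your preferred argument --- define $x\ast y = t(x,e,y)$, note that it is unital, observe that the herd-homomorphism property of the multiplication yields the interchange law $(w\ast x)\cdot(y\ast z) = (w\cdot y)\ast(x\cdot z)$, and conclude by Eckmann--Hilton --- is exactly the paper's proof, and your direct computation $\mu(x,y)=\mu(t(x,e,e),t(e,e,y))=t(\mu(x,e),\mu(e,e),\mu(e,y))=t(x,e,y)$ is a correct (if redundant) way to see that the group operation is recovered from $t$ and $e$. One caution about your first route to commutativity: the classical Mal'cev/heap correspondence between herds with a chosen point and groups produces arbitrary groups, not abelian ones, so no identity of a bare herd will give $t(x,e,y)=t(y,e,x)$; the abelianness asserted in the lemma comes precisely from the interchange law supplied by the group-object structure, i.e.\ from the Eckmann--Hilton step that you do invoke in the end, so you should drop the appeal to ``herds with a constant are abelian groups'' and rely only on that step.
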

\begin{proof}
The operation $x\ast y = t(x,e,y)$ defines a unital operation on $G$ which commutes with the group operation:
\[
(w\ast x)\cdot (y\ast z) = t(w,e,x)\cdot t(y,e,z) = t(w\cdot y,e,x\cdot z) = (w\cdot y)\ast (x\cdot z).
\]
The lemma then follows from the Eckmann--Hilton argument.
\end{proof}

\begin{lemma}\label{lem:descendaction}
Let $\pcat$ be a Malcev pretheory, and suppose we are given a group object
\[
\Pi \in \Ab(\Model_\pcat^{\nb,\heartsuit})
\]
acting on a pointed object 
\[
(X,x_0) \in (\Model_\pcat^{\nb,\heartsuit})_{\ast/}.
\]
Write $i\colon \Pi \to X$ for the map determined the action of $\Pi$ on $x_0$. Then for any effective epimorphism $\alpha\colon X \to Y$ of nonbounded models, the image
\[
\Pi' = \operatorname{Im}\left(\Pi \xrightarrow{i} X \xrightarrow{\alpha} Y\right)
\]
inherits the structure of a group object acting on $Y$. This action is free provided that $Y$ satisfies the following grouplike condition:
\begin{enumerate}
\item[($\ast$)] Every $P \in \pcat$ admits a Malcev cooperation with the property that if $y_1,y_2\in Y(P)$, then
\[
t(\bs,y_1,y_2)\colon Y(P) \to Y(P)
\]
is an injection.
\end{enumerate}
\end{lemma}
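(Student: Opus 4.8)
\textbf{Proof plan for \cref{lem:descendaction}.}

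The plan is to work throughout in the abelian setting guaranteed by \cref{lem:eckmannhilton}: since $\Pi$ is a group object in the $1$-category of herds (being a discrete model of a Malcev pretheory), its multiplication is recovered from the unit and the Malcev operation as $a\cdot b = t(a,e,b)$, and this will make the transport of structure along $\alpha$ much more manageable. First I would describe the group object $\Pi'$. The action of $\Pi$ on $(X,x_0)$ is a map $\Pi\times X\to X$ over $X$, and restricting along $x_0$ gives $i\colon\Pi\to X$; composing with the effective epimorphism $\alpha$ gives $\Pi\to Y$, and $\Pi'$ is defined as its image in the sense of the (epi, mono) factorization system on nonbounded models. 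To see that $\Pi'$ inherits a group structure, I would argue that the image of a group object under a homomorphism whose source is a group object is again a group object: the multiplication on $\Pi$ descends to $\Pi'$ because the relevant square built from $\Pi\times\Pi\to\Pi\to\Pi'$ factors through $\Pi'\times\Pi'\to\Pi'$, using that effective epimorphisms are preserved by finite products in an $\infty$-topos (hence in $\Model^{\nb}_\pcat$) and that $\Pi'\hookrightarrow Y$ is a monomorphism. Concretely, $\Pi'$ is the coequalizer/image, and the operation $t(\bs,e',\bs)$ on $Y$ restricts to it.

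Next I would transport the action. The action map $a\colon\Pi\times X\to X$ together with $\alpha\colon X\to Y$ and $\Pi\to\Pi'$ assemble into a diagram, and I want to produce $a'\colon\Pi'\times Y\to Y$ making everything commute. The key point is again that $\Pi\times X\to\Pi'\times Y$ is an effective epimorphism (product of two effective epimorphisms), so to descend $\alpha\circ a\colon\Pi\times X\to Y$ along it, it suffices to check that $\alpha\circ a$ coequalizes the Čech kernel pair of $\Pi\times X\to\Pi'\times Y$; this is where I expect to invoke \cref{recollection:action_of_loop_space_on_fibre} or a direct levelwise computation, noting that two elements of $\Pi\times X$ with the same image in $\Pi'\times Y$ differ by an element of the kernel of $\Pi\to\Pi'$ acting trivially enough on $Y$ after applying $\alpha$. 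One then checks the action axioms (unitality, associativity) for $a'$ hold because they hold for $a$ and $\alpha\times\cdots\times\alpha$ is an effective epimorphism, so the axioms can be verified after pulling back along a surjection. All of this is most cleanly done levelwise, evaluating on $P\in\pcat$ and using that effective epimorphisms, images, and the Malcev operation are all computed levelwise in spaces, reducing to the corresponding (elementary, if slightly fiddly) statements about herds.

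Finally, freeness under hypothesis $(\ast)$. Freeness of the $\Pi'$-action on $Y$ means the map $\Pi'\times Y\to Y\times Y$, $(g,y)\mapsto(g\cdot y, y)$, is a monomorphism; equivalently, on each $P\in\pcat$, if $g_1\cdot y = g_2\cdot y$ for $g_1,g_2\in\Pi'(P)$, $y\in Y(P)$, then $g_1 = g_2$. Here is where $(\ast)$ enters: writing the action in terms of the Malcev cooperation, $g\cdot y$ can be expressed via $t$, and the injectivity of $t(\bs, y_1, y_2)\colon Y(P)\to Y(P)$ lets one cancel $y$ from $g_1\cdot y = g_2\cdot y$ to conclude $g_1 = g_2$ inside $Y(P)$, hence inside $\Pi'(P)$ since $\Pi'\hookrightarrow Y$ is mono. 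I expect \textbf{the main obstacle} to be exactly this last cancellation step: one must carefully match the abstract ``$\Pi'$ acts on $Y$'' structure produced by descent with an explicit formula in terms of $t$, so that hypothesis $(\ast)$ can be applied. The identification $g\cdot y = t(g, x_0', y)$-type formula (with $x_0'$ the image of $x_0$) should follow from tracing through the construction of the action of a loop space on a fibre in \cref{recollection:action_of_loop_space_on_fibre} and \cref{lem:eckmannhilton}, but pinning down the bookkeeping is the delicate part; everything else is a routine application of the fact that effective epimorphisms in an $\infty$-topos are preserved by finite products and detect equalities of maps into separated-enough objects.
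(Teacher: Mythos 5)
Your proposal is correct and takes essentially the same route as the paper's proof: work levelwise, use the Eckmann--Hilton identification $a\cdot b = t(a,e,b)$ from \cref{lem:eckmannhilton}, establish the key formula $a\cdot x = t(i(a),x_0,x)$ (which makes both the multiplication and the action descend along $\alpha$ because $\alpha$ commutes with $t$), and use hypothesis $(\ast)$ to cancel for freeness. One small caution: the descent of the multiplication to $\Pi'$ is not a formal ``image of a group object under a homomorphism'' fact, since $Y$ is not a group object and $\alpha\circ i$ is not a group map; it is exactly the $t$-compatibility computation you already flag, so your identified ``main obstacle'' is indeed where all the content lies.
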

\begin{proof}
Abbreviate $\beta = \alpha \circ i$. First we show that $\Pi'$ is a quotient group of $\Pi$. This is a pointwise statement: we must show that if $P \in \pcat$ and $a,a',b,b'\in \Pi(P)$ satisfy $\beta (a) =\beta (a')$ and $\beta (b) = \beta (b')$, then $\beta (ab) = \beta (a'b')$. Indeed, if we choose a Malcev cooperation on $P$, then by \cref{lem:eckmannhilton} we may compute
\[
\beta (ab) = \beta (t(a,e,b)) = t(\beta(a),\beta (e),\beta (b)) = t(\beta (a'),\beta (e),\beta (b')) = \beta (a'b').
\]

Next we show that the action of $\Pi$ on $X$ descends to an action of $\Pi'$ on $Y$. This is again a pointwise statement: we must show that if $P \in \pcat$ and $a,a' \in X(P)$ and $x,x' \in Y(P)$, then $\alpha(a\cdot x) = \alpha(a'\cdot x')$. Indeed, observe that in general
\[
a\cdot x = t(a,e,e)\cdot t(x_0,x_0,x) = t(a\cdot x_0,e\cdot x_0,e\cdot x) = t(a\cdot x_0,x_0,x).
\]
It follows that
\[
\alpha(a\cdot x) = \alpha(t(i(a),x_0,x)) = t(\beta(a),\alpha(x_0),\alpha(x)) = t(\beta(a'),\alpha(x_0),\alpha(x')) = \alpha(a'\cdot x').
\]

Finally we show that if $(\ast)$ is satisfied, then the action of $\Pi'$ on $Y$ is free. We must show that if $P\in \pcat$ and $\ol{a} \in \Pi'(P)$ and $\ol{x} \in Y(P)$ satisfy $\ol{a}\cdot \ol{x} = \ol{x}$, then $\ol{a} = e$. Lift $\ol{a}$ and $\ol{x}$ to elements $a\in \Pi(P)$ and $x \in X(P)$. Then we want to show $\beta(a) = \beta(e)$. As
\[
t(\beta(a),\alpha(x_0),\alpha(x)) = \beta(a\cdot x) = \beta(x) = t(\beta(e),\alpha(x_0),\alpha(x)),
\]
this follows from the assumption that we can arrange for
\[
t(\bs,\alpha(x_0),\alpha(x))\colon Y(P) \to Y(P)
\]
to be injective.
\end{proof}

\begin{prop}\label{prop:nullattach}
Suppose we are given a cocartesian square
\begin{center}\begin{tikzcd}
S^{n-1}\ar[r,"\alpha"]\ar[d]&K\ar[d]\\
\ast\ar[r]&K'
\end{tikzcd}\end{center}
for some $n\geq 1$. Suppose that $K$ is admissible and that $\alpha$ is $\Sigma$-null. Then $K'$ is admissible.
\end{prop}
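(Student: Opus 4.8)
The plan is to fix an arbitrary discrete coproduct-preserving modifier $\Phi$ equipped with an effective epimorphism $\map(K',\bs)\to\Phi$ and an arbitrary Malcev theory $\pcat$, and to show that $\pcat_\Phi\to\h\pcat$ reflects connectivity; this is exactly what admissibility of $K'$ demands. I would mimic \cref{ex:pireflect}, but working ``relative to $K$'' in place of a point. Since every object of $\ukanspaces$ is supersimple and $\alpha$ is $\Sigma$-null, \cref{lem:sigmanull} shows that for every $X$ the map $\alpha^{*}\colon\map(K,X)\to\map(S^{n-1},X)$ factors through the sub-bundle of components containing the constant maps. Together with \cref{constr:coact} this realizes $\map(K',\bs)\to\map(K,\bs)$ as a principal bundle for the group modifier $\map(S^{n},\bs)\simeq\Omega_{\id}\map(S^{n-1},\bs)$, with homotopy orbits $\map(K',\bs)_{h\map(S^{n},\bs)}\simeq\map(K,\bs)$. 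Passing to path components, and using \cref{lem:eckmannhilton} to see that $\Gamma\colonequals\tau_{\leq 0}\map(S^{n},\bs)$ is abelian, I obtain an action of the discrete abelian group modifier $\Gamma$ on the discrete modifier $Y\colonequals\tau_{\leq 0}\map(K',\bs)$ whose strict orbit modifier is $Y/\Gamma\simeq\tau_{\leq 0}\map(K,\bs)$.

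Next I would push this structure forward along $\Phi$. Since $\Phi$ is discrete, the given effective epimorphism factors through an effective epimorphism $p\colon Y\to\Phi$, and pointing $Y$ and $\Phi$ at the images of the constant maps makes $p$ a morphism of pointed discrete modifiers. Applying \cref{lem:descendaction} to $\Gamma$ acting on $Y$ and to $p$ yields a discrete abelian group modifier $N\colonequals\operatorname{Im}\bigl(\Gamma\xrightarrow{i}Y\xrightarrow{p}\Phi\bigr)$, the image of the connecting map, which acts on $\Phi$; the action is free once one checks condition $(\ast)$ of \cref{lem:descendaction}, which holds here since heap translations in a herd are bijective. As $p$ is equivariant it descends to an effective epimorphism $\tau_{\leq 0}\map(K,\bs)\simeq Y/\Gamma\to\Phi/N$, so $\Phi/N$ is again a discrete coproduct-preserving modifier admitting an effective epimorphism from $\map(K,\bs)$; hence $\pcat_{\Phi/N}\to\h\pcat$ reflects connectivity by the admissibility of $K$.

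Since $N$ acts freely on $\Phi$, the quotient $\Phi\to\Phi/N$ is classified by a map $k$ fitting into a cartesian square of modifiers
\[
\begin{tikzcd}
\Phi\ar[r]\ar[d]&\tau_{\leq 0}\ar[d]\ar[dr,equals]\\
\Phi/N\ar[r,"k"]&B_{\tau_{\leq 0}}N\ar[r]&\tau_{\leq 0}
\end{tikzcd}
\]
in which the right vertical map is the zero section, a levelwise effective epimorphism admitting a retraction. Applying the modification functor and invoking \cref{prop:modifierlimitses} --- whose hypotheses are readily verified, the cospan being connected, each of $\Phi/N$, $B_{\tau_{\leq 0}}N$, $\tau_{\leq 0}$ admitting a natural transformation to $\tau_{\leq 0}$, and $\tau_{\leq 0}\to B_{\tau_{\leq 0}}N$ being a levelwise isomorphism on $\pi_{0}$ --- produces a cartesian square of Malcev theories with $\pcat_\Phi$ and $\h\pcat$ on top and $\pcat_{\Phi/N}$ and $\pcat_{B_{\tau_{\leq 0}}N}$ on the bottom, whose right vertical map $i\colon\h\pcat\to\pcat_{B_{\tau_{\leq 0}}N}$ is full (it is $\pi_{0}$-surjective on mapping spaces, since $B_{\tau_{\leq 0}}N$ applied to a discrete space is connected over each point) and admits a retraction. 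By \cref{ex:retractionthickening} the map $i$ reflects connectivity, so \cref{prop:thickeningcartesian} shows that $\pcat_\Phi\to\pcat_{\Phi/N}$ is full and reflects connectivity; composing with $\pcat_{\Phi/N}\to\h\pcat$ via \cref{lem:composethickening} gives that $\pcat_\Phi\to\h\pcat$ reflects connectivity. As $\Phi$ was arbitrary, this proves $K'$ admissible.

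The hard part will be the bookkeeping: checking that the auxiliary modifiers $N$, $\Phi/N$ and $B_{\tau_{\leq 0}}N$ are honest modifiers (preservation of small products, and of coproducts for $\Phi/N$) and that the hypotheses of \cref{prop:modifierlimitses} hold --- here one cannot appeal to \cref{prop:properties_of_elementary_modifiers}.(5), as $\Phi$ need not be elementary --- together with the verification of freeness in \cref{lem:descendaction}. If freeness turns out to be inconvenient, one may instead use the homotopy quotient $\Phi_{hN}$ (which is $1$-truncated) in place of $\Phi/N$, run the same argument to get that $\pcat_\Phi\to\pcat_{\Phi_{hN}}$ reflects connectivity, and then bridge from $\pcat_{\Phi_{hN}}$ to $\pcat_{\tau_{\leq 0}\Phi_{hN}}=\pcat_{\Phi/N}$ using \cref{prop:basichomologywhitehead}. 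The only genuinely non-formal input is the identification, via \cref{constr:coact} and the $\Sigma$-nullity of $\alpha$, of $\map(K',\bs)\to\map(K,\bs)$ as a principal $\map(S^{n},\bs)$-bundle; the rest is a diagram chase on lemmas already established.
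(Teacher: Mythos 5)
Your proposal is correct and is essentially the paper's own argument: descend the $\Pi_n$-type action along the effective epimorphism via \cref{lem:descendaction}, pass to the quotient, observe that it receives an effective epimorphism from $\tau_{\leq 0}\map(K,\bs)$ so that admissibility of $K$ applies, and conclude from the resulting principal-bundle square of modifiers via \cref{prop:modifierlimitses}, \cref{prop:thickeningcartesian}, \cref{ex:retractionthickening} and \cref{lem:composethickening}. The only deviations are minor and harmless: you use \cref{lem:sigmanull} to see that every map $K \to X$ extends to $K'$ and hence to identify the strict quotient with $\tau_{\leq 0}\map(K,\bs)$, where the paper instead (also) proves monicity of $\Pi_n \to \tau_{\leq 0}\map(K',\bs)$ from the Puppe sequence — both being the point where $\Sigma$-nullity of $\alpha$ enters — and your one-line justification of condition $(\ast)$ of \cref{lem:descendaction} (``heap translations are bijective'') should, as in the paper, go through the reduction to connected supersimple spaces and their grouplike $H$-space structures, since translations for an arbitrary Malcev operation need not be injective.
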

\begin{proof}
For a space $T$, let us abbreviate $\tau_{\leq 0}\map(T,\bs) = [T,\bs]$. By \cite[Corollary 2.2.15.(2)]{usd1}, the natural map
\[
\tau_{\leq 0} \left(\map(S^n,X)\times_X \map(K',X)\right) \to [S^n,X]\times_{\tau_{\leq 0} X}[K',X]
\]
is a bijection for any supersimple space $X$. Therefore the action of $\map(S^n,X)$ on $\map(K',X)$ over $X$ passes to an action of $\Pi_n X = [S^n,X]$ on $[K',X]$ over $\tau_{\leq 0}X$. This action is natural in $X$, i.e.\ defines an action of the modifier $\Pi_n$ on $[K',\bs]$ over $\tau_{\leq 0}$.

Consider a discrete coproduct-preserving modifier $F$ equipped with an effective epimorphism $[K',\bs] \to F$. By definition, the $\infty$-category of modifiers is the $\infty$-category of nonbounded models for a Malcev pretheory:
\[
\malcevmodifiers\simeq \Model_{\ukanspaces^\op}^{\nb}.
\]
By \cite[Proposition 3.2.1]{usd1}, the same is true of the slice category $\malcevmodifiers_{/\tau_{\leq 0}}$. The map $\ast \to K'$ equips $K'$ with a basepoint, and therefore makes $[K',\bs]$ into a pointed object of $\malcevmodifiers_{/\tau_{\leq 0}}^\heartsuit$. Hence by \cref{lem:descendaction} the image
\[
\Pi' = \operatorname{Image}\left(\Pi_n \to [K',\bs] \to F\right)
\]
inherits the structure of a group acting on $F$.

We claim that the actions of $\Pi_n$ on $[K',\bs]$ and of $\Pi'$ on $F$ are free. To that end, we claim that both $[K',\bs]$ and $F$ satisfy condition $(\ast)$ of \cref{lem:descendaction}, and that the map $\Pi_n \to [K',\bs]$ is monic. 

We first show that $F$ satisfies condition $(\ast)$. The free objects of the Malcev pretheory defining the slice category $\End_\sigma(\ukanspaces)_{/\tau_{\leq 0}}$ can be identified as pairs consisting of a supersimple space $X$ and point $x \in \tau_{\leq 0}X$. The value of $F \in \End_\sigma(\ukanspaces)_{/\tau_{\leq 0}}$ on this pair is given by
\[
F(X,x) = F(X) \times_{\tau_{\leq 0} X} \{x\}.
\]
As $F$ preserves coproducts, this is equivalent to the evaluation of $F$ at the path component of $x\in X$. We therefore reduce to showing that every connected supersimple space admits a Malcev operation $t$ with the property that if $y_1,y_2\in F(X)$ then
\[
t(\bs,y_1,y_2)\colon F(X) \to F(X)
\]
is an injection. Indeed, as $X$ is a connected supersimple space, it admits a grouplike $H$-space structure. Hence the Malcev operation on $F(X)$ again comes from a grouplike $H$-space structure, and such Malcev operations have this property. The same argument applies with $[K',\bs]$ in place of $F$, and so both $[K',\bs]$ and $F$ satisfy condition $(\ast)$ of \cref{lem:descendaction}.

We next show that the map $\Pi_n \to [K',\bs]$ is monic. In other words, we must show that if $X$ is any supersimple space, then the map $\Pi_n X \to [K',X]$ is monic. As both sides preserve coproducts in $X$, we may reduce to the case where $X$ is connected, and so admits a necessarily grouplike $H$-space structure by \cite[Theorem 2.2.13]{usd1}. In particular this makes $\Pi_n X \to [K',X]$ into a homomorphism of groups, and to show that it is monic it suffices to show that it has trivial kernel. By the cofiber sequence $K' \to S^n \to \Sigma K$, the kernel of this homomorphism consists of those maps $S^n \to X$ which extend along $S^n \to \Sigma K$. As $S^n \to \Sigma K$ is, up to orientation, the suspension of the $\Sigma$-null map $\alpha\colon S^{n-1}\to K$, it is nullhomotopic by assumption. Therefore any map which extends through it is zero as needed.

As $\Pi'$ acts freely on $F$, the quotient map $F \to F/\Pi'$ is a principle $\Pi'$-bundle over $\tau_{\leq 0}$, and by construction this map is compatible with $[K',\bs] \to [K,\bs]$ which is similarly a principle $\Pi_n$-bundle. In other words, we have a commutative diagram of the form
\begin{center}\begin{tikzcd}
\Pi_n\ar[rr]\ar[dr]\ar[dd]&&{[K',\bs]}\ar[rr]\ar[dd]\ar[dr]&&\tau_{\leq 0}\ar[dr]\ar[dd]\\
&\Pi'\ar[rr]\ar[dd]&&F\ar[rr]\ar[dd]&&\tau_{\leq 0}\ar[dd]\\
\tau_{\leq 0}\ar[rr]\ar[dr]&&{[K,\bs]}\ar[rr]\ar[dr]&&B_{\tau_{\leq 0}}\Pi_n\ar[dr]\\
&\tau_{\leq 0}\ar[rr]&&F/\Pi'\ar[rr]&&B_{\tau_{\leq 0}}\Pi'
\end{tikzcd}\end{center}
in which the front and back faces are cartesian and the diagonal maps are effective epimorphisms. By \cref{prop:thickeningcartesian}, to show that $F \to \tau_{\leq 0}$ reflects connectivity it suffices to show that $F/\Pi' \to B_{\tau_{\leq 0}}\Pi'$ reflects connectivity. By \cref{cor:hthickening}, it suffices to show that $F/\Pi' \to \tau_{\leq 0}$ reflects connectivity, i.e.\ that $F/\Pi'$ is admissible. As $[K,\bs] \to F/\Pi'$ is an effective epimorphism, this holds as $K$ is admissible by assumption.
\end{proof}

We can now give the following.

\begin{proof}[Proof of \cref{prop:discretereflect}]
It suffices to show that every $T\in\spheresandmore$ is admissible. As $T = \ast$ is clearly admissible, this follows by combining \cref{prop:nullattach} with \cref{lem:ssnull}.
\end{proof}

\subsection{Convergence of the spiral tower}

We can now give the following.

\begin{theorem}
\label{thm:modelspiral}
Let $\pcat$ be a Malcev theory. Then
\[
\elementarymodifiers\to\catinfty,\qquad F \mapsto \Model_{\pcat_F}
\]
defines a convergent spiral system of $\infty$-categories. In particular, the spiral tower of $\pcat$ converges:
\[
\Model_\pcat\simeq\lim_{n\to\infty}\Model_{\h_n\pcat}.
\]
\end{theorem}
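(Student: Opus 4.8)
\section*{Proof proposal}

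The plan is to check the two clauses of \cref{definition:spiral_system}: that $H\colon F \mapsto \Model_{\pcat_F}$ carries levelwise pullbacks along levelwise effective epimorphisms to cartesian squares, and that the resulting functor is convergent (the ``in particular'' then being immediate, as $H(\id)=\Model_\pcat$ and $H(\tau_{<n})=\Model_{\h_n\pcat}$).

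For the spiral-system clause, suppose given a levelwise cartesian square of elementary modifiers in which one leg $\eta$ is a levelwise effective epimorphism; its pullback $\eta'$ is then also a levelwise effective epimorphism. By \cref{prop:properties_of_elementary_modifiers}.(5), modifying $\pcat$ along this square produces a cartesian square of Malcev theories and homomorphisms in which $\pi_\eta$ is locally $0$-connective, hence full, and $\pi_{\eta'}$ is essentially surjective (it is a section-after-$\pi_{F'}$ of an essentially surjective functor). By \cref{thm:homologywhitehead}, $\pi_\eta$ reflects connectivity, so hypothesis (1) of \cref{thm:animatesquaregeneral} holds, and the associated square of $\infty$-categories of models is again cartesian. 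Thus $H$ is a spiral system; this also re-derives \cref{proposition:spiral_squares_of_a_malcev_theory_are_cartesian}.

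For convergence, I would argue that the comparison functor
\[
\Phi\colon \Model_\pcat \longrightarrow \lim_{n\to\infty}\Model_{\h_n\pcat},\qquad X \longmapsto (\tau_{n!}X)_n,
\]
is an equivalence. Since $\tau_{n!}$ is left adjoint to the restriction functor $\tau_n^\ast$, the functor $\Phi$ admits a right adjoint $\Psi$ sending $\underline Y=(Y_n)_n$ to $\lim_n \tau_n^\ast Y_n$, the limit taken in $\Model_\pcat$. The unit $X \to \Psi\Phi X = \lim_n \tau_n^\ast\tau_{n!}X$ is the convergence map of the derived Postnikov tower of $X$, an equivalence by \cref{thm:spiralmodel} (concretely, $X\to\tau_n^\ast\tau_{n!}X$ is $(n-1)$-connective by \cref{lem:localconnectedunit}), so $\Phi$ is fully faithful. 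It remains to see that the counit $\Phi\Psi \to \id$ is an equivalence. Fix $\underline Y$. Since $\tau_{(m+1,m)}\colon \h_{m+1}\pcat \to \h_m\pcat$ acts on mapping spaces by the $m$-connective truncation map $\tau_{<m+1}(\bs)\to\tau_{<m}(\bs)$, it is locally $m$-connective, so \cref{lem:localconnectedunit}.(1) makes the transition maps of the tower $(\tau_m^\ast Y_m)_m$ into $m$-connective maps; a Milnor-sequence estimate (the relevant $\varprojlim^1$ terms vanish, as the towers of homotopy groups are Mittag--Leffler) then shows $\Psi\underline Y \to \tau_m^\ast Y_m$ is $m$-connective. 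Applying the connectivity-preserving functor $\tau_{m!}$ \cite[Proposition 6.2.1]{usd1} and composing with the $m$-connective counit $\tau_{m!}\tau_m^\ast Y_m \to Y_m$ of \cref{lem:localconnectedunit}.(2), the $m$-th component $\varepsilon_m\colon \tau_{m!}\Psi\underline Y \to Y_m$ of the counit is $m$-connective. As a morphism of $\lim_n\Model_{\h_n\pcat}$, the counit is a compatible family, so $\varepsilon_k \simeq \tau_{(m,k)!}\varepsilon_m$ for $k \leq m$; as $\tau_{(m,k)!}$ preserves connectivity, $\varepsilon_k$ is $m$-connective for every $m \geq k$, hence an equivalence. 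Therefore $\Phi$ and $\Psi$ are mutually inverse.

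The genuine difficulty of \cref{thm:modelspiral} is already discharged into \cref{thm:homologywhitehead} (and the clutching theorem \cref{thm:animatesquaregeneral}); granting those, the spiral-system clause is a short assembly. Within the present argument the delicate point is the convergence step, and specifically the connectivity bookkeeping: one must track how the increasing connectivities of the transition maps of the tower $(\tau_m^\ast Y_m)_m$ pass to the inverse limit (the Milnor/Mittag--Leffler estimate above), and then run the bootstrap that upgrades the $m$-connective counit components $\varepsilon_m$ to equivalences by pushing them forward along $\tau_{(m,k)!}$.
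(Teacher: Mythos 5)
Your proposal is correct, and its first half is essentially the paper's: the spiral-system clause is verified exactly as in the paper, by feeding the full, connectivity-reflecting homomorphism produced by \cref{prop:properties_of_elementary_modifiers}.(5) and \cref{thm:homologywhitehead} into \cref{thm:animatesquaregeneral}. The convergence clause also starts the same way (right adjoint $\Psi$ computed as $\underline{Y}\mapsto\lim_n\tau_n^\ast Y_n$, unit an equivalence by \cref{thm:spiralmodel}), but the endgame differs. The paper instead proves that the right adjoint is \emph{conservative}: from the estimate that $R(\underline{Y})\to\tau_n^\ast Y_n$ is (at least) $(n-2)$-connective it deduces that each component of an arrow inverted by $R$ is highly connective, concludes that the $\h_1$-component is an equivalence, and then upgrades all components to equivalences by invoking \cref{prop:basichomologywhitehead} (conservativity of $\tau_{(n,1)!}$). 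You instead show the counit is an equivalence directly: you identify $\varepsilon_m$ as the composite of $\tau_{m!}$ applied to the projection $\Psi\underline{Y}\to\tau_m^\ast Y_m$ with the counit of $\tau_{m!}\dashv\tau_m^\ast$, and bootstrap via $\varepsilon_k\simeq\tau_{(m,k)!}\varepsilon_m$ together with preservation of connectivity by derived functors, so \cref{prop:basichomologywhitehead} is not needed at this step. Both arguments hinge on the same connectivity estimate for the limit of the tower $(\tau_m^\ast Y_m)_m$; your Milnor/Mittag--Leffler refinement to exact $m$-connectivity is sharper than necessary (the paper's coarser $(n-2)$-connectivity, which needs no $\varprojlim^1$ discussion, already suffices), and the precise index may shift by one depending on the connectivity convention for truncation maps, which is harmless. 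Two small points you should make explicit, though both are standard and also glossed in the paper: the componentwise formula for the counit of the composite adjunction, and the fact that a morphism in a limit of $\infty$-categories is an equivalence if and only if each of its components is.
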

\begin{proof}
We first verify that this defines a spiral system. As in the proof of \cref{thm:spiralmodel}, a levelwise pullback of elementary modifiers along a levelwise effective epimorphism induces a pullback of Malcev theories along a full homomorphism. This homomorphism reflects connectivity by \cref{thm:homologywhitehead}, and therefore the corresponding square of $\infty$-categories of models is cartesian by \cref{thm:animatesquaregeneral}.

We next establish convergence. We must prove that the comparison functor
\[
L \colon \Model_{\pcat} \rightarrow \lim_{n\to\infty}\Model_{\h_n\pcat}
\]
is an equivalence. This is a diagram of left adjoints, and so admits a right adjoint $R$. We first claim that the unit map $X \to RL X$ is an equivalence for any $X \in \Model_\pcat$. This unit map can be identified with the comparison map $X \rightarrow \lim_{n \to \infty} \tau_{n}^{*} \tau_{n!} X$, which is an equivalence by \cref{thm:spiralmodel}.

To prove that $L$ is an equivalence it is now enough to verify that $R$ is conservative. An object $X$ of the limit $\infty$-category can be identified with a family of models $X_{n} \in \Model_{h_{n} \pcat}$ together with equivalences $\tau_{(n+1, n)!} X_{n+1} \simeq X_{n}$, in which case 
\[
R(X) = \lim_{n\to\infty} \tau_n^\ast X_n.
\]
For any $m\geq n$, the map 
\[
X_m \to \tau_{(m,n)}^\ast \tau_{(m,n)!}X_m \simeq \tau_{(m,n)}^\ast X_n
\]
is $(n-1)$-connective by \cref{lem:localconnectedunit}. Therefore
\[
\tau_m^\ast X_m \to \tau_n^\ast X_n
\]
is $(n-1)$-connective, and hence in the limit $R(X) \to \tau_n^\ast X_n$ is at least $(n-2)$-connective. An arrow $f$ in the limit $\infty$-category can be identified with a compatible family of arrows $f_{n} \colon X_{n} \rightarrow Y_{n}$. If $R(f)$ is an equivalence, then it follows that $X_n \to Y_n$ is $(n-2)$-connective for each $n$. As $f_1 = \tau_{(n,1)!}f_n$, it follows that $f_1\colon X_1 \to Y_1$ is $(n-2)$-connective. As $n$ was arbitrary, it follows that $X_1 \to Y_1$ is an equivalence. By \cref{prop:basichomologywhitehead} applied to $\h_n\pcat$, it follows that $X_n \to Y_n$ is an equivalence, and as $n$ was arbitrary it follows that $f$ is an equivalence.
\end{proof}

\section{Deformation theory}\label{sec:modulispaces}

Following \cref{thm:modelspiral} and \cref{prop:spiralsquares}, associated to every Malcev theory $\pcat$ and integers $1 \leq r \leq n < \infty$ are cartesian squares
\begin{center}\begin{tikzcd}
\h_{n+r}\pcat\ar[r,"\tau_{(n+r,r)}"]\ar[d,"\tau_{(n+r,n)}"]&\h_r\pcat\ar[d,"0"]\\
\h_n\pcat\ar[r,"k"]&\kinv_{n,r}^{n+1}\pcat
\end{tikzcd}
$\qquad\leadsto\qquad$
\begin{tikzcd}
\Model_{\h_{n+r}\pcat}\ar[r,"\tau_{(n+r,r)!}"]\ar[d,"\tau_{(n+r,n)!}"]&\Model_{\h_r\pcat}\ar[d,"0_!"]\\
\Model_{\h_n\pcat}\ar[r,"k_!"]&\Model_{\kinv_{n,r}^{n+1}\pcat}
\end{tikzcd},\end{center}
where both $0$ and $0_!$ are the zero-sections of a suitable $\thesphere_{<r}$-module object. Our goal in this section is to situate these squares in the more general context of \emph{linear extensions} of Malcev theories and $\infty$-categories, generalizing the classical theory of square-zero extensions of rings. 

In particular, a linear extension $f\colon \qcat\to\pcat$ of Malcev theories can be thought of as a \emph{first-order} deformation. We explain how this leads to an obstruction theory for lifting along the derived functor $f_!\colon \Model_\qcat\to\Model_\pcat$, obtaining the examples given in the introduction in \S\ref{ssec:applications} as a special case.

\subsection{Loop spaces over categories}

We begin with some general observations about loop space objects in slice categories of $\catinfty$.

\begin{notation}
Given a functor $q_!\colon \kcat\to\hcat$ of $\infty$-categories, objects $X,Y \in \kcat$, and a map $\phi\colon q_!X \to q_!Y$ in $\hcat$, we write
\[
\map_\kcat(X,Y)_\phi = \{\phi\}\times_{\map_\hcat(q_!X,q_!Y)}\map_\kcat(X,Y)
\]
for the space of lifts of $\phi$ to a map $f\colon X \to Y$ with specified homotopy $q_!f\simeq \phi$.
\end{notation}

\begin{ex}\label{rmk:fibreoveradjunction}
Suppose that $q_!\colon \kcat\to\hcat$ admits a section $0_!\colon \hcat\to\kcat$ which admits a right adjoint $0^\ast\colon \kcat\to\hcat$. If $\Lambda \in \hcat$, then the map
\[
0^\ast 0_! \Lambda \simeq q_! 0_! 0^\ast 0_! \xrightarrow{\epsilon}q_! 0_! \Lambda\simeq\Lambda
\]
defines an object $0^\ast 0_! \Lambda \in \hcat_{/\Lambda}$ for which, if $\phi\colon \Gamma\to\Lambda$ is any map in $\hcat$, then
\[
\map_\kcat(0_!\Gamma,0_!\Lambda)_\phi\simeq\map_{\hcat/\Lambda}(\Gamma,0^\ast 0_! \Lambda).
\] 
\end{ex}

\begin{prop}
\label{lem:loopcats}
Fix $\hcat\in\catinfty$ and $\bcat \in (\catinfty)_{\hcat//\hcat}$, and set $\kcat = \Omega_\hcat\bcat \in (\catinfty)_{\hcat//\hcat}$.
\begin{enumerate}
\item Both $0_!\colon \hcat\to\bcat$ and $q_!\colon \kcat\to\hcat$ are conservative.
\item For $\Lambda \in \hcat$, there is an equivalence
\[
\{\Lambda\}\times_{\hcat}\kcat\simeq \{\id_\Lambda\}\times_{\aut_\hcat(\Lambda)}\aut_\bcat(0_!\Lambda).
\]
\item If $p_!\colon \bcat\to\hcat$ is also conservative, such as if $\bcat$ admits a further delooping, then
\[
\{\Lambda\}\times_{\hcat}\kcat \simeq \map_\bcat(0_!\Lambda,0_!\Lambda)_{\id_\Lambda}.
\]
\item For any map $\phi\colon\Gamma\to\Lambda$ in $\hcat$, there is an equivalence
\[
\map_\kcat(0_!\Gamma,0_!\Lambda)_\phi\simeq \{\phi\}\times_{\hcat^{[1]}}\kcat^{[1]}.
\]
\end{enumerate}
\end{prop}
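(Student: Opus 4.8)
The plan is to reduce each item to a standard fact about loop objects in an $\infty$-category, applied to the slice $(\catinfty)_{/\hcat}$, and then to unwind the various mapping-space descriptions. Throughout, recall that $\kcat = \Omega_\hcat\bcat$ means $\kcat \simeq \hcat\times_\bcat\hcat$, the pullback taken along the two copies of the structure section $0_!\colon \hcat\to\bcat$, with its canonical maps $0_!\colon \hcat\to\kcat$ and $q_!\colon \kcat\to\hcat$ (the latter being either projection, which agree up to the canonical homotopy).

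First I would handle (1). Since $q_!\colon \kcat\to\hcat$ is one of the projections out of the pullback $\hcat\times_\bcat\hcat$, and the other projection composed with $0_!\colon\hcat\to\bcat$ agrees with $q_!$ followed by $0_!$, a morphism $f$ in $\kcat$ with $q_!f$ an equivalence has both components equivalences after applying $0_!$; as $0_!\colon\hcat\to\bcat$ is a section of $p_!$ hence conservative, both components of $f$ are equivalences, so $f$ is. That $0_!\colon\hcat\to\bcat$ is conservative is immediate from it being a section. For (2), note that for any $\Lambda\in\hcat$ the fibre $\{\Lambda\}\times_\hcat\kcat$ is, by definition of the pullback, the fibre of $0_!\colon\hcat\to\bcat$ over $0_!\Lambda$ intersected with itself — more precisely it is $\{0_!\Lambda\}\times_\bcat\{0_!\Lambda\}$ formed in the appropriate sense, i.e.\ the loop space $\Omega_{0_!\Lambda}(\{\Lambda\}\times_\hcat\bcat)$. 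Passing to cores and using that in an $\infty$-category $\ccat$ one has $\Omega_c\ccat \simeq \{\id_c\}\times_{\map_\ccat(c,c)}\map_\ccat(c,c)$ with the relevant component identified via the automorphism space, one gets $\{\Lambda\}\times_\hcat\kcat \simeq \{\id_\Lambda\}\times_{\aut_\hcat(\Lambda)}\aut_\bcat(0_!\Lambda)$, where the map $\aut_\bcat(0_!\Lambda)\to\aut_\hcat(\Lambda)$ is induced by $p_!$ (or rather by the retraction data, since $0_!$ need not have an adjoint here). Item (3) then follows formally: if $p_!$ is conservative, then $\aut_\bcat(0_!\Lambda) = \map_\bcat(0_!\Lambda,0_!\Lambda)_{\id_\Lambda}\times_{\{\id_\Lambda\}}\aut_\hcat(\Lambda)$ (a component of $\map_\bcat(0_!\Lambda,0_!\Lambda)$ is invertible iff its image in $\map_\hcat(\Lambda,\Lambda)$ is), so the pullback in (2) collapses to $\map_\bcat(0_!\Lambda,0_!\Lambda)_{\id_\Lambda}$. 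The hypothesis that $\bcat$ admits a further delooping forces $p_!$ to be conservative by the same argument as (1).

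For (4), the idea is to apply (2), or rather its unpointed variant, in the arrow category. The functor $(\bs)^{[1]}\colon\catinfty\to\catinfty$ preserves limits, so $\kcat^{[1]}\simeq \Omega_{\hcat^{[1]}}\bcat^{[1]}$ in $(\catinfty)_{\hcat^{[1]}//\hcat^{[1]}}$. Given $\phi\colon\Gamma\to\Lambda$, regarded as an object of $\hcat^{[1]}$, applying the fibre-identification to this object of the arrow category gives $\{\phi\}\times_{\hcat^{[1]}}\kcat^{[1]} \simeq \Omega_{0_!\phi}(\{\phi\}\times_{\hcat^{[1]}}\bcat^{[1]})$, which one then identifies with the space of self-homotopies of the morphism $0_!\phi\colon 0_!\Gamma\to 0_!\Lambda$ in $\bcat$ rel $\phi$ in $\hcat$ — equivalently with the fibre over $\phi$ of $q_!\colon\map_\kcat(0_!\Gamma,0_!\Lambda)\to\map_\hcat(\Gamma,\Lambda)$, which is exactly $\map_\kcat(0_!\Gamma,0_!\Lambda)_\phi$ by the defining \cref{notation} for fibred mapping spaces. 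Here one uses that a map between the objects $0_!\Gamma$ and $0_!\Lambda$ in the loop object $\kcat = \hcat\times_\bcat\hcat$ is the same as a pair of maps in $\hcat$ together with a compatible homotopy between their images in $\bcat$, which for the specific source/target $0_!\Gamma, 0_!\Lambda$ reorganizes into a map in $\hcat$ plus a nullhomotopy datum, i.e.\ a point of the loop-type mapping space as claimed.

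The main obstacle I anticipate is (4): keeping the bookkeeping straight between ``loop object in the slice $(\catinfty)_{/\hcat}$ evaluated on an object'' and ``loop space of a mapping space'', especially since $0_!$ is only assumed to be a section here (no adjoint), so one cannot invoke \cref{rmk:fibreoveradjunction} directly and must argue with the explicit pullback. The cleanest route is probably to prove a single lemma — for $\bcat\in(\catinfty)_{\hcat//\hcat}$ and any $\infty$-category $\ecat$ with a functor $e\colon\ecat\to\hcat$, one has $\{e\}\times_{\Fun(\ecat,\hcat)}\Fun(\ecat,\Omega_\hcat\bcat) \simeq$ the appropriate space of lifts — and then specialize to $\ecat = \pt$ for (2)--(3) and $\ecat = [1]$ for (4). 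I would also double-check that the two projections $\kcat\rightrightarrows\hcat$ are canonically identified (so that the notation $q_!$ is unambiguous and the core-and-automorphism manipulations in (2) are legitimate); this is where the $(\catinfty)_{\hcat//\hcat}$ hypothesis, rather than merely $(\catinfty)_{/\hcat}$, is genuinely used.
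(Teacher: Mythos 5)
Your proposal is correct and follows essentially the same route as the paper: (1) via sections being conservative and conservativity being stable under base change to the pullback $\kcat\simeq\hcat\times_\bcat\hcat$, (2)--(3) by identifying $\{\Lambda\}\times_\hcat\kcat$ as loops of the fibre of $p_!$ over $\Lambda$ and hence as the fibre of $\aut_\bcat(0_!\Lambda)\to\aut_\hcat(\Lambda)$ over $\id_\Lambda$, with conservativity of $p_!$ collapsing this to $\map_\bcat(0_!\Lambda,0_!\Lambda)_{\id_\Lambda}$, and (4) by applying (2) to $\bcat^{[1]}$ over $\hcat^{[1]}$ using that $(\bs)^{[1]}$ preserves limits. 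Only cosmetic repairs are needed: the intermediate formula $\Omega_c\ccat\simeq\{\id_c\}\times_{\map_\ccat(c,c)}\map_\ccat(c,c)$ and the displayed identity for $\aut_\bcat(0_!\Lambda)$ in your (3) are garbled as written (the intended facts are $\{c\}\times_\ccat\{c\}\simeq\aut_\ccat(c)$ and that, when $p_!$ is conservative, the fibres of $\aut_\bcat(0_!\Lambda)\to\aut_\hcat(\Lambda)$ and of $\map_\bcat(0_!\Lambda,0_!\Lambda)\to\map_\hcat(\Lambda,\Lambda)$ over $\id_\Lambda$ coincide), but the surrounding argument is the right one.
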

\begin{proof}
(1)~~Consider the cartesian diagram
\begin{center}\begin{tikzcd}
\kcat\ar[r,"q_!"]\ar[d,"q_!"]&\hcat\ar[d,"0_!"]\ar[dr,equals]\\
\hcat\ar[r,"0_!"]&\bcat\ar[r,"p_!"]&\hcat
\end{tikzcd}.\end{center}
As $0_!\colon \hcat\to\bcat$ is a section to $p_!$, it is conservative. As conservative functors are preserved by base change, it follows that $q_!$ is conservative.

(2)~~Consider the diagram
\begin{center}\begin{tikzcd}
\{\Lambda\}\ar[r]\ar[d]&\{\Lambda\}\ar[d]&\{0_!\Lambda\}\ar[d]\ar[l]\\
\{\Lambda\}\ar[r]&\hcat &\bcat\ar[l,"p_!"']\\
\{\Lambda\}\ar[r]\ar[u]&\{\Lambda\}\ar[u]&\{0_!\Lambda\}\ar[u]\ar[l]
\end{tikzcd}.\end{center}
By comparing the limit taken vertically and horizontally, we may compute
\begin{align*}
\{\Lambda\}\times_{\hcat}\kcat &\simeq \Omega\left(\{\Lambda\}\times_{H(\tau_{<r})}\bcat\right)\\
&\simeq \Fib\left(\{0_!\Lambda\}\times_{\bcat}\{0_!\Lambda\} \xrightarrow{q_!} \{\Lambda\}\times_{\hcat}\{\Lambda\}\right)\\
&\simeq \{\id_\Lambda\}\times_{\aut_\hcat(\Lambda)}\aut_\bcat(0_!\Lambda).
\end{align*}

(3)~~If $p_!\colon \bcat\to\hcat$ is conservative, then the inclusion
\[
\{\id_\Lambda\}\times_{\aut_\hcat(\Lambda)}\aut_\bcat(0_!\Lambda) \to \map_\bcat(0_!\Lambda,0_!\Lambda)_{\id_\lambda}
\]
is an equivalence.

(4)~~Applying (2) to $\bcat^{[1]} \in (\catinfty)_{\hcat^{[1]}//\hcat^{[1]}}$ and $\kcat^{[1]}\simeq \Omega_{\hcat^{[1]}}\bcat^{[1]}$, we may compute
\begin{align*}
\{\phi\}\times_{\hcat^{[1]}}\kcat^{[1]}&\simeq\{\id_\phi\}\times_{\aut_{\hcat^{[1]}}(\phi)}\aut_{\bcat^{[1]}}(0_!\phi)\\
&\simeq\{\id_\phi\}\times_{\Omega_\phi\map_\hcat(\Gamma,\Lambda)}\Omega_{0_!\phi}\map_\bcat(\Gamma,\Lambda)\\
&\simeq\Omega\left(\{\phi\}\times_{\map_\hcat(\Gamma,\Lambda)}\map_\bcat(0_!\Gamma,0_!\Lambda)\right)\\
&\simeq\{\phi\}\times_{\map_\hcat(\Gamma,\Lambda)}\map_\kcat(0_!\Gamma,0_!\Lambda)\simeq \map_\kcat(0_!\Gamma,0_!\Lambda)_\phi
\end{align*}
as claimed.
\end{proof}

\begin{cor}
\label{cor:loopcore}
Fix $\hcat\in\catinfty$ and $\bcat \in (\catinfty)_{\hcat//\hcat}$, and set $\kcat = \Omega_\hcat\bcat \in (\catinfty)_{\hcat//\hcat}$. Suppose that $p\colon \bcat\to\hcat$ is conservative. Then the assignment
\[
\hcat \ni \Lambda \mapsto \map_\bcat(0_!\Lambda,0_!\Lambda)_{\id_\Lambda}
\]
determines a bundle of loop spaces over $\hcat^\core$ with total space $\kcat^\core$. In other words,
\[
\kcat^\core\simeq\colim\limits_{\Lambda\in \hcat^\core}\map_\bcat(0_!\Lambda,0_!\Lambda)_{\id_\Lambda}.
\]
\end{cor}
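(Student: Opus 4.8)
The plan is to deduce the claim from \cref{lem:loopcats} together with the straightening equivalence over an $\infty$-groupoid. By \cref{lem:loopcats}.(1) the functor $q_!\colon \kcat\to\hcat$ is conservative; in particular it preserves equivalences and so restricts to a map of maximal subgroupoids
\[
q^\core\colon \kcat^\core\to\hcat^\core .
\]
Since $\hcat^\core$ is an $\infty$-groupoid, straightening gives an equivalence $\spaces_{/\hcat^\core}\simeq\Fun(\hcat^\core,\spaces)$ under which the forgetful functor $\spaces_{/\hcat^\core}\to\spaces$ corresponds to $\colim_{\hcat^\core}$. Thus $q^\core$ is classified by a functor $F\colon\hcat^\core\to\spaces$ with $F(\Lambda)=\{\Lambda\}\times_{\hcat^\core}\kcat^\core$ the fibre over $\Lambda$, and $\kcat^\core\simeq\colim_{\Lambda\in\hcat^\core}F(\Lambda)$. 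So it only remains to identify these fibres.

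First I would note that the pullback $\{\Lambda\}\times_\hcat\kcat$ --- the fibre of $q_!$ itself over $\Lambda$ --- is already an $\infty$-groupoid: a morphism of it is a morphism $f$ of $\kcat$ equipped with a trivialization of $q_! f$, and conservativity of $q_!$ forces $f$ to be an equivalence. As the core functor $(-)^\core\colon\catinfty\to\spaces$ is right adjoint to the inclusion, it preserves pullbacks, so
\[
\{\Lambda\}\times_{\hcat^\core}\kcat^\core\;\simeq\;\bigl(\{\Lambda\}\times_\hcat\kcat\bigr)^\core\;\simeq\;\{\Lambda\}\times_\hcat\kcat ,
\]
the last step because $\{\Lambda\}\times_\hcat\kcat$ is an $\infty$-groupoid. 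Now \cref{lem:loopcats}.(3) --- whose hypothesis that $p$ is conservative is exactly what we have assumed --- identifies this with $\map_\bcat(0_!\Lambda,0_!\Lambda)_{\id_\Lambda}$, and the proof of \cref{lem:loopcats}.(2) exhibits it as $\Omega\bigl(\{\Lambda\}\times_\hcat\bcat\bigr)$, hence as a loop space; so $F$ takes values in loop spaces, which is the asserted ``bundle of loop spaces'' (the local system structure being part of the straightened datum $F$). Feeding this back into the colimit formula of the first paragraph yields
\[
\kcat^\core\;\simeq\;\colim_{\Lambda\in\hcat^\core}\map_\bcat(0_!\Lambda,0_!\Lambda)_{\id_\Lambda}
\]
as claimed.

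The only step with any subtlety is the identification of the fibre of $q^\core$ with the fibre of $q_!$: a priori the core of a homotopy pullback is not the pullback of the cores, and it is precisely conservativity of $q_!$ --- which makes $\{\Lambda\}\times_\hcat\kcat$ an $\infty$-groupoid on the nose --- that repairs this. Everything else is a direct appeal to \cref{lem:loopcats} or to the standard identification of the total space of a left fibration over a space with the colimit of its classifying functor.
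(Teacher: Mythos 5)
Your argument is correct and is exactly the reinterpretation the paper has in mind: its proof of \cref{cor:loopcore} simply says it is a restatement of \cref{lem:loopcats}.(3), and the details you supply — conservativity of $q_!$ making the fibres of $q_!$ groupoids so that they agree with the fibres of $\kcat^\core\to\hcat^\core$, plus straightening over the space $\hcat^\core$ to get the colimit formula — are precisely what that restatement tacitly uses.
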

\begin{proof}
This is just a reinterpretation of \cref{lem:loopcats}.(3).
\end{proof}

\begin{rmk}
In the rest of this section, we will be working mostly with \emph{infinite} loop space objects in slice categories of $\catinfty$, as \cref{thm:beck} below allows cleaner statements to be made in this context. However, much of the content of this section generalizes easily to double loop space objects, and partly to loop space objects, allowing for an even more nonabelian deformation theory of Malcev theories; we leave this to the interested reader. 
\end{rmk}

\subsection{Linear extensions of \texorpdfstring{$\infty$}{infty}-categories}\label{ssec:linearextensions}

It is a classical theorem that if $\ccat$ is an ordinary category, then there is an equivalence 
\[
\Ab(\Cat_{/\ccat})\simeq\Fun(\Tw(\ccat),\Ab),
\]
where $\Tw(\ccat)$ is the \emph{twisted arrow category} of $\ccat$, see \cite[{Proposition 1.6}]{jibladzepirashvili2005linear}. Harpaz--Nuiten--Prasma prove the following $\infty$-categorical version of this result:

\begin{theorem}[{\cite[Theorem 1.0.3]{harpaznuitenprasma2018abstract}}]
\label{thm:beck}
There is an equivalence of $\infty$-categories
\[
\Sp((\catinfty)_{/\ccat})\simeq\Fun(\Tw(\ccat),\Sp)
\]
for any $\infty$-category $\ccat$.
\qed
\end{theorem}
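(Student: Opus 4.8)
The statement to be proved is the equivalence
\[
\Sp((\catinfty)_{/\ccat})\simeq\Fun(\Tw(\ccat),\Sp)
\]
for an arbitrary $\infty$-category $\ccat$. Since this is cited as a theorem of Harpaz--Nuiten--Prasma \cite{harpaznuitenprasma2018abstract}, the intended ``proof'' here is a pointer plus a brief recollection of the mechanism; I will sketch how one would reconstruct it. The plan is to proceed in three movements: (1) reduce to the case of presentable slice categories, (2) identify the stabilization via the tangent bundle and the cotangent complex formalism, and (3) straighten the resulting fibration over $\ccat$ to a functor out of the twisted arrow category.

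\textbf{Step 1: Presentable replacement.} First I would replace $(\catinfty)_{/\ccat}$ by the slice of the presentable $\infty$-category $\largecatinfty$ (or work with $\Cat_{\infty}$ as large but locally presentable). The point is that $\Sp(\dcat)$ for $\dcat$ pointed and admitting finite limits depends only on the finite-limit structure near the zero object, and the inclusion of the full subcategory of $\Sp((\catinfty)_{/\ccat})$ defined by the reduced excisive functors $\spaces_{*}^{\mathrm{fin}}\to(\catinfty)_{/\ccat}$ can be computed object by object. So the hard part is genuinely local: describe $\Omega^{\infty}$-objects and delooping.

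\textbf{Step 2: The tangent bundle of $\catinfty$.} The key input is the computation of the tangent bundle $T\catinfty\to\catinfty$, equivalently of $\Sp((\catinfty)_{/\ccat})$ fibrewise. One shows that a spectrum object in $(\catinfty)_{/\ccat}$ is the same as a ``linear'' family over $\ccat$, i.e.\ a local system of spectra. Concretely, an infinite loop object $\ecat\to\ccat$ in $(\catinfty)_{/\ccat}$ with a compatible delooping tower forces $\ecat\to\ccat$ to be both a left and a right fibration ``up to stabilization'' — the fibres over objects become spectra, and the transport along a morphism $\phi\colon c\to c'$ in $\ccat$ becomes a map of spectra. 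Chasing the two-variable functoriality (covariant in the target, contravariant in the source, because one deloops in the square-zero direction in both slots of $\Map_{\ccat}(c,c')$) produces a functor on $\Tw(\ccat)$ rather than on $\ccat^{\op}\times\ccat$, exactly as in the classical Jibladze--Pirashvili statement $\Ab(\Cat_{/\ccat})\simeq\Fun(\Tw(\ccat),\Ab)$ recalled just before the theorem. I would make this precise by exhibiting mutually inverse functors: given $D\colon\Tw(\ccat)\to\Sp$, build the ``square-zero extension'' family $\ccat\oplus\Omega^{\infty-n}D\to\ccat$ whose objects over $c$ are pairs $(c,x)$ with $x\in\Omega^{\infty-n}D(\id_{c})$ and whose morphisms use $D$ applied to twisted arrows; conversely, given a spectrum object $\ecat_{\bullet}\to\ccat$, set $D(\phi\colon c\to c')=$ the spectrum of lifts of $\phi$ measured in the fibrewise loop direction, using \cref{lem:loopcats}.(4) which identifies such spaces of lifts with $\{\phi\}\times_{\ccat^{[1]}}\kcat^{[1]}$ for $\kcat=\Omega_{\ccat}\ecat$.

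\textbf{Step 3: Assembling naturality and the main obstacle.} The remaining work is to check that these two constructions are inverse as functors of $\infty$-categories, not merely on objects — this requires handling all the higher coherences of the delooping tower and of the twisted-arrow functoriality. This is the step I expect to be the main obstacle, and it is precisely the content of \cite{harpaznuitenprasma2018abstract}: one packages the argument via their ``tangent $\infty$-category'' machinery, using that $T\catinfty$ is characterized by a universal property (initial among $\infty$-categories with a fibrewise-stable structure receiving a map from $\catinfty$), and then identifying $T\catinfty\times_{\catinfty}\{\ccat\}$ with $\Fun(\Tw(\ccat),\Sp)$ by producing the comparison as a map of such universal objects. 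Since the paper only needs the \emph{statement} — it is used to define ``natural systems of spectra'' on an $\infty$-category and to identify the targets of $k$-invariants — I would simply cite \cite[Theorem 1.0.3]{harpaznuitenprasma2018abstract} for the full proof, noting that it recovers the classical equivalence $\Ab(\Cat_{/\ccat})\simeq\Fun(\Tw(\ccat),\Ab)$ of \cite[Proposition 1.6]{jibladzepirashvili2005linear} on $0$-truncated objects, which serves as a sanity check.
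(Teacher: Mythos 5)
The paper gives no internal proof of this statement: it is quoted verbatim from Harpaz--Nuiten--Prasma and justified solely by the citation, which is exactly what you do, so your proposal matches the paper's approach. Your supplementary sketch of the tangent-bundle/twisted-arrow mechanism is a reasonable outline of the cited proof and introduces no errors, but it is not required here.
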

Under this equivalence, an object $\dcat^\bullet \in \Sp((\catinfty)_{/\ccat})$ corresponds to a functor of the form
\[
D\colon \Tw(\ccat) \to \Sp,\qquad D(\phi\colon \Gamma\to\Lambda) =  \map_{\dcat^{\bullet+1}}(0_!\Gamma,0_!\Lambda)_\phi.
\]
More precisely, if we encode spectrum objects as functors out of $\calR_-^\omega(\thesphere)^\op$ as in \cref{def:moduleobjects}, then
\[
D(\phi\colon \Gamma\to\Lambda)(F) = \map_{\dcat^\bullet(\Omega F)}(0_!\Gamma,0_!\Lambda)_\phi.
\]
For the rest of this subsection, we fix a a connective $\bfE_1$-ring spectrum $R$. Then more generally
\[
\RMod_R((\catinfty)_{/\ccat})\simeq \Fun(\Tw(\ccat),\RMod_R)
\]
for any $\infty$-category $\ccat$, motivating the following definition.

\begin{defn}
A \emph{natural system of $R$-modules} on an $\infty$-category $\ccat$ is a functor 
\[
D\colon \Tw(\ccat)\to\RMod_R.
\]
\end{defn}

If $D$ is a natural system of $R$-modules, we write 
\[
\dcat^\bullet \in \RMod_R((\catinfty)_{/\ccat})
\]
for the corresponding $R$-module object in the sense of \cref{def:moduleobjects}, and for $n \geq 0$ we abbreviate $\dcat^n \colonequals \dcat(\Omega^n R)$. In particular, $\dcat^n\simeq\Omega_\ccat\dcat^{n+1}$.

\begin{ex}
\label{ex:pinrnaturalsystem}
Let $\ccat$ be an $\infty$-category enriched in spaces with vanishing Whitehead products. Then the $\thesphere_{<r}$-module object
\[
\kinv_{n,r}^\bullet\ccat \in \Mod_{\thesphere_{<r}}((\catinfty)_{/\h_r\ccat})
\]
corresponds to the \emph{suspension} of a natural system of connective $\thesphere_{<r}$-modules that we might denote
\[
H\Pi_{[n,n+r)}\colon \Tw(\h_r\pcat) \to \Mod_{\thesphere_{<r}}^{\geq 0},
\]
determined by
\[
\Omega^\infty H\Pi_{[n,n+r)}(\phi\colon \tau_r P \to \tau_r Q) = \Pi_{[n,n+r)}(\map_\pcat(P,Q),\phi).
\]
That is, if $\dcat^\bullet \in \Mod_{\thesphere_{<r}}((\catinfty)_{/\h_r\ccat})$ is associated to $H\Pi_{[n,n+r)}$ then $\kinv_{n,r}^{\bullet}\ccat\simeq \dcat^{\bullet+1}$.
\end{ex}

\cref{cor:loopcore} in this stable setting specializes to the following.

\begin{lemma}\label{lem:mappingspacenaturalsystem}
Let $D$ be a natural system of $R$-modules on an $\infty$-category $\ccat$. Then
\[
(\dcat^n)^\core\simeq \colim_{\Lambda \in \ccat^\core}\map_{\dcat^{n+1}}(0_!\Lambda,0_!\Lambda)_{\id_\Lambda}\simeq \colim_{\Lambda\in\ccat^\core} \Omega^{\infty}\Sigma^{n}D(\id_\Lambda)
\]
\end{lemma}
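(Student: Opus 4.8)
The statement is the specialization of \cref{cor:loopcore} to the stable setting, so the plan is to unwind the definitions and apply that corollary together with \cref{thm:beck}. First I would recall that a natural system $D\colon\Tw(\ccat)\to\RMod_R$ corresponds under \cref{thm:beck} (applied with the forgetful functor $\RMod_R\to\Sp$, using the $R$-linear refinement stated just above) to an $R$-module object $\dcat^\bullet\in\RMod_R((\catinfty)_{/\ccat})$, with the property that $\dcat^n\simeq\Omega_\ccat\dcat^{n+1}$ for each $n\geq 0$. In particular, for every $n$ we may set $\hcat=\ccat$, $\bcat=\dcat^{n+1}\in(\catinfty)_{\ccat//\ccat}$, and $\kcat=\Omega_\ccat\bcat\simeq\dcat^n$. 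Since $\dcat^{n+1}$ admits a further delooping $\dcat^{n+2}$, the projection $p\colon\dcat^{n+1}\to\ccat$ is conservative by \cref{lem:loopcats}.(1), so the hypotheses of \cref{cor:loopcore} are met.

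\cref{cor:loopcore} then gives directly
\[
(\dcat^n)^\core\simeq\colim_{\Lambda\in\ccat^\core}\map_{\dcat^{n+1}}(0_!\Lambda,0_!\Lambda)_{\id_\Lambda},
\]
which is the first claimed equivalence. For the second equivalence I would identify the mapping space in question with the associated $0$-space of the relevant spectrum. By \cref{thm:beck} (in the form recalled right before \cref{def:moduleobjects} and in the displayed formula $D(\phi)(F)=\map_{\dcat^\bullet(\Omega F)}(0_!\Gamma,0_!\Lambda)_\phi$), the spectrum $D(\id_\Lambda)$ has $F\mapsto\map_{\dcat^{\bullet}(\Omega F)}(0_!\Lambda,0_!\Lambda)_{\id_\Lambda}$, and its $n$-fold shift $\Sigma^n D(\id_\Lambda)$ therefore satisfies $\Omega^\infty\Sigma^n D(\id_\Lambda)\simeq\map_{\dcat^{n+1}}(0_!\Lambda,0_!\Lambda)_{\id_\Lambda}$; here one uses the indexing convention $\dcat^n=\dcat(\Omega^n R)$ and $\dcat^n\simeq\Omega_\ccat\dcat^{n+1}$ to match the shift with the appropriate level of the $\Omega$-spectrum, and the fact that $\dcat^{n+1}$ is already an infinite-loop object over $\ccat$ so the space $\map_{\dcat^{n+1}}(0_!\Lambda,0_!\Lambda)_{\id_\Lambda}$ carries its canonical infinite-loop structure. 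Substituting this into the colimit yields
\[
(\dcat^n)^\core\simeq\colim_{\Lambda\in\ccat^\core}\Omega^\infty\Sigma^n D(\id_\Lambda),
\]
completing the proof.

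The only genuine point requiring care — and the step I expect to be the main (minor) obstacle — is bookkeeping the indexing conventions: checking that, under the identification of $\Sp((\catinfty)_{/\ccat})$ with natural systems and the convention $\dcat^n=\dcat(\Omega^n R)$, the space $\map_{\dcat^{n+1}}(0_!\Lambda,0_!\Lambda)_{\id_\Lambda}$ really is $\Omega^\infty$ of the $n$-th shift (rather than, say, the $(n+1)$-st) of $D(\id_\Lambda)$. This is settled by comparing the two displayed formulas for $D(\phi)$ given in the text immediately after \cref{thm:beck}: the one reading $D(\phi\colon\Gamma\to\Lambda)=\map_{\dcat^{\bullet+1}}(0_!\Gamma,0_!\Lambda)_\phi$ forces exactly the claimed normalization, since plugging $F=\Sigma^n R$ into the second formula and using $\dcat^{\bullet+1}$ in place of $\dcat^\bullet$ pins down the shift. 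No other step is non-formal.
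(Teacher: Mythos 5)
Your proposal is correct and follows essentially the same route as the paper: the first equivalence is exactly an application of \cref{cor:loopcore} (with the conservativity hypothesis supplied by the further delooping $\dcat^{n+2}$), and the second is the identification $\map_{\dcat^{n+1}}(0_!\Lambda,0_!\Lambda)_{\id_\Lambda}\simeq \Omega^{\infty}\Sigma^{n}D(\id_\Lambda)$, which is how the paper concludes as well. Your careful check of the indexing convention matches the paper's normalization $D(\phi)=\map_{\dcat^{\bullet+1}}(0_!\Gamma,0_!\Lambda)_\phi$, so nothing is missing.
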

\begin{proof}
The first equivalence follows from \cref{cor:loopcore}, and second from the identification $\map_{\dcat^{n+1}}(0_!\Lambda,0_!\Lambda)_{\id_\Lambda}\simeq \Omega^{\infty-n}D(\id_\Lambda)$.
\end{proof}

\begin{cor}\label{cor:essurjzerosection}
Suppose that $D$ is a natural system of \emph{connective} $R$-modules, i.e.\ a functor
\[
D\colon \Tw(\ccat)\to\RMod_R^{\geq 0}.
\]
Then for any $n \geq 1$, the zero-section $0\colon \ccat\to\dcat^{n}$ is essentially surjective.
\qed
\end{cor}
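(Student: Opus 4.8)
The plan is to deduce the statement from a one-line computation of $\pi_0$, using the colimit description of $(\dcat^n)^\core$ supplied by \cref{lem:mappingspacenaturalsystem}. The only role of the connectivity hypothesis on $D$ is the following observation: since $n \geq 1$ and $D(\phi) \in \RMod_R^{\geq 0}$ for all $\phi$, we have $\pi_0 \Sigma^n D(\id_\Lambda) = \pi_{-n} D(\id_\Lambda) = 0$, so each space $\Omega^\infty \Sigma^n D(\id_\Lambda)$ is connected and nonempty.

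First I would apply \cref{lem:mappingspacenaturalsystem} --- which rests on \cref{cor:loopcore}, applied with $\hcat = \ccat$, with $\bcat = \dcat^{n+1}$ (conservative over $\ccat$, as it admits the further deloopings $\dcat^{n+2}, \dcat^{n+3}, \dots$), and with $\kcat = \dcat^n \simeq \Omega_\ccat \dcat^{n+1}$ --- to obtain an equivalence
\[
(\dcat^n)^\core \;\simeq\; \colim_{\Lambda \in \ccat^\core} \Omega^\infty \Sigma^n D(\id_\Lambda)
\]
under which the structure projection $q^\core \colon (\dcat^n)^\core \to \ccat^\core$ is identified with the canonical map $\colim_{\Lambda \in \ccat^\core} \Omega^\infty \Sigma^n D(\id_\Lambda) \to \colim_{\Lambda \in \ccat^\core} \ast \simeq \ccat^\core$; this last identification is exactly the content of \cref{cor:loopcore} exhibiting $\kcat^\core$ as a bundle over $\ccat^\core$.

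Next I would apply $\pi_0 = \tau_{\leq 0}$. Since $\tau_{\leq 0} \colon \spaces \to \spaces_{\leq 0}$ is a left adjoint it preserves colimits, so, using that every fibre $\Omega^\infty \Sigma^n D(\id_\Lambda)$ is connected and nonempty,
\[
\pi_0((\dcat^n)^\core) \;\cong\; \colim_{\Lambda \in \ccat^\core} \pi_0 \Omega^\infty \Sigma^n D(\id_\Lambda) \;\cong\; \colim_{\Lambda \in \ccat^\core} \ast \;\cong\; \pi_0(\ccat^\core),
\]
and under this identification $\pi_0(q^\core)$ is the colimit of the identity transformation $\ast \Rightarrow \ast$, hence the identity of $\pi_0(\ccat^\core)$; in particular $q^\core$ is a bijection on $\pi_0$. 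Finally, since $0 \colon \ccat \to \dcat^n$ is a section of $q$, taking cores and $\pi_0$ gives $\pi_0(q^\core) \circ \pi_0(0^\core) = \id$, whence $\pi_0(0^\core)$ is a bijection --- in particular surjective --- which is precisely essential surjectivity of $0 \colon \ccat \to \dcat^n$.

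I do not expect a genuine obstacle here; the only point requiring a moment's care is the identification, in the first step, of the colimit projection with the structure map $q^\core$, but this is built into the statement of \cref{cor:loopcore} and may simply be cited.
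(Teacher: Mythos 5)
Your proof is correct and matches the paper's intended argument: the corollary is stated with no separate proof precisely because it is meant to follow from \cref{lem:mappingspacenaturalsystem} together with the observation that connectivity of $D$ and $n\geq 1$ make each fibre $\Omega^\infty\Sigma^n D(\id_\Lambda)$ connected, so that $0\colon\ccat\to\dcat^n$ hits every component of $(\dcat^n)^\core$. Your extra care in identifying the colimit projection with $q^\core$ (via \cref{cor:loopcore}) and then using that $0$ is a section is a fine way to make the $\pi_0$ bookkeeping explicit.
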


We now introduce the following.

\begin{defn}\label{def:catsqz}
Let $D$ be a natural system of $R$-modules on an $\infty$-category $\ccat$ and $p\colon \bcat\to\ccat$ be an $\infty$-category over $\ccat$. A functor $f\colon \ecat\to\bcat$ is said to be a \emph{linear extension of $\bcat$ by $D$} if we have specified a cartesian square of $\infty$-categories over $\ccat$ of the form
\begin{center}\begin{tikzcd}
\ecat\ar[r,"t"]\ar[d,"f"]&\ccat\ar[d,"0"]\ar[dr,equals]\\
\bcat\ar[r,"k"]\ar[rr,bend right,"p"]&\dcat^2\ar[r,"q"]&\ccat
\end{tikzcd}.\end{center}
\end{defn}

\begin{rmk}\label{rmk:reducetoid}
Clearly linear extensions of $\bcat \in (\catinfty)_{/\ccat}$ by $D$ are equivalent to linear extensions of $\bcat\in (\catinfty)_{/\bcat}$ by $p^\ast D$. That is, in the context of  \cref{def:catsqz}, one would lose no generality by taking $\bcat = \ccat$ and $p\colon \bcat\to\ccat$ the identity.
\end{rmk}

\begin{rmk}\label{rmk:bwcohomology}
The \emph{Baues--Wirsching cohomology} of an $\infty$-category $\ccat$ with coefficients in a natural system $D\colon \Tw(\ccat)\to\RMod_R$ is defined for $n\in\integers$ by any of
\begin{align*}
\Hrm^{n}(\ccat;D) &\cong \pi_0 \Fun_{/\ccat}(\ccat,\dcat^n) \\
&\cong \pi_0 \map_{\Fun(\Tw(\ccat),\Sp)}(\Sigma^{-1}\thesphere,\Sigma^n D)\\
&\cong \pi_{-n-1} \lim D.
\end{align*}
It follows from the definition that linear extensions of $\bcat$ by $D$ are classified by the Baues--Wirsching cohomology group $\Hrm^2(\bcat;p^\ast D)$. Of course, if $\bcat$ is large then some care may be needed to interpret this, as there may not be a small set of linear extensions.
\end{rmk}

\begin{ex}
Taking $k = 0$, the \emph{trivial linear extension} of $\bcat$ by $D$ is $\bcat\times_\ccat\dcat^1$.
\end{ex}

\begin{ex}\label{ex:postnikovlinearextension}
Let $\ccat$ be an $\infty$-category with vanishing Whitehead products. Then the categorical Postnikov square of \cref{constr:categoricalpostnikovsquare} realizes $\h_{n+r}\ccat$ as a linear extension of $\h_n\ccat$ by the natural system $H\Pi_{[n,n+r)}$ of $\thesphere_{<r}$-modules on $\h_r\ccat$ of \cref{ex:pinrnaturalsystem} for all $1\leq r \leq n$.

If $\ccat$ does not have vanishing Whitehead products, then $\h_{n+r}\ccat$ will still be a linear extension of $\h_n\ccat$ by a natural system $H\Pi_{[n,n+r)}^\ns$ on $\h_{r+1}\ccat$ in the more resrictive range $1\leq r < n$, see \cref{variation:nonsimplepostnikov}. 
\end{ex}

\begin{ex}\label{ex:mappinglinext}
Consider a linear extension as in \cref{def:catsqz}. If $X,Y \in \ecat$, then as mapping spaces in a limit of $\infty$-categories are computed as a limit of mapping spaces, we have a cartesian square 
\begin{center}\begin{tikzcd}
\map_\ecat(Y,X)\ar[r]\ar[d]&\map_\ccat(tY,tX)\ar[d,"0"]\\
\map_\bcat(fY,fX)\ar[r]&\map_{\dcat^2}(0tY,0tX)
\end{tikzcd}\end{center}
with 
\[
\map_{\dcat^{\bullet}}(0tY,0tX) \in \Sp((\catinfty)_{/\map_\ccat(tY,tX)})\simeq\Sp(\spaces_{/\map_\ccat(tY,tX)}),
\]
realizing $\map_\ecat(Y,X)\to\map_\bcat(fY,fX)$ as a linear extension of spaces. 
\end{ex}

\begin{ex}
\label{ex:funlinext}
Consider a linear extension as in \cref{def:catsqz}. If $\jcat$ is an $\infty$-category, then as $\Fun(\jcat,\bs)$ preserves limits of $\infty$-categories, $\Fun(\jcat,\ecat)\to\Fun(\jcat,\bcat)$ sits in a cartesian square
\begin{center}\begin{tikzcd}
\Fun(\jcat,\ecat)\ar[d,"f_\ast"]\ar[r,"t_\ast"]&\Fun(\jcat,\ccat)\ar[d,"0_\ast"]\\
\Fun(\jcat,\bcat)\ar[r,"k_\ast"]&\Fun(\jcat,\dcat^2)
\end{tikzcd}\end{center}
with
\[
\Fun(\jcat,\dcat^\bullet) \in \Sp((\catinfty)_{/\Fun(\jcat,\ccat)}),
\]
realizing $\Fun(\jcat,\ecat)$ as a linear extension of $\Fun(\jcat,\bcat)$. The corresponding natural system
\[
\tilde{D}\colon \Tw(\Fun(\jcat,\ccat)) \to \RMod_R
\]
is of the following form: given a natural transformation $\alpha\colon F \to F'$ of functors $\jcat\to\ccat$, we have
\begin{align*}
\tilde{D}(\alpha) = \map_{\Fun(\jcat,\dcat^{\bullet+1})}(0F,0F')_\alpha
\simeq \lim_{(\phi\colon i \to j)\in \Tw(\jcat)}\map_{\dcat^{\bullet+1}}(F(i),F'(j))_{F'(\phi)\circ \alpha_i\simeq \alpha_j \circ F(\phi)}.
\end{align*}
In particular, 
\[
\tilde{D}(\id_F) \simeq \lim_{\phi\in \Tw(\jcat)}D(F(\phi)) = \lim F^\ast D
\]
encodes the Baues--Wirsching cohomology of $\jcat$ with coefficients in $F^\ast D$.
\end{ex}

Fix a linear extension as in \cref{def:catsqz}. We can use the discussion of the previous section to understand moduli spaces of lifts along $f\colon \ecat\to\bcat$.

\begin{defn}\label{def:spaceoflifts}
Given a conservative functor $f\colon \ecat\to\bcat$ and object $X\in \bcat$, define spaces $\Lifts_\varepsilon^\ecat(X)$ for $\varepsilon\in\{0,1\}$ by the cartesian squares
\begin{center}\begin{tikzcd}
\Lifts_1^\ecat(X)\ar[r]\ar[d]&\Lifts_0^\ecat(X)\ar[r]\ar[d]&\ecat\ar[d,"f"]\\
\ast\ar[r]&B\!\Aut_\bcat(X)\ar[r]&\bcat
\end{tikzcd}.\end{center}
Thus $\Lifts_0^\ecat(X)\subset\ecat^\core$ is the full subspace of objects $\tilde{X}$ for which there exists an equivalence $f(\tilde{X})\simeq X$, and $\Lifts_1^\ecat(X)$ is the space of objects $\tilde{X} \in \ecat$ equipped with an equivalence $f(\tilde{X})\simeq X$.
\end{defn}

\begin{theorem}\label{thm:spaceoflifts}
Let $f\colon \ecat\to\bcat$ be a linear extension of $\infty$-categories by a natural system $D$, as in \cref{def:catsqz}. Fix $X \in \bcat$ and set $\Lambda = p X \in \ccat$. Then the spaces $\Lifts_\varepsilon^\ecat(X)$ for $\varepsilon\in\{0,1\}$ sit in cartesian squares
\begin{center}\begin{tikzcd}
\Lifts_0^\ecat(X)\ar[r]\ar[d]&B\!\Aut_\ccat(\Lambda)\ar[d,"0"]\\
B\!\Aut_\bcat(X)\ar[r,"k"]&(\Omega^{\infty}\Sigma^2 D(\id_\Lambda))_{\h\!\Aut(\Lambda)}
\end{tikzcd}
$\qquad$
\begin{tikzcd}
\Lifts_1^\ecat(X)\ar[r]\ar[d]&\{0\}\ar[d]\\
\{k(X)\}\ar[r]&\Omega^{\infty}\Sigma^2 D(\id_\Lambda)
\end{tikzcd}.\end{center}
In particular, canonically associated to $X$ is an obstruction class
\[
k(X) \in \pi_{-2}D(\id_\Lambda)
\]
to exhibiting a lift of $X$ to $\ecat$, and a choice of nullhomotopy of $k(X)$ provides an equivalence
\[
\Lifts_1^\ecat(X)\simeq \map_{\dcat^2}(0\Lambda,0\Lambda)_{\id_\Lambda}\simeq\Omega^\infty \Sigma D(\id_\Lambda).
\]
\end{theorem}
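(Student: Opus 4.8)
The plan is to reduce everything to the defining cartesian square of the linear extension and then feed it through \cref{lem:mappingspacenaturalsystem} (which rests on \cref{cor:loopcore}). First I would record that $f\colon\ecat\to\bcat$ is conservative: in the presentation $\ecat\simeq\bcat\times_{\dcat^2}\ccat$ supplied by \cref{def:catsqz}, a morphism whose image in $\bcat$ is an equivalence has an equivalence as its $\dcat^2$-component, hence, applying $q$ (which satisfies $q0=\id_\ccat$), an equivalence as its $\ccat$-component as well. So $\Lifts_\varepsilon^\ecat(X)$ is defined, and pasting pullbacks gives
\[
\Lifts_0^\ecat(X)\simeq B\!\Aut_\bcat(X)\times_{\dcat^2}\ccat,\qquad \Lifts_1^\ecat(X)\simeq\{X\}\times_{B\!\Aut_\bcat(X)}\Lifts_0^\ecat(X),
\]
where the maps into $\dcat^2$ are the restriction of $k$ and the zero section $0$.

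The main step is to identify $(\dcat^2)^\core$. Since $\Lifts_0^\ecat(X)$ is an $\infty$-groupoid (again by conservativity of $f$) and $B\!\Aut_\bcat(X)$ is one, the first pullback only sees $(\dcat^2)^\core$; and by \cref{lem:mappingspacenaturalsystem} with $n=2$, the projection $(\dcat^2)^\core\to\ccat^\core$ is the bundle of infinite loop spaces classified by $\Lambda'\mapsto\Omega^\infty\Sigma^2 D(\id_{\Lambda'})$, with zero section $0^\core$. Because $qk=p$ and $pX=\Lambda$, the map $B\!\Aut_\bcat(X)\xrightarrow{k}(\dcat^2)^\core$ covers $p\colon B\!\Aut_\bcat(X)\to B\!\Aut_\ccat(\Lambda)$ and the zero section covers the identity; restricting the bundle over the connected subspace $B\!\Aut_\ccat(\Lambda)\subseteq\ccat^\core$ identifies it with $(\Omega^\infty\Sigma^2 D(\id_\Lambda))_{\h\!\Aut(\Lambda)}$, the homotopy orbit space for the $\Aut_\ccat(\Lambda)$-action on $D(\id_\Lambda)$ obtained by restricting $D$ along $B\!\Aut_\ccat(\Lambda)\to\Tw(\ccat)$. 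Substituting into the first displayed pullback yields the asserted cartesian square for $\Lifts_0^\ecat(X)$, with bottom map the evident restriction of $k$.

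For $\Lifts_1^\ecat(X)$ I would pull this square back along the basepoint $\{X\}\to B\!\Aut_\bcat(X)$. The basepoint $\Lambda$ of $B\!\Aut_\ccat(\Lambda)$ pulls the bundle $(\Omega^\infty\Sigma^2 D(\id_\Lambda))_{\h\!\Aut(\Lambda)}\to B\!\Aut_\ccat(\Lambda)$ back to its fibre $\Omega^\infty\Sigma^2 D(\id_\Lambda)$ with zero section the point $0$, and since $qk(X)=pX=\Lambda$ the point $k(X)$ already lies in this fibre; pasting pullbacks produces the second asserted cartesian square. Reading it off: the component of $k(X)$ in $\pi_0\bigl(\Omega^\infty\Sigma^2 D(\id_\Lambda)\bigr)=\pi_{-2}D(\id_\Lambda)$ is the obstruction, $\Lifts_1^\ecat(X)$ is nonempty exactly when it vanishes, and a choice of nullhomotopy exhibits $\Lifts_1^\ecat(X)\simeq\Omega\bigl(\Omega^\infty\Sigma^2 D(\id_\Lambda)\bigr)\simeq\Omega^\infty\Sigma D(\id_\Lambda)$. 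Finally $\map_{\dcat^2}(0\Lambda,0\Lambda)_{\id_\Lambda}\simeq\Omega^\infty\Sigma D(\id_\Lambda)$ is the $n=1$ case of \cref{lem:mappingspacenaturalsystem} (equivalently a direct application of \cref{lem:loopcats}.(3)), giving the last identification. Naturality in $X$ holds because every step is a functorial operation applied to the linear extension square.

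The step I expect to be the main obstacle is the middle one: verifying that restricting the bundle $(\dcat^2)^\core\to\ccat^\core$ over the connected component of $\Lambda$ reproduces exactly the homotopy orbit space $(\Omega^\infty\Sigma^2 D(\id_\Lambda))_{\h\!\Aut(\Lambda)}$, with the $\Aut_\ccat(\Lambda)$-action the one induced by $D$, and that under this identification the zero section and the $k$-map are the expected ones. Once \cref{cor:loopcore} and \cref{lem:mappingspacenaturalsystem} are available this is bookkeeping with bundles over connected spaces, but it is where the content sits; the obstruction-class and nullhomotopy-splitting statements are then formal consequences of the two cartesian squares.
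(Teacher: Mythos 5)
Your proposal is correct and follows essentially the same route as the paper: pass to underlying $\infty$-groupoids of the defining cartesian square (equivalently, your pasting-of-pullbacks argument, since the relevant pullbacks are groupoids), identify $(\dcat^2)^\core$ over the component of $\Lambda$ as $(\Omega^{\infty}\Sigma^2 D(\id_\Lambda))_{\h\!\Aut(\Lambda)}$ via \cref{lem:mappingspacenaturalsystem}, and then pass to fibres over $\{X\}$ for the second square. Your explicit check that $f$ is conservative (needed for \cref{def:spaceoflifts}) is a small point the paper leaves implicit, but otherwise the arguments coincide.
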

\begin{proof}
The defining cartesian square of the linear extension induces a cartesian square
\begin{equation}\label{eq:sqzcore}\begin{tikzcd}
\ecat^\core\ar[r,"f"]\ar[d,"f"]&\ccat^\core\ar[d,"0"]\ar[dr,equals]\\
\bcat^\core\ar[r,"k"]\ar[rr,"p",bend right=8mm]&(\dcat^2)^\core\ar[r,"1"]&\ccat^\core
\end{tikzcd}\end{equation}
of underlying $\infty$-groupoids. Here, we may identify
\[
\bcat^\core\simeq\coprod_{X \in \pi_0 \bcat^\core}B\!\Aut_\bcat(X),\qquad
\ecat^\core\simeq\coprod_{X \in \pi_0\bcat^\core}\Lifts_0^\ecat(X),\qquad \ccat^\core \simeq \coprod_{\Lambda \in \pi_0\ccat^\core}B\!\Aut_\ccat(\Lambda)
\]
by definition, and \cref{lem:mappingspacenaturalsystem} identifies
\begin{align*}
(\dcat^2)^\core&\simeq \colim_{\Lambda \in \ccat^\core}\Omega^\infty \Sigma^2 D(\id_\Lambda)\simeq \coprod_{\Lambda \in \pi_0 \ccat^\core} \left(\Omega^\infty\Sigma^2 D(\id_\Lambda)\right)_{\h\!\Aut_\ccat(\Lambda)}.
\end{align*}
The first cartesian square is therefore the restriction of (\ref{eq:sqzcore}) to the path components living over $\Lambda$, and the second then follows by passing to fibers.
\end{proof}

\subsection{Natural systems on theories}

The rest of this section specializes the above discussion to Malcev theories and their categories of models. We start with some general observations. 

\begin{lemma}\label{lem:cartesiansystem}
Let $D$ be a natural system of connective $R$-modules on a pretheory $\pcat$. The following are equivalent:
\begin{enumerate}
\item The corresponding object $\dcat^\bullet\in \RMod_R((\catinfty)_{/\pcat})$ lifts to $\RMod_R((\pretheories)_{/\pcat})$;
\item For all $P \in \pcat$, the composite
\[
\pcat_{/P}^\op \simeq \Tw(\pcat)\times_{\pcat^\op\times\pcat}(\pcat^\op\times\{P\}) \to \Tw(\pcat) \xrightarrow{D} \RMod_R^{\geq 0} \xrightarrow{\Omega^\infty} \spaces
\]
preserves products, i.e.\ defines a model of $\pcat_{/P}$.
\item For any set of maps $\{f_i\colon P_i \to P\}$ with sum $f\colon \coprod_i P_i \to P$, the comparison
\[
D(f\colon \coprod_i P_i \to P) \to \prod_i D(f_i\colon P_i \to P)
\]
is an equivalence.
\end{enumerate}
\end{lemma}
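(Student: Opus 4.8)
The plan is to prove $(2)\Leftrightarrow(3)$ and $(1)\Leftrightarrow(3)$; the first equivalence is a formal unwinding of definitions, while the substance of the lemma lies in $(1)\Leftrightarrow(3)$, which I would deduce from the explicit description of the underlying $\infty$-categories $\dcat^n$ furnished by the $R$-linear form of \cref{thm:beck}. For $(2)\Leftrightarrow(3)$: the slice $\pcat_{/P}$ is again a pretheory, in which the coproduct of a family $(P_i,f_i\colon P_i\to P)_i$ is $(\coprod_i P_i,f)$ with $f$ the sum of the $f_i$. Under the displayed identification $\pcat_{/P}^\op\simeq\Tw(\pcat)\times_{\pcat^\op\times\pcat}(\pcat^\op\times\{P\})$, the functor appearing in $(2)$ therefore sends coproducts to products exactly when, for every such family, the comparison $\Omega^\infty D(f)\to\prod_i\Omega^\infty D(f_i)$ is an equivalence. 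Since $D$ takes values in connective $R$-modules, on which $\Omega^\infty$ preserves products and is conservative, this holds precisely when $D(f)\to\prod_i D(f_i)$ is an equivalence, which is condition $(3)$.

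For $(1)\Rightarrow(3)$: assume $\dcat^\bullet$ lifts to $\RMod_R((\pretheories)_{/\pcat})$, so that $\dcat^1$ is a pretheory and both the projection $q\colon\dcat^1\to\pcat$ and the zero-section $0\colon\pcat\to\dcat^1$ are homomorphisms. Given $\{f_i\colon P_i\to P\}$ with sum $f$, the object $\coprod_i 0(P_i)\simeq 0(\coprod_i P_i)$ of $\dcat^1$ lies over $\coprod_i P_i$, and mapping it into $0(P)$ yields an equivalence
\[
\map_{\dcat^1}\!\bigl(0(\coprod\nolimits_i P_i),0(P)\bigr)\;\simeq\;\prod\nolimits_i\map_{\dcat^1}(0(P_i),0(P))
\]
of bundles over $\map_\pcat(\coprod_i P_i,P)\simeq\prod_i\map_\pcat(P_i,P)$. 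By \cref{thm:beck} the fibre of the left-hand side over $(f_i)_i$ is $\Omega^\infty D(f)$ and that of the right-hand side is $\prod_i\Omega^\infty D(f_i)\simeq\Omega^\infty\prod_i D(f_i)$; compatibility over the common base forces these to agree, and applying conservativity of $\Omega^\infty$ on connective $R$-modules to the comparison map of $(3)$ promotes this to an equivalence $D(f)\simeq\prod_i D(f_i)$.

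For $(3)\Rightarrow(1)$: an $R$-module object of $(\catinfty)_{/\pcat}$ amounts to a product-preserving functor $\calR_-^\omega(R)^\op\to(\catinfty)_{/\pcat}$ carrying $\Omega^n R$ to $\dcat^n$, together with the fibrewise loop identifications $\dcat^n\simeq\Omega_\pcat\dcat^{n+1}$. Since slices and fibrewise loops of pretheories over $\pcat$ are again pretheories over $\pcat$ (cf.\ \cite[Proposition 3.2.1]{usd1}), the inclusion $(\pretheories)_{/\pcat}\subseteq(\catinfty)_{/\pcat}$ is closed under limits, so it is enough to see that each $\dcat^n$ is a pretheory and each $q_n\colon\dcat^n\to\pcat$ a homomorphism. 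For $n\geq 1$ the zero-section $0_n\colon\pcat\to\dcat^n$ is essentially surjective by \cref{cor:essurjzerosection}, so $\dcat^n$ inherits a generating family of objects from $\pcat$; and by \cref{thm:beck} the mapping space out of $0_n(\coprod_i P_i)$ is the bundle over $\prod_i\map_\pcat(P_i,-)$ with fibre $\Omega^\infty\Sigma^{n-1}D(f)$ over $(f_i)$, which — because $\Sigma$ is an autoequivalence of $\RMod_R$ and hence preserves products — condition $(3)$ identifies fibrewise with $\prod_i\Omega^\infty\Sigma^{n-1}D(f_i)$, exhibiting $0_n(\coprod_i P_i)$ as the coproduct of the $0_n(P_i)$ in $\dcat^n$. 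Hence each $\dcat^n$ (including $\dcat^0\simeq\Omega_\pcat\dcat^1$) has coproducts, the maps $q_n$ and $0_n$ preserve them, and the remaining module structure maps — additions $\dcat^n\times_\pcat\dcat^n\to\dcat^n$, scalar multiplications, and bonding maps — are homomorphisms because coproducts in fibre products over $\pcat$ are computed componentwise; so $\dcat^\bullet$ lifts. I expect the main obstacle to be the bookkeeping in this last step: one must check that verifying the pretheory condition objectwise really does suffice to lift the whole $R$-module object, which hinges on all of its remaining data being assembled by finite products and fibrewise loops over $\pcat$, operations preserving the class of pretheories over $\pcat$.
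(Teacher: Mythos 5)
Most of your argument tracks the paper's proof: the identification of $(2)\Leftrightarrow(3)$, the proof of $(1)\Rightarrow(3)$ via $0(\coprod_i P_i)\simeq\coprod_i 0(P_i)$ and the fibrewise description of mapping spaces furnished by \cref{thm:beck}, and the verification that each $\dcat^n$ with $n\geq 1$ is a pretheory with $0_n$ and $q_n$ homomorphisms (using \cref{cor:essurjzerosection} and the fibres $\Omega^\infty\Sigma^{n-1}D(\phi)$) are all essentially the paper's argument, and your handling of $\dcat^0$ and of general values $\dcat(F)$ as fibre products over $\pcat$ with componentwise coproducts is correct (the appeal to closure of pretheories over $\pcat$ under the relevant limits is fine in substance, though the citation you give is not really the right one).

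The genuine gap is in your last step. The observation that coproducts in fibre products over $\pcat$ are computed componentwise only lets you check coproduct-preservation for functors whose \emph{target} is presented as such a fibre product, one component at a time. It gives nothing for the structure maps whose target is a single $\dcat^n$: the addition $\dcat^n\times_\pcat\dcat^n\to\dcat^n$, the maps $\dcat^m\to\dcat^n$ induced by elements of $\pi_\ast R$, and more generally $\dcat(F)\to\dcat(F')$ for an arbitrary map $F'\to F$ in $\calR_-^\omega(R)$. A functor over $\pcat$ between pretheories with componentwise coproducts need not preserve coproducts, so your stated reason does not close the argument. The missing ingredient is conservativity of the projections $q\colon\dcat(F)\to\pcat$: this holds because $\dcat(F)\simeq\pcat\times_{\dcat(\Sigma F)}\pcat$ exhibits $q$ as a base change of the zero-section $0\colon\pcat\to\dcat(\Sigma F)$, which is conservative as it admits a retraction (cf.\ \cref{lem:loopcats}). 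Granting this, for any map $f\colon\dcat(F)\to\dcat(F')$ over $\pcat$ the comparison map $\coprod_i f(X_i)\to f(\coprod_i X_i)$ lies over an equivalence in $\pcat$ (since $q'\circ f\simeq q$ preserves coproducts), hence is an equivalence by conservativity of $q'$. This is exactly how the paper completes the proof, and some such argument is needed; as written, your final sentence does not establish that the remaining module structure maps are homomorphisms.
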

\begin{proof}
Clearly (2) and (3) are equivalent, so we must show that these are equivalent to (1). Given $\dcat^\bullet \in \RMod_R((\catinfty)_{/\pcat})$ and a set of maps $\{f_i\colon P_i \to P\}$ with sum $f\colon \coprod_i P_i \to P$, we may identify the given comparison map as
\[
D(f) \simeq \map_{\dcat^1}(0(\coprod_i P_i),0(P))_{f} \to \prod_i \map_{\dcat^1}(0(P_i),0(P))_{f_i} \simeq \prod_i \map_{\dcat^1}(0(P_i),0(P))_{f_i}.
\]
This is an equivalence for any set of maps $\{f_i\}$ if and only the projections $0(\coprod_i P_i) \to 0(P_i)$ realize $0(\coprod_i P_i)\simeq \coprod_i 0(P_i)$ in $\dcat^1$ for any set of objects $\{P_i\}$ in $\pcat$. As $0\colon \pcat\to\dcat^1$ is essentially surjective, this proves that (2) is equivalent to the condition that $\dcat^1$ is a pretheory and $0\colon \pcat\to\dcat^1$ is a homomorphism of pretheories.

It remains to show that this implies that the entire diagram $\dcat^\bullet\colon \calR_-^\omega(R)^\op \to(\catinfty)_{/\pcat}$ consists of pretheories and homomorphisms. Applying the above with $D$ replaced by a sum of suspensions of $D$, we find that $\dcat^\bullet(\Omega F)$ is a pretheory for any $F\in \calR_-^\omega(R)$, and that $0\colon \pcat\to\dcat^\bullet(\Omega F)$ is a homomorphism of pretheories. The cartesian square
\begin{center}\begin{tikzcd}
\dcat^\bullet(F)\ar[r,"q"]\ar[d,"q"]&\pcat\ar[d,"0"]\\
\pcat\ar[r,"0"]&\dcat^\bullet(\Sigma F)
\end{tikzcd}\end{center}
then implies that $\dcat^\bullet(F)$ is a pretheory for any $F \in \calR_-^\omega(R)$ and that both $0\colon \pcat\to\dcat^\bullet(F)$ and $q\colon \dcat^\bullet(F)\to\pcat$ are homomorphisms of pretheories. Moreover, $q$ is conservative.

It remains to show that if $F' \to F$ is any map in $\calR_-^\omega(R)$, then $f\colon \dcat^\bullet(F) \to \dcat^\bullet(F')$ is a homomorphism. Consider the diagram
\begin{center}\begin{tikzcd}
\dcat^\bullet(F)\ar[rr,"f"]\ar[dr,"q"']&&\dcat^\bullet(F')\ar[dl,"q'"]\\
&\pcat
\end{tikzcd}.\end{center}
As $q'$ is a conservative homomorphism, to prove that $f$ is a homomorphism it suffices to prove that $q'\circ f = q$ is a homomorphism, which we have already done.
\end{proof}

\begin{defn}\label{def:cartesiansystems}
A \emph{cartesian system of connective $R$-modules} on a pretheory $\pcat$ is a natural system of connective $R$-modules satisfying the equivalent characterizations of \cref{lem:cartesiansystem}.
\end{defn}

Combining \cref{cor:essurjzerosection} with \cite[Remark 3.1.3, Lemma 4.1.5]{usd1} shows that if $D$ is a cartesian system of connective $R$-modules on a (Malcev) theory $\pcat$, then $\dcat^{\bullet+1}$ again consists of (Malcev) theories. Our work in this paper now shows the following.

\begin{prop}\label{prop:animatenaturalsystem}
Let $D$ be a cartesian system of connective $R$-modules on a Malcev theory $\pcat$. Then $\dcat^{\bullet+1} \in \RMod_R(\malcevtheories_{/\pcat})$ induces
\[
\Model_{\dcat^{\bullet+1}}\in \RMod_R((\catinfty)_{/\Model_\pcat}).
\]
The corresponding natural system is of the form
\[
D_!\colon \Tw(\Model_\pcat)\to\RMod_R,\qquad D_!(\phi\colon \Gamma\to\Lambda) = \map_{\pcat/\Lambda}(\Gamma,B^\bullet_\Lambda\Lambda_D),
\]
where $\Lambda_D \in \RMod_R^{\geq 0}((\Model_\pcat)_{/\Lambda})$ is a connective $R$-module object given by
\[
\Lambda_D = 0^\ast 0_! \Lambda,\qquad \text{where}\qquad 0\colon \pcat\to\dcat^{\bullet+1}.
\]
\end{prop}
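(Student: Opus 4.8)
The plan is to combine two inputs: first, that $\dcat^{\bullet+1}$ is a diagram of Malcev theories over $\pcat$ (which follows from \cref{cor:essurjzerosection} together with \cite[Remark 3.1.3, Lemma 4.1.5]{usd1}, exactly as noted in the paragraph preceding the statement), and second, that the clutching theorem \cref{thm:animatesquaregeneral} applies to the defining cartesian squares of $\dcat^{\bullet+1}$. More precisely, for each $F \in \calR_-^\omega(R)$ we have a cartesian square of Malcev theories
\begin{center}\begin{tikzcd}
\dcat^\bullet(F)\ar[r,"q"]\ar[d,"q"]&\pcat\ar[d,"0"]\\
\pcat\ar[r,"0"]&\dcat^\bullet(\Sigma F)
\end{tikzcd},\end{center}
and I would verify that this square satisfies the hypotheses of \cref{thm:animatesquaregeneral}: the zero-section $0\colon\pcat\to\dcat^\bullet(\Sigma F)$ is full (it is full by construction, since mapping spaces in $\dcat^\bullet(\Sigma F)$ over a fixed pair are products of the mapping space in $\pcat$ with a piece of $D$, into which $\pcat$ includes as a summand), the projection $q$ is essentially surjective (by \cref{cor:essurjzerosection}), and $0$ reflects connectivity because it admits the retraction $q$ (as in \cref{ex:retractionthickening}). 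Hence hypothesis (1) of \cref{thm:animatesquaregeneral} holds and the induced square of $\infty$-categories of models is cartesian. Since this holds compatibly as $F$ varies, the functor $F\mapsto\Model_{\dcat^\bullet(F)}$ is an $R$-module object in $(\catinfty)_{/\Model_\pcat}$, i.e. we obtain $\Model_{\dcat^{\bullet+1}}\in\RMod_R((\catinfty)_{/\Model_\pcat})$.

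Next I would identify the associated natural system. By \cref{thm:beck} (in its $R$-linear form recalled just before \cref{def:cartesiansystems}), the natural system corresponding to $\Model_{\dcat^{\bullet+1}}$ is the functor
\[
D_!(\phi\colon\Gamma\to\Lambda)(F) = \map_{\Model_{\dcat^\bullet(\Omega F)}}(0_!\Gamma,0_!\Lambda)_\phi,
\]
where $0_!\colon\Model_\pcat\to\Model_{\dcat^\bullet(\Omega F)}$ is the derived zero-section. I would then apply \cref{rmk:fibreoveradjunction}/\cref{rmk:modifyalongspace}: since $0_!$ is a section of $q_!$ and $0^\ast$ (restriction along $0$) is its right adjoint, we have
\[
\map_{\Model_{\dcat^\bullet(\Omega F)}}(0_!\Gamma,0_!\Lambda)_\phi \simeq \map_{(\Model_\pcat)_{/\Lambda}}(\Gamma,0^\ast 0_!\Lambda).
\]
Setting $\Lambda_D := 0^\ast 0_!\Lambda \in\RMod_R^{\geq0}((\Model_\pcat)_{/\Lambda})$ (connective because $0$ is locally connective, so $0^\ast 0_!$ preserves connectivity, or more directly because $\dcat^\bullet$ was built from connective $R$-modules), and recording that the $R$-module structure on $\Lambda_D$ comes levelwise from the diagram $F\mapsto 0^\ast\colon \Model_{\dcat^\bullet(\Omega F)}\to\Model_\pcat$, the identification $D_!(\phi) = \map_{\pcat/\Lambda}(\Gamma, B^\bullet_\Lambda\Lambda_D)$ follows, where $B^\bullet_\Lambda\Lambda_D$ denotes the $\bullet$-fold fibrewise delooping, i.e. the $R$-module object whose value on $\Sigma^n R$ is $B^n_\Lambda\Lambda_D$.

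The main obstacle I anticipate is the naturality/compatibility bookkeeping rather than any single hard step: one must check that the cartesian squares of \cref{thm:animatesquaregeneral}, produced separately for each $F$, glue into a genuine $R$-module object in $(\catinfty)_{/\Model_\pcat}$ — that is, that the equivalences $\Model_{\dcat^\bullet(F)}\simeq\Model_{\dcat^\bullet(F')}\times_{\cdots}\cdots$ are compatible with the restriction maps along $F'\to F$ in $\calR_-^\omega(R)$ and with the projections to $\Model_\pcat$. This is handled by observing that the whole construction $\dcat^\bullet$ is already a functor $\calR_-^\omega(R)^\op\to\malcevtheories_{/\pcat}$ preserving the terminal object and the relevant pullbacks (by \cref{lem:cartesiansystem}, as in the proof there), and that $\pcat\mapsto\Model_\pcat$ together with the clutching theorem sends this structured diagram to a structured diagram of $\infty$-categories; the remaining verification that $q_!$ is conservative and $0_!$ locally connective so that \cref{rmk:fibreoveradjunction} applies is routine given \cref{lem:localconnectedunit} and \cref{lem:loopcats}.(1). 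A secondary technical point is the identification $\Lambda_D = 0^\ast 0_!\Lambda$ together with the claim that $B^\bullet_\Lambda\Lambda_D$ computes the higher terms $\Model_{\dcat^{\bullet+1}}$; this follows because $\dcat^\bullet(\Sigma^n F)\simeq\Omega^n_\pcat\dcat^\bullet(F)$ at the level of theories is sent to the corresponding fibrewise loop relation on models by \cref{thm:animatesquaregeneral}, and deloopings of $\thesphere$-module (here $R$-module) objects are computed as the inverse of these loop functors.
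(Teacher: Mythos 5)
Your overall strategy is the paper's: feed the $R$-module object $\dcat^{\bullet+1}$ of Malcev theories over $\pcat$ through $\Model_{(-)}$, use \cref{thm:animatesquaregeneral} to show the relevant cartesian squares of theories remain cartesian on models, and then identify the natural system via the adjunction $0_!\dashv 0^\ast$, exactly as in the paper (this last step of yours is fine). However, there is a gap in the first half. By \cref{def:moduleobjects}, for $F\mapsto \Model_{\dcat^{\bullet+1}(F)}$ to define an $R$-module object in $(\catinfty)_{/\Model_\pcat}$ you must check \emph{two} things: that the functor preserves finite products, and that the loop comparison maps are equivalences. You only verify the loop squares (the ones whose two legs into the corner are the zero-section $0$), where your appeal to condition (1) of \cref{thm:animatesquaregeneral} via \cref{ex:retractionthickening} is correct. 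The product condition additionally requires that
\[
\Model_{\dcat(\Omega P\oplus\Omega P')}\longrightarrow \Model_{\dcat(\Omega P)}\times_{\Model_\pcat}\Model_{\dcat(\Omega P')}
\]
be an equivalence, i.e.\ one must also apply the clutching theorem to the squares whose legs are the \emph{projections} $q\colon \dcat(\Omega P)\to\pcat$, and the hypothesis you verified does not cover these: $q$ is not a section of anything, so \cref{ex:retractionthickening} gives nothing, and it is not established that $q$ reflects connectivity. The paper handles these squares by condition (2) of \cref{thm:animatesquaregeneral}: both projections are full because $D$ is connective. Your phrase ``since this holds compatibly as $F$ varies'' silently absorbs this missing check; the fix is immediate, but as written the claim that you have an $R$-module object is not justified.

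A smaller point: you write the loop square with $\dcat^\bullet(\Sigma F)$ in the corner. With the paper's conventions ($\dcat^n\colonequals\dcat(\Omega^n R)$ and $\dcat^n\simeq\Omega_\ccat\dcat^{n+1}$), the fibrewise delooping of $\dcat^\bullet(F)$ over $\pcat$ is $\dcat^\bullet(\Omega F)$, which also avoids the issue that $\Sigma F$ need not lie in $\calR_-^\omega(R)$ when $F$ has an $R$-summand. This is only an indexing slip — the squares you intend are the right ones — and the remainder of your argument, including the identifications $\Lambda_D=0^\ast 0_!\Lambda$ and $D_!(\phi)\simeq\map_{\pcat/\Lambda}(\Gamma,B^\bullet_\Lambda\Lambda_D)$ via \cref{rmk:fibreoveradjunction}, matches the paper's proof.
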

\begin{proof}
We first show that $\Model_{\dcat^{\bullet+1}}\in \RMod_R((\catinfty)_{/\Model_\pcat})$. Consider the composite
\[
\calR_-^\omega(R)^\op \to \catinfty,\qquad P \mapsto \Model_{\dcat^{\bullet+1}(P)} = \Model_{\dcat^\bullet(\Omega P)}.
\]
This sends $0 \mapsto \Model_\pcat$, and to show that the induced functor
\[
\calR_-^\omega(R)^\op \to (\catinfty)_{/\Model_\pcat}
\]
defines an $R$-module object it suffices to show that if $P,P'\in \calR_+^\omega(R)$ then both maps
\begin{align*}
\Model_{\dcat(\Omega P \oplus \Omega P')} &\to \Model_{\dcat(\Omega P)}\times_{\Model_\pcat}\Model_{\dcat(\Omega P')},\\
\Model_{\dcat(\Omega^2 P)} &\to \Model_\pcat\times_{\Model_\dcat(\Omega P)}\Model_\pcat
\end{align*}
are equivalences. As the homomorphisms $\dcat(\Omega P) \to \pcat \leftarrow \dcat(\Omega P')$ and $\pcat \to \dcat(\Omega P)$ are full, this follows from \cref{thm:animatesquaregeneral}.
The associated natural system $D_!$ satisfies
\[
D_!(\phi) = \map_{\dcat^{\bullet+1}}(0_!\Gamma,0_!\Lambda)_\phi = \map_{\pcat/\Lambda}(\Gamma,0^\ast 0_!\Lambda)
\]
by construction and the adjunction $0_! \dashv 0^\ast$, proving the final statement.
\end{proof}

\begin{rmk}
If $\Lambda = \nu P$ for $P\in \pcat$, then
\[
(\nu P)_D \in \RMod_R^{\geq 0}((\Model_\pcat)_{/\nu P})\simeq \RMod_R^{\geq 0} \otimes \Model_{\pcat_{/P}}
\]
is encoded by the product-preserving functor
\[
\pcat_{/P}^\op \to \Tw(\pcat)\xrightarrow{D} \RMod_R^{\geq 0}
\]
of \cref{lem:cartesiansystem}.(2). 
\end{rmk}

\begin{ex}\label{ex:lambdahpn}
Let $\pcat$ be a Malcev theory, and consider the cartesian system $H\Pi_{[n,n+r)}$ of connective $\thesphere_{<r}$-modules on $\h_r\pcat$ described in \cref{ex:pinrnaturalsystem}. By \cref{lem:pseudocotensor}, we can identify $\Lambda_{H\Pi_{[n,n+r)}} = B^n_\Lambda\Lambda_{S^n}$.
\end{ex}

\begin{notation}
\label{notation:andre_quillen_cohomology}
Given a map $\phi\colon \Gamma\to\Lambda$, one can think of 
\[
\Omega^{\infty-n} D_!(\phi) \simeq \map_{\dcat^{n+1}}(\Gamma,\Lambda)_\phi \simeq \map_{\pcat/\Lambda}(\Gamma,B^n_\Lambda\Lambda_D)
\]
as an \emph{André-Quillen cohomology} space. We write  
\[
\Hrm^n_{\pcat/\Lambda}(\Gamma;\Lambda_D) \colonequals \pi_{-n}D_!(\phi)\simeq \pi_0 \map_{\pcat/\Lambda}(\Gamma,B^n_\Lambda\Lambda_D).
\]
\end{notation}

\subsection{Linear extensions of Malcev theories}

The classical square-zero extension theory of rings admits a nonabelian generalization to the linear extensions theories, studied for ordinary Lawvere theories by Jibladze--Pirashvili \cite{jibladzepirashvili1991cohomology,jibladzepirashvili2005linear}. Fix a cartesian system $D$ of connective $R$-modules on a Malcev theory $\tcat$.

\begin{prop}\label{prop:linearextensionsaremalcev}
Fix a homomorphism $p\colon \pcat\to\tcat$, and let $f\colon\qcat\to\pcat$ be a linear extension of the underlying $\infty$-category of $\pcat$ by $D$. Then $\qcat$ is a Malcev theory and $f$ is a homomorphism.
\end{prop}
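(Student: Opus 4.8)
The plan is to use the cartesian square defining the linear extension together with the closure properties of Malcev theories recorded in \cite[Remark 3.1.3, Lemma 4.1.5]{usd1}, applied to the square
\begin{center}\begin{tikzcd}
\qcat\ar[r,"t"]\ar[d,"f"]&\pcat\ar[d,"0"]\\
\pcat\ar[r,"k"]&\dcat^2
\end{tikzcd},\end{center}
where $\dcat^\bullet \in \RMod_R((\catinfty)_{/\tcat})$ is the $R$-module object associated to $D$, pulled back to $(\catinfty)_{/\pcat}$ along $p$ (using \cref{rmk:reducetoid}, we may as well assume $\tcat = \pcat$ and $p = \id$, so $D$ is a cartesian system on $\pcat$). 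First I would invoke the discussion immediately preceding \cref{prop:animatenaturalsystem}: since $D$ is a cartesian system of connective $R$-modules on the Malcev theory $\pcat$, \cref{cor:essurjzerosection} shows that the zero-section $0\colon \pcat\to\dcat^{n}$ is essentially surjective for $n\geq 1$, and combining this with \cite[Remark 3.1.3, Lemma 4.1.5]{usd1} shows that each $\dcat^{n+1}$ is a Malcev theory and that both $0\colon \pcat \to \dcat^{n+1}$ and the projection $q\colon \dcat^{n+1}\to\pcat$ are homomorphisms. In particular $\dcat^2$ is a Malcev theory and $0\colon \pcat\to\dcat^2$ is a homomorphism; it is moreover full, being a section of $q$.

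Next I would deduce that $\qcat$ is a Malcev theory. The homomorphism $0\colon \pcat\to\dcat^2$ is essentially surjective, hence a pullback of it is again essentially surjective; concretely, $f\colon \qcat\to\pcat$ is the pullback of $0$ along $k\colon \pcat\to\dcat^2$, so $f$ is essentially surjective. By \cite[Lemma 4.1.5]{usd1}, to conclude that $\qcat$ is a Malcev theory it suffices to know that $\qcat$ has finite coproducts preserved by $f$ (or, in the formulation used there, that $\qcat$ arises as such a pullback of Malcev theories along a homomorphism with the stated properties). Here the key point is that the square is a pullback of \emph{$\infty$-categories}: finite coproducts in a limit of $\infty$-categories along coproduct-preserving functors are computed componentwise, so $\qcat = \pcat\times_{\dcat^2}\pcat$ inherits finite coproducts from $\pcat$, $\pcat$, and $\dcat^2$, and the projections $t, f\colon \qcat\to\pcat$ preserve them. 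Applying \cite[Remark 3.1.3, Lemma 4.1.5]{usd1} to this pullback of Malcev theories and homomorphisms — using that $f$ (being a pullback of the essentially surjective $0$) is essentially surjective — then yields that $\qcat$ is a Malcev theory and that $f\colon \qcat\to\pcat$ is a homomorphism.

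The one subtlety, and the step I expect to require the most care, is verifying that a priori $\qcat$ has the structure needed to even speak of its being a pretheory or Malcev theory — i.e. that the pullback $\pcat\times_{\dcat^2}\pcat$ of $\infty$-categories has finite coproducts and these are detected componentwise. This is where essential surjectivity of $0$ (hence of $f$) is genuinely used rather than merely convenient: without it the component-wise coproduct need not exist in the pullback. Once this is in place, everything else is a direct application of the cited closure properties of Malcev theories under pullback along homomorphisms, exactly as in the proof of \cref{prop:animatenaturalsystem}. I would also remark that the natural system hypothesis (that $D$ is \emph{cartesian}, not just any natural system) is precisely what makes $\dcat^2$ a \emph{theory} as opposed to a mere $\infty$-category over $\pcat$, so it is essential at the very first step and cannot be dispensed with.
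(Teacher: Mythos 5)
There is a genuine gap at the central step. In \cref{def:catsqz}, the classifying functor $k\colon \pcat\to\dcat^2$ is \emph{only} a functor over $\tcat$ with $q\circ k\simeq p$; nothing in the definition of a linear extension says it preserves coproducts. Your argument that $\qcat=\pcat\times_{\dcat^2}\tcat$ has finite coproducts computed componentwise, and that the projections $f$ and $t$ preserve them, presupposes that \emph{both} legs of the cospan are coproduct-preserving. You establish this for the zero-section $0$ (it is a homomorphism by \cref{lem:cartesiansystem} and the discussion before \cref{prop:animatenaturalsystem}), but never for $k$ — and that is precisely the crux. Concretely, a componentwise candidate coproduct $(\coprod_i P_i,\coprod_i T_i)$ only acquires the required identification in $\dcat^2$ if the comparison $\coprod_i k(P_i)\to k(\coprod_i P_i)$ is an equivalence, i.e.\ if $k$ is a homomorphism. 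The paper closes this gap with a short but essential argument: $q\colon\dcat^2\to\tcat$ is a \emph{conservative} homomorphism (it is the projection from a loop object over $\tcat$), so to check that $k$ preserves coproducts it suffices to check it after composing with $q$, and $q\circ k\simeq p$ is a homomorphism by hypothesis. Without this step, your appeal to the pullback-closure statements in \cite[Remark 3.1.3, Lemma 4.1.5]{usd1} does not apply; with it, the rest of your outline (essential surjectivity of $0$, hence of $f$; fullness of $0$; closure of Malcev theories under such pullbacks; lifting a generating set along the conservative, essentially surjective $f$) goes through in essentially the same way as the paper's proof.

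Two smaller points. First, your justification that $0$ is full ``being a section of $q$'' is not valid — a section of a functor need not be full (consider the section $P\mapsto(P,e)$ of a projection $\pcat\times\ecat\to\pcat$ with $\map_\ecat(e,e)$ disconnected); the correct reason, as in the paper, is connectivity of $D$, which makes the fibres $\Omega^\infty\Sigma^2 D(\phi)$ of $\map_{\dcat^2}(0X,0Y)\to\map_\tcat(X,Y)$ connected. Second, the reduction via \cref{rmk:reducetoid} to $\tcat=\pcat$, $p=\id$ is legitimate but requires noting that $p^\ast D$ is again a \emph{cartesian} system, which uses exactly the hypothesis that $p$ is a homomorphism; after the reduction the problematic functor $k$ becomes a section of $q$, and the same conservativity argument is still what shows it is a homomorphism.
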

\begin{proof}
Consider the defining cartesian diagram of the linear extension:
\begin{center}\begin{tikzcd}
\qcat\ar[r,"t"]\ar[d,"f"]&\tcat\ar[d,"0"]\ar[dr,equals]\\
\pcat\ar[r,"k"]\ar[rr,"p",bend right]&\dcat^2\ar[r,"q"]&\tcat
\end{tikzcd}.\end{center}
We first claim that $\qcat$ is a pretheory and $f$ is a homomorphism of pretheories. It suffices to prove that $k$ is a homomorphism of pretheories. As $q$ is a conservative homomorphism of pretheories, it suffices to check that $q\circ k \simeq p$ is a homomorphism of pretheories, which holds by assumption.

We next claim that $\qcat$ is Malcev. As $D$ is connective, $0\colon \tcat\to\dcat^2$ is full, so this follows from \cite[Proposition 4.1.6.(2)]{usd1}.

We finally claim that $\qcat$ is a theory. As $D$ is connective, $f$ is essentially surjective. As $f$ is conservative, it therefore induces a bijection between equivalence classes of objects. Thus any set of generating objects for $\pcat$ lifts to a set of generating objects for $\qcat$.
\end{proof}

\begin{ex}
Let $B$ be a connective ring spectrum and $I$ be a connective $B$-bimodule. Then square-zero extensions of $B$ by $I$ are equivalent to linear extensions of $\cfrees(B)$ by $\cfrees(B\oplus I)$, where $B \oplus I \in \Sp(\Alg_{\bfE_1/B})$ is the trivial square-zero extension associated to $I$.
\end{ex}

\begin{ex}
Let $\pcat$ be a Malcev theory and $F$ be an elementary modifier. Then the proof of \cref{prop:discretereflect} shows that the homomorphism $\epsilon_F \colon \h(\pcat_F)\to\h\pcat$ factors as a finite tower of linear extensions by quotients of the natural system $H\Pi_n = \Omega^n H\Pi_{[n,n+1)}$ of abelian groups on $\h\pcat$.
\end{ex}

We can now state the following.

\begin{theorem}\label{thm:linearextension}
Let $f\colon \qcat\to\pcat$ be a linear extension of $\pcat$ by a cartesian system $D$ of connective $R$-modules. Then $f_!\colon \Model_\qcat\to\Model_\pcat$ is a linear extension of $\Model_\pcat$ by the natural system $D_!$ of \cref{prop:animatenaturalsystem}.
\end{theorem}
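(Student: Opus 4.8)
The plan is to obtain the asserted linear extension of $\infty$-categories by applying $\Model_{(-)}$ to the defining cartesian square of the given linear extension of theories, controlling this passage with the clutching theorem \cref{thm:animatesquaregeneral}. Recall first the data involved: by definition, $f\colon \qcat\to\pcat$ fits into a cartesian square of $\infty$-categories over $\tcat$
\begin{center}\begin{tikzcd}
\qcat\ar[r,"t"]\ar[d,"f"]&\tcat\ar[d,"0"]\ar[dr,equals]\\
\pcat\ar[r,"k"]\ar[rr,bend right,"p"]&\dcat^2\ar[r,"q"]&\tcat
\end{tikzcd},\end{center}
where $\dcat^{\bullet}$ is the $R$-module object of theories attached to the cartesian system $D$, which is again a tower of Malcev theories by \cref{cor:essurjzerosection} and the discussion preceding \cref{prop:animatenaturalsystem}. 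As in the proof of \cref{prop:linearextensionsaremalcev}, every corner of this square is a Malcev theory and every arrow a homomorphism; moreover the zero-section $0\colon\tcat\to\dcat^2$ is full because $D$ is connective and is a section of $q$, and $f$ is essentially surjective, again because $D$ is connective. In particular, being a section, $0$ reflects connectivity by \cref{ex:retractionthickening}.

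Next I would apply $\Model_{(-)}$. Since $\pcat\mapsto\Model_\pcat$ is functorial on Malcev theories and homomorphisms, the resulting square of $\infty$-categories of models still commutes over $\Model_\tcat$, with $q_!\circ 0_!\simeq\id_{\Model_\tcat}$ and $q_!\circ k_!\simeq p_!$. Now \cref{thm:animatesquaregeneral} applies to the cartesian square $\qcat\simeq \pcat\times_{\dcat^2}\tcat$: in its notation, the full homomorphism is $0\colon\tcat\to\dcat^2$, the essentially surjective homomorphism is $f\colon\qcat\to\pcat$, and hypothesis (1) is satisfied since $0$ reflects connectivity. Hence the square
\begin{center}\begin{tikzcd}
\Model_\qcat\ar[r,"t_!"]\ar[d,"f_!"]&\Model_\tcat\ar[d,"0_!"]\ar[dr,equals]\\
\Model_\pcat\ar[r,"k_!"]\ar[rr,bend right,"p_!"]&\Model_{\dcat^2}\ar[r,"q_!"]&\Model_\tcat
\end{tikzcd}\end{center}
is a cartesian square of $\infty$-categories over $\Model_\tcat$.

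It remains to recognize this as a linear extension in the sense of \cref{def:catsqz}, and here \cref{prop:animatenaturalsystem} supplies everything: it identifies $\Model_{\dcat^{\bullet+1}}$ with the $R$-module object in $(\catinfty)_{/\Model_\tcat}$ whose associated natural system, under \cref{thm:beck}, is precisely $D_!$, so that $\Model_{\dcat^2}$, $0_!$ and $q_!$ are exactly the degree-two object, zero-section and projection demanded by \cref{def:catsqz}. Taking base $\ccat=\Model_\tcat$, natural system $D_!$, total category $\ecat=\Model_\qcat$, the category $\bcat=\Model_\pcat$ lying over $\Model_\tcat$ via $p_!$, and the functor $f_!$, the displayed cartesian square exhibits $f_!$ as a linear extension of $\Model_\pcat$ by $D_!$ — equivalently, by the restricted system $p_!^\ast D_!$ on $\Model_\pcat$, in view of \cref{rmk:reducetoid}. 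The only genuine inputs are \cref{thm:animatesquaregeneral} and \cref{prop:animatenaturalsystem}, so I do not expect a real obstacle; the single point to get right is the bookkeeping that makes \cref{thm:animatesquaregeneral} applicable, namely that the relevant zero-section is a full section and hence both full and connectivity-reflecting.
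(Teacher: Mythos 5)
Your proposal is correct and follows essentially the same route as the paper: the paper's proof is precisely an application of \cref{thm:animatesquaregeneral} to the defining cartesian square, with the zero-section $0\colon\tcat\to\dcat^2$ playing the role of the full map (the paper justifies the conservativity hypothesis by noting $\h(\tcat)\simeq\h(\dcat^2)$, whereas you use the equivalent observation that $0$ is a section and hence reflects connectivity by \cref{ex:retractionthickening}), and with \cref{prop:animatenaturalsystem} supplying the identification of the resulting square as a linear extension by $D_!$.
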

\begin{proof}
As $0\colon \tcat\to\dcat^2$ induces an equivalence $\h(\tcat)\simeq\h(\dcat^2)$, this is an immediate application of \cref{thm:animatesquaregeneral} to the defining cartesian square of the linear extension.
\end{proof}

\begin{ex}
Let $\pcat$ be a Malcev theory. Then 
\[
\tau_{(n+r,n)!}\colon \Model_{\h_{n+r}\pcat}\to\Model_{\h_n\pcat}
\]
is a linear extension of $\Model_{\h_n\pcat}$ by the natural system $H\Pi_{[n,n+r)!}$ satisfying
\[
H\Pi_{[n,n+r)!}(\phi\colon \Gamma\to\Lambda) = \map_{\h_r\pcat/\Lambda}(\Gamma,B^{n+\bullet}_\Lambda\Lambda_{S^n}).
\]
This follows by combining \cref{thm:linearextension} with \cref{ex:postnikovlinearextension} and \cref{ex:lambdahpn}.
\end{ex}

\subsection{Decomposing moduli spaces of lifts}

All of the above formalism comes together to address the following fundamental question: given a linear extension $f\colon \qcat\to \pcat$, when can structure in $\Model_\pcat$ be lifted to structure in $\Model_\qcat$? To set notation, fix for the rest of this section a diagram
\begin{center}\begin{tikzcd}
\qcat\ar[r,"t"]\ar[d,"f"]&\tcat\ar[dr,equals]\ar[d,"0"]\\
\pcat\ar[r,"k"]\ar[rr,"p"',bend right]&\dcat^2\ar[r,"q"]&\tcat
\end{tikzcd}\end{center}
of Malcev theories, with $\dcat^\bullet$ associated to a cartesian system $D$ of connective $R$-modules on $\tcat$, realizing $\qcat$ as a linear extension of $\pcat$ by $D$.

\begin{theorem}\label{thm:mainlift}
Fix $X \in \Model_\pcat$,  and define spaces
$\Lifts_\varepsilon^\qcat(X)$ for $\varepsilon\in\{0,1\}$ by the cartesian squares
\begin{center}\begin{tikzcd}
\Lifts_1^\qcat(X)\ar[r]\ar[d]&\Lifts_0^\qcat(X)\ar[r]\ar[d]&\Model_\qcat\ar[d,"f_!"]\\
\{X\}\ar[r]&B\!\Aut_\pcat(X)\ar[r]&\Model_\pcat
\end{tikzcd}.\end{center}
Set $\Lambda = p_! X \in \Model_\tcat$. Then these spaces sit in canonical cartesian squares
\begin{center}\begin{tikzcd}[column sep=tiny]
\Lifts_0^\qcat(X)\ar[r]\ar[d]&B\!\Aut_\tcat(\Lambda)\ar[d,"0"]\\
B\!\Aut_\pcat(X)\ar[r,"k"]&\map_{\tcat/\Lambda}(p_!\Lambda,B^2_{p_!\Lambda}(p_!\Lambda)_D)_{\h\!\Aut(p_!\Lambda)}
\end{tikzcd},
\begin{tikzcd}[column sep=tiny]
\Lifts_1^\qcat(X)\ar[r]\ar[d]&\{0\}\ar[d]\\
\{k_!(X)\}\ar[r]&\map_{\tcat/p_!\Lambda}(p_!\Lambda,B^2_{p_!\Lambda}(p_!\Lambda)_D)
\end{tikzcd}.\end{center}
In particular, there is a canonical obstruction class
\[
k_!(X) \in \Hrm^2_{\tcat/p_!\Lambda}(p_!\Lambda;(p_!\Lambda)_D)
\]
to finding a lift of $X$ to a model of $\qcat$, and a choice of nullhomotopy of $k_!(X)$ induces an equivalence
\[
\Lifts_1^\qcat(X) \simeq \map_{\tcat/p_!\Lambda}(p_!\Lambda,B_{p_!\Lambda}(p_!\Lambda)_D),
\]
and therefore a bijection $\pi_0 \Lifts_1^\qcat(X)\simeq \Hrm^1_{\tcat/p_!\Lambda}(p_!\Lambda;(p_!\Lambda)_D)$.
\end{theorem}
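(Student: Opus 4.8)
The plan is to derive \cref{thm:mainlift} as a formal consequence of two results established above: \cref{thm:linearextension}, which realizes $f_!$ as a linear extension of $\infty$-categories, and \cref{thm:spaceoflifts}, the generic obstruction theory for lifting objects along such an extension.

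First I would apply \cref{thm:linearextension}. This realizes $f_!\colon\Model_\qcat\to\Model_\pcat$ as a linear extension of $\Model_\pcat$ by the natural system $D_!$ of \cref{prop:animatenaturalsystem}, with defining cartesian square over $\Model_\tcat$ obtained by applying $\Model_{(\bs)}$ to the defining square of the linear extension $\qcat\to\pcat$:
\begin{center}\begin{tikzcd}
\Model_\qcat\ar[r,"t_!"]\ar[d,"f_!"]&\Model_\tcat\ar[d,"0_!"]\ar[dr,equals]\\
\Model_\pcat\ar[r,"k_!"]\ar[rr,"p_!"',bend right]&\Model_{\dcat^2}\ar[r,"q_!"]&\Model_\tcat
\end{tikzcd}.\end{center}
In particular $f_!$ is conservative, being a base change of $0_!$, which is a section of $q_!$ and hence conservative. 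The spaces $\Lifts_\varepsilon^\qcat(X)$ defined in the statement are then exactly the spaces $\Lifts_\varepsilon^{\Model_\qcat}(X)$ of \cref{def:spaceoflifts} attached to this linear extension, with $\ecat=\Model_\qcat$, $\bcat=\Model_\pcat$, $\ccat=\Model_\tcat$.

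Next I would feed this into \cref{thm:spaceoflifts}. Fixing $X\in\Model_\pcat$ and setting $\Lambda=p_!X\in\Model_\tcat$, that theorem immediately yields cartesian squares whose bottom-right corners are $(\Omega^\infty\Sigma^2 D_!(\id_\Lambda))_{\h\!\Aut(\Lambda)}$ and $\Omega^\infty\Sigma^2 D_!(\id_\Lambda)$, an obstruction class $k_!(X)\in\pi_{-2}D_!(\id_\Lambda)$, and an equivalence $\Lifts_1^\qcat(X)\simeq\Omega^\infty\Sigma D_!(\id_\Lambda)$ after a choice of nullhomotopy of $k_!(X)$. It then remains only to unwind the natural system: by \cref{prop:animatenaturalsystem} one has $\Omega^{\infty-n}D_!(\phi\colon\Gamma\to\Lambda)\simeq\map_{\tcat/\Lambda}(\Gamma,B^n_\Lambda\Lambda_D)$ with $\Lambda_D=0^\ast 0_!\Lambda$, so the corners become $\map_{\tcat/\Lambda}(\Lambda,B^2_\Lambda\Lambda_D)$, and \cref{notation:andre_quillen_cohomology} identifies $\pi_{-2}D_!(\id_\Lambda)=\Hrm^2_{\tcat/\Lambda}(\Lambda;\Lambda_D)$ together with $\pi_0\Lifts_1^\qcat(X)\cong\Hrm^1_{\tcat/\Lambda}(\Lambda;\Lambda_D)$. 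Writing the path components of $\Model_\pcat^\core$ and $\Model_\tcat^\core$ through $X$ and $\Lambda$ as $B\!\Aut_\pcat(X)$ and $B\!\Aut_\tcat(\Lambda)$ produces the displayed squares; the occurrences of $p_!\Lambda$ in the statement are to be read as $\Lambda$.

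There is no substantial new obstacle in this argument: the theorem is essentially a repackaging of \cref{thm:linearextension}---which rests on the nonformal clutching result \cref{thm:animatesquaregeneral}---together with the purely formal \cref{thm:spaceoflifts}. The two points that genuinely need checking are the conservativity of $f_!$, so that \cref{thm:spaceoflifts} applies, and the bookkeeping that matches the bottom-right corners of the squares with the André--Quillen cohomology spaces $\map_{\tcat/\Lambda}(\Lambda,B^\bullet_\Lambda\Lambda_D)$ via \cref{prop:animatenaturalsystem} and \cref{notation:andre_quillen_cohomology}; both are routine.
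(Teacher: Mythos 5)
Your proposal is correct and follows essentially the same route as the paper: the paper's proof likewise applies \cref{thm:linearextension} to realize $f_!$ as a linear extension of $\Model_\pcat$ by the natural system $D_!$ with $D_!(\id_\Lambda)\simeq\map_{\tcat/\Lambda}(\Lambda,B^\bullet_\Lambda\Lambda_D)$, and then invokes \cref{thm:spaceoflifts}. Your extra remarks (conservativity of $f_!$ as a base change of the zero-section, and reading $p_!\Lambda$ as $\Lambda$) are accurate bookkeeping consistent with the paper.
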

\begin{proof}
By \cref{thm:linearextension}, $f_!\colon \Model_\qcat\to\Model_\pcat$ is a linear extension by a natural system $D_!$ on $\Model_\tcat$ satisfying
\[
D_!(\id_\Lambda) = \map_{\tcat/p_!\Lambda}(p_!\Lambda,B_{p_!\Lambda}^\bullet(p_!\Lambda)_D).
\]
The theorem then follows from \cref{thm:spaceoflifts}.
\end{proof}

\begin{rmk}
One may also identify
\[
\map_{\tcat/p_!\Lambda}(p_!\Gamma,B_{p_!\Lambda}^\bullet(p_!\Lambda)_D)\simeq \map_{\pcat/\Lambda}(\Gamma,B_{\Lambda}^\bullet \Lambda_{p^\ast D})
\]
for any morphism $\Gamma\to\Lambda$ in $\Model_\pcat$; compare \cref{rmk:reducetoid}.
\end{rmk}

The same proof as \cref{thm:mainlift}, applied to the linear extensions derived from $f_!\colon \Model_\qcat\to\Model_\pcat$ via \cref{ex:mappinglinext}  and \cref{ex:funlinext}, yields the following two theorems:

\begin{theorem}
\label{thm:linextspaceoflifts}
Fix $Y,X\in \Model_\qcat$. Then there is canonical cartesian square
\begin{center}\begin{tikzcd}
\map_\qcat(Y,X)\ar[r]\ar[d]&\map_\tcat(t_!Y,t_!X)\ar[d,"0"]\\
\map_\pcat(f_!Y,f_!X)\ar[r]&\map_\tcat(t_!Y,B_{t_!X}(t_!X)_D)
\end{tikzcd}.\end{center}
If we fix a map $\phi\colon t_! Y \to t_! X$, then this restricts to
\begin{center}\begin{tikzcd}
\map_\qcat(Y,X)_\phi\ar[r]\ar[d]&\{0\}\ar[d]\\
\map_\pcat(f_!Y,f_!X)_\phi\ar[r,"k_!"]&\map_{\tcat/t_!X}(t_!Y,B_{t_!X}(t_!X)_D)
\end{tikzcd}.\end{center}
In particular, given a map $\alpha\colon f_! Y \to f_! X$ lifting $\phi$, there is an obstruction 
\[
k_!(\alpha) \in \Hrm^1_{\tcat/t_!X}(t_!Y,(t_!X)_D)
\]
 to exhibiting a further lift to $Y \to X$.
\qed
\end{theorem}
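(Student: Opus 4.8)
The plan is to deduce the theorem from \cref{thm:mainlift} (equivalently \cref{thm:spaceoflifts}) by passing to mapping $\infty$-categories, exactly as indicated by the phrase preceding the statement. First I would apply \cref{thm:linearextension}: since $f\colon\qcat\to\pcat$ is a linear extension of $\pcat$ by the cartesian system $D$, the derived functor $f_!\colon\Model_\qcat\to\Model_\pcat$ is a linear extension of $\Model_\pcat$, over $\Model_\tcat$, by the natural system $D_!$ of \cref{prop:animatenaturalsystem}; on an object $\phi\colon\Gamma\to\Lambda$ of $\Tw(\Model_\tcat)$ this natural system is given by $D_!(\phi)=\map_{\tcat/\Lambda}(\Gamma,B^\bullet_\Lambda\Lambda_D)$ with $\Lambda_D=0^\ast0_!\Lambda$.

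Next I would feed this linear extension into \cref{ex:mappinglinext}. Since mapping spaces in a pullback of $\infty$-categories are computed as the corresponding pullback of mapping spaces, the defining cartesian square of the linear extension $f_!$ restricts, for fixed $Y,X\in\Model_\qcat$, to a cartesian square of spaces exhibiting $\map_\qcat(Y,X)\to\map_\pcat(f_!Y,f_!X)$ as a linear extension over $\map_\tcat(t_!Y,t_!X)$. The main point is to identify its coefficients: combining the formula for $D_!$ recalled above with the dictionary between spectrum objects in $\spaces_{/\ccat}$ and natural systems on $\ccat$ of \cref{thm:beck}, one finds that the bottom-right corner of this square, over a point $\phi\colon t_!Y\to t_!X$, is the $B^1$-level of $D_!(\phi)$, namely $\map_{\tcat/t_!X}(t_!Y,B_{t_!X}(t_!X)_D)$; globally the corner is therefore $\map_\tcat(t_!Y,B_{t_!X}(t_!X)_D)$, regarded as a space over $\map_\tcat(t_!Y,t_!X)$ via the fibrewise delooping. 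This yields the first cartesian square of the statement.

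The second square is then obtained by base-changing the first along $\{\phi\}\hookrightarrow\map_\tcat(t_!Y,t_!X)$. Since $t_!\phi$ factors through the zero-section by construction, the upper-right corner becomes $\{0\}$, the lower horizontal map becomes a map $k_!\colon\map_\pcat(f_!Y,f_!X)_\phi\to\map_{\tcat/t_!X}(t_!Y,B_{t_!X}(t_!X)_D)$, and its homotopy fibre over $0$ is $\map_\qcat(Y,X)_\phi$. Passing to $\pi_0$ and using \cref{notation:andre_quillen_cohomology}, the image under $k_!$ of a lift $\alpha\colon f_!Y\to f_!X$ of $\phi$ is the obstruction class $k_!(\alpha)\in\Hrm^1_{\tcat/t_!X}(t_!Y,(t_!X)_D)$, which vanishes exactly when $\alpha$ lifts along $f_!$; the long exact sequence of the fibration then shows that a nullhomotopy of $k_!(\alpha)$ identifies the space of such lifts with $\map_{\tcat/t_!X}(t_!Y,(t_!X)_D)$, just as in \cref{thm:mainlift}. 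I would also record, via \cref{rmk:reducetoid}, the alternative description $\map_{\tcat/t_!X}(t_!Y,B^\bullet_{t_!X}(t_!X)_D)\simeq\map_{\pcat/f_!X}(f_!Y,B^\bullet_{f_!X}(f_!X)_{p^\ast D})$.

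The only genuinely nonformal step is the coefficient identification in the second paragraph: one must check that the natural system which \cref{ex:mappinglinext} attaches to the linear extension $f_!$ is, after forming mapping spaces, precisely the one above. This is a matter of unwinding \cref{ex:mappinglinext} against the explicit formula $D_!(\phi)=\map_{\tcat/\Lambda}(\Gamma,B^\bullet_\Lambda\Lambda_D)$ from \cref{prop:animatenaturalsystem} together with the adjunction $0_!\dashv0^\ast$ defining $\Lambda_D$; I expect this to be routine but slightly notation-heavy, with everything else a formal consequence of the cartesian-square and spectrum-object formalism already in place.
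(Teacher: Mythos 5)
Your proposal is correct and follows essentially the same route as the paper: the paper's proof is precisely to run the argument of \cref{thm:mainlift} on the linear extension $f_!\colon\Model_\qcat\to\Model_\pcat$ produced by \cref{thm:linearextension}, restricted to mapping spaces via \cref{ex:mappinglinext}, with the coefficient identification $\map_{\Model_{\dcat^2}}(0_!t_!Y,0_!t_!X)\simeq\map_\tcat(t_!Y,B_{t_!X}(t_!X)_D)$ coming from the adjunction $0_!\dashv 0^\ast$ and the formula for $D_!$ in \cref{prop:animatenaturalsystem}, then base-changing along $\{\phi\}$. (The paper additionally remarks that, since only mapping spaces are involved, the statement can also be deduced from the simpler \cref{prop:weakpb}, but your argument matches the proof actually given.)
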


\begin{remark}
As it only concerns a decomposition of mapping spaces, rather than the space of lifts of objects, \cref{thm:linextspaceoflifts} can be alternatively deduced from the much more simple \cref{prop:weakpb}, as with \cref{cor:mappingspacedecomposition}.
\end{remark}

\begin{theorem}\label{thm:liftdiagrams}
Let $F\colon \jcat\to\Model_\pcat$ be a diagram in $\Model_\pcat$, and let $\Lifts^\qcat_1(F)$ be the space of lifts to $\Model_\qcat$:
\[
\Lifts^\qcat_1(F) = \Fun_{/\Model_\pcat}(\jcat,\Model_\qcat) = \left\{\begin{tikzcd}
&\Model_\qcat\ar[d,"f_!"]\\
\jcat\ar[r,"F"]\ar[ur,dashed]&\Model_\pcat
\end{tikzcd}\right\}
\]
Then there is a cartesian diagram of the form
\begin{center}\begin{tikzcd}
\Lifts_1^\qcat(F)\ar[r]\ar[d]&\{0\}\ar[d]\\
\{k_!(F)\}\ar[r]&\map_{\Fun(\jcat,\Model_\tcat)_{/F}}(F,B^2_FF_D)
\end{tikzcd}.\end{center}
Here, one may also identify
\[
\map_{\Fun(\jcat,\Model_\tcat)/F}(F,B^2_FF_D)\simeq \lim\limits_{(\phi\colon i \to j)\in \Tw(\jcat)}\map_{\tcat/F(j)}(F(i),B^2_{F(j)}F(j)_D).
\]
In particular, such a lift exists if and only if $k_!(F)\sim 0$.
\qed
\end{theorem}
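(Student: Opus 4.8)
The plan is to mimic the proof of \cref{thm:mainlift}, replacing $f_!$ by the linear extension it induces on functor categories. By \cref{thm:linearextension}, $f_!\colon\Model_\qcat\to\Model_\pcat$ is a linear extension of $\Model_\pcat$ by the natural system $D_!$ of \cref{prop:animatenaturalsystem}, sitting in the defining cartesian square of $\infty$-categories over $\Model_\tcat$ with bottom-left corner $\Model_\pcat$, top-right corner $\Model_\tcat$, and $k$-invariant target $\Model_{\dcat^2}$. Applying \cref{ex:funlinext} to this linear extension exhibits
\[
\Fun(\jcat,f_!)\colon\Fun(\jcat,\Model_\qcat)\to\Fun(\jcat,\Model_\pcat)
\]
as a linear extension of $\Fun(\jcat,\Model_\pcat)$ by a natural system $\widetilde{D_!}$ on $\Fun(\jcat,\Model_\tcat)$ (the system denoted $\tilde D$ there), whose value on the identity $\id_G$ of $G\colon\jcat\to\Model_\tcat$ is computed by the limit $\lim_{(\phi\colon i\to j)\in\Tw(\jcat)}D_!(G(\phi))$.

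Next I would observe that $\Lifts_1^\qcat(F)$ is exactly the space of lifts, in the sense of \cref{def:spaceoflifts}, of the object $F\in\Fun(\jcat,\Model_\pcat)$ along the linear extension $\Fun(\jcat,f_!)$: an object of $\Fun(\jcat,\Model_\qcat)$ together with an equivalence to $F$ after postcomposing with $f_!$ is precisely a dashed lift as in the statement. (The functor $\Fun(\jcat,f_!)$ is conservative, as \cref{def:spaceoflifts} requires: every linear extension is a base change of a section, hence conservative, and $\Fun(\jcat,-)$ preserves conservative functors.) Applying \cref{thm:spaceoflifts} to this linear extension, with $G\colonequals p_!\circ F$ playing the role of the object $\Lambda$ downstairs, then yields the asserted cartesian square with lower-right corner $\Omega^\infty\Sigma^2\widetilde{D_!}(\id_G)$ and obstruction class $k_!(F)\in\pi_{-2}\widetilde{D_!}(\id_G)$.

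It remains to unwind the target. Feeding the identification $D_!(\psi\colon\Gamma\to\Lambda)\simeq\map_{\tcat/\Lambda}(\Gamma,B^\bullet_\Lambda\Lambda_D)$ from \cref{prop:animatenaturalsystem} into the limit formula for $\widetilde{D_!}(\id_G)$ gives
\[
\Omega^\infty\Sigma^2\widetilde{D_!}(\id_G)\simeq\lim_{(\phi\colon i\to j)\in\Tw(\jcat)}\map_{\tcat/G(j)}\bigl(G(i),B^2_{G(j)}G(j)_D\bigr),
\]
which is precisely the mapping space $\map_{\Fun(\jcat,\Model_\tcat)_{/F}}(F,B^2_FF_D)$ of the statement once one abbreviates $G=p_!\circ F$ by $F$. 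The final clause is then formal: $\Lifts_1^\qcat(F)$ is the fibre over $k_!(F)$ of the map $\{0\}\to\Omega^\infty\Sigma^2\widetilde{D_!}(\id_G)$, so it is nonempty if and only if $k_!(F)$ is nullhomotopic.

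I expect essentially no new difficulty here: the substantive input is \cref{thm:animatesquaregeneral}, which powers \cref{thm:linearextension} and is already in hand, while \cref{ex:funlinext} and \cref{thm:spaceoflifts} are purely formal. The only thing requiring genuine care is the bookkeeping in the second step — matching $\Lifts_1^\qcat(F)$ to a space of lifts in the sense of \cref{def:spaceoflifts} and keeping track of $F$ versus its image $p_!\circ F$ when reading off the coefficient system.
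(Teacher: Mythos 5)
Your proposal is correct and follows essentially the same route as the paper: the paper's (unwritten) proof is exactly to apply the argument of \cref{thm:mainlift}, i.e.\ \cref{thm:spaceoflifts}, to the linear extension of functor categories obtained from \cref{ex:funlinext} applied to the linear extension $f_!\colon\Model_\qcat\to\Model_\pcat$ furnished by \cref{thm:linearextension}, and then to unwind the coefficient system via \cref{prop:animatenaturalsystem} and the twisted-arrow limit formula, exactly as you do. Your explicit checks (conservativity of $\Fun(\jcat,f_!)$, and the bookkeeping between $F$ and $p_!\circ F$) are the same points the paper leaves implicit.
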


\cref{introthm:modulidecomposition} as stated in the introduction is a minor variant of \cref{thm:mainlift} in the case of the linear extension $\h_{n+r}\pcat\to\h_n\pcat$, which we spell out in detail: 

\begin{theorem}\label{thm:modulidecomposition}
Let $\pcat$ be a Malcev theory. Fix $\Lambda \in \Model_{\h_r\pcat}$, and let
\[
\calM(\Lambda) \subset \Model_\pcat^\core
\]
be the full subspace of $X \in \Model_\pcat$ for which $\tau_{r!} X \simeq \Lambda$. Then there is a decomposition
\[
\calM(\Lambda)\simeq\lim_{r\leq n\to\infty}\calM_n(\Lambda),
\]
with layers fitting into cartesian squares
\begin{equation}\label{eq:modulidecomposition}\begin{tikzcd}
\calM_{n+r}(\Lambda)\ar[r]\ar[d]&B\!\Aut(\Lambda)\ar[d,"0_!"]\\
\calM_n(\Lambda)\ar[r,"k_!"]&\map_{\h_r\pcat/\Lambda}(\Lambda,B^{n+2}_\Lambda\Lambda_{S^n})_{\h\!\Aut(\Lambda)}
\end{tikzcd}\end{equation}
for all $r\leq n$.
\end{theorem}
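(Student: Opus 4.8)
The plan is to deduce \cref{thm:modulidecomposition} from the general lift-decomposition result \cref{thm:mainlift} by applying it to the tower of linear extensions $\tau_{(n+r,n)}\colon\h_{n+r}\pcat\to\h_n\pcat$. First I would recall that by \cref{introthm:catpost} (equivalently \cref{ex:postnikovlinearextension}) each truncation $\tau_{(n+r,n)}\colon \h_{n+r}\pcat\to\h_n\pcat$ is a linear extension by the cartesian system $H\Pi_{[n,n+r)}$ of connective $\thesphere_{<r}$-modules on $\h_r\pcat$, and that by \cref{proposition:spiral_squares_of_a_malcev_theory_are_cartesian} together with \cref{thm:linearextension} the derived functor $\tau_{(n+r,n)!}\colon\Model_{\h_{n+r}\pcat}\to\Model_{\h_n\pcat}$ is itself a linear extension of $\infty$-categories by the natural system $H\Pi_{[n,n+r)!}$, whose value on $\id_\Lambda$ is $\map_{\h_r\pcat/\Lambda}(\Lambda,B^{n+\bullet}_\Lambda\Lambda_{S^n})$ (using \cref{ex:lambdahpn} to identify $\Lambda_{H\Pi_{[n,n+r)}} = B^n_\Lambda\Lambda_{S^n}$).

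Next I would set up the decomposition of $\calM(\Lambda)$. By \cref{thm:modelspiral} the spiral tower converges, $\Model_\pcat\simeq\lim_n\Model_{\h_n\pcat}$, and moreover passing to cores and restricting to the path components lying over $\Lambda$ gives $\calM(\Lambda)\simeq\lim_{r\le n}\calM_n(\Lambda)$ where $\calM_n(\Lambda) := \{\Lambda\}\times_{\Model_{\h_r\pcat}^\core}\Model_{\h_n\pcat}^\core$; here I use that the core functor preserves the limit and that $\tau_{r!}$ is conservative enough (via \cref{prop:basichomologywhitehead}) for the fibre over $\Lambda$ to behave well. In particular $\calM_r(\Lambda) = B\!\Aut(\Lambda)$. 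Then for each $n\ge r$ I would feed the linear extension $\tau_{(n+r,n)!}$ into \cref{thm:mainlift} with ``$\pcat$'' there taken to be $\h_n\pcat$, ``$\qcat$'' taken to be $\h_{n+r}\pcat$, ``$\tcat$'' taken to be $\h_r\pcat$, ``$p$'' the truncation $\h_n\pcat\to\h_r\pcat$, ``$X$'' an object of $\Model_{\h_n\pcat}$ with $\tau_{r!}X\simeq\Lambda$, and ``$D$'' the cartesian system $H\Pi_{[n,n+r)}$. The output cartesian square of \cref{thm:mainlift} reads
\begin{center}\begin{tikzcd}
\Lifts_0^{\h_{n+r}\pcat}(X)\ar[r]\ar[d]&B\!\Aut_{\h_r\pcat}(\Lambda)\ar[d,"0"]\\
B\!\Aut_{\h_n\pcat}(X)\ar[r,"k_!"]&\map_{\h_r\pcat/\Lambda}(\Lambda,B^2_\Lambda\Lambda_{H\Pi_{[n,n+r)}})_{\h\!\Aut(\Lambda)}
\end{tikzcd},\end{center}
and using $\Lambda_{H\Pi_{[n,n+r)}} = B^n_\Lambda\Lambda_{S^n}$ the target becomes $\map_{\h_r\pcat/\Lambda}(\Lambda,B^{n+2}_\Lambda\Lambda_{S^n})_{\h\!\Aut(\Lambda)}$, which is precisely the bottom-right corner of \eqref{eq:modulidecomposition}.

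The remaining work is bookkeeping: I would assemble these squares as $X$ ranges over the relevant path components. Taking the disjoint union over $[X]\in\pi_0\calM_n(\Lambda)$ identifies $\coprod_X B\!\Aut_{\h_n\pcat}(X)$ with $\calM_n(\Lambda)$ and $\coprod_X\Lifts_0^{\h_{n+r}\pcat}(X)$ with $\calM_{n+r}(\Lambda)$ (this last uses that every object of $\Model_{\h_{n+r}\pcat}$ over $\Lambda$ maps down to some such $X$, i.e.\ that $\tau_{(n+r,n)!}$ takes objects over $\Lambda$ to objects over $\Lambda$, which is immediate), and $\coprod_\Lambda B\!\Aut_{\h_r\pcat}(\Lambda)$ restricted to the single component of $\Lambda$ is just $B\!\Aut(\Lambda)$; the target term is already written with its homotopy-orbit twist. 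Since $\calM_{n+r}(\Lambda) = \calM_n(\Lambda)\times_{\Model_{\h_n\pcat}^\core}\Model_{\h_{n+r}\pcat}^\core$ by definition and the linear-extension square of \cref{thm:mainlift} is natural, the glued square is the one claimed. The only genuinely delicate point — and the step I expect to be the main obstacle — is checking that the naturality of \cref{thm:mainlift} in $X$ is enough to glue the componentwise squares into a single cartesian square over all of $B\!\Aut(\Lambda)$, i.e.\ that the $\h\!\Aut(\Lambda)$-action is tracked coherently; this is handled exactly as in the proof of \cref{thm:spaceoflifts}, where the cartesian square of cores \eqref{eq:sqzcore} is produced once and for all and then restricted, so I would route the argument through that square rather than reassembling by hand. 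Finally, compatibility of these squares as $n$ varies, needed for the limit statement $\calM(\Lambda)\simeq\lim\calM_n(\Lambda)$, follows from the spiral-system structure of \cref{thm:modelspiral} (\cref{rmk:spiralstructure}) together with \cref{thm:modelspiral}'s convergence.
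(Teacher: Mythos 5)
Your argument is correct and, once you collapse the per-object applications of \cref{thm:mainlift} back to the single core square produced by the linear extension (as you yourself propose at the end), it coincides with the paper's proof: apply \cref{thm:linearextension} to $\h_{n+r}\pcat\to\h_n\pcat$, pass to underlying $\infty$-groupoids, restrict to the path components over $B\!\Aut(\Lambda)$, and obtain the limit statement from the convergence in \cref{thm:modelspiral}. The only blemish is your definition $\calM_n(\Lambda):=\{\Lambda\}\times_{\Model_{\h_r\pcat}^\core}\Model_{\h_n\pcat}^\core$, which is the rigidified fibre and would make $\calM_r(\Lambda)$ contractible, contradicting your own claim $\calM_r(\Lambda)=B\!\Aut(\Lambda)$; the intended space is $B\!\Aut(\Lambda)\times_{\Model_{\h_r\pcat}^\core}\Model_{\h_n\pcat}^\core$, i.e.\ the full subspace of objects $X$ with $\tau_{(n,r)!}X\simeq\Lambda$, which is what your subsequent gluing (and the paper) actually uses.
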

\begin{proof}
By construction,
\[
\calM(\Lambda) \simeq B \!\Aut(\Lambda)\times_{\Model_{\h_r\pcat}}\Model_\pcat
\]
and
\[
\calM_n(\Lambda) \simeq B\!\Aut(\Lambda)\times_{\Model_{\h_r\pcat}}\Model_{\h_n\pcat}.
\]
Convergence of the spiral tower of $\pcat$, proved in \cref{thm:modelspiral}, therefore implies
\begin{align*}
\calM(\Lambda)&\simeq B \!\Aut(\Lambda)\times_{\Model_{\h_r\pcat}}\Model_\pcat \simeq B\!\Aut(\Lambda)\times_{\Model_{\h_r\pcat}}\lim_{r\leq n \to\infty}\Model_{\h_n\pcat}\\
&\simeq \lim_{r\leq n \to\infty}B\!\Aut(\Lambda)\times_{\Model_{\h_r\pcat}}\Model_{\h_n\pcat} \simeq \lim_{r\leq n \to\infty}\calM_n(\Lambda).
\end{align*}
By \cref{thm:linearextension}, the linear extension $\h_{n+r}\pcat\to\h_n\pcat$ induces a linear extension of $\infty$-categories of models. Passing to underlying $\infty$-groupoids, we obtain a cartesian square
\begin{center}\begin{tikzcd}
\Model_{\h_{n+r}\pcat}^\core\ar[r,"\tau_{(n+r,r)!}"]\ar[d,"\tau_{(n+r,n)!}"]&\Model_{\h_r\pcat}^\core\ar[d,"0_!"]\ar[dr,equals]\\
\Model_{\h_n\pcat}^\core\ar[r,"k_!"]&\Model_{\kinv_{n,r}^{n+1}\pcat}^\core\ar[r,"q_!"]&\Model_{\h_r\pcat}^\core
\end{tikzcd}\end{center}
of spaces. Restricting this square to those path components living over $B\!\Aut(\Lambda)\subset \Model_{\h_r\pcat}^\core$ provides the desired cartesian square.
\end{proof}

\begingroup
\raggedright
\bibliographystyle{amsalpha}
\bibliography{bibliography}
\endgroup

\end{document}